\documentclass[reqno,11pt]{amsart}
\usepackage{a4wide}
\usepackage{amssymb}
\usepackage{color}
\usepackage{tikz}
\usetikzlibrary{decorations.pathreplacing}
\usepackage{hyperref}

\newcommand{\CR}{\chi_R}

\newcommand{\CL}{\chi_L}
\newcommand{\ZL}{\zeta}
\newcommand{\cR}{\mathcal R}

\newcommand{\loc}{\textnormal{sol}}

\newcommand{\NN}{\mathbb{N}}
\newcommand{\RR}{\mathbb{R}}
\newcommand{\cA}{\mathcal{A}}

\newcommand{\cC}{\mathcal{C}}

\newcommand{\cE}{\mathcal{E}}
\newcommand{\cF}{\mathcal{F}}
\newcommand{\cI}{\mathcal{I}}
\newcommand{\cL}{\mathcal{L}}
\newcommand{\cLt}{\widetilde{\mathcal{L}}}
\newcommand{\cN}{\mathcal{N}}

\newcommand{\cRR}{\mathcal{R}}
\newcommand{\cS}{\mathcal{S}}

\newcommand{\cY}{\mathcal{Y}}
\newcommand{\cZ}{\mathcal{Z}}
\newcommand{\ONE}{\mathbf{1}}
\newcommand{\lesssimD}{\lesssim_\delta}
\newcommand{\kpu}{\kappa_1}
\newcommand{\kpd}{\kappa_2}

\DeclareMathOperator{\sech}{sech}

\DeclareMathOperator{\SPAN}{span}

\newtheorem{theorem}{Theorem}[section]
\newtheorem{lemma}{Lemma}[section]
\newtheorem{proposition}{Proposition}[section]

\theoremstyle{definition}

\theoremstyle{remark}
\newtheorem{remark}{Remark}[section]
\numberwithin{equation}{section}
\setlength{\marginparwidth}{2cm}

\begin{document}
\title[Continuum of finite point blowup rates for critical gKdV]
{Continuum of finite point blowup rates for the critical generalized Korteweg-de Vries equation}

\author[Y. Martel]{Yvan Martel}
\address{Laboratoire de mathématiques de Versailles,
UVSQ, Université Paris-Saclay, CNRS, and Institut Universitaire de France,
45 avenue des États-Unis,
78035 Versailles Cedex, France}
\email{yvan.martel@uvsq.fr}

\author[D. Pilod]{Didier Pilod}
\address{Department of Mathematics, University of Bergen, Postbox 7800, 5020 Bergen, Norway }
\email{Didier.Pilod@uib.no}

\subjclass[2010]{35Q53 (primary), 35B40, 37K40}

\begin{abstract}
The generalized Korteweg-de Vries equation (gKdV) is obtained by changing the nonlinearity of the famous (quadratic) Korteweg-de Vries equation. We consider the quintic power nonlinearity, 
for which the gKdV equation is critical for the $L^2$ norm. 
Kenig, Ponce and Vega proved in the 1990's that the associated Cauchy problem is locally well-posed both in the critical space $L^2$ and in the energy space $H^1$.

On the one hand, as a consequence of the local posedness result and of a sharp Gagliardo-Nirenberg inequality proved by Weinstein, any finite energy solution with a mass strictly less than that of the ground state exists globally in time. 

On the other hand, in the early 2000's, Merle and the first author constructed solutions with masses slightly above that of the ground state which blow up in finite time by concentrating a solitary wave. About a decade later, the blow-up dynamics of the critical gKdV equation was revisited in a series of papers by Merle, Rapha\"el and the first author, leading to a classification of finite energy solutions with  masses close to that of the ground state and satisfying in addition a smallness condition in some $L^2$ weighted space. In particular, a notion of stable blowup emerged, with the $L^2$-norm of the gradient of the solution blowing up at the rate $t^{-1}$ as the time $t>0$ approaches the blowup time (taken by convention at $t=0$). In complement, the same authors constructed exotic blowup solutions with various rates of blowup for initial data that do not satisfy the smallness condition in weighted norm. It is worth noting that in these works, blowup occurs through the bubbling of a solitary wave going at infinity in space at the blow-up time;
this behavior is known to be related to blowup rates $t^{-\nu}$ with exponents $\nu\geq \frac12$.

Recently, the authors of the present paper constructed the first example of a finite energy solution that blows up in finite time at a finite point in space, 
as a consequence of a particular blowup rate $t^{-2/5}$, reopening the question of 
the possible blowup rates for the critical gKdV equation.

In the present article, we introduce a more general point of view on the finite point blowup
for this equation.
For any $\nu\in(\frac 37,\frac12)$,
we prove the existence of a finite energy solution $u$ of
the mass critical generalized Korteweg-de Vries equation
on the time interval $(0,T_0]$,
for some $T_0>0$, which blows up at the time $t=0$ and at the point $x=0$  with the rate 
$\|\partial_x u (t)\|_{L^2} \sim t^{-\nu}$.
By construction, this blowup rate is associated to an $H^1$ blowup residue, obtained by passing to the
limit in the solution $u(t)$ as $t\downarrow 0$ after subtracting the singular bubble, 
of the form $r_\alpha(x)= C x^{\alpha -\frac 12}$ for $x>0$ close to the blowup point, where $\alpha=\frac{3\nu-1}{2-4\nu}$.
Note that $\nu=\frac25$ is equivalent to $\alpha=\frac12$, while
the condition $\nu\in(\frac37,\frac12)$ is equivalent to $\alpha>1$.
Thus, this range of $\nu$ corresponds exactly to
the range of $\alpha$ for which the residue $r_\alpha$ belongs to $H^1$.
For this reason, these are the only possible
finite point blowup rates $\nu$ accessible by our construction method.

Finally, we present some open problems regarding the blowup phenomenon for the mass critical
gKdV equation.
\end{abstract}

\maketitle

\section{Introduction}
\subsection{Problem setting}
We consider the generalized Korteweg-de Vries equation
(gKdV) with the critical quintic power nonlinearity 
\begin{equation}\label{eq:KV}
\partial_t u+\partial_x\big(\partial_x^2u+u^5\big)=0,\quad (t,x) \in \RR\times \RR.
\end{equation}
For a solution $u$ of \eqref{eq:KV},  the mass $M(u)$ and the energy $E(u)$ are formally conserved, where
\begin{align*}
M(u)&=\int_{\RR} u^2 \, dx , \\
E(u)&=\frac12 \int_{\RR} (\partial_xu)^2 \, dx-\frac16 \int_{\RR} u^6 \, dx .
\end{align*}
The Cauchy problem for~\eqref{eq:KV} is locally well-posed in the energy space $H^1(\RR)$ by the results obtained in~\cite{KPV}.
For any $u_0 \in H^1(\RR)$, there exists a unique (in a certain class) maximal solution $u$ of~\eqref{eq:KV} in $\cC([0,T), H^1(\RR))$ satisfying $u(0,\cdot)=u_0$.
Moreover, by \cite[Corollary 1.4]{KPV2}, if the maximal time of existence $T>0$ is finite then 
\[
\liminf_{t \uparrow T} (T-t)^{\frac 13}\|\partial_x u(t)\|_{L^2}>0.
\]
We say that the solution blows up in finite time.
In this article, we are interested by the blowup phenomenon.

We recall some other basic facts concerning~\eqref{eq:KV}, as the symmetries of the
equation and the existence of traveling waves.
If $u$ is a solution of~\eqref{eq:KV} then for any $(\lambda,\sigma) \in(0,+\infty) \times \RR$, the function
\begin{equation} \label{scaling:inv}
v(t,x):=\pm \frac 1{\lambda^{\frac12}} u\left(\frac t{\lambda^3}, \frac{x-\sigma}\lambda \right)
\end{equation}
and the function $w(t,x)=u(-t,-x)$ are also solutions of~\eqref{eq:KV}.
Moreover, the function
\[
Q(x)=\left(3\sech^2(2x)\right)^{1/4}
\]
is known to be the unique positive even solution in $H^1(\RR)$ of the equation
\begin{equation}\label{eq:Q5}
Q''+Q^5 = Q,\quad x\in \RR.
\end{equation}
Then, the function $u(t,x)=Q(x-t)$ is a traveling wave solution of \eqref{eq:KV},
also called solitary wave or soliton.
More generally, for any $(\lambda,\sigma) \in(0,+\infty) \times \RR$, the function
\begin{equation}\label{def:sol} 
u(t,x)=\frac 1{\lambda^{\frac12}}Q\left(\frac 1{\lambda} \left( x - \frac t{\lambda^{2}} - \sigma\right)\right)
\end{equation}
is a solution of~\eqref{eq:KV}. 
While the traveling waves given by \eqref{def:sol} are global and bounded solutions, 
there is a close relation between solitons and blowup, as we recall below.

By the sharp Gagliardo-Nirenberg inequality proved in~\cite{We82}
\begin{equation} \label{sharpGN}
\frac 13 \int_{\RR} \phi^6 \le \left(\frac{\int_{\RR} \phi^2}{\int_{\RR} Q^2} \right)^2 \int_{\RR} (\partial_x\phi)^2, \quad \forall \, \phi \in H^1(\RR),
\end{equation}
and the conservation of mass and energy, for
any initial data $u_0 \in H^1(\RR)$ satisfying
\begin{equation}\label{eq:su}
\|u_0\|_{L^2} < \|Q\|_{L^2},
\end{equation}
the corresponding solution of~\eqref{eq:KV} is global and bounded in $H^1(\RR)$.

A natural question after the local well-posedness result  \cite{KPV} and 
the global well-posedness result \cite{We82} in the mass subcritical case \eqref{eq:su} was 
the determination and the description of possible blowup solutions.
This problem has been addressed roughly in three main periods.
Most of the results cited below concern critical or slightly supercritical mass initial data
\begin{equation}\label{mass:close}
\|Q\|_{L^2} \leq  \|u_0\|_{L^2} < (1+\delta) \|Q\|_{L^2},\quad
\mbox{for $0<\delta\ll 1$.}
\end{equation}

In the first period,
using a rigidity property of the gKdV flow around the family of solitons established in \cite{MaMejmpa}, the first proof of blowup in finite or infinite time was given in \cite{Me3} 
for any negative energy $H^1$ initial data satisfying \eqref{mass:close}.
A direct link was also established between the blowup phenomenon and the solitary wave in \cite{MM1}, where it was shown that
any blowup solution $u$ of \eqref{eq:KV} satisfying \eqref{mass:close} has the following bubbling behavior
\begin{equation}\label{bubbling}
\lambda^{\frac 12}(t) u (t, \lambda(t) x + \sigma(t)) \rightharpoonup \pm Q \quad 
\mbox{and} \quad \lambda(t)\to 0 \quad \mbox{as $t\uparrow T$ in $H^1$ weak,}
\end{equation}
where $0<T\leq \infty$ is the (possibly infinite) blowup time.
The above weak convergence shows that the soliton $Q$ is the universal profile for the blowup phenomenon, but also allows various behaviors for the 
parameters $\lambda$ and $\sigma$. At that stage, it was unknown whether blow up could occur in finite or infinite time and what was the precise asymptotic behavior of the parameters $\lambda$ and
$\sigma$ close to the blow-up time.
However, it was proved that
\begin{equation}\label{sig:lam}
\lim_{t\uparrow T} \lambda^2(t) \sigma'(t) =1 .
\end{equation}
Then, a finite time blowup result was proved in \cite{MM2} for initial data with 
spatial decay and negative energy.

In the second period, more precise blowup results were proved, mostly relying on two main new ingredients.
The first ingredient is a refined blowup profile $Q_b=Q+bP_b$, replacing the
simple profile based only on $Q$, where $b$ is a small parameter and $P_b$ is related to the
scaling direction. The introduction of this ingredient was inspired by the previous treatment
of the log-log blowup for the mass critical nonlinear Schrödinger equation, see \cite{MR07}
for a synthetic presentation.
The second ingredient, more specific to the gKdV equation, is a functional 
with combines monotonicity properties of localized variants of the mass and energy for the gKdV equation in the spirit of Kato's early works
\cite{Kato} and a virial-type property in the neighborhood of solitons proved in \cite{MaMejmpa}.
 
More precisely, it was proved in \cite{MMR1} that
there exists a subset of initial data, included and open for $\|\cdot\|_{H^1}$ in the set
\[
\cA = \left\{ u_0 = Q+\varepsilon_0 : \varepsilon_0 \in H^1,\ \|\varepsilon_0\|_{H^1} <\delta_0 \mbox{ and } \int_{x>0} x^{10} \varepsilon_0^2 dx <1\right\}
\]
where $\delta_0>0$ is small,
leading to blow-up solutions of \eqref{eq:KV} such that
\[
\lim_{t\uparrow T} \left(T-t\right) \|\partial_x u(t)\|_{L^2} = C(u_0).
\]
Because of the openness property, we shall call this phenomenon  stable blowup. 
The asymptotic behavior of these solutions at the 
blow-up time $T$ is typical of the bubbling phenomenon. Indeed, there exists a function $r_\star\in H^1$ such that
\begin{equation}\label{uSS}
\lim_{t\uparrow T} \left\{ u(t,\cdot) - \frac 1{\lambda^{\frac 12}(t)}
Q\left(\frac{\cdot-\sigma(t)}{\lambda(t)} \right)\right\} = r_\star \quad \mbox{in $L^2$}
\end{equation}
for some functions $\lambda$, $\sigma$ satisfying 
\[
\lim_{t\uparrow T} \frac{\lambda(t)}{T-t} = \ell
\quad \mbox{and} \quad \lim_{t\uparrow T} \left(T-t\right)\sigma(t) = \frac 1{\ell^2}\,.
\]
The articles \cite{MMR1,MaMeNaRa} provided a classification of all the possible long time
behaviors for initial data in $\cA$, proving in particular that $(T-t)^{-1}$ is
the only possible blowup rate in such case.

Then, \cite{MMR2} settled the case of blowup at the threshold mass $\|u_0\|_{L^2}=\|Q\|_{L^2}$
showing the existence of a unique (up to invariances) blow-up solution $S(t)$ of
\eqref{eq:KV} with the mass of $Q$, blowing up in finite time with the rate $\|S(t)\|_{H^1} \sim C(T-t)^{-1}$ and concentrating a soliton at the position $\sigma(t) \sim C (T-t)^{-1}$ as $t\uparrow T$
(here, by time translation invariance, the value of $T$ is only a matter of convention).
A precise description of the time and space asymptotic behavior of $S$ in~\cite{CoM1}
has also permitted to construct multi-point bubbling in~\cite{CoM2}.
Such multi-bubble solutions provide examples of blowup with an arbitrary number of blow-up bubbles,
all propagating simultaneously to~$\infty$ as $t\uparrow T$.

Because of the restriction $u_0\in \cA$,
the above stable blow-up theory does not describe all blowup solutions with \eqref{mass:close}.
From~\cite{MMR3}, there exists a large class of exotic finite time blow-up solutions of \eqref{eq:KV}, close to the family of solitons, with blow-up rates of the form $\|\partial_x u(t)\|_{L^2} \sim (T-t)^{-\nu}$ for any $\nu\geq \frac {11}{13}$.
Such blow-up rates are generated by the nonlinear interactions of the bubbling soliton with an explicit slowly decaying tail added to the initial data. Because of the tail, these $H^1$ solutions fail to belong to $\cA$ and the classification \cite{MMR1,MaMeNaRa} does not apply.
The value $\frac {11}{13}$ is not optimal in~\cite{MMR3}, see below.
Moreover, \cite{MMR3} also constructs a large family of infinite time blowup solutions.

The third period started with a surprising blowup rate
$\|u(t)\|_{H^1} \approx (T-t)^{-\frac25}$ constructed in \cite{MP24} and
which led to the first example of finite point blowup for \eqref{eq:KV}.
Indeed, recall that for a single bubble blowup solution, by the relation \eqref{sig:lam} between the position $\sigma$ of the bubble and its blowup rate $\lambda$ (assuming it is of the form 
$\lambda(t)\approx(T-t)^\nu$)
the rate $\nu = \frac 12$ is critical:
if $\nu\geq \frac 12$ then the soliton position goes at $\infty$ since $\lim_{T}\sigma= \infty$, while for $\frac 13 < \nu < \frac 12$, the soliton position converges to a finite point.
The existence of bubbling solutions with $\nu= \frac 13$ was ruled out in~\cite{MM1}, but
the range $\frac 13 < \nu < \frac 12$ was left open by all the previously cited works.
As stated by the authors in \cite{MP24}, this result has reopened the question of 
exotic blowup, in particular the question of determining all the possible blowup rates
$\nu$ within the range $(\frac13,\frac{11}{13})$.
The recent article \cite{NM25} proves the existence (with some form of stability) of blowup solutions with any rate $(T-t)^{-\nu}$ for $\nu\in (\frac12,1)$,
thus completing the picture for power blowup rates at an infinite point.

In the present paper, we construct blowup solutions
with the rate $(T-t)^{-\nu}$,
for all $\nu\in (\frac37,\frac12)$.
For the remaining intervals
$(\frac13,\frac25)$ and $(\frac25,\frac37]$, we refer to the discussion in 
Section \ref{S:1.3}.

Figure 1 recapitulates the  values $\nu>0$ for which blowup solutions $u$ of \eqref{eq:KV} with
the blowup rate $\lim_{t\uparrow T} (T-t)^\nu \|\partial_x u \|_{L^2} = C$ for $C>0$, have been constructed.

\medskip 

\hskip -0.5cm
\begin{tikzpicture}[scale=1]
\draw[arrows=->,line width=.4pt,thick](-4,1.5)--(-4,1);
\draw[arrows=->,line width=.4pt,thick](2.1,0.6)--(1.6,0.1);
\draw (3.5,0.7) node{{\footnotesize ``stable'' blowup \cite{MMR1}}};
\draw [dashed] (-7.8,0) -- (-5.3,0);
\draw [thin](-5.13,0) -- (-3.56,0);
\draw [very thick] (0.3,0) -- (6.5,0);
\draw [very thick] (-3.43,0) -- (-2.56,0);
\draw [very thick] (-2.43,0) -- (0.3,0);
\draw (-3.5,0) circle (2pt);
\draw (-2.5,0) circle (2pt);
\draw [dashed, very thick] (6.5,0) -- (7.5,0);
\draw (-5.2,0) circle (2pt);
\filldraw (1.5,0) circle (2pt);
\filldraw (-4,0) circle (2pt);
\draw (-7.8,0.1) -- (-7.8,-0.1);
\draw (-7.8,0.6) node{{\footnotesize $0$}};
\draw (-2.5,0.6) node{{\footnotesize $\frac 12$}};
\draw (-4,0.6) node{{\footnotesize $\frac 25$}};
\draw (-3.5,0.6) node{{\footnotesize $\frac 37$}};
\draw (-5.2,0.6) node{{\footnotesize $\frac 13$}};
\draw (1.5,0.6) node{{\footnotesize $1$}};
\draw (7,0.6) node{{\footnotesize $\nu\to\infty$}};
\draw (-4,1.8) node{{\footnotesize \cite{MP24}}};
\draw [decorate,decoration={brace,amplitude=10pt}] (-3.5,1.2) -- (-2.5,1.2) node [black,midway,xshift=0.00cm,yshift=0.6cm]
{\footnotesize Thm \ref{th:01}};
\draw [decorate,decoration={brace,amplitude=10pt}] (-2.5,1.2) -- (7.5,1.2) node [black,midway,xshift=0.00cm,yshift=0.6cm]
{\footnotesize \cite{MMR3} and \cite{NM25}};
\draw [decorate,decoration={brace,amplitude=10pt}] (7.5,-0.2) -- (-2.5,-0.2) node [black,midway,xshift=0.00cm,yshift=-0.7cm]
{\footnotesize Blow-up point at $\infty$};
\draw [decorate,decoration={brace,amplitude=10pt}] (-2.5,-0.2) -- (-5.2,-0.2) node [black,midway,xshift=0.00cm,yshift=-0.7cm]
{\footnotesize Finite blow-up point};
\draw [decorate,decoration={brace,amplitude=10pt}] (-7.8,1.2) -- (-5.2,1.2) node [black,midway,xshift=0.00cm,yshift=0.6cm]
{\footnotesize Ruled out \cite{KPV2}};
\end{tikzpicture}
\begin{center}
Figure 1. Blow-up rates $\nu$ in $H^1$ for finite time blowup of the critical gKdV equation
\end{center}

\medbreak

The results gathered in Figure 1 indicate that
all $\nu>\frac 12$ are possible (gathering results in \cite{MMR3} and \cite{NM25})
and all $\nu\in (\frac 37,\frac12)$ are possible (this is Theorem \ref{th:01} of the present article).
Moreover, the critical value $\nu=\frac 12$ is a work in progress announced in \cite{NM25}.
The isolated point $\nu=\frac25$ was obtained previously in \cite{MP24} and
may seem surprising.
We provide a formal explanation in Remark \ref{rk:al} below.

\subsection{Main result}

By convention, and for simplicity, we fix the blowup point at $x=0$, the blowup time at $t=0$ and 
we reverse the sense of time.

\begin{theorem}\label{th:01}
Let $\nu \in (\frac37,\frac12)$ and $\alpha=\frac{3\nu-1}{2-4\nu}$.
For any $\delta>0$, there exist $T_0>0$ and a solution $u\in \cC((0,T_0]:H^1(\RR))$   of \eqref{eq:KV} of the form
\begin{equation}\label{eq:th}
u(t,x)=
\frac1{t^{\frac\nu2}}Q\biggl(\frac{x-\sigma(t)}{t^\nu}\biggr) +r(t,x),
\end{equation}
where $\lim_{t\downarrow 0} \sigma(t)=0$ and  
\begin{equation}\label{eq:ra}
\sup_{t\in(0,T_0]} \|r(t)\|_{L^2}\lesssim \delta^\alpha,\quad
\lim_{t\downarrow 0} t^\nu \|\partial_x r(t)\|_{L^2} = 0.
\end{equation}
In particular, the solution $u(t)$ blows up at time $0$ with
\[
\lim_{t\downarrow 0} t^\nu \|\partial_x u(t)\|_{L^2} = \|Q'\|_{L^2}.
\]
Moreover,
\begin{equation}\label{eq:lr}
\lim_{t\downarrow 0} r(t)=r_\alpha \quad \mbox{in $L^2(\RR)$}
\end{equation}
where $r_\alpha$ belongs to $H^1(\RR)$ and satisfies
\begin{equation}\label{eq:dr}
r_\alpha(x)=\begin{cases} 
0\quad & \mbox{for $x<0$,}\\
c_\alpha x^{\alpha-\frac 12} \quad &\mbox{for $0<x<\delta$,}
\end{cases}
\end{equation}
where $c_\alpha=(4\alpha+3)^{-\frac14(2\alpha+1)}(2\alpha+1)\int Q$.
\end{theorem}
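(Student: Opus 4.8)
\emph{Step 1: renormalization and approximate profile.} The plan is to construct $u$ as a limit, as $t_n\downarrow0$, of solutions of \eqref{eq:KV} issued at time $t_n$ from an approximate blowup profile and evolved away from the blowup time (i.e.\ for $t\ge t_n$), following the modulation and mixed energy--virial scheme of \cite{MMR1,MMR3,MP24}; the new feature is that the slowly decaying tail carried by the bubble is tuned so that its trace at $t=0$ is exactly $r_\alpha$. Fix $\delta>0$, set $\lambda(t):=t^\nu$, and pass to renormalized variables $y=(x-\sigma(t))/\lambda(t)$, $ds/dt=-\lambda^{-3}$, so that $s\to+\infty$ as $t\downarrow0$ (using $3\nu>1$). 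We seek $u$ in the form \eqref{eq:th}, written as
\begin{equation*}
u(t,x)=\frac1{\lambda^{\frac12}(t)}\bigl(Q_b+\varepsilon\bigr)\!\left(s,\tfrac{x-\sigma(t)}{\lambda(t)}\right)+\chi(t,x)\,R(t,x),
\end{equation*}
where $\chi$ is a cutoff vanishing near the bubble, $R(t,\cdot)$ is an outer tail — a slowly varying approximate solution of \eqref{eq:KV} on $\{x\gtrsim\lambda(t)\}$, supported essentially in $\{0<x\lesssim\delta\}$, of $L^2$-size $\sim\delta^\alpha$, converging to $r_\alpha$ as $t\downarrow0$ — and $Q_b=Q+bP+\cdots$ is the refined profile of \cite{MMR1} ($b$ of the order of $-\lambda_s/\lambda$, $P$ the scaling resonance of $L=-\partial_y^2+1-5Q^4$, with $P(y)$ tending to a nonzero multiple of $\int Q$ as $y\to+\infty$), further corrected to absorb the interaction of $Q$ with $R$. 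The imposed law $\lambda\sim t^\nu$ forces $b\sim t^{3\nu-1}\to0$ (here $\nu>\tfrac13$ is used), while $\lambda^2\sigma'\approx1$ (cf.\ \eqref{sig:lam}) forces $\sigma(t)\sim c\,t^{1-2\nu}\to0$ (here $\nu<\tfrac12$ is used). The exponent $\alpha-\tfrac12$ and the constant $c_\alpha$ then arise by matched asymptotics: through the $bP$ term the bubble emits a nearly constant tail of amplitude $\sim\lambda^{-\frac12}(t)\,b(t)\sim t^{\frac52\nu-1}$ just behind its position; requiring this to match $r_\alpha(x)=c_\alpha x^{\alpha-\frac12}$ at $x\sim\sigma(t)\sim t^{1-2\nu}$ equates the powers of $t$, which gives $\alpha-\tfrac12=\tfrac{5\nu-2}{2-4\nu}$, that is $\alpha=\tfrac{3\nu-1}{2-4\nu}$, and equating the constants gives $c_\alpha$ as the stated multiple of $\int Q$. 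In this picture $\alpha>1\Leftrightarrow\nu>\tfrac37$ is precisely the condition making $r_\alpha$, hence the whole construction, live in $H^1$; for $\alpha\le1$ one obtains only a (possibly discontinuous) $L^2$ residue, the borderline case $\nu=\tfrac25$ of \cite{MP24}.

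\emph{Step 2: modulation and bootstrap.} Impose a finite set of orthogonality conditions on $\varepsilon$ (as in \cite{MMR1}) to define $\lambda(t)$, $\sigma(t)$, $b(t)$, and check that the resulting modulation system is consistent with $\lambda(t)/t^\nu\to1$, $b(t)\to0$, $\sigma(t)\to0$. Then run the main bootstrap on $[t_n,T_0]$ with $\varepsilon(t_n)=0$: the core estimate is the almost-monotonicity of the mixed energy--virial functional of \cite{MMR1} (a localized energy of $\varepsilon$ plus a virial correction at a cutoff scale $B$), which controls $\int(\partial_y\varepsilon)^2e^{-|y|/B}+\int\varepsilon^2\phi_B$, complemented by a Kato-type monotonicity (in the spirit of \cite{Kato}) for the mass of $\varepsilon$ on the right. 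Combined with the modulation estimates and the smallness of the profile error from Step~1, and provided $T_0$ is chosen small enough (so that $\sigma(T_0)<\delta$ and the tail is a small perturbation), this closes the bootstrap uniformly in $n$ — in particular $u_n$ is defined on all of $[t_n,T_0]$ — with a decay $\|\varepsilon(t)\|\lesssim\lambda(t)^\theta$ for some $\theta>0$; a Brouwer-type argument over a one-parameter family of initial data handles the unique unstable direction of the linearized dynamics, as in \cite{MMR3,MP24}.

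\emph{Step 3: passage to the limit.} The solutions $u_n$ so obtained are bounded in $\cC([\tau,T_0]:H^1)$ uniformly in $n$, for every $\tau\in(0,T_0)$; by standard compactness (Aubin--Lions, using the equation to bound $\partial_t u_n$ in a negative Sobolev space) together with the $H^1$ well-posedness of \cite{KPV}, a subsequence converges in $\cC((0,T_0]:H^1)$ to a solution $u$ of \eqref{eq:KV} for which the uniform bounds persist. This yields the decomposition \eqref{eq:th} and the estimates \eqref{eq:ra}: $\sup_t\|r(t)\|_{L^2}\lesssim\delta^\alpha$ because $r(t)$ is, to leading order, the tail (of $L^2$-size $\sim\delta^\alpha$) plus a remainder that is small, and $t^\nu\|\partial_x r(t)\|_{L^2}\to0$ follows from the decay of $b$ and $\varepsilon$; the rate $t^\nu\|\partial_x u(t)\|_{L^2}\to\|Q'\|_{L^2}$ comes from $\|\partial_x(\lambda^{-\frac12}Q(\cdot/\lambda))\|_{L^2}=\lambda^{-1}\|Q'\|_{L^2}=t^{-\nu}\|Q'\|_{L^2}$ and the negligibility of $r$. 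Finally, since the rescaled bubble tends weakly to $0$ in $L^2$ and the corrections to the tail vanish in $L^2$ as $t\downarrow0$, $r(t)=u(t)-\lambda^{-\frac12}(t)\,Q\bigl((\cdot-\sigma(t))/\lambda(t)\bigr)$ converges strongly in $L^2$ to $r_\alpha$, which lies in $H^1$ since $\alpha>1$ makes $x^{\alpha-\frac32}\in L^2$ near $0$; this is \eqref{eq:lr}--\eqref{eq:dr}.

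\emph{Main difficulty.} The crux is Step~2 in the regime $\alpha>1$: because $r_\alpha\sim x^{\alpha-\frac12}$ decays only mildly toward the blowup point, its nonlinear interaction with the concentrating bubble feeds source terms into the equation for $\varepsilon$ that are not automatically absorbed by the energy--virial functional; the cutoff scale $B$ and the localization $\chi$ of $R$ must be tuned so that the virial coercivity (valid modulo the orthogonality directions) still dominates these terms, uniformly over the whole range $\alpha\in(1,\infty)$. A secondary but essential difficulty is already in Step~1: the matched asymptotics must be carried out exactly — not merely up to unspecified constants — both to obtain the sharp relation $\alpha=\tfrac{3\nu-1}{2-4\nu}$ and the precise value of $c_\alpha$, and to certify that the constructed tail $R$ genuinely belongs to $H^1$ and not to a larger space.
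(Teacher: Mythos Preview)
Your overall architecture (backwards-in-time construction from a well-prepared datum at $t_n\downarrow 0$, modulation plus a mixed energy--virial functional, then compactness) matches the paper, and your matched-asymptotics computation linking $\alpha$ and $\nu$ is correct. But the causal mechanism you describe is inverted relative to what is actually done, and this matters for the analysis. You impose $\lambda(t)=t^\nu$, take $b\sim -\lambda_s/\lambda$, and view the $bP$ tail as \emph{emitting} the residue $r_\alpha$. In the paper it is the other way around: the residue is \emph{prescribed} from the outset as an exact solution $\Theta$ of (a rescaled) gKdV with data $\Theta_0(x)=x^{\alpha-1/2}\CR(\delta^{-1}x)$, and the rescaled $\theta=\lambda^{1/2}\Theta(\tau,\lambda y+\sigma)$ is built into the profile through a term $V_1\theta=c_1A_1\zeta\,\theta$ with $A_1$ \emph{odd}, $A_1(\pm\infty)=\mp m_0$, $(\cL A_1)'=\Lambda Q$. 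Since $\theta(s,0)\approx\lambda^{1/2}\sigma^{\alpha-1/2}$, this term generates a $\Lambda Q$ contribution with exactly that coefficient, forcing the scaling ODE $\lambda_s/\lambda\approx(2\alpha+1)\lambda^{1/2}\sigma^{\alpha-1/2}$; solving it together with $\sigma_s=\lambda$ yields $\lambda\sim t^\nu$. The parameter $b$ is \emph{not} the driver here: in the bootstrap $|b|\lesssim|s|^{-3}\log|s|$, two orders below $|\lambda_s/\lambda|\sim|s|^{-1}$, and $bP_b$ is only a tertiary correction. The residue in $L^2$ comes from $V_1\theta\to(2\alpha+1)m_0\theta_0$ (Lemma~\ref{le:new}), not from $bP$.

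Two further points. First, the main technical novelty you omit is Section~\ref{S:2}: because $\Theta$ is only $H^1$ near $x=0$ (barely, when $\alpha$ is close to $1$), one needs sharp \emph{local} $C^4$ bounds on $\Theta$ in the region $x\gtrsim\rho(t)=t^\beta$ where the bubble sits. These are obtained by an induction on weighted Kato-smoothing functionals with two separately localized weights (Lemmas~\ref{LE:qq}--\ref{LE:q2}); a merely ``slowly varying approximate solution'' $R$ as you propose would generate error terms you cannot absorb. Second, there is no Brouwer/topological step: the paper follows \cite{RaSz} and works backwards in time precisely so that the delicate scaling term $-(\lambda_s/\lambda)\int y\psi_B'(\partial_y\varepsilon)^2$ in the energy--virial computation has a good sign (here $\lambda_s/\lambda>0$), which closes the bootstrap directly. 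Your mention of an unstable direction and a one-parameter family is unnecessary and suggests you have in mind the forward-in-time constructions of \cite{MMR3,MP24}, which are not what is used here. The limit is taken by weak $H^1$ compactness and weak continuity of the flow, not Aubin--Lions.
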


\begin{remark}\label{rk:al}
Let $r_\alpha$ be as in \eqref{eq:dr}, with compact support (to fix ideas). Observe that
$\alpha=\frac 12$ for $\nu = \frac 25$ (which is the case treated in \cite{MP24}),
while $\alpha\in (1,+\infty)$ is equivalent to $\nu \in (\frac 37,\frac 12)$.
Note also that the condition $\alpha>1$
corresponds exactly to the values of $\alpha$ for which the function $r_\alpha$ belongs to $H^1$
(we only consider the issue of the behavior at $x=0$).
As can be seen from the proof of Theorem~\ref{th:01}, this function is used in the present article as
a forcing term to produce the blowup rate $t^{-\nu}$.
Therefore, we observe that the construction of the blowup rate $t^{-\nu}$
by our method
is possible for the full range of parameters $\alpha$ for which the blowup residue $r_\alpha$
belongs to $H^1$.
\end{remark}

\subsection{Open problems}\label{S:1.3}
We present some open problems for the interested readers.
\begin{enumerate}
\item 
Does there exist a blowup residue $r_\alpha$ in $H^1$ for any solution of \eqref{eq:KV} 
blowing up in finite time at a finite point
(thus, $\nu\in (\frac13,\frac12)$)?
This question is open even for the solution constructed in \cite{MP24} for $\nu=\frac25$.
See related results in \cite{MR07} and later in \cite{MN24} for the mass critical nonlinear Schrödinger equation.
\item Do the solutions of Theorem \ref{th:01} enjoy any form of stability?
\item Are $\nu=\frac25$ and $\nu>\frac37$ the only possible power blowup rates for a solution in $H^1$?
\item Construct other kind of blowup rates, including for example oscillations between two admissible powers $\nu_1$
and $\nu_2$.
See related results in \cite{GNT} for a parabolic model and in \cite{DHKS,GNT} for critical wave models.
\item If the answer to question (3) is positive, are all the concentration rates $\nu\in (\frac 13,\frac 37)\setminus\{\frac25\}$  possible, but for infinite energy solutions belonging to 
$H^\kappa$ for some $\kappa=\kappa(\nu)\in (0,1)$?
We use the terminology concentration rate instead of blow-up rate since the solution
would not belong to $H^1$. 
See  \cite{LX25} for such an example for the critical cubic $2$D Zakharov-Kuznetsov equation 
(surprisingly, for a minimal mass blowup solution).
\end{enumerate}

\subsection{Sketch of the proof}
The exotic blowup rates described in Theorem \ref{th:01} are obtained using the interaction
between two components of different nature of the solution: first, the function $r_\alpha$, which satisfies 
$r_\alpha(x)=x^{\alpha-\frac12}$ for $x>0$ close to $0$, and which will eventually form the prescribed $H^1$ blowup residue
as $t\downarrow 0$;
second a concentrating blowup bubble based on the soliton $Q$, of the following general form
\[
\frac 1{\lambda^{\frac 12}(t)} Q \left(\frac{x-\sigma(t)}{\lambda(t)}\right),\quad
\mbox{where} \quad \mbox{$\lambda(t)\to 0$ and $\sigma(t)\to 0$  as $t \downarrow 0$.}
\]
These two components have different scales, 
since $r_\alpha(x)$ is a fixed function, while
the bubble is concentrating at a rate $\lambda(t)$ (going to $0$ faster than the self-similar rate $t^{1/3}$),
and thus they interact weakly.
In particular, close to $(t,x)=(0,0)$,
the background function $r_\alpha$ is not really affected by the presence of the singular bubble.
Moreover, the general shape of the bubble is only slightly perturbed. 
However, and this is the key feature to obtain exotic blowups,
the rate of concentration $\lambda(t)$ of the bubble is a quite sensitive parameter,
which, suitably perturbed by the nonlinear interactions between the soliton and
the function $r_\alpha$, can lead to exotic blowup rates and, as a consequence,
to blowup at a finite point. We do not know if this phenomenon has any form of stability.
Indeed, the solutions of Theorem \ref{th:01} are obtained by a compactness argument, passing to the limit in a sequence of solutions
constructed backwards in time. We refer to \cite{Me3,RaSz}
for pioneering works using this strategy in the blowup context, which leads
to solutions without any form of stability, unlike the solutions constructed in~\cite{MMR1,MMR3,MP24,NM25}.

\smallskip

Let us describe the arguments into more details, while also discussing the organization of the proof.

\smallskip

First, we discuss the issues related to the introduction of the function $r_\alpha(x)$.
Of course, it is not by itself a time-independent solution of the gKdV equation
and using it as an ansatz would lead to unmanageable error terms.
In Section \ref{S:2}, we rather introduce the solution $\Theta(t,x)$ of the gKdV equation 
(up to multiplication by a constant) with initial data $r_\alpha(x)$ at the expected blowup time $t=0$.
To compute the nonlinear interactions of the solution $\Theta$ with the bubble, it is important to describe its behavior with some precision and at the regularity level $\cC^4$, for small time and in some space region around
$x=\sigma(t)$ where the center of the soliton is located (see Proposition \ref{PR:qq}).
The proofs of such local estimates are reminiscent of the original proof of the Kato 
smoothing effect in \cite{Kato}.
However, the local description of the solution $\Theta$ in high Sobolev norms (up to $H^5$)
with an initial data of rather low regularity (in the case where $\alpha$ is close to $1$)
requires a sharp induction argument on localized energy quantities
defined with two weight functions of different spatial localizations. 
Proving such estimates is a key new technical ingredient of this article.
In this direction, we also refer to \cite{ILP} for general results of the propagation of regularity and decay for the generalized KdV equations.

The earlier construction \cite{MP24}  corresponds to the special case $\alpha=\frac12$ and 
thus $\Theta \equiv 1$ around the blowup point.
In particular, this step was not required.
In \cite{MMR3,MP20}, the analogue estimates are for large $x$, which is related to more
standard estimates in weighted norms for gKdV.

\smallskip

Second, we briefly describe the content of Section \ref{S:3} concerning the approximate blowup profile.
We have said before that the shape of the bubble is only weakly perturbed by the interaction with
$\Theta$; after rescaling, its main part will be the function $Q$. However,
taking into account small interactions is necessary to obtain
an approximate solution at a sufficient order and to produce the desired blowup regime, and
for this, we need a refined approximate profile which has a different form compared to \cite{MP24}.
To define and justify heuristically the blowup profile, we introduce rescaled variables $(s,y)$, 
$s<0$ and $y\in \RR$, setting
\begin{equation} \label{u:w}
U(t,x) = \frac 1{\lambda^{\frac 12}(s)} w(s,y) ,\quad \frac {ds}{dt}= \frac 1{\lambda^3},\quad y = \frac {x-\sigma(s)}{\lambda(s)}.
\end{equation}
Note that in view of the expected estimates on $\lambda$ and $\sigma$, as $t\in (0,t_0]$, we have $s\in (-\infty,s_0]$.
The exact time-dependent parameters $\lambda>0$, $\sigma\in \RR$ are to be determined and the function $w$ satisfies the rescaled equation
\begin{equation} \label{eq:re}
\partial_s w + \partial_y ( \partial_y^2 w - w + w^5) 
- \frac{\lambda_s}{\lambda}\left( \frac w 2 + y \partial_y w \right)
- \left( \frac{\sigma_s}{\lambda} - 1 \right) \partial_y w =0.
\end{equation}
An approximate solution $W$ of \eqref{eq:re} is defined of the form 
\[
W=V_0+V,\quad 
V=V_1\theta+V_2\theta^2+V_2^*\partial_y\theta+V_3\theta^3,
\]
where $V_0$ is a simple (time-dependent) cut-off of the function $Q$ and where the function 
$\theta(s,y)= \lambda^\frac12(s)\Theta(t(s),\lambda(s)y+\sigma(s))$ is the rescaled
version of $\Theta(t,x)$.
In Proposition \ref{pr:bp}, we define the functions $V_1$, $V_2$, $V_2^*$ and $V_3$
(these functions depend mainly on $y$, with a time-dependent cut-off)
so that $W$ is an approximate solution of \eqref{eq:re} at a sufficient order.
The function $V_1$ is a time-dependent cut-off of a function $A_1=A_1(y)$ defined as the unique
odd function such that $(-A_1''+A_1-5Q^4A_1)' = \frac Q2+y Q'$. 
As observed in \cite{MP24}, the introduction of such a function $A_1$ is related to the existence of a resonance for the linearized operator around the soliton.
The oddness of $A_1$ is essential to obtain a crucial cancellation of the crossed terms involving the even function $Q$ and $A_1$ when computing the energy of the approximate solution.
Ensuring the existence of suitable functions $V_2$, $V_2^*$ and $V_3$ 
requires additional scaling terms which in turn modify the scaling
equation. 
Indeed, it turns out that at the main order, the following equation is verified
\[
\frac{\lambda_s}{\lambda} \approx (2\alpha+1) \lambda^\frac12\sigma^{\alpha-\frac12} .
\]
Of course, the term on the right-hand side is related to 
the value of $\theta(t,x)\approx \lambda^\frac12 r_\alpha(x)$ at the point $x=\sigma$
where the bubble is located.
This is the mechanism by which the nonlinear interactions between $\Theta$ and the bubble 
induce the desired blowup rate.

In Subsection \ref{S:310}, we justify formally that the equation above and $\frac{\sigma_s}{\lambda} \approx 1$
imply the  blowup regime described in Theorem \ref{th:01}. The full proof in Section \ref{S:5} requires bootstrap estimates, in particular
on the difference between the exact  solution and the approximate solution.

\smallskip

In Section \ref{S:4}, we introduce a further refinement of the approximate solution, 
of the form $W_b= W+bP_b$, where $b$ is a small parameter needed to adjust the blowup profile
in a certain direction. The function $P_b$ is a time-dependent cut-off of a given function $P(y)\not\in L^2$.
Such a refinement $bP_b$ have been introduced in \cite{MMR1} and
used in the articles on the blowup phenomenon for \eqref{eq:KV} since then.
We also set 
\[
w(s,y) = W_{b}(s,y) + \varepsilon(s,y), 
\]
where $w$ is the solution of \eqref{eq:re} and where the small function $\varepsilon$
satifies three orthogonality conditions related to the tuning of the three ``free'' parameters
$(\lambda,\sigma,b)$
(the main order of the parameters $\lambda$ and $\sigma$ is prescribed by the interactions,
but they can be adjusted at a higher order to ensure the orthonality conditions).

Then, Section \ref{S:5} is devoted to the introduction of precise bootstrap estimates on all the components
of the solution. Moreover, using the equations of $(\lambda,\sigma,b)$, we also close
all the estimates, except the ones on the function $\varepsilon$.

In Section \ref{S:6}, we use a variant of an energy functional on $\varepsilon$ that was introduced in \cite{MMR1} (and used since then
in the articles related to blowup of \eqref{eq:KV})
to control $\varepsilon$ in the soliton region. 
Note that in this part of the proof, it seems essential to estimate the error term $\varepsilon$ backwards in time, since in the defocusing framework, a delicate scaling term has a favorable sign.
Our approach thus combines the idea introduced in \cite{RaSz} to construct blowup solutions backwards in time using favorable energy estimates with the local virial-energy estimates specific
to the critical gKdV equation introduced in \cite{MMR1}. 
As said before, no form of stability is to be expected using such approach.

Finally, in Section \ref{S:7}, we pass to the limit in the sequence of solutions and we obtain the solution
described in Theorem \ref{th:01}. We also justify the convergence of the function $r(t,x)$ to the
blowup residue $r_\alpha(x)$, by proving that the global $L^2$ norm of $\varepsilon$ 
converges to $0$ at the blowup time.
Such a strong convergence in $L^2$ could not be obtained in \cite{MP24}
since trying to reproduce the argument would have required a discontinuous blowup residue
$r_{1/2}(x)=1$ for $x>0$ small, and $r_{1/2}(x)=0$ for $x<0$.
By our method, residue that are not in $H^1$ lead to infinite energy solutions.

\subsection{Related results for other critical equations}

We cite here some references on related results for the critical gKdV equation.
We start with the formal and numerical blowup study \cite{Amodio}.
Then, for general recent results on the long time behavior of solutions of \eqref{eq:KV},
we refer to \cite {Do,DG22,KKSV,Pi16}.
On the slightly supercritical problem, \emph{i.e.}, the gKdV equation with power nonlinearity
with $p=5^+$, see \cite{Koch,La16}. 
For other related questions, like the continuation after the blowup time
or the case of saturated nonlinearity, we refer to \cite{Lanstar,Lan}.
Finally, in the spirit of \cite{MMR3}, flattening solutions were constructed in \cite{MP20}, 
\emph{i.e.} global solutions close to a soliton blowing down at $t\to+\infty$,
for any  admissible rate.

For the mass critical nonlinear Schrödinger equation, the co-existence of at least two different blowup rates, namely the 
log-log blowup rate (see \cite{SulemSulem}, \cite{P} and \cite{MeRa04,MeRa05,MeRa05bis,MeRa06,MR07,Ra}) and
the conformal rate (see \cite{BW,KS2,MRS,RaSz}),
leads to interesting problems concerning all the possible blowup rates and blowup residues.
We also refer to \cite{MaRa} at a rate slightly above the conformal rate obtained
by collision of several blowup bubbles at the blowup point.
See also the general references \cite{Cabook,SulemSulem,We86}.

The question of blowup for the mass critical (cubic) generalized Benjamin-Ono (gBO) is widely open.
So far, only the minimal mass solution was constructed in \cite{MP17}.
See \cite{KMR} for a first blowup study for another
nonlocal equation similar to the (gBO) equation, but close to gKdV.
We also refer to \cite{RWY} for numerical experiments for (gBO).

The blowup problem for the mass critical 2-dimensional (cubic) Zakharov-Kutnetzov equation (ZK)
is closely related to the one for gKdV. Several advances on this problem have been obtained
recently \cite{BGMY,GLY1,GLY2,FHRY,LX25}.

The notion of exotic blowup, and in particular the construction of 
families of solutions with a continuum of blowup rates is also a quite interesting
question for energy critical problems, mainly the heat flow, the wave maps and the nonlinear wave equation.
Without trying to give an exhaustive list, we
refer the reader to pioneering works \cite{GNT,KS14,KST1,KST2} on this question and 
to \cite{BK,DHKS,HR,JJ,JL1,JLR,KN,KS2,KS,Ro}.

We point out that the results in the articles \cite{JJ,JLR,MN24}
are close in spirit to the present article,
prescribing a blowup residue related to an expected blowup rate.

\subsection{Notation}
The linearized operator $\cL$, whose main properties are recalled in Lemma \ref{le:li} below,
is defined by
\begin{equation}\label{eq:cL} 
\cL = -\partial_x^2 + 1 - 5 Q^4.
\end{equation}
We recall the definitions of the parameters $\nu$ and $\alpha$ introduced in the statement of 
Theorem~\ref{th:01} and the definition of another useful parameter $\beta$
\begin{equation}\label{eq:ab}
\alpha>1,\quad \nu=\frac{2\alpha+1}{4\alpha+3}\in\left(\frac37,\frac12\right),\quad \beta=\frac1{4\alpha+3}\in \left(0,\frac17\right),\quad
1-\beta=2\nu.
\end{equation}
For $j\in \NN$, let
\begin{equation} \label{eq:La}
\Lambda_jf=\frac{1-j}2f+yf',\quad \Lambda = \Lambda_0.
\end{equation}
Let
\begin{equation}\label{eq:m0}
m_0 = \frac 14 \int Q.
\end{equation}
The following weight function $\omega$ will be used throughout this article
\[ 
\omega(y) = e^{-\frac{|y|}{10}},
\]
as well as the weighted $L^2$-norm 
\begin{equation} \label{def:L2sol} 
\|f\|_{L^2_\loc}=\left(\int_{\mathbb R} f(y)^2 e^{-\frac{|y|}{10}}dy \right)^{\frac12} .
\end{equation}
Now, we introduce several standard smooth cut-off functions.
Let $\chi:\mathbb R \to [0,1]$ be a smooth function such that
\begin{equation} \label{eq:C}
\chi \equiv 0 \ \text{on} \ (-\infty,1], \ \chi \equiv 1 \ \text{on} \ [2,+\infty) \ \text{and} \ \chi'\ge 0 \ \text{on} \ \mathbb R .
\end{equation}
Let  $\CL:\RR\to[0,1]$ be a smooth function such that 
\begin{equation}\label{eq:CL}
\mbox{
$\CL\equiv 0$ on $(-\infty,-1]$,
$\CL\equiv 1$ on $[-\frac12,+\infty)$
and $\CL' \geq 0$ on $\RR$.}
\end{equation}
Let $\CR:\RR\to[0,1]$ be a smooth function such that
\begin{equation}\label{eq:CR}
\mbox{$\CR\equiv 1$ on $(-\infty,\frac12]$, 
$\CR\equiv 0 $ on $[1,+\infty)$
and $\CR' \leq 0$ on $\RR$.}
\end{equation}
Let $\cY$ be the set of functions $\phi:\RR\to\RR$ of class $\cC^{\infty}(\RR)$ such that
\begin{equation*}
\mbox{$\forall \, m \in \NN$, $\exists C_m > 0$,
$r_m \ge 0$ such that
$|\phi^{(m)}(y)| \le C_m(1+|y|)^{r_k}e^{-|y|},$
$\forall \, y \in \RR$.}
\end{equation*}
Fix an odd function $z_0:\RR\to \RR$ of class $\cC^\infty$ such that 
$z_0(y)=1$ for $y>1$, and define
\[
\cZ_0=\cY+\SPAN(z_0).
\]
For any $m= 0,1$, fix a function $z_m^-:\RR\to[0,\infty)$ of class $\cC^\infty$ such that
$z_m^-(y)=y^m$ for $m \leq -1$ and 
$z_m^-(y)=0$ for $y>0$ and define 
\begin{equation}\label{eq:Zk}
\cZ_m^- = \cY + \SPAN(z_0^-,\ldots,z_m^-).
\end{equation}
We observe that
\begin{equation}\label{eq:zz}
\Lambda_1\cZ_0\subset\cY, \quad
\Lambda_1\cZ_0^-\subset\cY,\quad
\Lambda_{3}\cZ_1^-\subset \cZ_0^-.
\end{equation}

\section{The blowup residue}\label{S:2}
In this section, we prepare a suitable ``background function'' $\Theta(t,x)$, 
whose aim is to force the exotic blow-up regime at the point $(t,x)=(0,0)$.
For $x>0$ close to $0$, this function is simply $x^{\alpha-\frac12}$ at the main order.
However, since we require the function $\Theta(t,x)$ to be a solution of the gKdV equation, 
its exact behavior is not easily described, even close to $(t,x)=(0,0)$.

With the notation defined in \eqref{eq:ab}, set
\begin{equation}\label{eq:mb}
\mu(t) = t^\nu,\quad
\rho(t)=t^\beta.
\end{equation}
Let $\delta>0$ small.
Define
\begin{equation}\label{eq:T0}
\Theta_0(x) =
\begin{cases}
x^{\alpha-\frac 12} \CR\big(\delta^{-1}x\big) & \mbox{ for $x>0$,} \\ 
0 & \mbox{ for $x<0$}.
\end{cases}
\end{equation}
Since $\alpha>1$, one has $\|\Theta_0\|_{H^1}\lesssim \delta^{\alpha-1}$
and so for $\delta>0$ sufficiently small, it follows from \cite{KPV} that the solution $\Theta$ of
\begin{equation}\label{eq:TT}
\begin{cases}
\partial_t \Theta+\partial_x (\partial_x^2 \Theta + m_0^4 \Theta^5)=0\\
\Theta(0)=\Theta_0
\end{cases}
\end{equation}
is global and satisfies
\begin{equation} \label{eq:T9}
\sup_{t\in\RR}\|\Theta(t)\|_{H^1} \lesssim \delta^{\alpha-1}\lesssim 1 .
\end{equation}
The need to have $\Theta_0\in H^1(\RR)$ justifies the restriction $\alpha>1$ and so 
the interval $\nu\in(\frac37,\frac12)$ for the possible blowup rates constructed in the present article.
See Remark \ref{rk:al}.

Let
\begin{equation} \label{eq:Td}
\Theta_\sharp(t,x)=\Theta_0(x)+t\chi( t^{-\frac76\beta} x) \Theta_1(x) \quad \text{where} \quad
\Theta_1 =-\Theta_0''' -m_0^4(\Theta_0^5)' 
\end{equation}
and
\begin{equation}\label{def:q}
q(t,x) = \Theta(t,x) - \Theta_\sharp(t,x).
\end{equation}
The cut-off function $\chi$ is involved in the definition of $\Theta_\sharp$ to avoid a possible singularity
at $x=0$ due to $\Theta_1$ (depending on the value of $\alpha$).
We prove estimates on the $L^{\infty}$-norm of $q(t)$  and its spatial derivatives up to order $4$, 
for $t\in [0,1]$ and on spatial regions on the right of the curve $x=\frac 12\rho(t)$
(where the blowing up soliton will be located in the framework of Theorem~\ref{th:01}).
Such estimates mean that $\Theta_\sharp$ is an approximate solution for
the equation of $\Theta$ at some order.
Note that, at the cost of a more complicated expression, it would possible to improve $\Theta_\sharp$ as an approximate solution to
\eqref{eq:KV}, which would allow us to reduce further the size of the error terms obtained in the next proposition.

\begin{proposition}\label{PR:qq}
For all $t\in \RR$, the function $q$ satisfies $\|q(t)\|_{H^1} \lesssim \delta^{\alpha-1}$. 
Moreover, there exists $t_0\in(0,1]$ such that for all $k\in\{0,1,2,3,4\}$, $t\in (0,t_0]$,
\begin{equation}\label{eq:dq}
\|\partial_x^k q(t,x)\|_{L^\infty(x>\frac 12 \rho(t))} \lesssim 
\begin{cases}
t^{2+\frac{2\alpha-2k-13}{2(4\alpha+3)}} & 1<\alpha<k+6,\\
t^{2-\frac{1}{2(4\alpha+3)}} |\log t|^\frac14 & \alpha=k+6,\\
t^{2+\frac{\alpha - k-7 }{2(4\alpha+3)}}  & k+6<\alpha<k+7,\\
t^2 |\log t|^\frac14 & \alpha=k+7,\\
t^2 & \alpha>k+7.
\end{cases}
 \end{equation}
\end{proposition}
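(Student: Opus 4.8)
The plan is to estimate $q = \Theta - \Theta_\sharp$ by a localized energy argument in the spirit of Kato's smoothing effect, but with a refined two-weight induction to reach the $\cC^4$ (indeed $H^5$-local) level starting from the low-regularity datum $\Theta_0$ when $\alpha$ is close to $1$. First I would record the equation satisfied by $q$: from \eqref{eq:TT}, \eqref{eq:Td} and \eqref{def:q}, $q$ solves $\partial_t q + \partial_x^3 q = F$, where the forcing $F$ collects (i) the nonlinear term $-m_0^4 \partial_x(\Theta^5)$ expanded around $\Theta_\sharp$, (ii) the mismatch produced by applying $\partial_t + \partial_x^3$ to the correction $t\chi(t^{-\frac76\beta}x)\Theta_1$, including the time derivative of the cut-off and of the factor $t$, and the terms where $\partial_x^3$ falls partly or wholly on the cut-off, and (iii) the defect $\partial_x^3\Theta_0 + m_0^4\partial_x(\Theta_0^5) = -\Theta_1$ on the region $x > 2 t^{\frac76\beta}$ where $\chi \equiv 1$, which is exactly cancelled, so that on that region $F$ is genuinely of higher order. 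The homogeneity of $\Theta_0(x) = x^{\alpha-\frac12}$ near $0$ gives $|\partial_x^j\Theta_0(x)| \lesssim x^{\alpha-\frac12-j}$ (up to the cut-off at scale $\delta$), which is what drives every exponent in \eqref{eq:dq}; the time power $t^{\frac76\beta}$ marking the support of the cut-off is chosen precisely so that the worst contributions of (ii) and the $x \le 2t^{\frac76\beta}$ part of (iii) balance against the smoothing gain.

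Next I would set up the weighted energy estimates. Fix a point $x_0 \gtrsim \rho(t) = t^\beta$ (the soliton location) and introduce two weights $\phi_1$ and $\phi_2$ adapted to two spatial scales: one at scale $\rho(t) = t^\beta$ separating the singular region near $0$ from the region of interest, and one at a coarser scale (comparable to the $\delta$-cut-off, or to $1$) controlling the decay in $x$. Working backward from $k=0$, define localized energies $\cE_k(t) = \int (\partial_x^k q)^2\,\psi_k(t,x)\,dx$ with $\psi_k$ built from (powers of) these weights. Differentiating in time and integrating by parts, the Airy part $\partial_x^3$ produces the Kato gain $3\int (\partial_x^{k+1}q)^2\,\partial_x\psi_k$, which is coercive provided $\partial_x\psi_k \ge 0$ where it matters; this gain on $\partial_x^{k+1}q$ is fed into the right-hand side of the estimate for $\cE_{k+1}$ at the next step of the induction. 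The remaining commutator terms ($\int (\partial_x^k q)^2\,\partial_t\psi_k$, lower-order weight derivatives, and the nonlinear pieces of $F$ handled by Hölder together with the already-established $H^1$ bound \eqref{eq:T9} on $\Theta$ and the bootstrapped bounds on lower derivatives of $q$) are absorbed either into the Kato gain or into a Gronwall term. Integrating in time from $t$ forward to $t_0$ — using that $q(t_0)$ is already controlled in $H^{k}$ on the relevant region, again via \cite{KPV} and the smoothing of \cite{KPV2} — and converting the $L^2$-local bound into the pointwise bound via a local Gagliardo–Nirenberg / Agmon inequality on scale $\rho(t)$, one reads off the five regimes of \eqref{eq:dq}. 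The thresholds $\alpha = k+6$ and $\alpha = k+7$ (and the attendant $|\log t|^{1/4}$) appear exactly when the homogeneity exponent of the relevant $\partial_x^j\Theta_0$ or $\partial_x^j\Theta_1$ is integrable with a borderline logarithm, or when it becomes so regular that the bound saturates at $t^2$ (the order of the correction in $\Theta_\sharp$).

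The hard part will be the two-weight induction itself: one must choose the weights $\psi_k$ — their spatial profiles, their time-dependence, and the way $\psi_{k+1}$ is slaved to $\partial_x\psi_k$ — so that the Kato gain produced at level $k$ is strong enough (and localized in the right region) to close the estimate at level $k+1$, while simultaneously the time-derivative and commutator errors stay below the target exponents in \eqref{eq:dq} in \emph{every} one of the five regimes, uniformly in $\alpha > 1$ including $\alpha \downarrow 1$ where $\Theta_0$ is barely in $H^1$. Controlling the nonlinear contribution $\partial_x(\Theta^5)$ at high derivative order is delicate here because $\Theta$ itself is only known in $H^1$ globally; the resolution is to differentiate the equation for $q$ rather than for $\Theta$, so that the top-order derivative always lands on $q$ (which is smoother on $x \gtrsim \rho(t)$), distributing the remaining derivatives on factors of $\Theta = q + \Theta_\sharp$ that are controlled either by \eqref{eq:T9} or by the explicit $\Theta_\sharp$. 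Once the bookkeeping of exponents is organized around the single quantity $\alpha - \frac12 - j$ and the scale $t^\beta$, the five cases of \eqref{eq:dq} fall out mechanically; I would present the weight choice and the induction step in detail and relegate the regime-by-regime exponent arithmetic to a short computation.
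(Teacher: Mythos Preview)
Your overall architecture---Kato-type weighted energies, an induction in $k$, and a final Sobolev/Agmon step to pass from weighted $L^2$ to $L^\infty$---matches the paper's. But there are two genuine gaps.

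\textbf{Wrong anchor for the time integration.} You propose to integrate ``from $t$ forward to $t_0$'' and invoke control of $q(t_0)$ in $H^k$ via \cite{KPV,KPV2}. This cannot work: $\Theta_0$ is only in $H^1$, and neither local well-posedness nor Kato smoothing hands you $H^5$-local control of $\Theta$ (or $q$) at a fixed positive time. More to the point, the powers $t^{2+\cdots}$ in \eqref{eq:dq} do not come from any property at $t_0$; they come from the exact vanishing $q(0)=0$ (since $\Theta_\sharp(0)=\Theta_0=\Theta(0)$). The paper integrates the weighted-energy differential inequalities \emph{forward from $0$}, feeding $G_k(0)=0$ into an ODE lemma of the form $G' \le C t^{\kappa_1-1}G + f(t)\sqrt{G} + C t^{\kappa_2-1}$, which yields $G(t)\lesssim (\int_0^t f)^2 + t^{\kappa_2}$. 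The source term $f$ is the weighted $L^2$ norm of $\partial_x^k F_\sharp$, computed explicitly from the homogeneity of $\Theta_0$, and this is where the five regimes and the $|\log t|$ factors arise. Your scheme never isolates $q(0)=0$ and so has no mechanism to produce the $t^2$ base rate.

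\textbf{Missing modified energies at $k=1,2$.} For $k\ge 3$ one can indeed close with plain weighted norms, because by then $\|\partial_x q\|_{L^\infty(x\gtrsim\rho)}$ is already bounded and the nonlinear commutators submit to H\"older plus the induction hypothesis. At $k=1$ and especially $k=2$, however, differentiating the equation for $q$ and integrating by parts leaves an irreducible term of the type $\int (\partial_x^k q)^2(\partial_x q)\,q^{m-2}\Theta_\sharp^{5-m}\omega_k$, and at that stage you have no $L^\infty$ bound on $\partial_x q$ to absorb it. The paper handles this by replacing $\int(\partial_x^k q)^2\omega_k$ with a modified energy that incorporates the nonlinear structure (for $k=1$, the energy-density analogue; for $k=2$, an explicit sum $G_{2,0}+\sum_m c_m\int(\partial_x q)^2 q^{m-1}\Theta_\sharp^{5-m}\omega_2$) so that these bad terms are cancelled by the time derivative of the correction. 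Your ``H\"older plus $H^1$ bound plus bootstrap'' does not see this cancellation and would not close. Relatedly, the two weights are not at different \emph{spatial} scales (one $\rho$, one $\delta$) as you describe: both live at scale $\mu(t)$ with centres shifted by small multiples of $\rho(t)$; the cut-off $\chi_k$ localises away from the singularity, while $\varphi_k$ (a $\sech$-based profile) carries the Kato gain, and for $k\le 2$ the passage from $k$ to $k+1$ is implemented by raising $\omega_k$ to the eleventh power rather than by a new spatial scale.
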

The rest of this section is devoted to the proof of Proposition \ref{PR:qq}.

\subsection{A differential inequality}
\begin{lemma}\label{LE:ax}
Let ${\kpu}>0$, $\kpd>0$ and $C \geq 0$.
Let $f$ be a nonnegative continuous function on $(0,1]$
such that $\int_0^1 f <+\infty$.
Assume that $G:[0,1]\to [0,+\infty)$, continuous on $[0,1]$ and 
of class $\mathcal C^1$ on $(0,1]$, satisfies
 the differential inequality on $(0,1]$
\begin{equation}\label{eq:DG}
G'(t)\leq 
 C t^{\kpu-1} G(t)+f(t)  \sqrt{G(t)}+C t^{\kpd-1} ,\quad G(0)=0.
\end{equation}
Then, for all $t\in [0,1]$,
\[
G(t) \lesssim   \left( \int_0^t f\right)^2 +t^{\kpd}.
\]
\end{lemma}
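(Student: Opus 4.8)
The plan is to treat \eqref{eq:DG} as a Gronwall-type inequality and handle the three terms on the right-hand side successively, starting from the (uniform) a priori bound $G \lesssim 1$ on $[0,1]$ that follows from continuity of $G$. First I would absorb the linear term $C t^{\kpu-1} G(t)$: since $\int_0^1 t^{\kpu-1}\,dt = 1/\kpu < \infty$, multiplying \eqref{eq:DG} by the integrating factor $e^{-C\kpu^{-1}t^{\kpu}}$ (which is bounded above and below by positive constants on $[0,1]$) reduces matters, up to harmless constants, to the inequality
\[
H'(t) \leq f(t)\sqrt{H(t)} + C t^{\kpd-1}, \qquad H(0)=0,
\]
for a function $H$ comparable to $G$. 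The remaining work is to show $H(t) \lesssim \left(\int_0^t f\right)^2 + t^{\kpd}$.

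Next I would deal with the $\sqrt{H}$ term by the standard trick of working with $\sqrt{H}$ itself. Formally, $\frac{d}{dt}\sqrt{H} = \frac{H'}{2\sqrt H} \leq \frac12 f(t) + \frac{C t^{\kpd-1}}{2\sqrt{H(t)}}$, but the last term is singular where $H$ is small, so one cannot integrate naively. The clean way is to introduce, for $\epsilon>0$, the regularized quantity $H_\epsilon = H + \epsilon t^{\kpd}$ (or $H+\epsilon$), which satisfies $H_\epsilon' \le f(t)\sqrt{H_\epsilon} + C' t^{\kpd-1}$ with $H_\epsilon \ge \epsilon t^{\kpd}$, hence
\[
\frac{d}{dt}\sqrt{H_\epsilon(t)} \le \frac12 f(t) + \frac{C' t^{\kpd-1}}{2\sqrt{\epsilon}\, t^{\kpd/2}} = \frac12 f(t) + \frac{C'}{2\sqrt{\epsilon}}\, t^{\kpd/2 - 1}.
\]
Since $\kpd/2 - 1 > -1$, the right-hand side is integrable on $[0,1]$, and integrating from $0$ (using $H_\epsilon(0)=0$) gives $\sqrt{H_\epsilon(t)} \le \frac12\int_0^t f + C_\epsilon t^{\kpd/2}$. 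Squaring yields $H_\epsilon(t) \lesssim \left(\int_0^t f\right)^2 + C_\epsilon^2 t^{\kpd}$; however $C_\epsilon \to \infty$ as $\epsilon \to 0$, so one cannot simply let $\epsilon\to 0$ in this crude bound.

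To get the $\epsilon$-independent conclusion, I would instead split the time interval: on the region where $H(t) \le t^{\kpd}$ there is nothing to prove, and on the complementary region $H(t) > t^{\kpd}$ one has $\frac{C t^{\kpd-1}}{2\sqrt{H(t)}} \le \frac{C}{2} t^{\kpd/2 - 1}$ directly, again integrable. More precisely, set $\Phi(t) = \sqrt{\max(H(t), t^{\kpd})}$; on any maximal subinterval where $H > t^{\kpd}$ one has $\Phi = \sqrt H$ and $\Phi' \le \frac12 f + \frac C2 t^{\kpd/2-1}$, while at the entry point $t_1$ of such a subinterval $\Phi(t_1) = t_1^{\kpd/2} \le t^{\kpd/2}$ for $t$ to the right; integrating from $t_1$ to $t$ gives $\sqrt{H(t)} \le t_1^{\kpd/2} + \frac12\int_{t_1}^t f + \frac{C}{\kpd}(t^{\kpd/2} - t_1^{\kpd/2}) \lesssim \int_0^t f + t^{\kpd/2}$, and squaring finishes it. The main obstacle is precisely this last bookkeeping step — making the interval-splitting (or the passage to the limit $\epsilon\to0$) rigorous so that the constant does not blow up — since the $t^{\kpd-1}$ source term is genuinely singular at $t=0$ relative to $\sqrt{H}$ when $H$ is comparably small; everything else is routine Gronwall manipulation.
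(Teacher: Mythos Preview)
Your proof is correct, but you took a detour that the paper avoids. Your interval-splitting argument works, yet the key observation you missed is that your own $\epsilon$-regularization already succeeds if you simply fix $\epsilon=1$ instead of sending it to zero: with $H_1 = H + t^{\kpd}$ you have $H_1 \ge t^{\kpd}$, so $\tfrac{d}{dt}\sqrt{H_1} \le \tfrac12 f + \tfrac{C'}2 t^{\kpd/2-1}$, and integrating from $0$ gives $\sqrt{H_1(t)} \lesssim \int_0^t f + t^{\kpd/2}$ with a constant independent of any regularization parameter. The added $t^{\kpd}$ is not a nuisance to be removed---it is exactly the term appearing in the conclusion anyway.

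The paper does precisely this, but in one stroke: it sets $H(t)=\sqrt{k(t)G(t)+t^{\kpd}}$ with $k(t)=e^{-C\kpu^{-1}t^{\kpu}}$, combining the integrating factor and the $t^{\kpd}$ floor simultaneously. Then $H'=\tfrac{kG'+k'G+\kpd t^{\kpd-1}}{2\sqrt{kG+t^{\kpd}}}\lesssim f+t^{\kpd/2-1}$ directly, and integrating finishes it. This avoids both the two-step reduction and the case analysis on $\{H>t^{\kpd}\}$ versus $\{H\le t^{\kpd}\}$. Your approach has the minor advantage of being more modular (each term handled separately), but the paper's is shorter and sidesteps the bookkeeping you flagged as ``the main obstacle.''
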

\begin{proof}[Proof of Lemma \ref{LE:ax}]
Let $k(t) = \exp(- \frac C{\kpu} t^{\kpu})$ so that
for all $t\in (0,1]$,
$k'(t) = - C t^{\kpu-1} k(t)$
and $\exp(-\frac C{\kpu})\leq k(t)\leq 1$.
We set
\[
H(t) = \sqrt{k(t)G(t)+ t^{\kpd}}.
\]
Then, for $t\in (0,1]$,
\[
H'=\frac{kG'+k'G+\kpd t^{\kpd-1}}{2\sqrt{kG+t^{\kpd}}}
\lesssim \frac{f\sqrt{G}+t^{\kpd-1}}{\sqrt{kG+t^{\kpd}}}
 \lesssim   f+  t^{\frac 12\kpd-1}.
\]
Since  $G(0)=0$, we have $H(0)=0$
and integrating the estimate above on $(0,t)$,
we obtain, for all $t\in [0,1]$,
$H(t) \lesssim \int_0^t f + t^{\frac 12\kpd}$.
The result then follows from  $G(t) \lesssim H^2(t)$.
\end{proof}

\subsection{Preliminary estimates}
We start the proof of Proposition \ref{PR:qq} by giving
some estimates on the approximate solution $\Theta_\sharp$ and the function $q$.
\begin{lemma}\label{le:eq}
\begin{enumerate}
\item[(i)] The functions $\Theta_0$, $\Theta_1$ and $\Theta_\sharp(t)$ are supported on the interval $[0,\delta]$.
\item[(ii)] \emph{Estimates on $[0,\delta]$.} For all $j\in \{0,\ldots,6\}$, $x\in[0,\delta]$, $t\in [0,1]$ satisfying $x\ge 2 t^{\frac76\beta}$,
\begin{equation}\label{eq:eT}
|\partial_x^j \Theta_0|+|\partial_x^j \Theta_\sharp|\lesssim x^{\alpha-\frac12-j},\quad
 |\partial_x^j \partial_t\Theta_\sharp|+|\partial_x^j \Theta_1|\lesssim x^{\alpha-\frac 72-j}.
\end{equation}
\item[(iii)] \emph{Equation of $q$.}
It holds $\|q(t)\|_{H^1} \lesssim \delta^{\alpha-1}$.
Moreover, for $(t,x)\in(0,1]\times\RR$,
\begin{equation}\label{eq:qq}
\partial_t q+\partial_x (\partial_x^2 q + m_0^4 ((q+\Theta_\sharp)^5-
\Theta_\sharp^5))=F_\sharp+R_\sharp
\end{equation}
where  
\begin{equation}\label{eq:Fd}
F_\sharp = 
-t\chi(t^{-\frac76\beta}x) \Theta_1''' - m_0^4\partial_x (\Theta_\sharp^5-\Theta_0^5)
\end{equation}
and where $R_\sharp(t,x)=0$ for all $(t,x)\in[0,1]\times\RR$ such that $x\geq 2t^{\frac76\beta}$.
\end{enumerate}
\end{lemma}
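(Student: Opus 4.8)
The plan is to verify the three items in order, since (ii) relies on (i), and (iii) uses both. For item (i), I would observe that $\Theta_0$ is supported on $[0,\delta]$ directly from its definition \eqref{eq:T0}, because the cut-off $\CR(\delta^{-1}x)$ vanishes for $x\geq\delta$ by \eqref{eq:CR} and the formula gives $0$ for $x<0$. Then $\Theta_1=-\Theta_0'''-m_0^4(\Theta_0^5)'$ is a differential-polynomial expression in $\Theta_0$, hence is also supported on $[0,\delta]$; and $\Theta_\sharp(t)=\Theta_0+t\chi(t^{-\frac76\beta}x)\Theta_1$ inherits the same support since multiplying by the bounded factor $\chi(t^{-\frac76\beta}x)$ cannot enlarge it.

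For item (ii), the key point is that on the region $x\geq 2t^{\frac76\beta}$ the cut-off $\chi(t^{-\frac76\beta}x)\equiv 1$ by \eqref{eq:C}, so there $\Theta_\sharp(t,x)=\Theta_0(x)+t\Theta_1(x)$ and $\partial_t\Theta_\sharp=\Theta_1$. On $[0,\delta]$ with $x$ bounded away from $0$ in the stated sense, the cut-off $\CR(\delta^{-1}x)$ contributes only bounded derivatives of fixed size (comparable to powers of $\delta^{-1}$, hence harmless), so $\partial_x^j\Theta_0$ is, up to the differentiation of the cut-off, of size $x^{\alpha-\frac12-j}$; the cut-off-derivative terms are supported near $x\sim\delta$ where $x^{\alpha-\frac12-j}\sim\delta^{\alpha-\frac12-j}$ and are absorbed. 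For $\Theta_1$ one differentiates the formula $\Theta_1=-\Theta_0'''-m_0^4(\Theta_0^5)'$: the term $\Theta_0'''$ scales like $x^{\alpha-\frac72}$, while $(\Theta_0^5)'\sim x^{5\alpha-\frac52-1}=x^{5\alpha-\frac72}$, which is no worse than $x^{\alpha-\frac72}$ near $x=0$ since $\alpha>1$ (and on $[0,\delta]$ with $\delta$ small the higher power is even smaller). Differentiating $j$ more times lowers the exponent by $j$, giving the bound $x^{\alpha-\frac72-j}$ for $\partial_x^j\Theta_1$, and the same for $\partial_x^j\partial_t\Theta_\sharp$ since $\partial_t\Theta_\sharp=\Theta_1$ on this region. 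The range $j\in\{0,\dots,6\}$ is exactly what is needed later to reach the $\cC^4$ estimates on $q$ after accounting for the third-order term in $\Theta_1$.

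For item (iii), the bound $\|q(t)\|_{H^1}\lesssim\delta^{\alpha-1}$ follows by the triangle inequality from \eqref{eq:T9}, which gives $\|\Theta(t)\|_{H^1}\lesssim\delta^{\alpha-1}$, together with $\|\Theta_\sharp(t)\|_{H^1}\lesssim\|\Theta_0\|_{H^1}+\|\Theta_1\|_{H^1}\lesssim\delta^{\alpha-1}$ (the $t\chi$ factor being bounded and $t\in[0,1]$; note $\Theta_1\in H^1$ on $[0,\delta]$ requires care when $\alpha$ is close to $1$, but the cut-off $\chi(t^{-\frac76\beta}x)$ removes the region near $x=0$ so the global $H^1$ norm of $t\chi(t^{-\frac76\beta}x)\Theta_1$ is finite and uniformly small). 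For the equation, I would simply subtract: $\Theta$ solves \eqref{eq:TT}, so $\partial_t q+\partial_x(\partial_x^2 q+m_0^4((q+\Theta_\sharp)^5-\Theta_\sharp^5)) = -\partial_t\Theta_\sharp-\partial_x(\partial_x^2\Theta_\sharp+m_0^4\Theta_\sharp^5)$ after using $\Theta=q+\Theta_\sharp$ and expanding the quintic term. It then remains to identify the right-hand side. Writing $\Theta_\sharp=\Theta_0+t\chi(t^{-\frac76\beta}x)\Theta_1$ and using $\Theta_1=-\Theta_0'''-m_0^4(\Theta_0^5)'$, the terms $-\partial_t(t\chi\,\Theta_1)$ produce $-\chi\Theta_1$ plus a term supported where $\chi'\neq 0$, i.e. on $x\leq 2t^{\frac76\beta}$; the spatial terms give $-\Theta_0'''-\partial_x(t\chi\Theta_1)''$ with $-\Theta_0'''-m_0^4(\Theta_0^5)'=\Theta_1$; and the quintic difference $\Theta_\sharp^5-\Theta_0^5$ supplies the remaining nonlinear piece. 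On the region $x\geq 2t^{\frac76\beta}$ where $\chi\equiv 1$, one checks the singular parts cancel exactly: $-\chi\Theta_1$ cancels against $+\Theta_1$ (from $-\Theta_0'''-m_0^4(\Theta_0^5)'$) up to the $t$-weighted third-derivative term $-t\chi\Theta_1'''$ and the $\partial_x(\Theta_\sharp^5-\Theta_0^5)$ term, which are precisely the two terms collected in $F_\sharp$ in \eqref{eq:Fd}. All the remaining contributions involve at least one derivative of $\chi(t^{-\frac76\beta}x)$ and are therefore supported in $\{x\leq 2t^{\frac76\beta}\}$; collecting them into $R_\sharp$ gives $R_\sharp(t,x)=0$ for $x\geq 2t^{\frac76\beta}$, as claimed.

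The routine but slightly delicate point — the one I would be most careful about — is the bookkeeping in (iii): correctly tracking which of the many terms produced by $\partial_t$ and $\partial_x^3$ hitting $t\chi(t^{-\frac76\beta}x)\Theta_1$ land in $F_\sharp$ versus $R_\sharp$, and confirming that every $R_\sharp$ term genuinely carries a factor $\chi'(t^{-\frac76\beta}x)$ or $\chi''$ or $\chi'''$ (hence the stated support property), while $F_\sharp$ contains no such factor and is exactly the advertised expression. Everything else is a direct computation using (i) and (ii).
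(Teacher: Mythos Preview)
Your proposal is correct and takes essentially the same approach as the paper. The only refinement worth noting is for the $H^1$ bound in (iii): since $\Theta_1\notin H^1$ when $1<\alpha\leq 4$, the paper avoids your written (but parenthetically caveated) inequality $\|\Theta_\sharp\|_{H^1}\lesssim\|\Theta_0\|_{H^1}+\|\Theta_1\|_{H^1}$ by instead establishing the pointwise estimate $|\partial_x\Theta_\sharp|\lesssim x^{\alpha-3/2}$ on \emph{all} of $(0,\delta]$ (using that $x\geq t^{7\beta/6}$ on the support of $\chi$, together with $7\beta<1$) and then integrating directly.
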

\begin{proof}[Proof of Lemma \ref{le:eq}]
Firstly, (i) is clear from the definitions.

We now concentrate on the interval $[0,\delta]$ for the space variable $x$.
For $j\geq0$ and $0<x<\delta$, by the definition of $\Theta_0$, we see that
\begin{equation*}
|\partial_x^j \Theta_0| \lesssim 
\sum_{l=0}^jx^{\alpha-\frac12-l}\delta^{-j+l}
|\CR^{(j-l)}(\delta^{-1} x)|
\lesssim x^{\alpha-\frac12-j}.
\end{equation*}
Thus, by the definition of $\Theta_1$, for $j\geq 0$ and $0<x<\delta$,
$|\partial_x^j \Theta_1| 
\lesssim x^{\alpha-\frac72-j}$.
In particular, for $0<x<\delta$ and $t\in [0,1]$,
\[
\Theta_\sharp^2\lesssim x^{2\alpha-1}+t^2 \chi^2( t^{-\frac76\beta} x) x^{2\alpha-7}
\lesssim x^{2\alpha-1}+t^{2-7\beta} x^{2\alpha-1}
\lesssim x^{2\alpha-1},
\]
using $0<\beta\leq \frac 17$.
Similarly, for $0<x<\delta$ and $t\in [0,1]$,
\[
(\partial_x\Theta_\sharp)^2\lesssim x^{2\alpha-3}+t^2 \chi^2( t^{-\frac76\beta} x) x^{2\alpha-9}
+ t^{2-\frac73\beta} (\chi')^2(t^{-\frac76\beta} x) x^{2\alpha-7} 
\lesssim x^{2\alpha-3}.
\]
In particular, we obtain 
\[
\|q(t)\|_{H^1} \lesssim \|\Theta(t)\|_{H^1} + \|\Theta_\sharp(t)\|_{H^1}
 \lesssim \delta^{\alpha-1}.
\]
More generally, for all $j\geq 0$, $0 <x<\delta$ and $t\in [0,1]$,
\begin{align*}
|\partial_x^j \Theta_\sharp| 
& \lesssim |\partial_x^j \Theta_0|
+\sum_{l=0}^jt^{1-\frac76\beta l} |\chi^{(l)}(t^{-\frac76\beta}x)| |\partial_x^{j-l} \Theta_1|\\
& \lesssim x^{\alpha-\frac12-j} + x^{\alpha-\frac12-j}
\sum_{l=0}^j t^{1-\frac76\beta l} |\chi^{(l)}(t^{-\frac76}x)| x^{l-3}.
\end{align*}
For $0\leq l\leq 3$ and $t^{\frac76\beta}\leq x$, one has
$t^{1-\frac 76\beta l} x^{l-3}\lesssim t^{1-\frac72 \beta}\lesssim 1$.
For $4\leq l\leq 6$ and $t^{\frac76\beta}\leq x\leq \delta$, one has
$t^{1-\frac 76\beta l} x^{l-3}\lesssim t^{1-\frac76 \beta l}\lesssim 1$
since $\beta\leq \frac 17$ and $l\leq 6$, so that
$\frac 76 \beta l\leq 1$.
Thus, for all $0\leq j\leq 6$, $0< x< \delta$ and $t\in [0,1]$, 
one has $|\partial_x^j \Theta_\sharp|\lesssim x^{\alpha-\frac12-j}$.
The estimate for $\partial_x^j\partial_t\Theta_\sharp$ follows similarly observing that for $x\ge 2 t^{\frac76\beta}$, $\partial_t\Theta_\sharp=\Theta_1$.

Finally, we derive the equation of $q$ from the equation of $\Theta$ and the definition of $\Theta_\sharp$
\begin{align*}
0 & = \partial_t \Theta + \partial_x (\partial_x^2 \Theta
+m_0^4\Theta^5)\\
& = \partial_t q + \partial_x^3 q
+m_0^4 \partial_x((q+\Theta_\sharp)^5-\Theta_\sharp^5)
+\partial_t \Theta_\sharp +\partial_x^3\Theta_\sharp
+m_0^4\partial_x(\Theta_\sharp^5)
\\
& = \partial_t q + \partial_x^3 q
+m_0^4 \partial_x((q+\Theta_\sharp)^5-\Theta_\sharp^5)
+ \chi(t^{-\frac76\beta}x)\Theta_1
-\frac 76 \beta t^{-\frac 76\beta}x \chi'(t^{-\frac 76\beta}x)\Theta_1\\
&\quad+\Theta_0'''+t\chi(t^{-\frac76\beta}x) \Theta_1'''
+3t^{1-\frac 76\beta}\chi'(t^{-\frac 76\beta}x)\Theta_1''
+3t^{1-\frac 73\beta}\chi''(t^{-\frac 76\beta}x)\Theta_1'
+t^{1-\frac 72\beta}\chi'''(t^{-\frac 76\beta}x)\Theta_1\\
&\quad 
+m_0^4 (\Theta_0^5)'
+m_0^4\partial_x (\Theta_\sharp^5-\Theta_0^5)
\end{align*}
and so, using the definition of $\Theta_1$, the function $q$ satisfies equation \eqref{eq:qq} where $F_\sharp$ is defined in~\eqref{eq:Fd} and where the error term $R_\sharp$ has the following expression
\begin{align*}
R_\sharp
& = 
-(\Theta_0'''+m_0^4(\Theta_0^5)') (1-\chi(t^{-\frac76\beta}x))
+\frac76 \beta t^{-\frac76\beta}x \chi'(t^{-\frac76\beta}x)\Theta_1\\
&\quad -3t^{1-\frac76\beta}\chi'(t^{-\frac76\beta}x)\Theta_1''
-3t^{1-\frac73\beta}\chi''(t^{-\frac76\beta}x)\Theta_1'
-t^{1-\frac72\beta}\chi'''(t^{-\frac76\beta}x)\Theta_1.
\end{align*}
As stated in the lemma, we check from the definition of $\chi$ that $R_\sharp=0$ for $x>2t^{\frac76\beta}$.
\end{proof}

\subsection{Estimates for the first derivatives}
Our next goal is to prove estimates on $q$
and its first two derivatives.
We define some auxiliary functions.
Let $A>1$ to be chosen sufficiently large 
and 
\begin{equation}\label{eq:dp}
\varphi(x) = \int_{-\infty}^x \sech\left(\frac{y}A\right) dy
=2A\arctan\left(\exp\left(\frac{x}A\right)\right).
\end{equation}
For $k=0,1,2$, set
\begin{align}
& \chi_k(t,x)=\chi^{(11)^k}(z_0(t,x)),\quad z_0(t,x) = \frac{x-2^{-8}\rho(t)}{\mu(t)}, \label{def:z0}\\
& \varphi_k(t,x)=\varphi^{(11)^k}(y_0(t,x)),\quad y_0(t,x) = \frac{x-2^{-7}\rho(t)}{\mu(t)}, \label{def:y0}
\end{align}
and
\[
\omega_k  = \chi_k \varphi_k  .
\]

\begin{lemma}\label{LE:qq}
For $A$ sufficiently large, for $k=0,1,2$ and for all $t>0$ sufficiently small,
\begin{equation}\label{eq:L1}
\int (\partial_x^k q)^2 \omega_k 
 \lesssim  \begin{cases} 
t^{4+\frac {2( \alpha-6-k)}{4\alpha+3}} &  \mbox{ if $1<\alpha<k+6$,}\\
 t^4 |\log t| &  \mbox{ if $\alpha=k+6$,}\\
t^4 & \mbox{ if $\alpha>k+6$.}
\end{cases}
\end{equation}
\end{lemma}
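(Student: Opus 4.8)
The plan is to establish \eqref{eq:L1} by an induction on $k$, running a localized energy estimate for $\partial_x^k q$ at each step and feeding the conclusion for smaller values of $k$ into the right-hand side. The key is to differentiate the equation \eqref{eq:qq} $k$ times, multiply by $2(\partial_x^k q)\,\omega_k$ and integrate in $x$, producing a differential inequality for $G_k(t) = \int (\partial_x^k q)^2 \omega_k\,dx$ to which Lemma~\ref{LE:ax} applies. Because $\omega_k = \chi_k\varphi_k$ is supported (through $\chi_k$) in the region $x > 2^{-8}\rho(t) \gtrsim t^{\frac76\beta}$ for $t$ small, the error term $R_\sharp$ from Lemma~\ref{le:eq}(iii) vanishes on the support of the weight, so only $F_\sharp$ and the nonlinear term contribute genuine forcing. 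The time-dependence of the weight (both $\chi_k$ and $\varphi_k$ translate at speed $\sim \rho'(t)\sim t^{\beta-1}$ and dilate at rate $\sim \mu'/\mu \sim t^{-1}$) generates extra terms $\int (\partial_x^k q)^2 \partial_t\omega_k$, which must be absorbed into the $Ct^{\kpu-1}G(t)$ term of \eqref{eq:DG}; here the choice of the logarithmic-type weight $\varphi$ via \eqref{eq:dp} with $A$ large is exactly what makes $|\partial_t \varphi_k|$ controllable by $\varphi_k$ up to a small constant, and the monotonicity $\chi_k'\ge 0$ together with the dispersive term $\int \partial_x^{k+3}q\,\partial_x^k q\,\omega_k$ (integrated by parts thrice) gives a favorable sign on the leading $\int (\partial_x^{k+1}q)^2 \omega_k'$ contribution, or at worst a term that is handled by the already-established estimate at level $k+1$ when one runs the argument, or more precisely by treating the three levels $k=0,1,2$ together.

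\textbf{Order of the steps.} First I would record that $\omega_k(t,\cdot)$ is supported in $\{x \gtrsim \rho(t)\}$ and that there $F_\sharp$ obeys, using Lemma~\ref{le:eq}(ii) and $x \gtrsim \rho(t) = t^\beta$, a pointwise bound of the form $|\partial_x^k F_\sharp(t,x)| \lesssim x^{\alpha - k - 4} + (\text{lower order from the }\chi\text{ in }\Theta_\sharp) \lesssim t^{\beta(\alpha-k-4)}$ on the support of $\omega_k$ (with the usual borderline logarithmic modification when $\alpha - k - 4$ hits an integer forcing a boundary layer in $x$), whence $\int |\partial_x^k F_\sharp|^2 \omega_k \lesssim t^{2\beta(\alpha - k - 4)}\cdot\mu(t) = t^{2\beta(\alpha-k-4) + \nu}$, and then $f(t) := (\int |\partial_x^k F_\sharp|^2\omega_k)^{1/2}$ integrates to $\int_0^t f \lesssim t^{\beta(\alpha - k - 4) + \frac{1+\nu}{2}}$, whose square reproduces the exponent $4 + \frac{2(\alpha - 6 - k)}{4\alpha+3}$ in \eqref{eq:L1} once one substitutes $\beta = \frac1{4\alpha+3}$ and $\nu = \frac{2\alpha+1}{4\alpha+3}$ and simplifies. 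Second, I would estimate the nonlinear contribution $m_0^4 \partial_x^k \partial_x\big((q+\Theta_\sharp)^5 - \Theta_\sharp^5\big)$: expanding, the terms linear in $q$ are $\sim \Theta_\sharp^4 \partial_x^{k+1}q$ plus lower-derivative pieces, and since $\|\Theta_\sharp\|_{L^\infty} \lesssim \delta^{\alpha - 1/2}$ is small and $\|q(t)\|_{H^1} \lesssim \delta^{\alpha-1}$, these feed a term $\lesssim t^{\kpu - 1}G_k(t) + f(t)\sqrt{G_k(t)}$ with $f$ built from lower-order $\partial_x^j q$ norms already controlled by the induction hypothesis (for $j<k$) or by Gagliardo–Nirenberg interpolation against the $H^1$ bound; the purely higher-order quadratic-and-above terms in $q$ are harmless because of the smallness of $\|q\|_{H^1}$. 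Third, I would assemble the differential inequality $G_k'(t) \le C t^{\kpu - 1}G_k(t) + f(t)\sqrt{G_k(t)} + C t^{\kpd - 1}$ with $\kpu$ a suitable positive power (coming from $\mu'/\mu$-type terms after absorbing, here one can take $\kpu$ close to $0^+$ since those terms carry a gain from the large parameter $A$, but any $\kpu > 0$ suffices for Lemma~\ref{LE:ax}) and $\kpd = 4 + \frac{2(\alpha - 6 - k)}{4\alpha+3}$ (or the logarithmic variant), verify $G_k(0) = 0$ from $q(0) = 0$ and continuity, and conclude by Lemma~\ref{LE:ax} that $G_k(t) \lesssim (\int_0^t f)^2 + t^{\kpd} \lesssim t^{\kpd}$.

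\textbf{Main obstacle.} The delicate point is the interplay between the two spatial scales in the weight $\omega_k = \chi_k\varphi_k$ and the need to keep the ``bad'' time-derivative-of-weight terms on the correct side of the inequality. Concretely, $\partial_t\chi_k$ is a bump localized near $x \sim 2^{-8}\rho(t)$ of amplitude $\sim \rho'/\mu + \rho\mu'/\mu^2$, and one must check that on that thin region the pointwise bound on $\partial_x^k q$ inherited from the equation (again via Lemma~\ref{le:eq}(ii), but now at $x \sim \rho(t)$) is strong enough that $\int (\partial_x^k q)^2|\partial_t\chi_k|$ is dominated by the forcing term $Ct^{\kpd - 1}$; this is why $\chi_k$ is cut at $2^{-8}\rho$ while $\varphi_k$ is centered at $2^{-7}\rho$, creating a buffer zone where $\chi_k \equiv 1$ and only $\varphi_k$ (with its good sign and the large-$A$ smallness of $|\partial_t\varphi_k/\varphi_k|$) is active. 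Getting these two localizations to cooperate — so that wherever the weight is being differentiated in time one either has a favorable sign (from $\chi_k' \ge 0$ against the dispersive term) or a quantitatively small coefficient (from $A$ large) or an a priori pointwise bound on $q$ strong enough to absorb the loss — is the technical heart of the lemma, and it is precisely the ``sharp induction argument on localized energy quantities defined with two weight functions of different spatial localizations'' advertised in the introduction. Once $k=0,1,2$ are all run simultaneously with a shared bootstrap (so that the $\int(\partial_x^{k+1}q)^2\omega_k'$ cross-term at level $k$ is controlled by $G_{k+1}$, itself being estimated in the same loop), the argument closes.
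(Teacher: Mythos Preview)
Your overall architecture (localized energy, Lemma~\ref{LE:ax}, two-scale weight) is right, but there is a genuine gap in the treatment of the nonlinear term for $k=1,2$. Your claim that the ``purely higher-order quadratic-and-above terms in $q$ are harmless because of the smallness of $\|q\|_{H^1}$'' fails: differentiating \eqref{eq:qq} once and pairing with $2(\partial_x q)\omega_1$ produces, after one integration by parts on the piece $5(q+\Theta_\sharp)^4(\partial_x^2 q)(\partial_x q)\omega_1$, a term of type $\int (q+\Theta_\sharp)^3(\partial_x q)^3\omega_1$. The prefactor is small, but $\int|\partial_x q|^3\omega_1$ cannot be bounded by $G_1$ using only $\|q\|_{H^1}\lesssim\delta^{\alpha-1}$, since you have no $L^\infty$ control on $\partial_x q$. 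The paper handles this by replacing the naive functional at $k=1$ with the energy-type quantity $\int\big[(\partial_x q)^2-\frac{m_0^4}{3}((q+\Theta_\sharp)^6-\Theta_\sharp^6-6q\Theta_\sharp^5)\big]\omega_1$, whose time derivative exhibits the favorable square $-\int(\partial_x^2 q+m_0^4((q+\Theta_\sharp)^5-\Theta_\sharp^5))^2\partial_x\omega_1$ and avoids the cubic-in-$\partial_x q$ obstruction. At $k=2$ a different modified energy is needed, with explicit correction terms $\sum_m c_m\int(\partial_x q)^2 q^{m-1}\Theta_\sharp^{5-m}\omega_2$ designed so that their $s$-derivatives cancel the uncontrolled pieces $T_{2,m}=(m-1)\int(\partial_x^2 q)^2(\partial_x q)q^{m-2}\Theta_\sharp^{5-m}\omega_2$ coming from $G_{2,0}'$. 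Your shared-bootstrap idea does not supply these algebraic cancellations.

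Two secondary points. First, your $F_\sharp$ bound is off: from \eqref{eq:Fd} and Lemma~\ref{le:eq}(ii), $F_\sharp\sim t\,x^{\alpha-13/2}$ (note the factor $t$ and the six derivatives hidden in $\Theta_1'''$), so $\partial_x^k F_\sharp\sim t\,x^{\alpha-13/2-k}$, not $x^{\alpha-k-4}$; redoing your arithmetic with the correct bound does give the exponent in \eqref{eq:L1}, whereas your stated numbers do not. Second, the time-derivative of the weight is not absorbed into $Ct^{\kpu-1}G$: the dominant piece of $\int(\partial_x^k q)^2\partial_t\omega_k$ is $-2^{-7}\rho_t\int(\partial_x^k q)^2 s_k$, which has a \emph{favorable} sign and furnishes the Kato-type term $S_k$ that the paper then uses (via $\omega_{k+1}=\omega_k^{11}$, hence $s_{k+1}\le \omega_k^{10}s_k$) to absorb bad contributions from the next level. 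You have this mechanism backwards.
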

\begin{proof}[Proof of Lemma \ref{LE:qq}]
\emph{Step 1.} Preliminary estimates.
Observe that, for all $x \in \mathbb R$,
\begin{align*}
\varphi'(x)&=\sech\left(\frac {x}A\right) ,\quad
 \varphi'(x)\leq \frac1 A {\varphi(x)} , \\
\left|\varphi''(x)\right|&=\frac1A \left| \sech(\frac {x}A) \tanh (\frac {x}A)\right| \leq \frac1A\varphi'(x) ,\\
|\varphi'''(x)|&=\frac{1}{A^2}\left|\sech\left(\frac {x}A\right)
-2\sech^3\left(\frac {x}A\right)\right|
 \leq\frac{1}{A^2}\varphi'(x) .
\end{align*}
Then, we claim that for $A$ large enough, one has
for $k=0,1,2$,
\begin{equation}\label{eq:px}
|(\varphi^{(11)^k})'''| \le 2^{-9}{\beta}(\varphi^{(11)^k})' .
\end{equation}
Indeed, we compute
\begin{align*}
(\varphi^{(11)^k})'&=(11)^k \varphi^{(11)^{k}-1} \varphi' \\
(\varphi^{(11)^k})''& = 
(11)^k((11)^{k}-1)\varphi^{(11)^{k}-2}(\varphi')^2
+(11)^k \varphi^{(11)^{k}-1} \varphi''\\
(\varphi^{(11)^k})'''&=(11)^k((11)^{k}-1)((11)^k-2)\varphi^{(11)^{k}-3}(\varphi')^3
\\ &\quad +3 (11)^k((11)^{k}-1)\varphi^{(11)^k-2}\varphi' \varphi''+(11)^k \varphi^{(11)^{k}-1} \varphi''' .
\end{align*}
It follows that
\begin{equation*}
|(\varphi^{(11)^k})'''| 
\lesssim  \left(\varphi^{-1}\varphi'\right)^2|(\varphi^{(11)^k})'|+\left|\varphi^{-1}\varphi''\right| |(\varphi^{(11)^k})'|+ |\varphi^{(11)^{k}-1} \varphi'''|
\lesssim \frac1{A^2}(\varphi^{(11)^k})',
\end{equation*}
which implies \eqref{eq:ph} for any $0\leq k\leq 2$ by choosing $A$ sufficiently large.
Now, $A$ is fixed (the same $A$ will be used in the next Lemma). In particular, for $k=0,1,2$,
\begin{equation}\label{eq:ph}
| \partial_x^3 \varphi_k | \leq 2^{-9} \beta \mu^{-2} \partial_x \varphi_k.
\end{equation}

Observe that the functions $\chi_k$, $\varphi_k$ and $\omega_k$ have been constructed such that, for $k=0,1$,
\begin{equation}\label{eq:ww}
\chi_{k+1} = \chi_k^{11},\quad
\varphi_{k+1} = \varphi_k^{11},\quad
\omega_{k+1} = \omega_k^{11}.
\end{equation}
We also set, for $k= 0,1,2$,
\begin{equation} \label{eq:vc}
s_k 
=\partial_x \omega_k 
=  \chi_k \partial_x\varphi_k 
+\partial_x \chi_k \varphi_k ,
\end{equation}
where both terms on the right-hand side are nonnegative.
In particular, we have for $k=0,1$,
\begin{equation} \label{eq:sk}
s_{k+1} = \omega_k^{10} s_k =  \omega_{k+1}^{\frac12}\omega_{k}^{\frac12}\omega_k^4 s_k.
\end{equation}
We give for future use some simple identities involving the functions $\omega_k$
\begin{align}
\partial_x \omega_k  & =  s_k ,\label{eq:o1}\\
\partial_x^2 \omega_k  & =  \chi_k\partial_x^2\varphi_k  + 2\partial_x\chi_k \partial_x\varphi_k +  \partial_x^2\chi_k \varphi_k , \label{eq:o2}\\
\partial_x^3 \omega_k  & =  \chi_k\partial_x^3\varphi_k + 3\partial_x\chi_k \partial_x^2\varphi_k + 3 \partial_x^2\chi_k \partial_x\varphi_k 
+ \partial_x^3\chi_k\varphi_k ,\label{eq:o3}\\
\partial_t \omega_k  &  = -2^{-7}\rho_t s_k- \mu_t y_0 s_k 
+2^{-8}\Bigl(\rho_t-\rho\frac{\mu_t}{\mu} \Bigr)\partial_x \chi_k\varphi_k.\label{eq:ot}
\end{align}
Moreover, we have 
\begin{align*}
\partial_x(\omega_{k+1}^{\frac12}\omega_k^{\frac12}) = \frac12 \left(\frac{\omega_k}{\omega_{k+1}} \right)^{\frac12}\partial_x\omega_{k+1}+\frac12 \left(\frac{\omega_{k+1}}{\omega_{k}} \right)^{\frac12}\partial_x\omega_{k} ,
\end{align*}
so it follows from \eqref{eq:ww} and \eqref{eq:o1} that for $k=0,1$,
\begin{equation} \label{eq:oo}
\partial_x(\omega_{k+1}^{\frac12}\omega_k^{\frac12})=6 \omega_k^5 s_k.
\end{equation}
As a consequence, we claim that for $k=0,1$,
\begin{equation}\label{eq:Lz}
\bigr\|(\partial_x^k q)^2 \omega_{k+1}^\frac12\omega_k^\frac12\bigr\|_{L^\infty}\lesssim
\left(\int (\partial_x^{k+1} q)^2 \omega_{k+1} \right)^{\frac12}
\left(\int (\partial_x^{k} q)^2 \omega_k \right)^{\frac12} 
+\int (\partial_x^kq)^2  \omega_k^5s_k.
\end{equation}
Proof of \eqref{eq:Lz}.  We have
\begin{align*}
(\partial_x^k q)^2 \omega_{k+1}^\frac12\omega_k^\frac12
=- \int_x^{+\infty} &\Bigl( 2 (\partial_x^{k+1} q)(\partial_x^k q)\omega_{k+1}^\frac12 \omega_k^\frac12
+(\partial_x^k q)^2 \partial_x (\omega_{k+1}^\frac12\omega_k^\frac12)\Bigr).
\end{align*}
By the Cauchy-Schwarz inequality,
\[
\left| \int_x^{+\infty}  (\partial_x^{k+1} q)(\partial_x^k q) \omega_{k+1}^\frac12\omega_k^\frac12\right|
\lesssim 
\left(\int (\partial_x^{k+1} q)^2 \omega_{k+1} \right)^{\frac12}
\left(\int (\partial_x^{k} q)^2 \omega_k \right)^{\frac12}.
\]
Moreover, from \eqref{eq:oo}, it holds
\[
\int(\partial_x^k q)^2 \partial_x (\omega_{k+1}^\frac12\omega_k^\frac12)
= 6\int (\partial_x^kq)^2 \omega_k^5 s_k ,
\]
which suffices to prove \eqref{eq:Lz}.

Now, we claim that 
for any $j\geq 1$, $l\geq 0$ and $k=0,1,2$,
\begin{equation}\label{eq:cp}
|\partial_x^j\chi_k \partial_x^l\varphi_k|
\lesssim \mu^{-k-l}\exp\left(-2^{-8}A^{-1} t^{-\frac {2\alpha}{4\alpha+3}}\right).
\end{equation}
Proof of \eqref{eq:cp}.
Note that by definition of $\chi$, for $j\geq 1$, $\chi^{(j)}=0$ on $(-\infty,1]\cup[2,+\infty)$.
Thus for any $(t,x)$,
\[
\chi_k^{(j)}(t,x)\neq 0 \mbox{ implies }
z_0(t,x)\leq2 \mbox{ and so } 
y_0(t,x) = z_0(t,x)- 2^{-8} \frac\rho\mu\leq2- 2^{-8} t^{-\frac {2\alpha}{4\alpha+3}}.
\]
Thus, \eqref{eq:cp} follows from the definition of $\varphi_k$.
In what follows, for $t\in [0,1]$, using the estimate \eqref{eq:cp},
we will estimate terms containing $|\chi_k^{(j)}\varphi_k^{(l)}|$ for $j\geq 1$ by large powers of $t$, typically $t^{100}$, where $100$ has no special meaning. We also observe that for $k=2$, for any $1\leq j \leq 3$ and $0\leq l\leq 3$,
\begin{equation}\label{eq:cq}
|\partial_x^j\chi_2 \partial_x^l\varphi_2|\lesssim t^{100}\chi_1 \partial_x \varphi_1 \lesssim t^{100} s_1.
\end{equation}

Finally, we claim that for any $p\geq 1$
and $i=(i_1,\ldots,i_p)\in\NN^p$, such that $i_\nu \leq 6$,
for all $x\in\RR$ and $t\in [0,1]$
(using the notation $|i|=\sum_{\nu} i_\nu$)
\begin{equation}\label{eq:iT}
\left|\Pi_{\nu=1}^p(\partial_x^{i_\nu}\Theta_\sharp)\right|\omega_k
\lesssim \begin{cases}
1 & \mbox{if $|i|\leq p(\alpha-\frac12)$,} \\
t^{\beta(p(\alpha-\frac12)-|i|)} & \mbox{if $|i|\geq p(\alpha-\frac12)$}.
\end{cases}
\end{equation}
Indeed, note that if $x\leq 2^{-8} t^{\beta}=2^{-8}\rho(t)$ then $z_0(t,x)\leq 0$, which implies $\omega_k(t,x)=0$.
Thus, \eqref{eq:iT} follows from \eqref{eq:eT}.

\emph{Step 2.} We prove \eqref{eq:L1} for $k=0$: for all $t>0$ sufficiently small,
\begin{equation}\label{eq:G0}
\int q^2(t) \omega_0 
\lesssim \begin{cases} 
t^{4+\frac {2( \alpha-6)}{4\alpha+3}} &  \mbox{if $1<\alpha<6$,}\\
t^4 |\log t|&\mbox{if $\alpha=6$,}\\
t^4 & \mbox{if $\alpha>6$.}
\end{cases}
\end{equation}
Set 
\[
G_0(t) = \int q^2(t,x) \omega_0(t,x) dx
\]
and compute using \eqref{eq:qq} and integration
by parts,
\begin{align*}
G_0'
& = 2 \int (\partial_t q) q \omega_0 + \int q^2 \partial_t \omega_0,\\
& = -3 \int (\partial_x q)^2 \partial_x \omega_0
+ \int q^2 \partial_t \omega_0
+\int q^2 \partial_x^3 \omega_0 
- 2 m_0^4\int q \partial_x((q+\Theta_{\sharp})^5 - \Theta_{\sharp}^5)\omega_0
+ 2\int qF_\sharp \omega_0.
\end{align*}
Note that the term $R_\sharp$ in the equation of $q$ does not appear in the identity above since
$R_\sharp(t,x)=0$ for $x>2t^{\frac 76\beta}$, $\omega_0(t,x)=0$ for $x<2^{-8} t^\beta$ and
$2t^{\frac 76\beta}<2^{-8} t^\beta$ for $t>0$ small.

By integration by parts, we see that
\begin{align*}
\int q  \partial_x((q+\Theta_{\sharp})^5 - \Theta_{\sharp}^5)\omega_0
& = - \int (\partial_x q) [(q+\Theta_{\sharp})^5 - \Theta_{\sharp}^5] \omega_0
- \int q[(q+\Theta_{\sharp})^5 - \Theta_{\sharp}^5] \partial_x \omega_0\\
&=\frac16\int[(q+\Theta_{\sharp})^6-6q\Theta_{\sharp}^5-\Theta_{\sharp}^6]\partial_x\omega_0
 - \int q[(q+\Theta_{\sharp})^5 - \Theta_{\sharp}^5] \partial_x \omega_0
\\&\quad+\int[(q+\Theta_{\sharp})^5-5q\Theta_{\sharp}^4-\Theta_{\sharp}^5](\partial_x\Theta_{\sharp})\omega_0.
\end{align*}
Using \eqref{eq:o1}, \eqref{eq:o3}, \eqref{eq:ot},
we obtain
\begin{align*}
G_0' & = -3 \int (\partial_x q)^2 s_0 
-2^{-7} {\rho_t}\int q^2 s_0
+  \int q^2 \chi_0\partial_x^3\varphi_0
- {\mu_t}  \int q^2 y_0s_0\\
&\quad - \frac {m_0^4}{3} \int [ (q+\Theta_{\sharp})^6 - 6 q \Theta_{\sharp}^5 - \Theta_{\sharp}^6]s_0
+2m_0^4\int q[(q+\Theta_{\sharp})^5 - \Theta_{\sharp}^5] s_0 \\
&\quad - 2 m_0^4\int [ (q+\Theta_{\sharp})^5 - 5 q \Theta_{\sharp}^4 -\Theta_{\sharp}^5] (\partial_x \Theta_{\sharp}) \omega_0
+ 2\int qF_\sharp \omega_0 +g_0
\end{align*}
where we have gathered error terms
containing derivatives of the function $\chi_0$ in the function $g_0$
defined below
\[
g_0  = 
2^{-8} \left({\rho_t} -\rho\frac{\mu_t}{\mu}\right)\int q^2  \partial_x\chi_0\varphi_0
+ \int q^2 \left( \partial_x^3 \chi_0\varphi_0 +3 \partial_x^2 \chi_0 \partial_x\varphi_0 
+ 3 \partial_x\chi_0 \partial_x^2\varphi_0\right) .
\]
Note that, using $1-\beta=2\nu$,
$\rho_t = \beta t^{\beta-1}=\beta t^{-2\nu}=\beta\mu^{-2}$.
Thus, using \eqref{eq:ph}, we have
\begin{equation}\label{eq:pp}
\partial_x^3 \varphi_0
\leq 2^{-9} \beta \mu^{-2}\partial_x\varphi_0
=2^{-9}{\rho_t} \partial_x\varphi_0
\quad\mbox{and so}\quad
 \int q^2 \chi_0\partial_x^3\varphi_0
 \leq 2^{-9} {\rho_t} \int q^2 s_0.
\end{equation}
Now, we treat the term $-{\mu_t} \int q^2 y_0 s_0$ 
in the above expression of $G_0'$.
For this, we distinguish three regions in space, depending on the value of $y_0$
(and thus of $x$).
Firstly, by $s_0\geq 0$, we have
\[
- \ {\mu_t}  \int_{y_0>0} q^2 y_0 s_0
=-\nu t^{\nu-1}\int_{y_0>0}q^2y_0 s_0
\leq  0.
\]
Second, let $c=2^{-10}\frac \beta{\nu}$.
In the region in $x$ where $- c t^{-(\nu-\beta)} < y_0 <0$, one has
$ -  {\mu_t}  y_0 \leq 2^{-10}\rho_t $, and so
\[
\left|{\mu_t}
\int_{-ct^{-(\nu-\beta)}<y_0<0}q^2 y_0 s_0\right|
\leq 2^{-10}{\rho_t}\int q^2s_0.
\]
In the region $y_0<- c t^{-(\nu-\beta)}$, we estimate,
for $t>0$ small, using the decay properties of the function $\varphi$
and $\nu-\beta>0$,
\begin{equation*}
\left| {\mu_t}\int_{y_0 < - c t^{-(\nu-\beta)}} q^2 y_0 s_0\right|
\lesssim t^{\nu-1}\|q\|_{L^2}^2 \sup_{y >c t^{-(\nu-\beta)}}
[ A y \sech(y/A)]
\lesssim t^{100},
\end{equation*}
where the exponent $100$ has no special meaning and could be replaced by any large number.
Thus, we have proved
\begin{equation}\label{eq:ft}
-{\mu_t} \int q^2 y_0 s_0 \leq 
- \nu t^{\nu-1}\int_{y_0>0}q^2 y_0 s_0 +  2^{-10}{\rho_t}\int q^2s_0+Ct^{100}.
\end{equation}
For the next two terms, we have, using $\|q(t)\|_{L^\infty}\lesssim 1$ and
$\|\Theta_\sharp\|_{L^\infty}\lesssim 1$, for $t>0$ small enough,
\begin{align*}
&\frac{m_0^4}{3 }
\left|\int[(q+\Theta_{\sharp})^6 - 6 q \Theta_{\sharp}^5-\Theta_{\sharp}^6]s_0\right|
+2  m_0^4\left|\int q[(q+\Theta_{\sharp})^5 - \Theta_{\sharp}^5] s_0 \right|\\
 &\quad \leq  C\int (q^6 + q^2\Theta_{\sharp}^4 ) s_0 
 \leq C\int q^2s_0
\leq 2^{-11}{\rho_t}  \int q^2 s_0,
\end{align*}
where for the last estimate, we have used $\lim_{t\to 0} {\rho_t} =+\infty$.
Next, we get from \eqref{eq:eT}
and from the fact that $\omega_0(t,x)=0$ for $x<2^{-8} t^\beta$,
\begin{align*}
 \left|\int [ (q+\Theta_{\sharp})^5 - 5 q \Theta_{\sharp}^4 -\Theta_{\sharp}^5] (\partial_x \Theta_{\sharp}) \omega_0\right|
 &\lesssim \int (|q|^5+ q^2\Theta_{\sharp}^3) |\partial_x \Theta_{\sharp}| \omega_0\\
&\lesssim \int q^2 |\partial_x \Theta_{\sharp}| \omega_0 
\lesssim t^{-\frac 12 \beta} \int q^2   \omega_0 
\lesssim t^{-\frac1{14}}G_0 .
\end{align*}
Then, by the Cauchy-Schwarz inequality, we have
\[
\left| \int qF_\sharp \omega_0 \right| \leq {G_0^\frac12} \left( \int F_\sharp^2 \omega_0 \right)^{\frac 12}.
\]
For $x\leq 2^{-8}\rho$, one has $z_0(t,x)\leq 0$ and so $\omega_0(t,x)=0$, and for
$x\geq 2^{-8}\rho =2^{-8} t^\beta \geq 2 t^{\frac76\beta}$ (for $t>0$ sufficiently small),
one has 
$\chi(t^{-\frac76\beta}x)\equiv 1$.
Thus, for any $t>0$ small and $x\in \RR$, one has
\[
F_\sharp^2(t,x)\omega_0(t,x)
=(t \Theta_1''' + m_0^4\partial_x (\Theta_\sharp^5-\Theta_0^5))^2.
\]
Using 
\eqref{eq:eT}, we have
\[
F_\sharp^2(t,x)\omega_0(t,x) 
\lesssim t^2x^{2\alpha-13}+t^2x^{10\alpha-13}\lesssim t^2x^{2\alpha-13}.
\]
It follows that
\begin{equation}\label{eq:F0}
 \int F_\sharp^2 \omega_0 
 \lesssim t^2 \int_{2^{-8}t^\beta}^\delta x^{2\alpha-13} dx \lesssim
\begin{cases} 
t^{2+\frac {2(\alpha-6)}{4\alpha+3}} &  \mbox{ if $1<\alpha<6$,}\\
t^2| \log t| &  \mbox{ if $\alpha=6$,}\\
t^2\delta^{2(\alpha -6)}& \mbox{ if $\alpha>6$.}
\end{cases}
\end{equation}
Lastly, we estimate the error term $g_0$.
Using \eqref{eq:cp} and $\|q\|_{H^1}+\|\Theta_\sharp\|_{H^1}\lesssim 1$,
one has, for $t>0$ sufficiently small,
\[
|g_0|\lesssim t^{100}.
\]

Gathering these estimates, and setting
\[
S_0 = 3  \int (\partial_x q)^2s_0
+2^{-8}\beta t^{-2\nu}\int q^2s_0
+\nu t^{\nu-1}\int_{y_0>0} q^2 y_0 s_0,
\]
we obtain
\begin{equation}\label{eq:dG}
G_0'+S_0
\lesssim t^{100}+t^{-\frac1{14}}G_0+
{G_0^\frac12}\begin{cases}
 t^{1+\frac {\alpha-6}{4\alpha+3}}&\mbox{for $1<\alpha<6$,}\\
 t|\log t|^\frac12&\mbox{for $\alpha=6$,}\\
 t&\mbox{for $\alpha>6$.}
\end{cases}
\end{equation}
The above estimate will be useful in the next steps. For now, it is enough to write the
following simple consequence
\[
G_0'
\lesssim t^{100}+t^{-\frac1{14}}G_0+
{G_0^\frac12}\begin{cases}
 t^{1+\frac {\alpha-6}{4\alpha+3}}&\mbox{for $1<\alpha<6$,}\\
 t|\log t|^\frac12&\mbox{for $\alpha=6$,}\\
 t &\mbox{for $\alpha>6$.}
\end{cases}
\]
and to observe that \eqref{eq:G0} follows from Lemma \ref{LE:ax}
and $G_0(0)=0$. For convenience, we set $H_0:=G_0$.
\medskip

\emph{Step 3.}
In this step, we prove \eqref{eq:L1} for $k=1$: for all $t>0$ small,
\begin{equation}\label{eq:g1}
\int (\partial_x q)^2(t)\omega_1
\lesssim  \begin{cases} 
t^{4+\frac{2(\alpha-7)}{4\alpha+3}} &  \mbox{ if $1<\alpha<7$,}\\
t^4 |\log t| &  \mbox{ if $\alpha=7$,}\\
t^4 & \mbox{ if $\alpha>7$.}
\end{cases}
\end{equation}
Considering simply the functional $\int (\partial_x q)^2\omega_1$ does not seem sufficient to prove \eqref{eq:g1}, and we are led to consider a functional related to the energy conservation.
Let
\[
G_1(t) = \int \Bigl((\partial_x q)^2 - \frac {m_0^4}3 \left((q+\Theta_\sharp)^6 - \Theta_\sharp^6 - 6 q \Theta_\sharp^5\right)\Bigr)(t,x) \omega_1(t,x) dx.
\]
Since (as in Step 1),
\[
\left|\int ((q+\Theta_\sharp)^6 - \Theta_\sharp^6 - 6 q \Theta_\sharp^5)\omega_1\right|
\lesssim \int q^2 \omega_1\lesssim \int q^2 \omega_0,
\]
there exists a constant $C_0>1$ such that 
\[
H_1:=G_1+C_0 G_0 \geq \int  (\partial_x q)^2\omega_1  + \int q^2  \omega_0.
\]
We compute
\begin{align*}
G_1'  & = - 2 \int (\partial_t q) \left(\partial_x^2 q + m_0^4 \left((q+\Theta_\sharp)^5- \Theta_\sharp^5\right)\right)\omega_1
- 2 \int (\partial_t q) (\partial_x q) \partial_x \omega_1\\
&\quad + \int \Bigl((\partial_x q)^2 - \frac {m_0^4}3 \left((q+\Theta_\sharp)^6 - \Theta_\sharp^6 - 6 q \Theta_\sharp^5\right)\Bigr)\partial_t \omega_1  \\ 
& \quad -2m_0^4\int \left((q+\Theta_\sharp)^5-\Theta_\sharp^5-5q\Theta_\sharp^4 \right)(\partial_t\Theta_\sharp) \omega_1.
\end{align*}
Using \eqref{eq:qq},
we obtain after integration by parts
\begin{align*}
G_1'  & = -   \int \left(\partial_x^2 q + m_0^4 ((q+\Theta_\sharp)^5- \Theta_\sharp^5)\right)^2\partial_x\omega_1\\
&\quad + \int \Bigl((\partial_x q)^2 - \frac {m_0^4}3 ((q+\Theta_\sharp)^6 - \Theta_\sharp^6 - 6 q \Theta_\sharp^5)\Bigr)\partial_t \omega_1\\
&\quad + 2 \int \left(\partial_x (\partial_x^2 q + m_0^4 \left((q+\Theta_\sharp)^5- \Theta_\sharp^5)\right)\right) (\partial_x q)  \partial_x\omega_1\\
&\quad -2m_0^4\int \left((q+\Theta_\sharp)^5-\Theta_\sharp^5-5q\Theta_\sharp^4 \right)(\partial_t\Theta_\sharp) \omega_1 \\
&\quad +2 \int (\partial_x F_\sharp) (\partial_x q)  \omega_1
- 2  m_0^4 \int F_\sharp  ((q+\Theta_\sharp)^5- \Theta_\sharp^5)\omega_1,
\end{align*}
where, as in the previous step, the term $R_\sharp$ in equation \ref{eq:qq} does not appear in the expression of
$G_1'$.
Note that the first term on the right-hand side of the above identity
is nonnegative.
Then, using \eqref{eq:o1}, \eqref{eq:o3}, \eqref{eq:ot}, we obtain
\begin{align*}
G_1' & \leq
- 2 \int (\partial_x^2 q)^2 s_1 -2^{-7} \rho_t  \int(\partial_x q)^2s_1
+\int(\partial_xq)^2\chi_1\partial_x^3 \varphi_1 \\
&\quad + 2^{-7}\frac{m_0^4}3 \rho_t\int \left((q+\Theta_\sharp)^6 - \Theta_\sharp^6 - 6 q \Theta_\sharp^5\right)s_1\\
&\quad - {\mu_t}\int (\partial_x q)^2 y_0 s_1
+ \frac {m_0^4}3 \mu_t\int\left((q+\Theta_\sharp)^6-\Theta_\sharp^6-6q\Theta_\sharp^5\right)y_0 s_1\\
&\quad +10 m_0^4 \int (\partial_x q)^2 (q+\Theta_\sharp)^4 s_1
+10 m_0^4\int  \left((q+\Theta_\sharp)^4-\Theta_\sharp^4\right)
(\partial_x q) (\partial_x \Theta_\sharp)s_1\\
&\quad -2m_0^4\int \left((q+\Theta_\sharp)^5-\Theta_\sharp^5-5q\Theta_\sharp^4 \right)(\partial_t\Theta_\sharp) \omega_1 \\
&\quad +2\int (\partial_xF_\sharp) (\partial_x q) \omega_1- 2m_0^4 \int F_\sharp  ((q+\Theta_\sharp)^5- \Theta_\sharp^5)\omega_1
+g_1
\end{align*}
where
\begin{align*}
g_1 & = 2^{-8} \left({\rho_t} -\rho\frac{\mu_t}{\mu}\right)  \int \Bigl((\partial_x q)^2 - \frac {m_0^4}3 \left((q+\Theta_\sharp)^6 - \Theta_\sharp^6 - 6 q \Theta_\sharp^5\right)\Bigr) \partial_x \chi_1 \varphi_1\\
&\quad
+ \int (\partial_x q)^2 (\partial_x^3\chi_1\varphi_1+3 \partial_x^2\chi_1\partial_x\varphi_1+3\partial_x\chi_1\partial_x^2\varphi_1) .
\end{align*}
Firstly,  using \eqref{eq:ph} as in the previous step (see \eqref{eq:pp}), we have
\[
\int (\partial_x q)^2 \chi_1 \partial_x^3 \varphi_1
 \leq 2^{-9}  \rho_t  \int (\partial_x q)^2 s_1.
\]
Then, using $\|q\|_{L^\infty} + \|\Theta_\sharp\|_{L^\infty} \lesssim 1$, we have
\begin{equation} \label{eq:GS}
\left| \rho_t \int \left((q+\Theta_\sharp)^6 - \Theta_\sharp^6 - 6 q \Theta_\sharp^5\right) s_1\right|
\lesssim t^{-2\nu} \int q^2 s_1\lesssim S_0.
\end{equation}
For the next term, we distinguish three regions in space as in Step 1.
Firstly,
\[
- \mu_t\int_{y_0>0}  (\partial_x q)^2 y_0 s_1
\leq 0.
\]
Recall $c = 2^{-10}\frac \beta{\nu}$.
Then, in the region in $x$ where $- c t^{-(\nu-\beta)} < y_0 <0$, one has
$ -  {\mu_t}  y_0 \leq 2^{-10}\rho_t $, and so
\[
\left| \mu_t  \int_{- c t^{-(\nu-\beta)} < y_0 <0} (\partial_x q)^2 y_0 s_1\right|
\leq 2^{-10} \rho_t \int (\partial_x q)^2 s_1.
\]
Moreover, for $y_0 <- c t^{-(\nu-\beta)}$ and $t>0$ small enough,
\[
\left|  \mu_t \int_{y_0 <- ct^{-(\nu-\beta)}} (\partial_x q)^2 y_0 s_1\right|
\lesssim t^{100} \|\partial_x q\|_{L^2}^2 \lesssim t^{100}.
\]
Thus,
\begin{equation} \label{eq:ys}
 -  \mu_t  \int (\partial_x q)^2  y_0 s_1
\leq -\nu t^{\nu-1} \int_{y_0>0}(\partial_x q)^2y_0 s_1 
+ 2^{-9}  \rho_t \int (\partial_x q)^2 s_1+C t^{100}.
\end{equation}
For the next term, we proceed as before
\begin{align*}
\left| \mu_t \int  \left((q+\Theta_\sharp)^6 - \Theta_\sharp^6 - 6 q \Theta_\sharp^5\right) y_0s_1\right|
&\lesssim t^{\nu-1} \int q^2 |y_0| s_1\\
&\lesssim  t^{\nu-1} \int_{y_0>0} q^2y_0s_1+ \rho_t \int q^2s_1 +t^{100}
\lesssim S_0+t^{100}.
\end{align*}
Then, using again $\|q\|_{L^\infty} + \|\Theta_\sharp\|_{L^\infty} \lesssim 1$, we have, for $t$ small,
\[
\left|  \int (\partial_x q)^2 (q+\Theta_\sharp)^4s_1\right|
\leq C  \int (\partial_x q)^2  s_1
\leq 2^{-11} \rho_t \int (\partial_x q)^2 s_1.
\]
Next, by using \eqref{eq:eT}, the condition $\alpha>1$,
and the fact that $s_1(t,x)=0$ for $x<t^\beta$, we see that
$|\partial_x \Theta_\sharp|s_1\lesssim t^{-\frac 1{14}} s_1$.
Thus,
\begin{align*}
\left| \int((q+\Theta_\sharp)^4-\Theta_\sharp^4)
(\partial_x q) (\partial_x \Theta_\sharp)s_1\right|
&\leq C  \int |q| |\partial_x q|
|\partial_x\Theta_\sharp| s_1 \\ 
&\leq C t^{-\frac1{14}}\int |q||\partial_xq|s_1
 \leq 2^{-12}\rho_t \int (\partial_x q)^2 s_1 + C S_0.
\end{align*}
Using \eqref{eq:eT} and the fact that $\omega_1(t,x)=0$ for $x\ge 2^{-8}\rho\ge 2t^{\frac76 \beta}$ (for $t$ sufficiently small), we have $|\partial_t\Theta_\sharp|\lesssim t^{-5\beta}\lesssim t^{-\frac57}$, so that 
\begin{align*}
\left|\int \left((q+\Theta_\sharp)^5-\Theta_\sharp^5-5q\Theta_\sharp^4 \right)(\partial_t\Theta_\sharp) \omega_1\right| \lesssim \int q^2 |\partial_t \Theta_\sharp| \omega_1 \lesssim t^{-\frac57} H_1 .
\end{align*}
Finally, it follows from the Cauchy-Schwarz inequality that
\[
\left| \int (\partial_x F_\sharp) (\partial_x  q)\omega_1 \right|
\lesssim \left( \int (\partial_x q)^2 \omega_1\right)^{\frac 12}
 \left( \int (\partial_x F_\sharp)^2 \omega_1\right)^{\frac 12}.
\]
Using \eqref{eq:eT}, one has
$(\partial_x F_\sharp)^2\omega_1 
\lesssim t^2x^{2\alpha-15},
$
and thus
\[
\int (\partial_x F_\sharp)^2 \omega_1 
\lesssim t^2 \int_{2^{-8}t^\beta}^\delta x^{2\alpha-15} dx \lesssim
\begin{cases} 
t^{2+\frac {2(\alpha-7)}{4\alpha+3}} &  \mbox{ if $1<\alpha<7$,}\\
t^2 |\log t| &  \mbox{ if $\alpha=7$,}\\
t^2 \delta^{2(\alpha -7)}& \mbox{ if $\alpha>7$.}
\end{cases}
\]
Using \eqref{eq:G0} and \eqref{eq:F0}, for $t$ small,
\[
\left|\int F_\sharp ((q+\Theta_\sharp)^5- \Theta_\sharp^5) \omega_1\right|
\lesssim \left( \int F_\sharp^2 \omega_1\right)^\frac12
\left( \int q^2 \omega_1\right)^{\frac 12}.
\]
Lastly, similarly as for $g_0$, using \eqref{eq:cp}, we have $|g_1|\lesssim t^{100}$.

Gathering these estimates, and setting
\[
S_1 = 3 \int (\partial_x^2 q)^2s_1
+2^{-8} \beta t^{-2\nu}\int (\partial_x q)^2 s_1
+\nu t^{\nu-1} \int_{y_0>0} (\partial_x q)^2 y_0 s_1,
\]
we obtain
\begin{equation}\label{eq:G1}
G_1'+S_1\lesssim  t^{100} +S_0 +t^{-\frac57} H_1
+ {H_1^\frac12} \begin{cases}
 t^{1+\frac {\alpha-7}{4\alpha+3}}&\mbox{for $1<\alpha<7$,}\\
 t|\log t|^\frac12&\mbox{for $\alpha=7$,}\\
 t&\mbox{for $\alpha>7$.}
\end{cases}
\end{equation}
Combining \eqref{eq:G1} with \eqref{eq:dG}, we deduce that there exists a constant $\widetilde C_0\geq C_0$ such that
setting $\widetilde H_1=G_1+\widetilde C_0G_0$, we have
\begin{align*}
\widetilde H_1'  & \lesssim 
t^{100} +t^{-\frac57}\widetilde H_1 + {\widetilde H_1^\frac 12}
 \begin{cases}
 t^{1+\frac {\alpha-7}{4\alpha+3}}&\mbox{for $1<\alpha<7$,}\\
 t|\log t|^\frac12&\mbox{for $\alpha=7$,}\\
 t &\mbox{for $\alpha>7$.}
\end{cases}
\end{align*}
Using $\widetilde H_1(0)=0$, estimate \eqref{eq:g1} follows from Lemma \ref{LE:ax}
applied to the function $\widetilde H_1$.

Note for future reference that from \eqref{eq:G0} and \eqref{eq:g1}, we deduce
\begin{equation}\label{eq:H1}
H_0\lesssim t^\frac{18}7,\quad
H_1\lesssim t^\frac{16}7.
\end{equation}

\medskip

\emph{Step 4.} We prove \eqref{eq:L1} for $k=2$, i.e.
\begin{equation}\label{eq:g2}
\int (\partial_x^2 q)^2 \omega_2 
 \lesssim  \begin{cases} 
t^{4+\frac {2( \alpha-8)}{4\alpha+3}} &  \mbox{ if $1<\alpha<8$,}\\
 t^4 |\log t| &  \mbox{ if $\alpha=8$,}\\
t^4 & \mbox{ if $\alpha>8$.}
\end{cases}
\end{equation}
Since we do not have a good control on $\partial_xq$ in $L^\infty$, we use a modified energy. For  $m\in\{2,\ldots,5\}$, let
\[
G_2 = G_{2,0}+ \frac56m_0^4 \sum_{m=2}^5mC_5^m G_{2,m},\quad
G_{2,0} = \int (\partial_x^2 q)^2 \omega_2 ,\quad
G_{2,m} = \int (\partial_x q)^2q^{m-1}\Theta_\sharp^{5-m}\omega_2,
\]
and
\[
S_2 = 3  \int (\partial_x^3 q)^2s_2
+2^{-8} \beta t^{-2\nu}\int (\partial_x^2 q)^2 s_2
+\nu t^{1-\nu}\int_{y_0>0} (\partial_x^2 q)^2 y_0s_2.
\]
Since $|G_{2,m}|\lesssim H_1$, we can choose a positive constant $C_1>1$ such that setting
\[
  H_2 = G_2+  C_1  H_1,
\]
it holds
\[
  H_2 \geq \sum_{l=0}^{2} \int (\partial_x^l q)^2 \omega_l.
\]
We compute the derivative of the functional $G_2$ defined above.
First, we compute using \eqref{eq:qq}
\begin{align*}
G_{2,0}' & =  2\int (\partial_x^2\partial_t q) (\partial_x^2q) \omega_2
+ \int (\partial_x^2 q)^2 \partial_t \omega_2 \\
& = - 3 \int (\partial_x^3 q)^2 \partial_x \omega_2
+ \int (\partial_x^2 q)^2 \partial_t \omega_2 
+ \int (\partial_x^2 q)^2 \partial_x^3 \omega_2 \\
&\quad
- 2 m_0^4\int (\partial_x^3 ((q+\Theta_\sharp)^5-\Theta_\sharp^5)) (\partial_x^2 q)  \omega_2
+ 2\int (\partial_x^2 F_\sharp) (\partial_x^2 q)\omega_2.
\end{align*}
(The term $R_\sharp$ is eliminated as in the previous steps.)
Using \eqref{eq:qq}, we find
\begin{align*}
G_{2,0}' & \leq - 3  \int (\partial_x^3 q)^2 s_2
 - 2^{-7}\rho_t \int (\partial_x^2 q)^2 s_2
 +  \int (\partial_x^2 q)^2 \chi_2\partial_x^3 \varphi_2 \\
&\quad - 2\int (\partial_x^3 ((q+\Theta_\sharp)^5-\Theta_\sharp^5)) (\partial_x^2 q) \omega_2
- \mu_t \int (\partial_x^2 q)^2 y_0 s_2
+ 2m_0^4\int (\partial_x^2 F_\sharp) (\partial_x^2  q)\omega_2
+g_{2,0}
\end{align*}
where we set
\begin{align*}
g_{2,0} & =
2^{-8} \left(\rho_t-\rho \frac{\mu_t}{\mu}\right) \int (\partial_x^2 q)^2 \partial_x \chi_2\varphi_2 + \int (\partial_x^2 q)^2 \left(\partial_x^3 \chi_2\varphi_2
+3 \partial_x^2 \chi_2 \partial_x\varphi_2 + 3 \partial_x\chi_2\partial_x^2 \varphi_2\right).
\end{align*}

The second and third terms on the right-hand side are treated as before
\begin{equation*}
 -2^{-7} \rho_t \int (\partial_x^2 q)^2 s_2 
 + \int (\partial_x^2 q)^2 \chi_2 \partial_x^3\varphi_2
 \leq 
 - 3 \cdot 2^{-9} \rho_t \int (\partial_x^2 q)^2 s_2.
\end{equation*}
To deal with the next term, 
we argue as in \eqref{eq:ys} and find that
\begin{equation*} 
- \mu_t \int (\partial_x^2 q)^2  y_0 s_2
\leq
-  \nu t^{\nu-1} \int_{y_0>0}(\partial_x^2 q)^2y_0 s_2 
+2^{-10} \rho_t  \int (\partial_x^2 q)^2 s_2 +C t^{100}.
\end{equation*}
We handle the source term containing $F_\sharp$ by using the Cauchy-Schwarz inequality,
\[
\left| \int (\partial_x^2 F_\sharp) (\partial_x^2 q)\omega_2 \right| \leq G_{2,0}^\frac 12 \left( \int (\partial_x^2 F_\sharp)^2 \omega_2\right)^{\frac 12}.
\]  
Using \eqref{eq:eT}, one has
$(\partial_x^2 F_\sharp)^2\omega_2
\lesssim t^2x^{2\alpha-17},
$
and thus
\[
\int (\partial_x^2 F_\sharp)^2 \omega_2
\lesssim t^2 \int_{2^{-8}t^\beta}^\delta x^{2\alpha-17} dx \lesssim
\begin{cases} 
t^{2+\frac {2(\alpha-8)}{4\alpha+3}} &  \mbox{ if $1<\alpha<8$,}\\
t^2 |\log t| &  \mbox{ if $\alpha=8$,}\\
t^2 & \mbox{ if $\alpha>8$.}
\end{cases}
\]
Using \eqref{eq:cq}, we have $|g_{2,0}|\lesssim S_1$.

Lastly, we deal with the non-linear term
\[\int (\partial_x^3 ((q+\Theta_\sharp)^5-\Theta_\sharp^5)) (\partial_x^2 q) \omega_2=\sum_{m=1}^5C_5^m \int (\partial_x^3(q^m \Theta_\sharp^{5-m})) (\partial_x^2q) \omega_2 . \]
By expanding the derivatives, we observe that for $m=1,\ldots,5$,
\[
\int (\partial_x^3(q^m \Theta_\sharp^{5-m})) (\partial_x^2 q) \omega_2
=\sum_{l=0}^{3} C_3^l I_{2,l,m}
\]
where for $l=0,\ldots,3$,
\[
I_{2,l,m}=\int[\partial_x^{l}(q^m)](\partial_x^2 q) [\partial_x^{3-l}(\Theta_\sharp^{5-m})]\omega_2.
\]
We estimate the terms $I_{2,l,m}$, $m=1,\ldots, 5$, starting with the case $l=3$.
We decompose $I_{2,3,m}$ as follows
\begin{align*}
I_{2,3,m} &=
m \int q^{m-1}(\partial_x^3q)(\partial_x^2q)\Theta_\sharp^{5-m} \omega_2+3m(m-1)\int q^{m-2}(\partial_xq)(\partial_x^2q)^2 \Theta_\sharp^{5-m} \omega_2 \\
 &\quad + m(m-1)(m-2) \int q^{m-3} (\partial_xq)^3 (\partial_x^2 q) \Theta_\sharp^{5-m} \omega_2 .
\end{align*}
We set
\[
T_{2,m}=(m-1)\int (\partial_x^2 q)^2 (\partial_xq) q^{m-2}\Theta_\sharp^{5-m}\omega_2.
\]
After integrating by parts the first term of $I_{2,3,m}$, we obtain 
\begin{align*}
I_{2,3,m}&=
\frac52 m T_{2,m}+\sum_{j=1}^5I_{2,3,m}^j,
\end{align*}
where 
\begin{align*}
I_{2,3,m}^1&= - \frac14 m(m-1)(m-2)(m-3) \int q^{m-4} (\partial_xq)^5  \Theta_\sharp^{5-m} \omega_2 ; \\
I_{2,3,m}^2&=-\frac{m}2 \int (\partial_x^2 q)^2 q^{m-1} (\partial_x (\Theta_\sharp^{5-m}))\omega_2 ;\\ 
I_{2,3,m}^3 &= - \frac14 m(m-1)(m-2) \int q^{m-3} (\partial_xq)^4  (\partial_x(\Theta_\sharp^{5-m})) \omega_2 ;\\ 
I_{2,3,m}^4&= -\frac{m}2 \int (\partial_x^2 q)^2 q^{m-1} \Theta_\sharp^{5-m}(\partial_x \omega_2) ;\\ 
I_{2,3,m}^5 &=- \frac14 m(m-1)(m-2) \int q^{m-3} (\partial_xq)^4  \Theta_\sharp^{5-m} (\partial_x\omega_2) .
\end{align*}
The term $\frac 52 m T_{2,m}$ in the expression of $I_{2,3,m}$ 
will be canceled out by using the modified energy. To handle the terms $I_{2,3,m}^{1,\ldots,5}$, we observe from \eqref{eq:ww} $\omega_2^{\frac14}=\omega_1^2 \omega_1^{\frac34}$. 
Moreover, we recall from \eqref{eq:eT} that $|\partial_x\Theta_\sharp| \omega_2 \lesssim t^{-\frac1{14}} \omega_2$. 
Thus, it follows from $\|\Theta_\sharp\|_{L^{\infty}} \lesssim 1$, $\|q\|_{L^{\infty}} \lesssim 1$, \eqref{eq:Lz}
and \eqref{eq:H1} that
\begin{align*}
\left| I_{2,3,m}^1 \right| &\lesssim \bigr\| (\partial_xq)^2 \omega_1^{\frac12}\omega_2^{\frac12}\bigr\|_{L^{\infty}}^{\frac32} \left(\int (\partial_xq)^2 \omega_1^2 \right)  \lesssim \left( H_1^{\frac12}H_2^{\frac12}+t^{-\nu} H_1\right)^{\frac32} H_1
\lesssim H_2 \\
\left| I_{2,3,m}^2 \right|  &\lesssim \int (\partial_x^2 q)^2 |\partial_x\Theta_\sharp| \omega_2 \lesssim t^{-\frac1{14}}H_2, \\ 
\left| I_{2,3,m}^3 \right|
&\lesssim \bigr\| (\partial_xq)^2 \omega_1^{\frac12}\omega_2^{\frac12}\bigr\|_{L^{\infty}} \int (\partial_xq)^2 |\partial_x\Theta_\sharp| \omega_1^5 \lesssim \left( H_1^{\frac12}H_2^{\frac12}+t^{-\nu}H_1\right) t^{-\frac 1{14}} H_1
\lesssim H_2 ,\\
\left|I_{2,3,m}^4  \right| &\lesssim \int (\partial_x^2 q)^2 s_2 \lesssim t^{2\nu}S_2 ,\\
\left|I_{2,3,m}^5 \right| 
& \lesssim  
\bigr\| (\partial_xq)^2 \omega_1^{\frac12}\omega_2^{\frac12}\bigr\|_{L^{\infty}} \int (\partial_xq)^2 s_1  \lesssim t^{-\nu}\left( H_1^{\frac12}H_2^{\frac12}+t^{-\nu}H_1\right) H_1 \lesssim H_2
\end{align*}
Now, we turn to $I_{2,2,m}$,
for $m=1,\ldots,4$.
After integrating by parts, we decompose $I_{2,2,m}$ as 
\begin{align*}
I_{2,2,m}=\sum_{j=1}^4 I_{2,2,m}^j ,
\end{align*}
where
\begin{align*}
I_{2,2,m}^1 &=m \int q^{m-1} (\partial_x^2q)^2  (\partial_x(\Theta_\sharp^{5-m})) \omega_2 \\ 
I_{2,2,m}^2 & =-\frac13 m(m-1)(m-2)  \int q^{m-3} (\partial_xq)^4  (\partial_x(\Theta_\sharp^{5-m})) \omega_2 \\ 
I_{2,2,m}^3 &=-\frac13 m(m-1) \int q^{m-2}(\partial_xq)^3 (\partial_x^2(\Theta_\sharp^{5-m})) \omega_2 \\ 
I_{2,2,m}^4 &=-\frac13 m(m-1) \int q^{m-2}(\partial_xq)^3 (\partial_x(\Theta_\sharp^{5-m}))   (\partial_x\omega_2) .
\end{align*}
We estimate the terms $I_{2,2,m}^j$ one by one. Note that  $I_{2,2,m}^1 = c I_{2,3,m}^2$ and 
$I_{2,2,m}^2 =c I_{2,3,m}^3 $, so these contributions are   controlled as above. It follows from 
$\|q\|_{L^{\infty}} \lesssim 1$, \eqref{eq:eT}, \eqref{eq:Lz} and \eqref{eq:H1} that
\begin{align*}
\left| I_{2,2,m}^3 \right|   &\lesssim t^{-\frac 3{14}} \bigr\| (\partial_xq)^2 \omega_1^{\frac12}\omega_2^{\frac12}\bigr\|_{L^{\infty}}^\frac12 \left( \int (\partial_xq)^2 \omega_1\right)    \lesssim t^{-\frac 3{14}} \left( H_1^{\frac12}H_2^{\frac12}+t^{-\nu}H_1\right)^\frac12 H_1 \lesssim H_2,\\
\left| I_{2,2,m}^4 \right|  
&\lesssim t^{ -\nu-\frac1{14}}   \left( H_1^{\frac12}H_2^{\frac12}+t^{-\nu}H_1\right)^\frac12 H_1
  \lesssim H_2 
\end{align*}
Now, for $m=1,\ldots,4$, we deal with 
\[
I_{2,1,m}= m \int (\partial_x^2 q) (\partial_x q) q^{m-1}
\partial_x^2 (\Theta_\sharp^{5-m}) \omega_2.
\]
By \eqref{eq:eT}, $\|q\|_{L^\infty}\lesssim 1$ and \eqref{eq:H1},
\[
|I_{2,1,m}|\lesssim t^{-\frac3{14}} \int |\partial_x^2 q| |\partial_x q|   \omega_2
\lesssim t^{-\frac3{14}} H_2^\frac12 H_1^\frac12
\lesssim t^{-\frac3{14}} H_2
\]
Lastly, we deal similarly with $ I_{2,0,m}$ for $m=1,\ldots,4$,
using \eqref{eq:G0},
\begin{equation*}
     \left| I_{2,0,m} \right|=t^{-\frac5{14}}
     \int |\partial_x^2 q| |q|   \omega_2
\lesssim t^{-\frac5{14}} H_2^\frac12 H_0^\frac12
\lesssim t^{-\frac5{14}} H_2
\end{equation*}
We summarize the above estimates, for $t$ small,
\begin{equation}\label{eq:8d}
G_{2,0}'+ S_2-\frac52 m_0^4 \sum_{m=2}^5 m C_5^m T_{2,m}
\lesssim 
t^{100}+S_1+t^{-\frac5{14}} H_2 
+ H_2^\frac12\begin{cases} 
t^{1+\frac {\alpha-8}{4\alpha+3}} &  \mbox{ if $1<\alpha<8$,}\\
t |\log t|^\frac 12 &  \mbox{ if $\alpha=8$,}\\
t & \mbox{ if $\alpha>8$.}
\end{cases}
\end{equation}
Second, we compute, for $m=2,\ldots,5$, the time derivative of $G_{2,m}$,
using \eqref{eq:qq} and \eqref{eq:ot},
\begin{align*}
G_{2,m}' & = 2 \int (\partial_x\partial_t q)(\partial_xq)q^{m-1}\Theta_\sharp^{5-m}\omega_2
+(m-1)\int (\partial_xq)^2 (\partial_t q)q^{m-2}\Theta_\sharp^{5-m}\omega_2\\
&\quad +\int(\partial_xq)^2q^{m-1}\Theta_\sharp^{5-m}\partial_t\omega_2
+\int(\partial_xq)^2q^{m-1}\partial_t\left(\Theta_\sharp^{5-m}\right)\omega_2\\
& = -3T_{m,2}
-3 \int (\partial_x^2 q)^2 [\partial_x (q^{m-1}\Theta_\sharp^{5-m}\omega_2)
-(m-1)(\partial_x q)q^{m-2}\Theta_\sharp^{5-m}\omega_2]\\
&\quad 
+ \int (\partial_x q)^2 [\partial_x^3 (q^{m-1}\Theta_\sharp^{5-m} \omega_2) - (m-1)(\partial_x^3q) q^{m-2} \Theta_\sharp^{5-m} \omega_2]\\
&\quad
-2^{-7} \rho_t\int(\partial_xq)^2q^{m-1}\Theta_\sharp^{5-m}s_2
- \mu_t \int(\partial_xq)^2q^{m-1}\Theta_\sharp^{5-m}y_0 s_2
\\& \quad -2 m_0^4\int [\partial_x^2((q+\Theta_\sharp)^5-\Theta_\sharp^5)] (\partial_x q) q^{m-1}\Theta_\sharp^{5-m} \omega_2\\
&\quad -(m-1)m_0^4 \int (\partial_x q)^2 (\partial_x ((q+\Theta_\sharp)^5-\Theta_0^5)) q^{m-2}\Theta_\sharp^{5-m}\omega_2
\\
&\quad +\int(\partial_xq)^2q^{m-1}\partial_t\left(\Theta_\sharp^{5-m}\right)\omega_2 +2\int (\partial_x F_\sharp) (\partial_x q) q^{m-1} \Theta_\sharp^{5-m} \omega_2\\ & \quad
+(m-1)\int (\partial_x q)^2 F_\sharp q^{m-2}\Theta_\sharp^{5-m} \omega_2
+g_{2,m},
\end{align*}
where
\[
g_{2,m} = 2^{-8}\left(\rho_t-\rho
\frac{ \mu_t}{\mu}\right)
\int(\partial_xq)^2q^{m-1}\Theta_\sharp^{5-m}
\partial_x \chi_2\varphi_2. 
\]
The term $T_{m,2}$ will be used to cancel a similar term in \eqref{eq:8d}.
Note that arguing as in the proof of Lemma \ref{LE:qq},
$|g_{2,m}| \lesssim t^{100}$.

Now, we estimate the other terms in the expression of $G_{2,m}'$.
Observe that 
\begin{align*}
 \int (\partial_x^2 q)^2 [\partial_x (q^{m-1}\Theta_\sharp^{5-m}\omega_2)
-(m-1)(\partial_x q)q^{m-2}\Theta_\sharp^{5-m}\omega_2] =
c_1I_{2,3,m}^2+c_2I_{2,3,m}^4 
\end{align*}
for some constants $c_1$, $c_2$, so that this term is controled as before.
Then, there exist constants $c_3,\ldots,c_8$, such that
\begin{align*}
& \int (\partial_x q)^2  [\partial_x^3 (q^{m-1}\Theta_\sharp^{5-m} \omega_2) - (m-1)(\partial_x^3q) q^{m-2} \Theta_\sharp^{5-m} \omega_2] \\ & \quad= c_3I_{2,3,m}^3+c_4I_{2,3,m}^5
+c_5I_{2,2,m}^3+c_6I_{2,2,m}^4
+c_7(m-1)J_1
+c_8(m-2)J_2 +c_9 (m-3)J_3,
\end{align*}
where
\begin{align*}
J_1 & = \int(\partial_x^2 q) (\partial_x q) q^{m-2} \partial_x^2(\Theta_\sharp^{5-m} \omega_2),\\
J_2 & = \int(\partial_x^2 q) (\partial_x q)^2 q^{m-3} \Theta_\sharp^{5-m}\partial_x \omega_2,\\
J_3 & = \int (\partial_x q)^5 q^{m-4} \Theta_\sharp^{5-m} \omega_2.
\end{align*}
First, we deal with $J_1$
\begin{align*}
J_1
&  = 
\int(\partial_x^2 q) (\partial_x q) q^{m-2} \partial_x^2(\Theta_\sharp^{5-m})
\omega_2
+\int(\partial_x^2 q) (\partial_x q) q^{m-2} \partial_x(\Theta_\sharp^{5-m})
s_2\\
&\quad  -\int(\partial_x^3 q) (\partial_x q) q^{m-2}  \Theta_\sharp^{5-m} s_2
- \int(\partial_x^2 q)^2  q^{m-2} \Theta_\sharp^{5-m} s_2\\
&\quad - (m-2) 
\int(\partial_x^2 q) (\partial_x q)^2 q^{m-3}  \Theta_\sharp^{5-m} s_2\\
& = J_{1,1}+J_{1,2}+J_{1,3}+J_{1,4}+J_{1,5}.
\end{align*}
We estimate $J_{1,1}$,\ldots,$J_{1,4}$ using from \eqref{eq:eT} and the restriction on $\alpha>1$ that $\left|\partial_x(\Theta_\sharp^5)\right| \lesssim t^{-\frac1{14}}$ and $\left|\partial_x^2(\Theta_\sharp^5)\right| \lesssim t^{-\frac3{14}}$. We have, for $t$ small,
\begin{align*}
|J_{1,1}| &\lesssim t^{-\frac3{14}} H_1^{\frac12}H_2^{\frac12} \lesssim t^{-\frac3{14}} H_2  ;\\
|J_{1,2}|&\lesssim t^{-\frac{\nu}2}t^{-\frac1{14}} H_1^{\frac12} \left(t^{2\nu}S_2\right)^{\frac12} \leq  \frac1{10}S_2+CH_2;\\
|J_{1,3}|&\lesssim  S_0^\frac12 S_2^{\frac12} \leq \frac1{10}S_2+CS_0 ;\\
|J_{1,4}|&\leq  \frac1{10}S_2 .
 \end{align*}
Moreover, from \eqref{eq:Lz} and $H_1 \lesssim t^{\frac{16}7}$, 
\begin{align*}
\left|J_{1,5} \right| &\lesssim 
  t^{-\frac\nu2}\Bigr\| (\partial_xq) \omega_1^{\frac12}\omega_2^{\frac12} \Bigr\|_{L^{\infty}}^{\frac12} \left(\int (\partial_xq)^2 
\omega_1\right)^{\frac12} \left(\int (\partial_x^2q)^2 
s_2\right)^{\frac12} \\ 
& \lesssim t^{-\frac\nu2} \left( H_1^{\frac12}H_2^{\frac12}+t^{-\nu}H_1\ \right)^{\frac12}H_1^{\frac12} \left(t^{2\nu}S_2 \right)^{\frac12}
\leq \frac 1{10}S_2+C H_2 .
\end{align*}
Moreover, up to a multiplicative constant $J_2$ is $J_{1,5}$.
To estimate $J_3$, we write
\begin{equation*}
|J_3|\lesssim \|(\partial_x q)^2 \omega_1^\frac12\omega_2^\frac12\|_{L^\infty}^\frac32 H_1
\lesssim \left( H_1^{\frac12}H_2^{\frac12}+t^{-\nu}H_1\right)^\frac32  H_1\lesssim  H_2.
\end{equation*}

Then, we have
\begin{equation} \label{eq:HS}
\left|  \rho_t \int(\partial_xq)^2q^{m-1}\Theta_\sharp^{5-m}s_2\right|
\lesssim S_1.
\end{equation}
We also observe using \eqref{eq:ys} that
\[
 \left| {\mu_t} \int(\partial_xq)^2q^{m-1}\Theta_\sharp^{5-m}y_0 s_2\right|
\lesssim t^{\nu-1}\int_{y_0\geq 0} (\partial_x q)^2 y_0 s_1
+ \rho_t   \int (\partial_xq)^2 s_1
+ t^{100}\lesssim S_1+t^{100}.
\]
Moreover, by using \eqref{eq:Lz}, \eqref{eq:eT}, $\|q\|_{L^{\infty}} \lesssim 1$ and 
\eqref{eq:H1},
\begin{align*}
&\left|\int [\partial_x^2((q+\Theta_\sharp)^5-\Theta_\sharp^5)] (\partial_x q) q^{m-1}\Theta_\sharp^{5-m} \omega_2\right|
+\left| \int (\partial_x q)^2 (\partial_x ((q+\Theta_\sharp)^5-\Theta_\sharp^5)) q^{m-2}\Theta_\sharp^{5-m}\omega_2
\right|
 \\
& \lesssim \int \left(|\partial_x^2 q||\partial_x q|
+ |\partial_x q|^3 +|\partial_x \Theta_\sharp||\partial_xq|^2 
+ |\partial_x\Theta_\sharp|^2|\partial_x q| |q|
+ |\partial_x^2 \Theta_\sharp| |\partial_x q||q|\right)\omega_2 \\ 
& \lesssim H_1^{\frac12}H_2^{\frac12} 
+\left(H_1^{\frac12}H_2^{\frac12}+t^{-\nu}H_1 \right)^{\frac12}H_1+t^{-\frac1{14}}H_1+t^{-\frac3{14}}H_0^{\frac12}H_1^{\frac12}
\lesssim t^{-\frac 3{14}} H_2.
\end{align*}
Next, observe from \eqref{eq:eT} that $\left|\partial_t(\Theta_\sharp^{5-m})\right| \lesssim t^{-\frac5{14}}$ that 
\[\left| \int(\partial_xq)^2q^{m-1}\partial_t\left(\Theta_\sharp^{5-m}\right)\omega_2\right| \lesssim t^{-\frac5{14}} \int(\partial_xq)^2\omega_2 \lesssim t^{-\frac5{14}} H_1 .\]
Lastly, using \eqref{eq:eT}, we have
\[
|F_\sharp| \omega_2 \lesssim t x^{\alpha-\frac{13}2} \omega_2 \lesssim t^{1-\frac{11}{14}}\omega_2\lesssim \omega_2 ,\quad
|\partial_x F_\sharp|\omega_2 \lesssim t x^{\alpha-\frac{15}2} \omega_2 \lesssim t^{1-\frac{13}{14}}\omega_2\lesssim \omega_2.
\]
Thus,
\begin{equation*}
\left|\int (\partial_x F_\sharp) (\partial_x q) q^{m-1} \Theta_\sharp^{5-m} \omega_2 \right|
+\left|\int (\partial_x q)^2 F_\sharp q^{m-2}\Theta_\sharp^{5-m} \omega_2\right|
 \lesssim  H_1 \lesssim H_2.
\end{equation*}
Summarizing for $G'_{2,m}$, we have
\begin{equation}\label{eq:9d}
G_{2,m}'-\frac 12 S_2+3  T_{2,m}
\lesssim 
t^{100}+t^{-\frac5{14}} H_2 +S_0+ S_1 .
\end{equation}
Therefore, by \eqref{eq:8d} and \eqref{eq:9d},
\begin{align*}
G_2'+\frac 12 S_2
&\lesssim 
t^{100}+S_0+ S_1 +t^{-\frac 5{14}} H_2  
+H_2^\frac12 \begin{cases} 
t^{1+\frac {\alpha-8}{4\alpha+3}} &  \mbox{ if $1<\alpha<8$,}\\
t |\log t|^\frac12 &  \mbox{ if $\alpha=8$,}\\
t  & \mbox{ if $\alpha>8$.}
\end{cases}
\end{align*}
It follows from \eqref{eq:dG}, \eqref{eq:G1}  that there exists a positive constant
$\widetilde C_1\geq C_1$ such that setting
\[
\widetilde H_2 = G_2+ \widetilde C_1 G_1.
\]
it holds 
\[
\widetilde H_2' \lesssim t^{100}+t^{-\frac5{14}} \widetilde H_2
+ {\widetilde H_2^\frac12} \begin{cases} 
t^{1+\frac {\alpha-8}{4\alpha+3}} &  \mbox{ if $1<\alpha<8$,}\\
t |\log t|^\frac12 &  \mbox{ if $\alpha=8$,}\\
t  & \mbox{ if $\alpha>8$.}
\end{cases}
\]
Now, we obtain \eqref{eq:g1} by applying again Lemma \ref{LE:ax}
and using $\widetilde H_2 \geq G_{2,0}$.
\end{proof}

\subsection{Estimates for higher order derivatives}
Now, we estimate some higher order space derivatives of the function $q$ in the same spirit as in the previous lemma.
However, in order to avoid the use of modified energies for $k=3,4,5$, we slightly change the method for the higher-order space derivatives of $q$.
Note that the definitions of $\chi_k$ and $\varphi_k$
before Lemma \ref{LE:qq} were given so far only for $k=0,1,2$.
For $k=3,4,5$, we define
\begin{align*}
&z_3(t,x)=\frac{x-2^{-6}\rho(t)}{\mu(t)},\quad
y_3(t,x)=\frac{x-2^{-5}\rho(t)}{\mu(t)},\\
&z_4(t,x)=\frac{x-2^{-4}\rho(t)}{\mu(t)},\quad
y_4(t,x)=\frac{x-2^{-3}\rho(t)}{\mu(t)},\\
&z_5(t,x)=\frac{x-2^{-2}\rho(t)}{\mu(t)},\quad
y_5(t,x)=\frac{x-2^{-1}\rho(t)}{\mu(t)},
\end{align*}
and the weight functions
\[
\chi_k(t,x)=\chi(z_k(t,x)),\quad \varphi_k(t,x)=\varphi(y_k(t,x)),\quad
\omega_k=\chi_k \varphi_k, \quad s_k =\partial_x \omega_k.
\]
The function $\varphi$ is defined in \eqref{eq:dp} for the value of $A$ fixed at the beginning of the proof of Lemma \ref{LE:qq}.
In particular, using \eqref{eq:ph} for $k=0$, it holds for $k=3,4,5$,
\begin{equation}\label{eq:p9}
|\partial_x^3 \varphi_k|\le 2^{-9} \beta \mu^{-2} \partial_x \varphi_k.
\end{equation}

\begin{remark}
Instead of increasing the power of the weight functions as for $k=0,1,2$, we shift
the location of the cut-off function for each function.
It seems that we could not have used this second method for the first three derivatives, because of some unfavorable terms appearing after differentiating the modified energies.
In the proof of Lemma \ref{LE:qq}, 
these terms are controlled by using the Kato term $S_j$ of a lower order term (see for example \eqref{eq:GS}, \eqref{eq:HS}).
\end{remark}

\begin{lemma}\label{LE:q2}
For all $k=3,4,5$, for $t>0$ sufficiently small,
\begin{equation}\label{eq:L3}
\int (\partial_x^k q)^2 \omega_k 
 \lesssim  \begin{cases} 
t^{4 + \frac {2(\alpha-6-k)}{4\alpha+3}} &  \mbox{ if $1<\alpha<k+6$,}\\
 t^4 |\log t| &  \mbox{ if $\alpha=k+6$,}\\
t^4 & \mbox{ if $\alpha>k+6$.}
\end{cases}
\end{equation}
\end{lemma}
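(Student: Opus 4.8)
The plan is to argue by induction on $k\in\{3,4,5\}$, running the scheme of the proof of Lemma~\ref{LE:qq} but \emph{without} modified energies --- which is exactly the simplification the shifted cut-offs are designed to allow. Assume \eqref{eq:L1} and \eqref{eq:L3} for all lower orders, write $H_{k-1}$ for a fixed linear combination (with large positive coefficients) of the quantities $\int(\partial_x^lq)^2\omega_l$, $l<k$, so that $H_{k-1}$ carries a positive power of $t$ (indeed $H_{k-1}\lesssim t$) by the induction hypothesis and \eqref{eq:H1}, and set
\[
G_k(t)=\int(\partial_x^kq)^2\omega_k,\qquad S_k=3\int(\partial_x^{k+1}q)^2s_k+2^{-8}\beta t^{-2\nu}\int(\partial_x^kq)^2s_k+\nu t^{\nu-1}\int_{y_k>0}(\partial_x^kq)^2y_ks_k.
\]
First I would differentiate \eqref{eq:qq} $k$ times in $x$, multiply by $\partial_x^kq\,\omega_k$ and integrate by parts, checking as in the earlier steps that $R_\sharp$ drops out: $\omega_k$ is supported in $\{x\gtrsim 2^{-(9-k)}\rho(t)\}$ while $R_\sharp$ vanishes for $x\ge 2t^{7\beta/6}$, and $2t^{7\beta/6}<2^{-(9-k)}\rho(t)$ for $t$ small.

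The linear and source contributions are then controlled exactly as in Lemma~\ref{LE:qq}: the term $\int(\partial_x^kq)^2\chi_k\partial_x^3\varphi_k$ is absorbed into $2^{-9}\rho_t\int(\partial_x^kq)^2s_k$ by \eqref{eq:p9} and $\rho_t=\beta\mu^{-2}$; the scaling term $-\mu_t\int(\partial_x^kq)^2y_ks_k$ is split into the three regions $y_k>0$, $-ct^{-(\nu-\beta)}<y_k<0$ and $y_k<-ct^{-(\nu-\beta)}$ as in \eqref{eq:ft}--\eqref{eq:ys}, producing the good term $-\nu t^{\nu-1}\int_{y_k>0}(\partial_x^kq)^2y_ks_k$ of $S_k$, a further $2^{-10}\rho_t\int(\partial_x^kq)^2s_k$, and an $O(t^{100})$ remainder; the terms carrying derivatives of $\chi_k$ are $O(t^{100})$ by the exponential smallness of $\chi_k^{(j)}$, $j\ge1$, on the support of $\omega_k$ (as in \eqref{eq:cp}). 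For the forcing term, Cauchy--Schwarz gives $\bigl|\int(\partial_x^kF_\sharp)(\partial_x^kq)\omega_k\bigr|\le G_k^{1/2}\bigl(\int(\partial_x^kF_\sharp)^2\omega_k\bigr)^{1/2}$, and $|\partial_x^kF_\sharp|\,\omega_k^{1/2}\lesssim t\,x^{\alpha-\frac{13}{2}-k}$ on the support of $\omega_k$ by \eqref{eq:eT}, so that $\int(\partial_x^kF_\sharp)^2\omega_k\lesssim t^2\int_{c\rho(t)}^{\delta}x^{2\alpha-13-2k}\,dx$ equals $t^{2+2(\alpha-6-k)/(4\alpha+3)}$, $t^2|\log t|$, or $t^2$ in the three ranges of $\alpha$; thus the source feeds $G_k^{1/2}$ times precisely the square root of the claimed bound.

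The main work, and the only place I expect a genuine difficulty, is the nonlinear term $m_0^4\int\partial_x^{k+1}\bigl((q+\Theta_\sharp)^5-\Theta_\sharp^5\bigr)\,\partial_x^kq\,\omega_k$. Expanding by Leibniz, the unique term carrying $\partial_x^{k+1}q$ is $\sim(q+\Theta_\sharp)^4\,\partial_x^{k+1}q\,\partial_x^kq\,\omega_k$; integrating by parts once turns it into $-\frac52\int(\partial_x^kq)^2\partial_x\bigl((q+\Theta_\sharp)^4\omega_k\bigr)$, whose $s_k$-piece is absorbed into $S_k$ (using $\rho_t\to+\infty$), whose $\partial_x\Theta_\sharp$-piece costs only $t^{-1/14}$ on the support of $\omega_k$ by \eqref{eq:eT} (contributing $t^{-1/14}G_k$), and whose $\partial_xq$-piece is handled by the weighted $L^\infty$ bound $\|(\partial_xq)^2\omega_1^{1/2}\omega_2^{1/2}\|_{L^\infty}\lesssim H_1^{1/2}H_2^{1/2}+t^{-\nu}H_1$ from \eqref{eq:Lz}, combined with the key geometric fact that on the support of $\omega_k$ with $k\ge3$ the lower weight $\omega_2$ is bounded below --- there $z_0$ and $y_0$ tend to $+\infty$ as $t\downarrow0$, since $\rho/\mu=t^{\beta-\nu}\to+\infty$, so $\chi_2\equiv1$ and $\varphi_2$ stays away from $0$. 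Every remaining Leibniz term contains a factor $\partial_x^lq$ with $l\le k-1$ (or a factor $\partial_x^j\Theta_\sharp$, for which $|\partial_x^j\Theta_\sharp|\omega_k\lesssim t^{-j/14}$), so one places the lowest-order factors in $L^\infty$ --- using $\|q\|_{L^\infty},\|\Theta_\sharp\|_{L^\infty}\lesssim 1$, \eqref{eq:Lz} for $\partial_xq$, and the analogous weighted $L^\infty$ bounds for $\partial_x^2q,\dots,\partial_x^{k-1}q$ obtained by the same fundamental-theorem-of-calculus argument as \eqref{eq:Lz} (part of the induction) --- and estimates the two remaining factors by Cauchy--Schwarz against $G_k^{1/2}$ and a lower-order $H_l^{1/2}$; since those lower $H_l$ carry positive powers of $t$, every such contribution is of the form $t^{-c}G_k$ with $c<1$, or $G_k^{1/2}$ times a positive power of $t$, and no lower-order Kato term $S_l$ is needed (this is precisely what fails for $k\le2$ and forces the modified-energy approach there).

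Collecting everything, for $t$ small,
\[
G_k'+S_k\ \lesssim\ t^{100}+t^{-1/14}G_k+G_k^{1/2}\begin{cases}t^{1+\frac{\alpha-6-k}{4\alpha+3}}&\text{if }1<\alpha<k+6,\\ t|\log t|^{1/2}&\text{if }\alpha=k+6,\\ t&\text{if }\alpha>k+6.\end{cases}
\]
Since $G_k(0)=0$, Lemma~\ref{LE:ax} applied to $G_k$ (with $\kappa_1=\frac{13}{14}$, $\kappa_2$ large, and $f$ the coefficient of $G_k^{1/2}$) gives $G_k\lesssim\bigl(\int_0^tf\bigr)^2+t^{\kappa_2}$, which is exactly \eqref{eq:L3}, closing the induction. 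The delicate points are the nested-support bookkeeping making the lower weights $\omega_0,\omega_1,\omega_2$ harmless on the support of $\omega_k$, the inductive construction of the weighted $L^\infty$ bounds for the intermediate derivatives $\partial_x^2q,\dots,\partial_x^{k-1}q$, and tracking that the borderline case $\alpha=k+6$ produces exactly the logarithmic loss and nothing worse.
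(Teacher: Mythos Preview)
Your overall architecture is correct and matches the paper's: induction on $k$, plain energy $G_k=\int(\partial_x^kq)^2\omega_k$, same treatment of the scaling and source terms, and the same integration-by-parts for the top-order nonlinear piece. But there is one genuine gap.

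You assert that the error terms carrying derivatives of $\chi_k$ are $O(t^{100})$ ``by the exponential smallness of $\chi_k^{(j)}$''. This fails for $k\ge 2$: the weight $\partial_x^j\chi_k\cdot\varphi_k$ is indeed exponentially small on its support, but you have \emph{no a priori bound} on $\int(\partial_x^kq)^2$ over that region, since $q\in H^1$ only and $\Theta$ is not known to be in $H^k$. The argument you invoke (as in \eqref{eq:cp}) only worked for $g_0,g_1$ precisely because one could use $\|q\|_{H^1}\lesssim 1$; already for $k=2$ the paper needed $|g_{2,0}|\lesssim S_1$ (see \eqref{eq:cq}). For $k=3,4,5$ the paper observes that on the support of $\partial_x^j\chi_k$ one has $|\partial_x^j\chi_k\,\varphi_k|\lesssim t^{100}s_{k-1}$ (the shifted $\varphi_k$ decays faster than $\partial_x\varphi_{k-1}$ there), whence $|g_k|\lesssim S_{k-1}$. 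So your statement ``no lower-order Kato term $S_l$ is needed'' is exactly backwards: the lower Kato term $S_{k-1}$ is what controls $g_k$.

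This forces a change in how you close: the correct differential inequality is \eqref{eq:Gj}, namely $G_k'+\tfrac12 S_k\lesssim t^{100}+S_{k-1}+t^{-5/7}H_k+H_k^{1/2}\cdot(\text{source})$, with $S_{k-1}$ on the right. You cannot apply Lemma~\ref{LE:ax} to $G_k$ alone; instead you must form $\widetilde H_k=G_k+\sum_{l<k}\widetilde C_{k,l}G_l$ with large enough coefficients so that the $S_{l-1}$ appearing at level $l$ is absorbed by the $+\tfrac12 S_{l-1}$ on the left at level $l-1$, telescoping down to $l=0$. Then Lemma~\ref{LE:ax} applies to $\widetilde H_k$. (A secondary point: your $t^{-1/14}$ is too optimistic for the Leibniz terms $I_{k,l,m}$ with $l\le k-1$, which involve up to $k+1$ derivatives of $\Theta_\sharp$; the paper obtains $t^{-5/7}$ by placing $\partial_x^j(\Theta_\sharp^{5-m})$ in $L^2$. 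This does not break the argument since $\kappa_1=2/7>0$ still works in Lemma~\ref{LE:ax}, but it should be corrected.)
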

\begin{remark}\label{rk:22}
We see easily that Lemma \ref{LE:qq} and Lemma \ref{LE:q2} imply Proposition \ref{PR:qq}.
Indeed, for $k=0,1,2,3,4$, for $t$ sufficiently small and for $x>\frac 12 \rho(t)$, we have
\begin{align*}
|\partial_x^k q(t,x)|^2 &\lesssim \int_{\frac12\rho(t)}^{+\infty} |\partial_x^{k+1}q(t,y)||\partial_x^kq(t,y)| dy\\
&\lesssim \left(\int (\partial_x^{k+1}q)^2w_{k+1} \right)^\frac12\left(\int (\partial_x^{k}q)^2w_k \right)^\frac12 
 \lesssim \begin{cases}
t^{4+\frac{2\alpha-2k-13}{4\alpha+3}} & 1<\alpha<k+6,\\
t^{4-\frac{1}{4\alpha+3}} |\log t|^\frac12 & \alpha=k+6,\\
t^{4+\frac{\alpha - k-7 }{4\alpha+3}}  & k+6<\alpha<k+7,\\
t^4 |\log t|^\frac12 & \alpha=k+7,\\
t^4 & \alpha>k+7.
\end{cases}
\end{align*}
\end{remark}
\begin{proof}
We prove the estimates for $t>0$ small.
Let $k=3,4,5$ and $t>0$ small. Note that for all 
$x\leq 2^{2k-12}\rho$,  one has $z_k(t,x)\leq 0$ and so $\chi_k(t,x)=0$.
Moreover, for $0\leq j\leq k-1$ and for all $x\geq 2^{2k-12}\rho$, 
one has $y_j(t,x)\geq (2^{2k-12}-2^{2j-11})\frac\rho\mu>0$ and 
$z_j(t,x)\geq (2^{2k-12}-2^{2j-12})\frac{\rho}{\mu}\geq 2$ for $t>0$ small,
and so for such~$x$, $\omega_{j}(t,x)\geq \frac \pi2 A$. Note that we have taken by convention that $z_2(t,x)=z_1(t,x)=z_0(t,x)$ and $y_2(t,x)=y_1(t,x)=y_0(t,x)$, where $z_0(t,x)$ and $y_0(t,x)$ are defined in \eqref{def:z0}-\eqref{def:y0}.
Thus, for $k=3,4,5$ and $0\leq j \leq k-2$, as in Remark \ref{rk:22},
\begin{equation}\label{eq:Lg}
\begin{aligned}
\|\partial_x^j q \|_{L^\infty(x \geq 2^{2k-12}\rho(t))}^2
&\lesssim 
\left(\int_{x \geq 2^{2k-12}\rho(t)} (\partial_x^j q)^2\right)^\frac12
\left(\int_{x \geq 2^{2k-12}\rho(t)} (\partial_x^{j+1} q)^2\right)^\frac12
\lesssim H_j^\frac12 H_{j+1}^\frac12.
\end{aligned}
\end{equation}
In particular, for $k=3$ and $j=1$, from Lemma \ref{LE:qq}, it holds
\begin{equation}\label{eq:Lf}
\|\partial_x q \|_{L^\infty(x \geq 2^{-6}\rho(t))}^2
\lesssim 
\left(\int_{x \geq 2^{-6}\rho(t)} (\partial_x q)^2\right)^\frac12
\left(\int_{x \geq 2^{-6}\rho(t)} (\partial_x^2 q)^2\right)^\frac12\\
\lesssim t^{\frac {15}7}\lesssim 1.
\end{equation}
We prove \eqref{eq:L3}
for all $3\leq k \leq 5$ using an induction argument on the integer $k$.
Let
\[
G_k = \int (\partial_x^k q)^2 \omega_k 
\]
and
\[
S_k = 3  \int (\partial_x^{k+1} q)^2s_k
+2^{-8} \beta t^{-2\nu}\int (\partial_x^k q)^2 s_k
+\nu t^{1-\nu}\int_{y_0>0} (\partial_x^k q)^2 y_0s_k.
\]
We observe that setting, for $k=3,4,5$,
\[
  H_k = G_k+\sum_{l=0}^{k-1}H_{l},
\]
where the expressions of $H_0$, $H_1$ and $H_2$ are taken from the proof of Lemma \ref{LE:qq},
it holds
\[
  H_k \geq \sum_{l=0}^{k} \int (\partial_x^l q)^2 \omega_l.
\]
The exact statement to be proved by induction on $3\leq j\leq 5$ is composed of two estimates:
\begin{equation}\label{eq:Gj}
G_j' + \frac 12 S_j
\lesssim   t^{100}+S_{j-1}+t^{-\frac{5}{7}} H_j+ {  H_j^\frac12}  \begin{cases} 
t^{1+\frac {\alpha-j -6}{4\alpha+3}} &  \mbox{ if $1<\alpha<j+6$,}\\
t|\log t|^\frac12 &  \mbox{ if $\alpha=j+6$,}\\
t & \mbox{ if $\alpha>j+6$,}
\end{cases}
\end{equation}
and
\begin{equation}\label{eq:Lj}
\int (\partial_x^j q)^2 \omega_j
 \lesssim \begin{cases} 
t^{4 + \frac {2(\alpha-6-j)}{4\alpha+3}} &  \mbox{ if $1<\alpha<j+6$,}\\
 t^4 |\log t| &  \mbox{ if $\alpha=j+6$,}\\
t^4 & \mbox{ if $\alpha>j+6$.}
\end{cases}
\end{equation}
Note that \eqref{eq:Gj}-\eqref{eq:Lj} for $j=0,1,2$ were already proved in Lemma \ref{LE:qq}.
Now, we fix $3\leq k\leq 5$ and assume that \eqref{eq:Gj}-\eqref{eq:Lj} hold for all $j\leq k-1$
(which is true for $k=3$).
Our goal is to prove that \eqref{eq:Gj}-\eqref{eq:Lj} hold for $j=k$.
We compute using \eqref{eq:qq},
\begin{align*}
G_k' & =  2\int (\partial_x^{k}\partial_t q) (\partial_x^{k }q)\omega_k
+ \int (\partial_x^k q)^2 \partial_t \omega_k \\
& = - 3 \int (\partial_x^{k+1} q)^2 \partial_x \omega_k 
+ \int (\partial_x^k q)^2 \partial_t \omega_k 
+ \int (\partial_x^k q)^2 \partial_x^3 \omega_k \\
&\quad
- 2 m_0^4\int (\partial_x^{k+1} ((q+\Theta_\sharp)^5-\Theta_\sharp^5)) (\partial_x^{k} q)  \omega_k
+ 2\int (\partial_x^k F_\sharp) (\partial_x^k q)\omega_k.
\end{align*}
(As in the proof of Lemma \ref{LE:qq}, the term $R_\sharp$ does not appear
because its support and the support of the function $\omega_k$ are disjoint.)
Thus,
\begin{align*}
G_{k}' & \leq - 3\int (\partial_x^{k+1} q)^2 s_k
 - 2^{2k-11}\rho_t  \int (\partial_x^k q)^2 s_k 
 +\int (\partial_x^k q)^2 \chi_k\partial_x^3\varphi_k\\
&\quad - 2m_0^4\int (\partial_x^{k+1} ((q+\Theta_\sharp)^5-\Theta_\sharp^5)) (\partial_x^k q) \omega_k
- {\mu_t} \int (\partial_x^k q)^2 y_k s_k
+ 2\int (\partial_x^k F_\sharp) (\partial_x^k q)\omega_k
+g_{k}
\end{align*}
where we set
\begin{align*}
g_{k} & =
-2^{2k-12} \left( \rho_t-\rho \frac{\mu_t}{\mu}\right) \int (\partial_x^k q)^2 \partial_x\chi_k \varphi_k
\\
&\quad+  \int (\partial_x^k q)^2 \left[ \partial_x^3 \chi_k\varphi_k
+3 \partial_x^2 \chi_k \partial_x \varphi_k + 3 \partial_x \chi_k\partial_x^2\varphi_k\right].
\end{align*}
As in the proof of Lemma \ref{LE:qq}, we check that
$|g_{k}|\lesssim S_{k-1}$.

Then, the second and third terms on the right-hand side of the estimate of $G_k'$ are treated as the corresponding terms 
in Lemma \ref{LE:qq}, using \eqref{eq:p9},
\begin{equation*}
 -2^{2k-11} {\rho_t}  \int (\partial_x^k q)^2 s_k 
 +  \int (\partial_x^k q)^2 \chi_k\partial_x^3 \varphi_k
 \leq 
 -3\cdot 2^{2k-13} \beta t^{-2\nu} \int (\partial_x^k q)^2 s_k.
\end{equation*}

The next term is a linear combination of terms of the form
$\int (\partial_x^{k+1}(q^m \Theta_\sharp^{5-m})) (\partial_x^kq) \omega_k$
for $m\in \{1,\ldots,5\}$. 
By expanding the derivatives, we observe that
\[
\int (\partial_x^{k+1}(q^m \Theta_\sharp^{5-m})) (\partial_x^k q) \omega_k
=\sum_{l=0}^{k+1} C_{k+1}^l I_{k,l,m}
\]
where for $l=0,\ldots,k+1$,
\[
I_{k,l,m}=\int(\partial_x^{l}(q^m))(\partial_x^k q) \partial_x^{k+1-l}(\Theta_\sharp^{5-m})\omega_k.
\]
The Faà di Bruno formula gives
\[
\partial_x^{l}(q^m)
=\sum_{{\bf n}\in \Delta(m,l)}
D(m,l,{\bf n}) q^{m-|{\bf n}|} \Pi_{j=1}^l (\partial_x^j q)^{n_j}
\]
where
\[
\Delta(m,l)
=\{ {\bf n}=(n_1,\ldots,n_l) :
|{\bf n}|=\sum_{j=1}^l n_j\leq m, \sum_{j=1}^{l} j n_j = l\}
\]
and $D(m,l,{\bf n})$ are constants.

For $l=k+1$, $I_{k,k+1,m}=\sum_{j=1}^9 E_{k,m,j} I_{k,k+1,m}^j$, where (part of the first term  and the second term are obtained through integration by parts)
\begin{align*}
I_{k,k+1,m}^1&=\int (\partial_x^k q)^2 (\partial_x q) q^{m-2} \Theta_\sharp^{5-m} \omega_k,\\ 
I_{k,k+1,m}^2 &= \int (\partial_x^k q)^2  q^{m-1} \partial_x(\Theta_\sharp^{5-m} \omega_k)  ,
\\
I_{k,k+1,m}^3 &=\int (\partial_x^{k} q) (\partial_x^{k-1}q) (\partial_x^2 q) q^{m-2} \Theta_\sharp^{5-m} \omega_k,\\ 
I_{k,k+1,m}^4 &=\int (\partial_x^{k} q) (\partial_x^{k-1}q) (\partial_x q)^2 q^{m-3} \Theta_\sharp^{5-m} \omega_k,\\
I_{k,k+1,m}^5 &=\int (\partial_x^{k} q) (\partial_x^{k-2} q) (\partial_x^3 q)  q^{m-2} \Theta_\sharp^{5-m} \omega_k,\\
I_{k,k+1,m}^6 &=\int (\partial_x^{k} q) (\partial_x^{k-2} q) (\partial_x^2 q) (\partial_x q)  q^{m-3} \Theta_\sharp^{5-m} \omega_k,\\ 
I_{k,k+1,m}^7 &=\int (\partial_x^{k} q) (\partial_x^{k-2} q) (\partial_x q)^3  q^{m-4} \Theta_\sharp^{5-m} \omega_k,\\
I_{k,k+1,m}^8 &=\int (\partial_x^{k} q) (\partial_x^{k-3} q) (\partial_x^2 q)^2 q^{m-3} \Theta_\sharp^{5-m} \omega_k,\\
I_{k,k+1,m}^9 &=\int (\partial_x^{k} q) (\partial_x^{k-3} q) (\partial_x q)^4 q^{m-5} \Theta_\sharp^{5-m} \omega_k,
\end{align*}
and $E_{k,m,j}$ are constants (note that for $k=3,4$, some terms are redundant).
In the above terms, we use the convention that terms containing negative powers of 
$q$ do not appear.
We have, using \eqref{eq:Lf},
\[
|I_{k,k+1,m}^1 |\lesssim H_k
\|\partial_x q\|_{L^\infty(x > 2^{-7} \rho)} \lesssim H_k.
\]
We estimate $I_{k,k+1,m}^2 $ as follows,
\[
\left|\int (\partial_x^k q)^2 q^{m-1}\partial_x(\Theta_\sharp^{5-m})\omega_k\right|\lesssim t^{-\frac1{14}} H_{k},
\]
and
\[
\left|\int (\partial_x^k q)^2 q^{m-1} \Theta_\sharp^{5-m}\partial_x \omega_k\right|\lesssim
 \int(\partial_x^kq)^2s_k\lesssim t^{2\nu} S_k.
\]
Similarly as \eqref{eq:Lz}, using $|\partial_x (\omega_{j+1}^\frac12\omega_j^\frac12)|
\lesssim t^{-\nu} \omega_j$, for $j=2,3,4$
(from the definitions of $\omega_j$ and $\omega_{j+1}$),
we claim that
\begin{align}\label{eq:zb}
\bigr\|(\partial_x^j q)^2 \omega_{j+1}^\frac12\omega_j^\frac12\bigr\|_{L^\infty} &\lesssim
\left(\int (\partial_x^{j+1} q)^2 \omega_{j+1} \right)^{\frac12}
\left(\int (\partial_x^j q)^2 \omega_j \right)^{\frac12} 
+t^{-\nu} \int (\partial_x^jq)^2  \omega_j \nonumber \\ &
\lesssim  H_{j+1}^\frac12 H_j^\frac12 + t^{-\nu} H_j.
\end{align}
In particular, using $ t^{-\nu} H_2\lesssim 1$, and $H_{k-1}\lesssim 1$ by the induction hypothesis,
for $k=3,4,5$,
\[
|I_{k,k+1,m}^3|\lesssim H_k^\frac12H_{k-1}^\frac12 (H_3^\frac14 H_2^\frac14 + t^{-\frac\nu2} H_2^\frac12)
\lesssim H_k.
\]
Moreover, by \eqref{eq:Lf},
\[
|I_{k,k+1,m}^4|\lesssim H_k^\frac12 H_{k-1}^\frac12 \|\partial_x q\|_{L^\infty(x>2^{-7}\rho)}^2
\lesssim H_k.
\]
For $k=3$, $I^5_{k,k+1,m}$ is identical to $I^1_{k,k+1,m}$ and for $k=4$, it is identical
to $I^3_{k,k+1,m}$. For $k=5$, we have
\[
|I^5_{5,6,m}|\lesssim  \|(\partial_x q_3)^2 \omega_4^\frac12\omega_3^\frac12\|_{L^\infty}^\frac 12  H_5^\frac12H_3^\frac 12 \lesssim H_5.
\]
Arguing similarly, we see easily that
\[
|I_{k,k+1,m}^5|+|I_{k,k+1,m}^6|+|I_{k,k+1,m}^7|+|I_{k,k+1,m}^8|\lesssim H_k.
\]
In conclusion for $I_{k,k+1,m}$, we have proved, for $m=1,\ldots,5$,
\[
|I_{k,k+1,m}|\lesssim t^{-\frac1{14}} H_k+t^{2\nu} S_k.
\]
Now, we estimate $I_{k,k,m}$, \emph{i.e.} the case where $l=k$, observing that integrating by parts,
\begin{align*}
I_{k,k,m}&=
\int\partial_x^k(q^m)(\partial_x^k q)\partial_x(\Theta_\sharp^{5-m})\omega_k\\
&=-\int[\partial_x^{k+1}(q^m)](\partial_x^k q)  \Theta_\sharp^{5-m} \omega_k
-\int[\partial_x^k(q^m)](\partial_x^{k+1} q)  \Theta_\sharp^{5-m} \omega_k\\
&\quad -\int\partial_x^k(q^m)(\partial_x^k q) \Theta_\sharp^{5-m} \partial_x \omega_k
\end{align*}
Each of term was either already controlled
(for example, the first term is $-I_{k,k+1,m}$), or is controlled in a similar way
by expanding the derivative of $q^m$.
We obtain as for $I_{k,k+1,m}$,
\[
|I_{k,k,m}|\lesssim t^{-\frac1{14}} H_k+t^{2\nu} S_k.
\]

For $l=0,\ldots,k-1$, we have, using \eqref{eq:Lg} and the induction assumption,
\begin{align*}
|I_{k,l,m}|& \lesssim \int |\partial_x^l(q^m)| |\partial_x^k q|  |\partial_x^{k+1-l}(\Theta_\sharp^{5-m})|\omega_k\\
& \lesssim
 \sup_{0\leq j\leq k-2} \|(\partial_x^j q) \omega_k \|_{L^\infty(x\geq 2^{2k-12}\rho)}^{m} 
\int  |\partial_x^k q|  |\partial_x^{k+1-l}(\Theta_\sharp^{5-m})|\omega_k
\\
&\quad +\|\partial_x^2(\Theta_\sharp^{5-m})\|_{L^\infty(x> 2^{-7} \rho)}\|q\|_{L^\infty}^{m-1}
\int |\partial_x^{k-1} q| |\partial_x^k q|  \omega_k
\\
& \lesssim H_{k-1}^\frac 12 H_k^\frac 12 \sum_{j=2}^{k+1}\left( \int_{x>2^{-7}\rho} |\partial_x^{j} (\Theta_\sharp^{5-m})|^2  \right)^{\frac 12}
+ H_{k-1}^\frac12 H_k^\frac12\|\partial_x^2(\Theta_\sharp^{5-m})\|_{L^\infty(x> 2^{-7} \rho)} .
\end{align*}
Here, $m=1,\ldots,4$, since for $m=5$ the derivative of $\Theta_0^{5-m}$ is zero.
For the first term on the right-hand side, using \eqref{eq:eT}, we have, 
for $j=2,\ldots,k+1$, and $0\leq x\leq \delta$,
\[
|\partial_x^{j} (\Theta_\sharp^{5-m})|^2
\lesssim x^{2\alpha-1-2(k+1)}\lesssim x^{2\alpha-13}\lesssim x^{-11}.
\]
and so for $j=2,\ldots,6$,
\[
\int_{x>2^{-7}\rho} |\partial_x^{j} (\Theta_\sharp^{5-m})|^2 dx
\lesssim t^{-\frac{10}{4\alpha+3}}\lesssim t^{-\frac{10}{7}}.
\]
For the second term on the right-hand side, using \eqref{eq:eT}, we have,
for  $2^{-7}\rho\leq x\leq \delta$,
\[
|\partial_x^2 (\Theta_\sharp^{5-m})|^2
\lesssim 1+t^{\frac{2\alpha-5}{4\alpha+3}}\lesssim t^{-\frac37}.
\]
Thus, for $l=0,\ldots, k-1$,
\begin{equation*}
|I_{k,l,m}|
 \lesssim t^{-\frac57} H_k^\frac 12 H_{k-1}^\frac12 \lesssim t^{-\frac57} H_k.
\end{equation*}

Then, as in the proof of Lemma \ref{LE:qq},
\[
-\mu_t  \int (\partial_x^k q)^2 y_k s_k
\leq - \nu t^{1-\nu} \int_{y>0}(\partial_x^kq)^2y_k s_k 
+ 2^{-10} \rho_t \int (\partial_x^k q)^2 s_k+Ct^{100}.
\]
Finally, by the Cauchy-Schwarz inequality,
\[
\left| \int (\partial_x^k F_\sharp) (\partial_x^k q)\omega_k \right| \leq H_k^\frac 12 \left( \int (\partial_x^k F_\sharp)^2 \omega_k\right)^{\frac 12}.
\]
Using \eqref{eq:eT} and \eqref{eq:Fd}, for $0<x<2\delta$,
\[
(\partial_x^k F_\sharp)^2(x)\lesssim t^2 x^{2\alpha-13-2k}.
\]
Thus, using also that $x\leq 2^{-8}\rho$ implies $\omega_k=0$, 
it follows that
\[
\int (\partial_x^k F_\sharp)^2 \omega_k \lesssim
t^2 \int_{2^{-8}t^\beta}^\delta x^{2\alpha-13-2k} dx \lesssim
\begin{cases} 
t^{2+\frac {2(\alpha-6-k)}{4\alpha+3}} &  \mbox{ if $1<\alpha<6+k$,}\\
t^2 |\log t| &  \mbox{ if $\alpha=6+k$,}\\
t^2  & \mbox{ if $\alpha>6+k$.}
\end{cases}
\]

Gathering all the estimates above, we have proved \eqref{eq:Gj}
for $l=k$.
Moreover, it follows from \eqref{eq:dG}, \eqref{eq:G1} and \eqref{eq:Gj} applied
for all $l=2,\ldots,k$, that there exist positive constants
$\widetilde C_{k,0}\geq 1$, $\widetilde C_{k,1}\geq 1$, \ldots, $\widetilde C_{k,k-1}\geq 1$ such that setting
\[
\widetilde H_k = G_k+\sum_{l=0}^{k-1}\widetilde C_{k,k-l} G_{k-l},
\]
we obtain, for $t>0$ small,
\[
\widetilde H_k'+\frac 12 S_k \lesssim t^{100} + t^{-\frac{5}7} \widetilde H_k+ \widetilde H_k^\frac12 \begin{cases} 
t^{1+\frac {\alpha-6-k}{4\alpha+3}} &  \mbox{ if $1<\alpha<6+k$,}\\
t|\log t|^\frac12 &  \mbox{ if $\alpha=6+k$,}\\
t  & \mbox{ if $\alpha>6+k$.}
\end{cases}
\]
and we obtain \eqref{eq:Lj} for $j=k$ by applying Lemma \ref{LE:ax}
to the function $\widetilde H_k$.
\end{proof}

\section{The blowup profile}\label{S:3}

\subsection{The linearized operator}

We recall standard properties of the operator $\cL$ and introduce useful functions for the construction of the blow-up profile.

\begin{lemma}\label{le:li}
The self-adjoint operator $\cL$ on $L^2(\mathbb R)$ defined by~\eqref{eq:cL} satisfies the following properties.
\begin{enumerate} 
\item[(i)] \emph{Spectrum of $\cL$.} The operator $\cL$ has only one negative eigenvalue $-8$ associated to the eigenfunction $Q^3$.
Moreover, $\ker \cL=\{aQ' : a \in \RR\}$ and $\sigma_\textnormal{ess}( \cL)=[1,+\infty)$.
\item[(ii)] \emph{Scaling.} The operator $\Lambda$ being defined in~\eqref{eq:La}, it holds $\cL\Lambda Q=-2Q$ and $(Q,\Lambda Q)=0$.
\item[(iii)] \emph{Coercivity of $\cL$.}
There exists $\nu_0>0$ such that, for all $\phi \in H^1(\RR)$,
\begin{equation} \label{eq:co}
(\cL\phi,\phi) \ge \nu_0\|\phi\|_{H^1}^2-\frac1{\nu_0}\left((\phi,Q)^2+(\phi,y\Lambda Q)^2 +(\phi,\Lambda Q)^2 \right) .
\end{equation}
\item[(iv)] \emph{Inversion of $\cL$ in $\cY$.} For any function $h \in \cY$ such that $\int hQ'=0$,
there exists a unique function $f \in \cY$ orthogonal to $Q'$ and such that $\cL f=h$; moreover, if $h$ is even (resp. odd) then $f$ is even (resp. odd). 
\item[(v)] \emph{Inversion of $\cL$ in $\cZ_k^-$.}
Let $k\geq 0$. For any function $h \in \cZ_k^-$ such that $\int h Q'=0$,
there exists a unique function $f \in \cZ_{k+1}^-$ orthogonal to $Q'$ and such that $\cL f=h$.
\end{enumerate}
\end{lemma}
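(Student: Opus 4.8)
The plan is to establish parts (i)--(iii) by recalling the standard analysis of $\cL$ around the soliton $Q$, and to prove the two inversion statements (iv)--(v) by the same Fredholm argument, the only new point being the bookkeeping of the function spaces $\cY$ and $\cZ_k^-$. For (i), I would note that $\cL=-\partial_x^2+1-5Q^4$ is a Schr\"odinger operator with smooth, exponentially decaying, even potential, so $\sigma_{\mathrm{ess}}(\cL)=[1,+\infty)$ is immediate; differentiating $Q''+Q^5=Q$ gives $\cL Q'=0$, and since $Q'$ has exactly one zero, $Q'$ is the ground state direction of the conjugated operator, forcing $\ker\cL=\RR Q'$ and exactly one negative eigenvalue below it. A direct computation from the explicit $Q=(3\sech^2(2x))^{1/4}$ verifies $\cL Q^3=-8Q^3$, identifying that negative eigenvalue. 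For (ii), applying $\Lambda=\tfrac12+y\partial_y$ to $Q''+Q^5=Q$ and using $[\cL,\Lambda]$-type identities gives $\cL\Lambda Q=-2Q$, and $(Q,\Lambda Q)=\tfrac12\frac{d}{d\lambda}\big|_{\lambda=1}\int\lambda^{-1}Q(\cdot/\lambda)^2\,dx\cdot(\dots)$; more simply $(Q,\Lambda Q)=\tfrac12\int\partial_y(yQ^2)\,dy=0$ after integration by parts, since $\int Q^2$ is scaling invariant. For (iii), the coercivity up to the three quadratic corrections is the classical fact that $\cL$ is positive on the orthogonal complement of a finite-dimensional space spanned by $Q^3$ and $Q'$; replacing those by the computable directions $Q$, $\Lambda Q$, $y\Lambda Q$ (which have nonzero pairing with the unstable/kernel directions) and absorbing the gap into $\nu_0\|\phi\|_{H^1}^2$ via the essential spectrum at $1$ is standard — I would cite \cite{We82,MaMejmpa} rather than redo it.

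For (iv), given $h\in\cY$ with $\int hQ'=0$, the solvability condition for $\cL f=h$ is exactly orthogonality to $\ker\cL=\RR Q'$, so a solution exists in, say, $H^2_{\mathrm{loc}}$; uniqueness modulo $\ker\cL$ is fixed by demanding $\int fQ'=0$. The point is the regularity and decay: by elliptic regularity $f\in\cC^\infty$, and to get $f\in\cY$ I would solve the ODE $-f''+f=h+5Q^4 f$ explicitly by variation of parameters using the two fundamental solutions $e^{\pm y}$ of $-f''+f=0$. Since the right-hand side $h+5Q^4f$ decays like $(1+|y|)^{r}e^{-|y|}$, the Duhamel integrals produce a solution of the same type (the resonance — the bounded solution $e^{-|y|}$ at $\pm\infty$ — is precisely killed by the orthogonality $\int hQ'=0$, which is what allows the decaying, rather than merely bounded, branch to be selected on both sides). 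Parity is preserved because $\cL$ commutes with $y\mapsto-y$ and the solution normalized by $\int fQ'=0$ is unique.

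For (v), the argument is the same variation-of-parameters computation but now the forcing $h\in\cZ_k^-=\cY+\SPAN(z_0^-,\dots,z_k^-)$ contains, on the left half-line, polynomial pieces $y^j$ ($0\le j\le k$) multiplied by the ``vanishing for $y>0$'' cutoffs, plus the odd piece $z_0$-type behavior. Writing $f=f_{\cY}+(\text{particular solution for the polynomial part})$, one checks that $\cL$ applied to a degree-$j$ polynomial (times a left cutoff) is, up to a $\cY$-error from the $Q^4$ term and from the cutoff derivatives, again a polynomial of degree $j$; inverting $-f''+f$ on a polynomial of degree $j$ gives a polynomial of the \emph{same} degree $j$, so the particular solution lives in $\SPAN(z_0^-,\dots,z_{k}^-)$ — hence $f\in\cZ_{k+1}^-$ once we also account for the fact that $\Lambda$-type corrections and one extra antiderivative may raise the polynomial degree by one (this is the reason the codomain is $\cZ_{k+1}^-$ and not $\cZ_k^-$, consistent with \eqref{eq:zz}). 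Existence and the $\int fQ'=0$ normalization are handled exactly as in (iv), the solvability condition again being $\int hQ'=0$.

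The main obstacle is item (v): one must track carefully how the polynomial degree propagates through $\cL^{-1}$ and through the variation-of-parameters antiderivatives, and verify that the left-supported low-regularity pieces $z_j^-$ do not leak into $\cY$-incompatible growth on the right. Concretely, the delicate check is that $\cL$ maps $\SPAN(z_0^-,\dots,z_k^-)$ into $\cZ_{k}^-$ modulo $\cY$ (so that the fixed-point/splitting argument closes) and that inverting raises the index by at most one; this is a bounded, finite-dimensional linear-algebra computation but it is the place where a sign or degree error would be fatal. Parts (i)--(iii) I regard as citable background, and (iv) is the classical resonance-inversion lemma; only (v) requires genuine new (if elementary) bookkeeping.
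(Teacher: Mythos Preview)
The paper's own proof consists entirely of citations: (i)--(iv) are deferred to \cite[Lemma~2.1]{MMR1}, and (v) to \cite[Lemma~5]{CoM1} and \cite[Section~2.3]{MP24}. Your sketches for (i)--(iv) are correct and match the standard arguments those references contain.

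For (v), your variation-of-parameters strategy against the fundamental pair $(Q',\psi)$ of $\cL$ is also the right one, but the explanation for the index jump from $k$ to $k+1$ is muddled. You correctly observe that inverting $-f''+f$ on a degree-$j$ polynomial returns degree $j$, and since $5Q^4 f\in\cY$ (the exponential decay of $Q^4$ beats any polynomial growth of $f$), the polynomial part of $f$ on the left half-line has degree at most $k$. The invocation of ``$\Lambda$-type corrections and one extra antiderivative'' is not a mechanism present in solving $\cL f=h$: there is no antiderivative in the problem, and the reference to \eqref{eq:zz} concerns the action of the operators $\Lambda_j$, not of $\cL^{-1}$. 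In fact your own computation already yields $f\in\cZ_k^-\subset\cZ_{k+1}^-$; the lemma's $\cZ_{k+1}^-$ is simply the (possibly non-sharp) statement recorded in the cited references, and it is all that is used downstream. Just delete the hand-waving sentence and state the conclusion.
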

\begin{proof}
The points (i)--(iv) are standard, see for example \cite[Lemma 2.1]{MMR1}.
For (v), we refer to \cite[Lemma 5]{CoM1} and \cite[Section 2.3]{MP24}.
\end{proof}

\begin{lemma} \label{le:PA}
\begin{enumerate}
\item[(i)] There exists a unique even function $R \in \cY$ such that 
$\cL R=5 Q^4$.
\item[(ii)] There exists a unique function $P \in \cZ_0^-$ such that 
\begin{equation} \label{eq:PP}
(\cL P)'=\Lambda Q, \quad \lim_{y \to - \infty}P(y)= 2m_0, \quad \lim_{y \to +\infty} P(y)=0,\quad (P,Q')=0.
\end{equation}
Moreover,
\begin{equation} \label{eq:PQ} 
(P,Q)=m_0^2 .
\end{equation}
\item[(iii)] Let $A_1=P-m_0(1-R)$.
Then, $A_1\in \cZ_0$ is an odd function of class $\cC^\infty$ satisfying
\begin{equation} \label{eq:A1}
A_1' \in \cY,\quad (\cL A_1)'=\Lambda Q \quad \mbox{and}\quad \lim_{y \to \mp \infty}A_1(y)=\pm m_0.
\end{equation}
Moreover,
\begin{equation} \label{eq:AQ} 
(A_1,Q)=0 ,\quad (A_1,Q')=0.
\end{equation}
\item[(iv)] The following identity holds
\begin{equation} \label{eq:id}
\left( -\frac12P+yP' +20(Q^3 A_1P)' ,Q\right)=m_0^2 .
\end{equation}
\begin{equation} \label{eq:pq}
\left( P, Q^5 \right)=-m_0^2
\end{equation}
\end{enumerate}
\end{lemma}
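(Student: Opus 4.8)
The plan is to treat the four parts in order, using Lemma~\ref{le:li} as the main tool. For (i), since $5Q^4\in\cY$ is even, the only obstruction to inverting $\cL$ via Lemma~\ref{le:li}(iv) is the condition $\int 5Q^4 Q'=0$; but this integral is $\int (Q^5)'=0$ by decay of $Q$, so there is a unique even $R\in\cY$ orthogonal to $Q'$ with $\cL R=5Q^4$. For (ii), I would first invert $\cL$ to find $\cL$-preimages: note $\Lambda Q$ is even, and $\int \Lambda Q\,Q'$ is an odd integrand, hence zero, so by Lemma~\ref{le:li}(iv) there is a unique even $g\in\cY$ with $\cL g=\Lambda Q$, $g\perp Q'$. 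However, the condition in (ii) is $(\cL P)'=\Lambda Q$, i.e. $\cL P$ should be an antiderivative of $\Lambda Q$. Writing $\Lambda Q=\frac12 Q+yQ'=\tfrac12(\text{something})$, one checks $\int \Lambda Q = \int(\tfrac12 Q + yQ') = \int(\tfrac12 Q - Q)\,$ (integrating $yQ'$ by parts) $=-\tfrac12\int Q=-2m_0\ne 0$, so the antiderivative $\Pi(y):=\int_{-\infty}^y\Lambda Q$ tends to $-2m_0$ as $y\to+\infty$ and to $0$ as $y\to-\infty$, hence $\Pi\in\cZ_0^-$ (after checking $\int \Pi\,Q'=0$, which holds because an integration by parts turns it into $-\int\Lambda Q\,Q=-(Q,\Lambda Q)=0$ by Lemma~\ref{le:li}(ii)). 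Then Lemma~\ref{le:li}(v) with $k=0$ produces a unique $P\in\cZ_1^-$ with $\cL P=\Pi$ and $P\perp Q'$; one must then verify the two prescribed limits of $P$ itself (not $\cL P$), which follow by analyzing the ODE $-P''+P-5Q^4P=\Pi$ at $\pm\infty$: since $\Pi\to 0$ at $-\infty$ the bounded solution decays, giving $\lim_{-\infty}P=0$... here I must be careful with the sign convention stated, so I would match it to the asymptotics $\lim_{y\to-\infty}P=2m_0$, $\lim_{y\to+\infty}P=0$ as written, using that $\Pi\to -2m_0$ at $+\infty$ forces $P\to -2m_0/1\cdot(-1)$-type behavior — the precise constant $2m_0$ comes out of $-P+ (\text{lower order})=\Pi$ asymptotically, so $P\sim-\Pi$ at the relevant end. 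Finally $(P,Q)=m_0^2$ is obtained by pairing $\cL P=\Pi$ with the right test function, or more cleanly by the identity $(P,Q')=0$ combined with a direct computation using $\cL Q'=0$ and integration by parts: $(\Pi,Q)=(\cL P,Q)=(P,\cL Q)$, and $\cL Q = -Q''+Q-5Q^4Q = -Q''+Q-5Q^5$; using $Q''=Q-Q^5$ this is $-4Q^5$... so $(\Pi,Q)=-4(P,Q^5)$, which together with the independent evaluation of $(\Pi,Q)=\int(\int_{-\infty}^y\Lambda Q)Q\,dy$ (integrate by parts) pins down both $(P,Q)$ and anticipates~\eqref{eq:pq}.

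For (iii), I would \emph{define} $A_1:=P-m_0(1-R)$ and simply compute: $\cL A_1=\cL P-m_0\cL 1+m_0\cL R$. Now $\cL 1=-0+1-5Q^4=1-5Q^4$, and $\cL R=5Q^4$, so $\cL A_1=\cL P-m_0(1-5Q^4)+m_0\cdot 5Q^4 = \cL P - m_0 + 10 m_0 Q^4$; hmm — rather than push constants around, the clean route is to differentiate: $(\cL A_1)'=(\cL P)'-m_0(\cL(1-R))' = \Lambda Q - m_0(\cL(1-R))'$, and since $\cL(1-R)=\cL 1-\cL R=(1-5Q^4)-5Q^4$ is a function whose derivative is $-20Q^3Q'-$(smooth), one checks this correction is chosen precisely so that $(\cL A_1)'=\Lambda Q$ survives — actually the point of subtracting $m_0(1-R)$ is to kill the nonzero limit of $P$ at $-\infty$, since $1-R\to 1$ at $\pm\infty$ (as $R\in\cY$ decays), making $A_1\to \mp m_0$ as claimed and $A_1\in\cZ_0$ (odd part: one shows $A_1$ is odd by a uniqueness/parity argument, since $P$'s "even-in-the-interior plus odd tail" structure combined with $m_0(1-R)$ even leaves an odd function — alternatively define $A_1$ and verify $A_1(-y)=-A_1(y)$ from the ODE it solves). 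Then $A_1'\in\cY$ because the nonconstant part of $A_1$ decays. The orthogonalities $(A_1,Q)=(P,Q)-m_0(1,Q)+m_0(R,Q)=m_0^2-m_0\cdot 4m_0+m_0(R,Q)$, and $(R,Q)=(R,\cL(\Lambda Q)/(-2))$... better: $(R,Q)$ is computed via $\cL R=5Q^4$ so $5(R,Q^4)=\dots$ — I would instead use $(\cL R,\Lambda Q)=5(Q^4,\Lambda Q)$ and $(\cL R,\Lambda Q)=(R,\cL\Lambda Q)=-2(R,Q)$ by Lemma~\ref{le:li}(ii), reducing $(R,Q)$ to an explicit integral of $Q^4\Lambda Q=Q^4(\tfrac12 Q+yQ')$ which evaluates (integrating $yQ^4Q'$ by parts) to $\tfrac12\int Q^5 - \tfrac15\int Q^5\cdot$(boundary)$=\tfrac{3}{10}\int Q^5$ up to the right constant, and $\int Q^5$ relates to $m_0$ through $\int Q^5=\int(Q-Q'')\cdot$... no: from $Q''+Q^5=Q$, $\int Q^5=\int Q - \int Q'' = \int Q = 4m_0$. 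Plugging in gives $(A_1,Q)=0$. Similarly $(A_1,Q')=(P,Q')-m_0(1,Q')+m_0(R,Q')=0-0+0$ since $\int Q'=0$ and $R$ even forces $(R,Q')=0$.

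For (iv), the two identities~\eqref{eq:id} and~\eqref{eq:pq}: for~\eqref{eq:pq}, as sketched above, pair $\cL P=\Pi$ (where $\Pi=\int_{-\infty}^y\Lambda Q$) with $Q$, use $\cL Q=-4Q^5$ (from $Q''=Q-Q^5$), getting $(P,-4Q^5)=(\Pi,Q)$, and evaluate $(\Pi,Q)$ by parts: $(\Pi,Q)=-\int\Lambda Q\,(\int_y^\infty Q)$; using $\Lambda Q=\tfrac12 Q+yQ'$ and integrating the $yQ'$ term by parts, plus $\int Q=4m_0$, this yields $(\Pi,Q)=4m_0^2$, hence $(P,Q^5)=-m_0^2$. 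For~\eqref{eq:id}, I expect this to be the main obstacle: the combination $-\tfrac12 P+yP'+20(Q^3A_1P)'=-\Lambda_1 P\cdot(\text{sign})+20(Q^3A_1P)'$ is exactly what appears when one computes a certain energy/virial quantity for the approximate profile, and the claim is that pairing it with $Q$ gives $m_0^2$. The strategy is: expand $(yP',Q)=-\tfrac12(P,Q)-(P,yQ')$ by parts, so $(-\tfrac12 P+yP',Q)=-(P,\tfrac12 Q+yQ')=-(P,\Lambda Q)=-(P,\Lambda Q)$; and $(P,\Lambda Q)=(P,-\tfrac12\cL^{-1}(\dots))$ — actually use $\cL\Lambda Q=-2Q$? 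No, use instead that $\Lambda Q$ is the right-hand side: $(P,\Lambda Q)=((\cL P)',\text{antiderivative})$... Cleaner: $(P,\Lambda Q)=(P,(\cL P)')=-((P)',\cL P)=-(P',\cL P)$, and since $\cL$ is self-adjoint $(P',\cL P)=(\cL P',P)$, while $\cL P'=(\cL P)'+[\cL,\partial_x]P=(\cL P)'+20Q^3Q'P$ (since $[\partial_x,\cL]=[\partial_x,-5Q^4]=-20Q^3Q'$, so $[\cL,\partial_x]=20Q^3Q'$ acting as multiplication), thus $(P',\cL P)=(P,(\cL P)')+20(P,Q^3Q'P)=(P,\Lambda Q)+20(P^2 Q^3,Q')$; this gives $(P,\Lambda Q)=-(P,\Lambda Q)-20(P^2Q^3,Q')$, so $2(P,\Lambda Q)=-20(P^2Q^3,Q')=-20\cdot\tfrac13(\,(P^3)'... )$ — wait $P^2Q^3 Q' $ is not a perfect derivative, but $20\int P^2Q^3Q' = \int P^2(Q^4)'\cdot 5 = -5\int (P^2)'Q^4=-10\int PP'Q^4$. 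Meanwhile the term $20((Q^3A_1P)',Q)=-20(Q^3A_1P,Q')=-20\int Q^3A_1PQ'=-5\int A_1P(Q^4)'=5\int(A_1P)'Q^4=5\int(A_1'P+A_1P')Q^4$. Adding everything and using $A_1=P-m_0(1-R)$ to replace $A_1$, plus the ODEs for $P$ ($\cL P=\Pi$, i.e. $-P''+P-5Q^4P=\Pi$) and $R$ ($-R''+R-5Q^4R=5Q^4$), one reduces~\eqref{eq:id} to combinations of $\int Q^k$-type integrals and the already-established $(P,Q)=m_0^2$, $(P,Q^5)=-m_0^2$; the bookkeeping is delicate but mechanical. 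I would organize it by first proving~\eqref{eq:pq}, then using it repeatedly inside the expansion of~\eqref{eq:id}, and finally invoking $(A_1,Q)=0$ and $(P,Q)=m_0^2$ to collapse the remaining terms to $m_0^2$. The main risk is sign errors in the commutator $[\cL,\partial_x]$ and in the parity bookkeeping, so I would double-check each integration by parts against the known value $\cL\Lambda Q=-2Q$ as a consistency test.
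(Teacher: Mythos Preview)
Your treatment of parts (i)--(iii) is largely beside the point here: the paper simply cites \cite{MP20} for these, so there is nothing to compare. (Your sketch of (iii) does run into trouble: with $\cL R=5Q^4$ as stated one computes $\cL(m_0(1-R))=m_0-10m_0Q^4$, which is \emph{not} constant, so $(\cL A_1)'$ would acquire an extra $40m_0Q^3Q'$ term; this suggests a sign slip somewhere in the statement or in \cite{MP20}'s conventions, but it is not your burden.)

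For \eqref{eq:pq} your route via $\cL Q=-4Q^5$ and $(\cL P,Q)=(P,\cL Q)$ is a legitimate and arguably slicker alternative to the paper's computation of $((\cL P)',\int_y^{+\infty}Q)$ in two ways; both reduce to evaluating the same antiderivative integrals. Do be careful about \emph{which} antiderivative of $\Lambda Q$ equals $\cL P$: since $P\to0$ at $+\infty$ one has $\cL P(y)=-\int_y^{+\infty}\Lambda Q$, not $\int_{-\infty}^y\Lambda Q$; the two differ by $-2m_0$.

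For \eqref{eq:id} your approach has a genuine gap. You try to evaluate $(P,\Lambda Q)$ by writing it as $(P,(\cL P)')$ and pushing the derivative through a commutator, which produces terms quadratic in $P$ with no obvious cancellation; you then say ``the bookkeeping is delicate but mechanical'' without closing it, and there is already an arithmetic slip ($(yP',Q)=-(P,Q)-(P,yQ')$, not $-\tfrac12(P,Q)-(P,yQ')$). The paper instead computes $((\cL A_1)',P)$ two ways. One way gives $(\Lambda Q,P)=-(Q,\Lambda P)=-\tfrac12(Q,P)-(Q,yP')$. The other uses $(\cL A_1)'=\cL(A_1')-20Q^3Q'A_1$, self-adjointness of $\cL$, and integration by parts in $(A_1',\cL P)$, picking up the boundary term $[A_1\,\cL P]_{-\infty}^{+\infty}=-2m_0^2$ from the nonzero limits of $A_1$ and $\cL P$, and crucially using $(A_1,\Lambda Q)=0$ because $A_1$ is odd and $\Lambda Q$ is even. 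Equating the two expressions and using $(P,Q)=m_0^2$ gives \eqref{eq:id} in two lines. The key idea you are missing is to route the computation through $A_1$ rather than $P$ alone, precisely so that parity kills the term $(A_1,\Lambda Q)$.
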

\begin{proof}
The proofs of (i), (ii) and (iii) are given in \cite{MP20}.
Now, we prove the identities in (iv). We compute the scalar product of $(\mathcal{L}{A}_1)'$ with $P$.  On the one hand, we have by integrating by parts and using the limits in \eqref{eq:PP} and \eqref{eq:A1}
\begin{align*}
\left( (\mathcal{L}{A}_1)',P\right)&=\left( (\mathcal{L}{A}_1'),P\right)-20\left( Q^3Q'{A}_1,P\right) \\ 
&=\left( {A}_1',\mathcal{L}P\right)+20\left( (Q^3{A}_1P)',Q\right)\\ 
&={A}_1P\Big|_{-\infty}^{+\infty}-\left( {A}_1,(\mathcal{L}P)'\right)+20\left( (Q^3{A}_1P)',Q\right) \\ &
=-2m_0^2+20\left( (Q^3{A}_1P)',Q\right) ,
\end{align*}
where we used in the last step that $\left( {A}_1,(\mathcal{L}P)'\right)=\left( {A}_1,\Lambda Q\right)=0$ since
the function ${A}_1\Lambda Q$ is odd.
 On the other hand, we see from  \eqref{eq:A1}  that 
\begin{align*}
\left( (\mathcal{L}{A}_1)',P\right)=\left( \Lambda Q,P\right)=-\left( Q,\Lambda P\right)=-\frac12\left( Q,P\right)-\left( Q,yP'\right)
\end{align*}
Therefore, we conclude the proof of \eqref{eq:id} by combining these identities and using  $\left(P,Q\right)=m_0^2$ (see \eqref{eq:PQ}).

We observe by using the equation in \eqref{eq:PP} that 
\begin{equation*} 
\left( (\mathcal{L}P)', \int_y^{+\infty}Q \right)=\left(  \Lambda Q, \int_y^{+\infty}Q \right)=-\frac12 \left(   Q, \int_y^{+\infty}Q \right)=-\frac14 \left( \int_{-\infty}^{+\infty}Q \right)^2
\end{equation*}
On the other hand, we have by integrating by parts, and then using \eqref{eq:Q5}, \eqref{eq:m0} and \eqref{eq:PP},
\begin{align*} 
\left( (\mathcal{L}P)', \int_y^{+\infty}Q \right)&=-\left( P'', Q \right)+\left( P', \int_y^{+\infty}Q  \right)-\left( 5Q^4P, Q \right) \\ 
&=-\left( P, Q'' \right)-2m_0 \left(\int_{-\infty}^{+\infty}Q\right)+\left( P, Q \right)-5\left( Q^5, P \right) \\ 
&=-\frac12\left(\int_{-\infty}^{+\infty}Q\right)^2-4\left( Q^5, P \right) .
\end{align*}
Thus, we infer combing these identities that 
\begin{equation*}
\left( Q^5, P \right)=-\frac1{16}\left(\int_{-\infty}^{+\infty}Q\right)^2=-m_0^2 ,
\end{equation*}
which is \eqref{eq:pq}.

\end{proof}

\subsection{Bootstrap estimates on the parameters}
Define
\begin{equation}\label{eq:ka}
\kappa_\tau= (4\alpha+3)^{-1} (2\alpha)^{-\frac{4\alpha+3}{2\alpha}},\quad
\kappa_\sigma=(2\alpha)^{-\frac1{2\alpha}},\quad
\kappa_\lambda=(2\alpha)^{-\frac{2\alpha+1}{2\alpha}}.
\end{equation}
Let $I$ be an interval of $(-\infty,-s_0]$
where $s_0\geq 1$ is large.
\begin{remark}
 The fact that we work for negative rescaled time $s$ is related to the method used for constructing the blowup solution, which is backward in time $t$.
Since we argue from $t=0$ to some positive time $t_0>0$, and since 
$|s|\to+\infty$ is related to the blowup time $t\downarrow0$, to keep the same sense of time,
we  consider large negative $s$.
\end{remark}

Let $C^1$ functions $\tau$, $\lambda$ and $\sigma$ defined on $I$,
where $\tau>0$, $\lambda>0$, and satisfying the following
estimates, for a bootstrap constant $C_1^\star>0$ to be fixed:
\begin{align}
 \left|\tau(s)-\kappa_\tau |s|^{-\frac{4\alpha+3}{2\alpha}}\right|&\leq 
C_1^\star |s|^{-\frac{4\alpha+3}{2\alpha}-1}\log|s|,\label{eq:B1}
\\
 \left|\sigma(s)-\kappa_\sigma  |s|^{-\frac1{2\alpha}}\right|&\leq C_1^\star |s|^{-\frac1{2\alpha}-1}\log|s|,\label{eq:B2}\\
 \left|\lambda(s)-\kappa_\lambda  |s|^{-\frac{2\alpha+1}{2\alpha}}\right|&\leq C_1^\star |s|^{-\frac{2\alpha+1}{2\alpha}-1}\log|s|\label{eq:B3}
\end{align}
We observe that \eqref{eq:B1}--\eqref{eq:B3} imply
\begin{equation} \label{eq:B4}
\left| \lambda^\frac12\sigma^{\alpha-\frac12} - (2\alpha)^{-1} |s|^{-1}\right|
+\left| \frac\lambda\sigma - (2\alpha)^{-1} |s|^{-1}\right|
+\left| \sigma^{2\alpha} - (2\alpha)^{-1} |s|^{-1}\right|
\lesssim C_1^\star |s|^{-2}\log|s| 
\end{equation}
and 
\begin{equation} \label{eq:B4.1}
\left| \tau \sigma^{-3}-(4\alpha+3)^{-1}(2\alpha)^{-2}|s|^{-2}\right| \lesssim C_1^\star|s|^{-3}\log|s| .
\end{equation}
We will take $s_0$ sufficiently large, possibly
depending on $C_1^\star$. 

\subsection{Definition of the blowup profile}
Let $T\in (0,1)$ and $S\in I$. 
For $s\in I$, let
\begin{equation}\label{eq:ta}
\tau(s) = T + \int_S^s \lambda^3(s') ds'.
\end{equation}
The function $\Theta(t,x)$ being defined in Section \ref{S:2} (see \eqref{eq:T0}, \eqref{eq:TT} and Proposition \ref{PR:qq}), we set
\begin{equation} \label{def:theta}
\theta(s,y)= \lambda^\frac12(s)\Theta(\tau(s),\lambda(s)y+\sigma(s)).
\end{equation}
We also defined a cut-off function
\begin{equation}\label{def:chiL}
\ZL(s,y)=\CL\left(|s|^{-1}y\right).
\end{equation}
Let $A_1\in \cZ_0$ be defined in Lemma \ref{le:PA} and
Let $A_2,A_2^*,A_3:\RR\to\RR$ to be chosen later with
\begin{equation}\label{eq:aa}
A_1\in \cZ_0,\quad A_2\in\cY,\quad A_2^*\in\cY, \quad A_3\in\cZ_1^-
\end{equation}
to be defined later. Define
\begin{equation}\label{eq:v9}
V_0= Q \ZL,\quad
V_1=c_1 A_1 \ZL,\quad V_2=(c_2 P + A_2) \ZL,\quad
V_2^*= A_2^* \ZL,\quad V_3=A_3 \ZL,
\end{equation}
and
\begin{equation}\label{eq:WW}
V=V_1\theta+V_2\theta^2+V_2^*\partial_y\theta+V_3\theta^3,\quad 
W=V_0+V,
\end{equation}
where $c_1=-(2\alpha+1)$ and where the constant $c_2$ is to be chosen later.
Let
\begin{align}
\widetilde\cL & = -\partial_x^2 + 1 - 5 V_0^4,\label{eq:Lt}\\
\beta(s )& =c_1 \lambda^\frac12(s)\sigma^{\alpha-\frac12}(s)
+c_2 \lambda(s)\sigma^{2\alpha-1}(s).\label{eq:bt}
\end{align}
\begin{remark}\label{rk:ap}
We comment on the terms composing the approximate rescaled solution $W$.
The first term $V_1$ is the main term, which will actually produce the desired blowup rate.
This term will also produce a nontrivial blowup residue.
The other terms in $V$ are used to remove error terms due to the presence of $V_1$, up to a certain order of $|s|^{-1}$,
which is needed to obtain a sufficiently small error term and to close the estimates.
Theoretically, it would be possible to push the expansion of $V$ to any arbitrary order of $|s|^{-1}$,
to provide a better approximate solution, at least locally around the soliton.

Note that the functions $A_2$, $A_2^*$, $A_3$ have exponential decay on the right.
Only $A_1$ behaves as a nonzero constant on the right. This will allow us to obtain an error term with
exponential decay on the right. This remarkable property of the ansatz is important to close the estimates
in the construction proof. See \eqref{eq:PW} below.

Note that the terms $V_1\theta$, $V_2\theta^2$ and $V_3\theta^3$ formally have the
decay in $s$ respectively $|s|^{-1}$, $|s|^{-2}$ and $|s|^{-3}$ for $s$ large.
The expression $V$ is thus formally an asymptotic decomposition of the approximate solution in powers of $s^{-1}$. Moreover, the term $\partial_y \theta$ is formally of the size $|s|^{-2}$.
However, we choose not merge it with the term $V_2\theta^2$ of the same size in $|s|^{-1}$.
Indeed, this term appears intrinsically and more precise asymptotic computations for the
parameters $\lambda$ and $\sigma$ would be needed otherwise. See also Remark \ref{rk:ag}.
\end{remark}

We define the following cut-off functions
\begin{align*}
    \eta(s,y)&=
    \ONE_{[-1,+\infty)}(|s|^{-1}y),\\
    \eta_L(s,y)&=\ONE_{[-1,-\frac12]}(|s|^{-1}y),\\
    \eta_{I}(s,y)&=
    \ONE_{[-1,0]}(|s|^{-1}y),\\
     \eta_R(s,y)&=
    \ONE_{[1,\delta |s|^{-1} \lambda^{-1]}}(|s|^{-1} y)
    =\ONE_{[1,+\infty)}(|s|^{-1} y) \cdot \ONE_{(-\infty,1]}(\delta^{-1} \lambda y).
\end{align*}
Let
\begin{equation}\label{eq:EW}
\mathcal{E} (W) = 
\partial_s W + \partial_y (\partial_y^2 W - W +W^5)
-\frac{\lambda_s}{\lambda}\Lambda W - \left(\frac{\sigma_s}{\lambda} - 1\right)\partial_y W.
\end{equation}
Estimating $\cE(W)$ is equivalent to measuring to what extend $W$ is an accurate approximate solution
of the rescaled gKdV equation
\begin{equation}\label{rescaled}
\partial_s w + \partial_y (\partial_y^2 w - w +w^5)
-\frac{\lambda_s}{\lambda}\Lambda w - \left(\frac{\sigma_s}{\lambda} - 1\right)\partial_y w=0.
\end{equation}
In the following proposition, we give basic estimates on $W$ and we estimate $\cE(W)$.

\begin{proposition}\label{pr:bp}
Assume \eqref{eq:B1}--\eqref{eq:B3}.
There exist $c_2$ and functions $A_2,A_2^*,A_3$  as in \eqref{eq:aa} such that
the following holds.
\begin{enumerate}
\item[(i)] \emph{Pointwise estimates.} For $p=0,1,2,3$,
\begin{equation}\label{eq:w0}
|\partial_y^p W| 
\lesssim 
\omega\eta+|s|^{-1-p}\eta +\lambda^{\frac12+p} \eta_R
\end{equation}
\item[(ii)] \emph{Error of $W$ for the rescaled equation.}
\begin{equation} \label{eq:ew}
\cE(W)=-\left(\frac{\lambda_s}{\lambda}+\beta\right) \left( \Lambda V_0+\Psi_{\lambda} \right)
- \left( \frac{\sigma_s}{\lambda} - 1 \right) (\partial_y V_0+\Psi_{\sigma})+ \Psi_{W}
\end{equation}
where for any $p=0,1$, for any $y\in \RR$,
\begin{align}
|\partial_y^p \Psi_\lambda|
&\lesssim |s|^{-1}\omega\eta + |s|^{-1-p}\eta_L
+|s|^{-2-p}\eta_I,\label{eq:ls}\\
|\partial_y^p \Psi_\sigma|&\lesssim |s|^{-1}\omega\eta + |s|^{-2-p}\eta_L
+|s|^{-3-p}\eta_I , \label{eq:ss} \\ 
|\partial_y^p \Psi_W|&\lesssim  C_1^\star|s|^{-4}(\log|s|) (1+y^4) \omega\eta+|s|^{-4}(1+|y|)\eta_I
+C_1^\star|s|^{-2}(\log|s|)\eta_L. 
\label{eq:PW}
\end{align} 
Moreover,
\begin{equation}\label{est:Psi_lambda:Q}
|(\Psi_\lambda,Q)|\lesssim C_1^\star |s|^{-2}\log|s|.
\end{equation}
\item[(iii)] \emph{Norms of $W$.}
\begin{align} 
\left| \int W^2 - \int Q^2 \right|& \lesssim \delta^{2\alpha},\label{eq:ma}\\
\left| \int (\partial_y W)^2 - \int (Q')^2 \right| &\lesssim |s|^{-2}.\label{eq:h1}
\end{align}
\item[(iv)] \emph{Variation of the energy of $W$.}
\begin{equation} \label{eq:en}
\left|\frac{d}{ds}\left[\frac {E(W)}{\lambda^2}\right]\right|
\lesssimD \frac 1{\lambda^2} \left( |s|^{-1}\left| \frac{\lambda_s}{\lambda}+\beta \right|
+ |s|^{-1}\left| \frac{\sigma_s}{\lambda}-1\right|+ C_1^\star|s|^{-4}\log |s| \right).
\end{equation}

\end{enumerate}
\end{proposition}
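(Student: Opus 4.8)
The plan is to substitute $W=V_0+V$ into $\cE(\cdot)$, expand every term in powers of the slowly–varying small quantity $\theta$ (keeping $\theta$, $\theta^2$, $\partial_y\theta$, $\theta^3$ separate, since by \eqref{eq:B4} these have respective sizes $|s|^{-1}$, $|s|^{-2}$, $|s|^{-2}$, $|s|^{-3}$ in the soliton core), and then to choose $c_2$ and the correctors $A_2,A_2^*,A_3$ so that every contribution of size $|s|^{-1},|s|^{-2},|s|^{-3}$ that is \emph{not} of modulation type (a multiple of $\Lambda V_0+(\text{error})$ or of $\partial_y V_0+(\text{error})$) is cancelled; what remains is $\Psi_W$. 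The three structural inputs are: that $m_0\theta$ is an exact solution of the rescaled equation \eqref{rescaled} — this is precisely why \eqref{eq:TT} carries the coefficient $m_0^4$ — the identities $(\cL A_1)'=\Lambda Q$ and $(\cL P)'=\Lambda Q$ from Lemma \ref{le:PA}, and the inversion results of Lemma \ref{le:li}(iv)--(v). Throughout, $\theta$ and its first few $y$–derivatives are controlled by the $\cC^4$ description of $\Theta$ near $x\sim\sigma$ from Proposition \ref{PR:qq} and by \eqref{eq:B1}--\eqref{eq:B4}.

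For part (ii) I would first compute $\cE(V_0)$: since $Q''-Q+Q^5=0$, only $\partial_s\chi_L$, the commutators of $\partial_y^3$ with $\chi_L$, and the $\chi_L^5-\chi_L$ discrepancy survive, all supported near $y\sim-|s|$ and hence exponentially small, giving $\cE(V_0)=-\tfrac{\lambda_s}{\lambda}\Lambda V_0-(\tfrac{\sigma_s}{\lambda}-1)\partial_y V_0+O(e^{-|s|})$. Next the terms linear in $\theta$: the crucial one is $\partial_y\big(\partial_y^2(V_1\theta)-V_1\theta+5V_0^4 V_1\theta\big)=-\partial_y\big(\widetilde\cL(V_1\theta)\big)=-\partial_y\big[(\widetilde\cL V_1)\theta-2V_1'\partial_y\theta-V_1\partial_y^2\theta\big]$; since $\widetilde\cL V_1=c_1(\cL A_1)\chi_L+O(e^{-|s|})$ and $(\cL A_1)'=\Lambda Q$, this produces $-c_1(\Lambda Q)\chi_L\,\theta$ plus terms carrying an extra $\partial_y\theta$ (size $|s|^{-2}$). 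Writing $\theta(s,y)=\theta(s,0)+(\theta(s,y)-\theta(s,0))$ with $\theta(s,0)=\lambda^{1/2}\sigma^{\alpha-\frac12}+\dots$, the piece $-c_1\lambda^{1/2}\sigma^{\alpha-\frac12}(\Lambda Q)\chi_L$ merges with $-\tfrac{\lambda_s}{\lambda}\Lambda V_0$ into $-(\tfrac{\lambda_s}{\lambda}+\beta_1)\Lambda V_0$, $\beta_1:=c_1\lambda^{1/2}\sigma^{\alpha-\frac12}$; the $\theta$–oscillation remainder and all other order-$|s|^{-2}$ debris, together with the genuine $\theta^2$ contributions, are cancelled up to order $|s|^{-3}$ by choosing $c_2$ and $A_2$ so that $c_2P+A_2$ solves the appropriate $\cL$–equation — whose solvability condition $\int(\cdot)Q'=0$ pins $c_2$ and upgrades $\beta_1$ to $\beta(s)$ of \eqref{eq:bt} — then $A_2^*$ the equation attached to $\partial_y\theta$, and $A_3\in\cZ_1^-$ the one at order $\theta^3$; Lemma \ref{le:li}(iv)--(v) yields these in the regularity classes \eqref{eq:aa} because the right-hand sides lie in $\cY$ or $\cZ_1^-$. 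Everything left is $\lesssim|s|^{-4}$, and the weighted bounds \eqref{eq:ls}--\eqref{eq:PW} are then checked region by region: the $\omega\eta$ behaviour in the core, the $|s|^{-1-p}\eta_L$ and $|s|^{-2-p}\eta_I$ behaviour coming from $\partial_y^p\chi_L$ and from the fact that $A_1$ alone does not decay to $0$ on the right (its cut-off produces the only non-exponential left tail, which is exactly the one absorbed into the modulation term), and the outer region $\eta_R$ where $\theta$ is not small, handled with $\|\Theta\|_{H^1}\lesssim\delta^{\alpha-1}$ and the exactness of $m_0\theta$. Finally \eqref{est:Psi_lambda:Q} follows by pairing $\Psi_\lambda$ with $Q$ and using $(\Lambda Q,Q)=0$, the orthogonalities $(A_1,Q)=(A_1,Q')=0$ of \eqref{eq:AQ} and the oddness of $A_1$ to kill the leading contributions, leaving only the bootstrap error $C_1^\star|s|^{-2}\log|s|$ from \eqref{eq:B1}--\eqref{eq:B4}.

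Part (i) is then bookkeeping from the definitions: $V_0=Q\chi_L\lesssim\omega\eta$, and each $V_j$ is $\chi_L$ times a function that is either in $\cY$ (exponential decay, absorbed into $\omega\eta$ once a $\partial_y$ falls on it) or bounded (only $A_1$), multiplied by one of $\theta,\theta^2,\partial_y\theta,\theta^3$ of sizes $|s|^{-1},|s|^{-2},|s|^{-2},|s|^{-3}$ on the core and $\lambda^{1/2},\dots$ on $\eta_R$ with each $\partial_y$ costing a factor $\lambda$ there; collecting these gives the three terms of \eqref{eq:w0}. For part (iii) I would expand $\|W\|_{L^2}^2$ and $\|\partial_y W\|_{L^2}^2$: the cross terms $\int V_0(V_1\theta)$ and $\int\partial_y V_0\,\partial_y(V_1\theta)$ vanish to leading order by $(A_1,Q)=0$, $(A_1,Q')=0$ and the oddness of $A_1$, so the genuine size of $\|W\|_{L^2}^2-\|Q\|_{L^2}^2$ comes from $\|V\|_{L^2}^2$, concentrated on $\eta_R$, where the change of variables $x=\lambda y+\sigma$ turns $\int\lambda A_1^2\Theta^2$ into $\approx c_1^2 m_0^2\int\Theta_0^2=O(\delta^{2\alpha})$, proving \eqref{eq:ma}; the $\partial_y W$ discrepancy of \eqref{eq:h1} picks up its $O(|s|^{-2})$ from $\int Q'A_1\,\partial_y\theta$. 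Part (iv) is the cleanest once the algebra is isolated: differentiating and integrating by parts, $\tfrac{d}{ds}E(W)=-\int(\partial_y^2 W+W^5)\,\partial_s W$, and substituting $\partial_s W=\cE(W)-\partial_y(\partial_y^2 W-W+W^5)+\tfrac{\lambda_s}{\lambda}\Lambda W+(\tfrac{\sigma_s}{\lambda}-1)\partial_y W$ together with the identities
\[
\int(\partial_y^2 W+W^5)\,\partial_y(\partial_y^2 W-W+W^5)=0,\qquad \int(\partial_y^2 W+W^5)\,\Lambda W=-2E(W)
\]
(the first expressing conservation of the gKdV energy, the second the scaling identity, both obtained by the same integrations by parts that underlie $\cL\Lambda Q=-2Q$) and $\int(\partial_y^2 W+W^5)\partial_y W=0$, one gets exactly $\tfrac{d}{ds}\big[E(W)/\lambda^2\big]=-\lambda^{-2}\int(\partial_y^2 W+W^5)\,\cE(W)$. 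Plugging in \eqref{eq:ew}: using $\partial_y^2 W+W^5=V_0+O(e^{-|s|})+O(|s|^{-3})$, the bounds $\int V_0\Lambda V_0=(Q,\Lambda Q)+O(e^{-|s|})$, $\int V_0\partial_y V_0=0$ and \eqref{est:Psi_lambda:Q} give the first two terms of \eqref{eq:en} (multiplied by $|\tfrac{\lambda_s}{\lambda}+\beta|$, resp. $|\tfrac{\sigma_s}{\lambda}-1|$), while the $\Psi_W$ pairing is $\lesssim C_1^\star|s|^{-4}\log|s|$ since $V_0\sim\omega$ kills the growth $(1+y^4)$ against $\omega\eta$, on $\eta_L$ the factor $O(|s|^{-3})$ offsets the $C_1^\star|s|^{-2}\log|s|$ weight over a window of length $|s|$, and the remaining outer piece is controlled by the $L^2$ smallness of both factors.

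The main obstacle is part (ii): not any single estimate, but the sheer bookkeeping of organising $\cE(W)$ by order, separating the modulation-type debris (reabsorbed) from the debris that must be cancelled by a corrector, verifying that each corrector equation is $\cL$–solvable in the prescribed class $\cY$ or $\cZ_1^-$, and then establishing the sharp weighted bounds \eqref{eq:ls}--\eqref{eq:PW} uniformly across the three spatial regions — the outer region $\eta_R$, where $\theta$ is not small, being the most delicate and relying on the full $\cC^4$ description of $\Theta$ from Proposition \ref{PR:qq} and on the exactness of $m_0\theta$.
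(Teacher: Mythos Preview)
Your overall strategy matches the paper's: expand $\cE(W)$ by the hierarchy $\theta,\theta^2,\partial_y\theta,\theta^3$, exploit $(\cL A_1)'=(\cL P)'=\Lambda Q$ and the fact that $m_0\theta$ solves the rescaled equation (this is indeed why \eqref{eq:TT} carries the factor $m_0^4$), then invert $\cL$ for the correctors. Your treatment of (i), (iii), (iv) is also in line with the paper --- in particular the scaling identity $\int(\partial_y^2W+W^5)\Lambda W=-2E(W)$ is exactly what makes (iv) work.

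There is, however, a concrete mislocation in the construction. You write that the solvability condition for $c_2P+A_2$ at order $\theta^2$ ``pins $c_2$''. It does not. The sources at orders $\theta^2$ and $\partial_y\theta$ are
\[
F_2=c_1^2\bigl(10(Q^3A_1^2)'+\Lambda_1A_1\bigr),\qquad
F_2^*=c_1\bigl(-y\Lambda Q+3A_1''+5Q^4A_1\bigr),
\]
which are both \emph{odd} (since $A_1$ is odd and $\Lambda_1A_1=yA_1'$); hence their primitives are even, in $\cY$, and automatically orthogonal to $Q'$. The equations $(\cL A_2)'=F_2$, $(\cL A_2^*)'=F_2^*$ are therefore unconditionally solvable by Lemma~\ref{le:li}(iv), independently of $c_2$. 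The constant $c_2$ is determined only at order $\theta^3$: there the source splits as $c_2\Omega+F_3$ with $\Omega\in\cZ_0^-$, and the solvability condition for $A_3\in\cZ_1^-$ via Lemma~\ref{le:li}(v) is $(c_2\Omega+F_3,Q)=0$. This is where a genuine computation is needed --- the paper shows $(\Omega,Q)=-2m_0^2\neq0$ using the identity \eqref{eq:id} of Lemma~\ref{le:PA}(iv), which is precisely why that identity is recorded. Without verifying this nondegeneracy, your construction of $A_3$ (and hence the $O(|s|^{-4})$ error bound \eqref{eq:PW}) has a gap.
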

Subsections \ref{s:3.4}--\ref{s:3.9} are devoted to the proof of Proposition \ref{pr:bp}.

\subsection{Estimates on the blowup profile}\label{s:3.4}

We prove estimates on the components of the blowup profile $W$,
which will imply (i) of Proposition \ref{pr:bp}
and which will also be used to estimate $\cE(W)$.

We start by estimates on the functions $V_j$.
\begin{lemma}\label{le:vv}
For all $(s,y)\in I\times \RR$, it holds
\begin{align*}
|\partial_y^mV_0|
+|\partial_y^m\Lambda V_0|
+
|\partial_y^mV_2^*|
+|\partial_y^m\Lambda_3 V_2^*|&\lesssim 
\omega\eta  \quad \mbox{for $m\geq 0$,}\\
|V_1|&\lesssim \eta,\\
|\Lambda_1V_1|&\lesssim \omega\eta+\eta_L,\\
|\partial_y^mV_1|+|\partial_y^m \Lambda_1V_1|
&\lesssim 
\omega\eta + |s|^{-m} \eta_L\quad  \mbox{for $m\geq 1$,}\\
|\partial_y^mV_2|+|\partial_y^m\Lambda_2 V_2|+|\partial_y^m\Lambda_3 V_3|
&\lesssim 
\begin{cases}
\omega \eta+ \eta_I & \mbox{for $m=0$,}\\
\omega\eta + |s|^{-m} \eta_L & \mbox{for $m\geq1$,}
\end{cases}\\
|\partial_y^mV_3|&\lesssim 
\begin{cases}
\omega\eta+(1+|y|)\eta_I & \mbox{for $m=0$,}\\
\omega\eta +\eta_I & \mbox{for $m=1$,}
\\
\omega\eta + |s|^{-m+1}\eta_L & \mbox{for $m\geq 2$.}
\end{cases} 
\end{align*}
\end{lemma}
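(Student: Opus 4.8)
The plan is to reduce every estimate to the common product structure $V_0=Q\,\ZL$, $V_1=c_1A_1\,\ZL$, $V_2=(c_2P+A_2)\,\ZL$, $V_2^*=A_2^*\,\ZL$, $V_3=A_3\,\ZL$, where the left-hand factor lies in $\cY$ (for $V_0$ and $V_2^*$, since $Q,A_2^*\in\cY$), in $\cZ_0$ (for $V_1$, since $A_1\in\cZ_0$), in $\cZ_0^-$ (for $V_2$, since $c_2P+A_2\in\cZ_0^-$), or in $\cZ_1^-$ (for $V_3$, since $A_3\in\cZ_1^-$), and where $\ZL(s,y)=\CL(|s|^{-1}y)$ equals $1$ on $\{y\ge-|s|/2\}$, vanishes on $\{y\le-|s|\}$, and has transition — the support of $\partial_y\ZL$ — contained in the collar $J_s:=[-|s|,-|s|/2]$, on which $|y|\sim|s|$, $\omega(y)\le e^{-|s|/20}$ and $|\partial_y^k\ZL|\lesssim|s|^{-k}$. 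The first step is to record the Leibniz splitting $\partial_y^m(f\,\ZL)=f^{(m)}\ZL+\sum_{j=0}^{m-1}\binom{m}{j}f^{(j)}\,\partial_y^{m-j}\ZL$: the ``main'' term $f^{(m)}\ZL$ is supported in $\{y\ge-|s|\}=\operatorname{supp}\eta$, while each ``collar'' term is supported in $J_s\subset\operatorname{supp}\eta_L$ and is $\lesssim|s|^{-(m-j)}\sup_{J_s}|f^{(j)}|$. For the quantities involving $\Lambda_k$ I use the identity $\Lambda_k(f\,\ZL)=(\Lambda_kf)\,\ZL+yf\,\partial_y\ZL$, so that, after invoking the algebra \eqref{eq:zz} to see that $\Lambda_kf$ stays in an admissible class, the first summand is again a nice function times $\ZL$ and the cross term $yf\,\partial_y\ZL$ is a collar term carrying the extra factor $|y|\sim|s|$.

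Next I feed in the structure of the function classes. If $f\in\cY$ then $|f^{(m)}|\lesssim\omega$, so the main term is $\lesssim\omega\eta$ and the collar terms are $O(e^{-|s|/20})$, hence negligible; since $\cY$ is stable under $\Lambda$ and $\Lambda_3$, this gives at once $|\partial_y^mV_0|+|\partial_y^m\Lambda V_0|+|\partial_y^mV_2^*|+|\partial_y^m\Lambda_3V_2^*|\lesssim\omega\eta$. If $f=c_1A_1\in\cZ_0$, then $A_1$ is bounded (so $|V_1|\lesssim\eta$) while $A_1'\in\cY$ forces $|A_1^{(m)}|\lesssim\omega$ for $m\ge1$; hence the main terms give $\omega\eta$ and the collar terms give $|s|^{-m}\eta_L$ (the worst being $j=0$, $|A_1|\lesssim1$). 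For $\Lambda_1V_1$ I use $\Lambda_1A_1\in\cY$ from \eqref{eq:zz} and bound the cross term by $|y|\,|A_1|\,|\partial_y\ZL|\lesssim1$ on $J_s$, which accounts for the $\eta_L$ contribution.

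For the remaining components I use the $\cZ^-$-structure. Writing a generic element of $\cZ_0^-$ as $cz_0^-+g$ with $g\in\cY$: $z_0^-$ is bounded and supported in $\{y\le0\}$, so $z_0^-\ZL\lesssim\eta_I$ at $m=0$, whereas for $m\ge1$ the derivative $(z_0^-)^{(m)}$ is compactly supported near the origin (where $\omega\gtrsim1$ and $\ZL\equiv1$), hence $\lesssim\omega\eta$; together with the collar terms $\lesssim|s|^{-m}\eta_L$ this yields $\omega\eta+\eta_I$ at $m=0$ and $\omega\eta+|s|^{-m}\eta_L$ for $m\ge1$, which covers $V_2$, and also $\Lambda_2V_2$ once one checks $\Lambda_2(c_2P+A_2)\in\cZ_0^-$ (because $\Lambda_2z_0^-=-\tfrac12z_0^-+y(z_0^-)'$ with $y(z_0^-)'$ compactly supported) and estimates the cross term as above. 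For $A_3\in\cZ_1^-$, writing $A_3=a_0z_0^-+a_1z_1^-+g$, the new feature is $z_1^-$, which is $\sim\pm|y|$ on $\{y<-1\}$: this produces $z_1^-\ZL\lesssim(1+|y|)\eta_I$ at $m=0$; at $m=1$ the derivative $(z_1^-)'$ is bounded (tending to a constant at $-\infty$), giving $(z_1^-)'\ZL\lesssim\eta_I$; for $m\ge2$ the derivative $(z_1^-)^{(m)}$ is compactly supported, giving $\omega\eta$, while the collar terms are $\lesssim|s|^{1-m}\eta_L$ since $|z_1^-|\lesssim|s|$ on $J_s$ — exactly the three-case bound for $V_3$. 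The estimate for $\Lambda_3V_3$ is obtained in the same way: its main part $(\Lambda_3A_3)\ZL$, with $\Lambda_3A_3\in\cZ_0^-$ by \eqref{eq:zz} (note $\Lambda_3z_1^-\equiv0$ on $\{y<-1\}$), obeys the $\cZ_0^-$-type bound, and its cross term $yA_3\,\partial_y\ZL$ is a collar term controlled by the linear growth of $A_3$ on $J_s$.

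There is no conceptual difficulty here; the main obstacle is the bookkeeping — keeping track, for each of the five profile components and each relevant operator $\Lambda_k$, of which power of $|s|^{-1}$ and which of the weights $\omega\eta$, $\eta_L$, $\eta_I$, $(1+|y|)\eta_I$ each Leibniz term contributes, and verifying that the sum collapses to the stated right-hand side. Two points deserve a little care: the cross term $yf\,\partial_y\ZL$ in the $\Lambda_k$ identities, where the extra $|y|\sim|s|$ on $J_s$ partially offsets the gain $|\partial_y\ZL|\lesssim|s|^{-1}$; and the fact that for $f\in\cZ_1^-$ the linear growth on the left half-line is genuine, which is why the weights entering the $V_3$ estimates are one power of $|s|$, respectively of $1+|y|$, weaker than the corresponding ones for $V_0,V_1,V_2$.
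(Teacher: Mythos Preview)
Your approach is exactly the one the paper takes: the paper's proof is literally the sentence ``the above estimates follow directly from the definitions of the functions $V_j$ and of the spaces $\cY$, $\cZ_0$, $\cZ_k^-$; for $\Lambda_1V_1$ we used $\Lambda_1A_1\in\cY$ and $\Lambda_3A_3\in\cZ_0^-$.'' You have spelled out that sentence in full, with the Leibniz splitting, the identity $\Lambda_k(f\ZL)=(\Lambda_kf)\ZL+yf\,\partial_y\ZL$, and the case analysis according to the class of the profile factor. This is correct and complete for $V_0$, $V_2^*$, $V_1$, $\Lambda_1V_1$, $V_2$, $\Lambda_2V_2$, and $V_3$.

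One point deserves more care than you give it. In the last line you write that the cross term $yA_3\,\partial_y\ZL$ in $\Lambda_3V_3$ is ``a collar term controlled by the linear growth of $A_3$ on $J_s$,'' and you assert that the result ``obeys the $\cZ_0^-$-type bound.'' But on the collar $J_s$ one has $|y|\sim|s|$, $|A_3|\lesssim|s|$ (genuine linear growth, since the $z_1^-$ coefficient of $A_3$ equals $c_2(2\alpha+1)m_0$ and is generically nonzero), and $|\partial_y\ZL|\lesssim|s|^{-1}$, so this cross term is of size $|s|$ on $\eta_L$, not $O(1)$; likewise its $y$-derivative is $O(1)$ on $\eta_L$, not $O(|s|^{-1})$. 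In other words, the cross term is one power of $|s|$ worse than the stated bound $\omega\eta+\eta_I$ (resp.\ $\omega\eta+|s|^{-m}\eta_L$). The paper's one-line proof does not address this either, and in fact the discrepancy is harmless for every downstream use of the lemma (in Lemma~\ref{le:ww} one multiplies $\Lambda_3V_3$ by $\theta^3\lesssim|s|^{-3}$, and the resulting collar contribution $|s|^{-2}\eta_L$ is still dominated by the $|s|^{-1}\eta_L$ term in \eqref{eq:ls}); but you should either note explicitly that the cross term gives an extra factor of $|s|$ on $\eta_L$, or observe that the bound as literally stated is slightly stronger than what is needed or proved.
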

\begin{proof}
The above estimates follow directly from the definition of the functions 
$V_0$, $V_1$, $V_2$, $V_2^*$ and $V_3$ in \eqref{eq:aa}--\eqref{eq:v9} and the definitions 
of the spaces $\cY$, $\cZ_0$ and $\cZ_k^-$.
For $\Lambda_1 V_1$, we have used the fact that by \eqref{eq:zz}, $\Lambda_1 A_1\in \cY$ and $\Lambda_3 A_3\in\cZ_0^-$.
\end{proof}

We claim the following global estimate on $\theta$.
\begin{lemma}\label{le:th}
The function $\theta$ satisfies $\theta \in \cC(I,H^1(\mathbb R))$ and, for all $s \in I$,
\begin{equation} \label{theta:H1}
\|\theta(s)\|_{L^2} \lesssim \delta^{\alpha} \quad \text{and} 
\quad \|\partial_y\theta(s)\|_{L^2} \lesssim \lambda(s) \delta^{\alpha-1} .
\end{equation}
Moreover, for any $k=0,1,2,3,4$ and for all $y\geq -|s|$, the following pointwise estimate holds
\begin{equation}\label{eq:lt}
  |\partial_y^k\theta(s,y)|\lesssim |s|^{-1-k}\eta(s,y) + \lambda^{\frac12+k}(s)\eta_R(s,y) .
\end{equation}
\end{lemma}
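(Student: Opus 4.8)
The plan is to prove Lemma~\ref{le:th} in three stages, treating separately the global $H^1$ bounds, the continuity statement, and the pointwise bounds, all by translating back and forth between the original variables $(t,x)$ and the rescaled variables $(s,y)$ via the change of variables $x = \lambda(s)y+\sigma(s)$ together with the bootstrap estimates \eqref{eq:B1}--\eqref{eq:B3} on the parameters.

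\emph{Global $H^1$ bounds.} Starting from the definition \eqref{def:theta}, I would compute directly
\[
\|\theta(s)\|_{L^2}^2 = \lambda(s)\int_{\RR} \Theta(\tau(s),\lambda(s)y+\sigma(s))^2\,dy
= \int_{\RR}\Theta(\tau(s),x)^2\,dx = \|\Theta(\tau(s))\|_{L^2}^2,
\]
so the $L^2$ norm of $\theta$ is scale-invariant and equals that of $\Theta$. Then \eqref{eq:T9} gives $\|\Theta(\tau(s))\|_{L^2}\lesssim\delta^{\alpha-1}$, but this is weaker than the claimed $\delta^\alpha$; to sharpen it I would observe that $\Theta(0)=\Theta_0$ has $\|\Theta_0\|_{L^2}\lesssim\delta^\alpha$ (directly from \eqref{eq:T0}: $\int_0^\delta x^{2\alpha-1}dx \lesssim \delta^{2\alpha}$), and that by the $L^2$-critical theory of \cite{KPV} with such small data, the $L^2$ norm stays comparable to its initial value, hence $\|\Theta(\tau(s))\|_{L^2}\lesssim\delta^\alpha$ uniformly. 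For the gradient, the chain rule gives $\partial_y\theta(s,y) = \lambda^{3/2}(s)\,(\partial_x\Theta)(\tau(s),\lambda y+\sigma)$, so $\|\partial_y\theta(s)\|_{L^2} = \lambda(s)\|\partial_x\Theta(\tau(s))\|_{L^2}\lesssim\lambda(s)\delta^{\alpha-1}$, again using \eqref{eq:T9}.

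\emph{Continuity.} The map $s\mapsto\theta(s)$ in $H^1$ is continuous because $t\mapsto\Theta(t)$ lies in $\cC(\RR,H^1)$ by \cite{KPV}, because $s\mapsto(\tau(s),\lambda(s),\sigma(s))$ is continuous (indeed $C^1$ by hypothesis, $\tau$ being defined by \eqref{eq:ta}), and because the scaling-translation action $(\lambda,\sigma)\mapsto \lambda^{1/2}f(\lambda\cdot+\sigma)$ is strongly continuous on $H^1$ jointly in $(\lambda,\sigma,f)$ on the relevant parameter range (here $\lambda$ stays away from $0$ for $s\in I$ bounded away from $-\infty$, and on any compact subinterval one gets the needed uniformity; since $I\subset(-\infty,-s_0]$ may be unbounded, I would phrase continuity locally in $s$, which suffices).

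\emph{Pointwise bounds.} This is the part requiring Proposition~\ref{PR:qq} and is the main obstacle. For $y\geq -|s|$, the point $x=\lambda(s)y+\sigma(s)$ satisfies, using \eqref{eq:B2}--\eqref{eq:B3}, $x \geq -\lambda|s| + \sigma \gtrsim \sigma \sim |s|^{-1/(2\alpha)} \gg \rho(\tau(s)) = \tau^\beta$ (since $\tau\sim\kappa_\tau|s|^{-(4\alpha+3)/(2\alpha)}$ by \eqref{eq:B1}, so $\rho(\tau)\sim |s|^{-(4\alpha+3)\beta/(2\alpha)} = |s|^{-1/(2\alpha)}$ up to constants, and one checks the region $y\geq -|s|$ maps into $x \geq \tfrac12\rho(\tau(s))$ for $s_0$ large — this numerology is exactly the reason the weights in Section~\ref{S:2} were placed where they are). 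Hence Proposition~\ref{PR:qq} applies: writing $\Theta = \Theta_\sharp + q$ via \eqref{def:q}, I would bound $\partial_x^k\Theta_\sharp$ using \eqref{eq:eT} (giving $|\partial_x^k\Theta_\sharp|\lesssim x^{\alpha-1/2-k}$) and $\partial_x^k q$ using \eqref{eq:dq} (giving at least $|\partial_x^k q|\lesssim t^{2-\text{small}}$, hence a lower-order contribution after rescaling). Then $\partial_y^k\theta(s,y) = \lambda^{1/2+k}(s)(\partial_x^k\Theta)(\tau(s),x)$. In the region $-|s|\leq y\leq |s|$ (i.e.\ $\eta$-region, $x\lesssim\sigma$), one has $x\sim\sigma\sim|s|^{-1/(2\alpha)}$ so $\lambda^{1/2+k}x^{\alpha-1/2-k}\sim \kappa_\lambda^{1/2+k}\kappa_\sigma^{\alpha-1/2-k}|s|^{-(2\alpha+1)(1/2+k)/(2\alpha)}|s|^{-(2\alpha-1-2k)/(4\alpha)} = |s|^{-1-k}$ up to constants — the precise exponent arithmetic $\tfrac{(2\alpha+1)(1+2k)}{4\alpha} + \tfrac{2\alpha-1-2k}{4\alpha} = 1+k$ is the crux — giving the first term $|s|^{-1-k}\eta$. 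For $y > |s|$ (the $\eta_R$-region, $x$ ranging up to $\sim\delta$), $\Theta_\sharp$ is supported in $x\leq\delta$ and there $x^{\alpha-1/2-k}$ is bounded by a constant (when $k\leq\lfloor\alpha-1/2\rfloor$) or by $x^{\alpha-1/2-k}$ which is still controlled on that range; multiplying by $\lambda^{1/2+k}$ gives the $\lambda^{1/2+k}\eta_R$ term, while the $q$-contribution is again dominated by the same after using \eqref{eq:dq} and $\lambda^{1/2+k}\lesssim$ the target. The main technical care is checking that the $q$-error terms from \eqref{eq:dq}, once multiplied by $\lambda^{1/2+k}$ and re-expressed in $|s|$ via \eqref{eq:B1}, are genuinely lower order than $|s|^{-1-k}$ in the $\eta$-region — this follows because \eqref{eq:dq} gives $q$ a power of $t=\tau$ strictly larger than what $\Theta_\sharp$ contributes (the "$2+\cdots$" versus the "$\alpha-1/2-k$" scaling), and $\tau\sim|s|^{-(4\alpha+3)/(2\alpha)}$ converts this into a strictly more negative, hence smaller, power of... wait — more negative power of $|s|$ means larger, so I must instead track that the $\Theta_\sharp$ scaling $x^{\alpha-1/2-k}\sim\sigma^{\alpha-1/2-k}$ carries a specific power and $q$'s contribution $t^{2-\cdots}=\tau^{2-\cdots}$ carries the power $-\tfrac{4\alpha+3}{2\alpha}(2-\cdots)$ which one verifies is $\leq -(1+k)-\text{(positive)}$ after the bootstrap substitution; this verification is exactly the routine-but-delicate bookkeeping I would carry out in full and which I expect to be the only real work in the proof.
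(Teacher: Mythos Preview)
Your proposal is correct and follows essentially the same route as the paper: mass conservation for the sharp $L^2$ bound, the scaling identity for $\partial_y\theta$, then the decomposition $\Theta=\Theta_\sharp+q$ combined with \eqref{eq:eT} and Proposition~\ref{PR:qq} after verifying that $y\geq -|s|$ forces $x\geq\tfrac12\rho(\tau)$. The paper additionally checks that $\tau^{-\frac76\beta}x\geq 1$ on this region (so the cut-off in $\Theta_\sharp$ is identically $1$ and $\Theta_\sharp$ splits cleanly into $\Theta_0+\tau\Theta_1$), and in the $\eta_R$ region it notes that when $\alpha-\tfrac12-k<0$ the bound $x^{\alpha-\frac12-k}\lesssim\sigma^{\alpha-\frac12-k}+\delta^{\alpha-\frac12-k}$ produces a $|s|^{-1-k}$ piece (absorbed into the $\eta$ term, since $\eta\equiv1$ on the support of $\eta_R$) in addition to the $\lambda^{\frac12+k}$ piece---your informal treatment of that case is slightly imprecise but the conclusion is the same.
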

\begin{proof}
Recall from \eqref{def:theta} that 
$\theta(s,y)= \lambda^\frac12(s)\Theta(\tau(s),\lambda(s)y+\sigma(s))$.
The bounds in \eqref{theta:H1} follow directly from the definition of $\theta$, the conservation of the $L^2$-norm and the $H^1$-bound on the function $\Theta$ in \eqref{eq:T9}.

To prove \eqref{eq:lt}, we start with some preliminary estimates.
Since $\Theta = \Theta_\sharp+q$ (see \eqref{eq:Td} for the definition of $\Theta_\sharp$ 
and \eqref{def:q} for the definition of $q$), and since the function $q(t,x)$ is estimated in Proposition \ref{PR:qq} only for $x>\frac 12\rho(t)$, we need to relate
the condition $y\geq -|s|$ to the condition $x=\lambda y + \sigma \geq \frac12 \rho$.
Because of the presence of the cut-off $\chi(t^{-\frac76\beta})$ in the definition of
$\Theta_\sharp$, we also need a lower bound on $t^{-\frac76 \beta}x$.
For $y\geq - |s|$, we estimate using \eqref{eq:B1}, \eqref{eq:B2}, \eqref{eq:B3},
\begin{align*}
\lambda y + \sigma -\frac12\rho\circ \tau
& \geq -\lambda |s|  + \sigma-\frac 12 |\tau|^{\frac1{4\alpha+3}}\\
& \geq \left(\kappa_\sigma-\kappa_\lambda-\frac12 \kappa_\tau^\frac1{4\alpha+3}\right)
|s|^{-\frac 1{2\alpha}} 
-C C_1^\star  |s|^{-\frac1{2\alpha}-1}\log|s| 
\end{align*}
for some constant $C>0$.
We compute using \eqref{eq:ka},
\[
\kappa_\sigma-\kappa_\lambda-\frac12 \kappa_\tau^\frac1{4\alpha+3}
=(2\alpha)^{-\frac1{2\alpha}} \left( 1-(2\alpha)^{-1}
- \frac 12 (4\alpha+3)^{-\frac{1}{4\alpha+3}}\right) >0
\]
for $\alpha>1$.
Thus, taking $s_0$ sufficiently large, we obtain for all $y\geq - |s|$,
\begin{equation} \label{est:lambda y}
\lambda y + \sigma \geq \frac12\rho\circ \tau
\geq \frac 12 \kappa_\tau^{\frac1{4\alpha+3}} |s|^{-\frac1{2\alpha}} 
- C_1^\star C |s|^{-\frac1{2\alpha}-1}\log |s|
\gtrsim  |s|^{-\frac1{2\alpha}}
\end{equation}
and
\begin{equation}\label{est:cutoff}
\tau^{-\frac76\beta}(\lambda y+\sigma) \gtrsim |s|^{\frac{7}{12\alpha}}|s|^{-\frac1{2\alpha}} =c|s|^{\frac{1}{12\alpha}} \ge 1.
\end{equation}
Note that estimate \eqref{est:cutoff} means that in the expression of
$\Theta_\sharp$, the term $\chi(t^{-\frac76\beta} x)$, where $t=\tau(s)$ and
$x=\lambda(s)y+\sigma(s)$, is identically equal to $1$ in the region $y>-|s|$.
From the definition of $\theta$ and omitting the $s$ variable 
for the functions $\tau$, $\lambda$, $\sigma$, we have 
for $k=0,1,2,3,4$, and for all $y>-|s|$,
\begin{equation} \label{decomp:dktheta}
\partial_y^k\theta(s,y)
= \lambda^{\frac12+k}\Theta_0^{(k)}(\lambda y + \sigma)+\lambda^{\frac12+k}\tau \Theta_1^{(k)}(\lambda y + \sigma)
+ \lambda^{\frac12+k} \partial_x^k q(\tau,\lambda y +\sigma),
\end{equation}
where have used the previous observation concerning the term $\chi(t^{-\frac76\beta} x)$.

Now, we estimate separately each term in \eqref{decomp:dktheta}, starting with the last one.
We claim that
for $k=0,1,2,3,4$, for $y\geq -|s|$,
\begin{equation}\label{eq:qQ}
\lambda^{\frac 12 + k} |\partial_x^k q(\tau, \lambda y +\sigma)|
\lesssim |s|^{- k -\frac 92}.
\end{equation}
Indeed, using Proposition \ref{PR:qq} with \eqref{est:lambda y}, we have the following bounds
\begin{itemize}
\item For $1<\alpha<k+6$,
\[ \lambda^{\frac 12 + k} |\partial_x^k q(\tau, \lambda y +\sigma)|
\lesssim \lambda^{\frac12+k}\tau^{\frac{18\alpha-2k-1}{2(4\alpha+3)}} \lesssim |s|^{-5-k} .\]
\item For $ \alpha=k+6$,
\[
\lambda^{\frac 12 + k} |\partial_x^k q(\tau, \lambda y +\sigma)|
\lesssim \lambda^{\frac12+k}\tau^{\frac{16\alpha+11}{2(4\alpha+3)}} |\log \tau|^\frac14
\lesssim |s|^{-5-k}\log |s|.
\]
\item For $k+6<\alpha<k+7$,
\[
\lambda^{\frac 12 + k} |\partial_x^k q(\tau, \lambda y +\sigma)|
\lesssim \lambda^{\frac12+k}\tau^{\frac{17\alpha - k+5 }{2(4\alpha+3)}}
\lesssim |s|^{-\frac{19}4-k-\frac{6+k}{4\alpha}}.
\]
\item For $\alpha=k+7$,
\[
\lambda^{\frac 12 + k} |\partial_x^k q(\tau, \lambda y +\sigma)|
\lesssim \lambda^{\frac12+k}\tau^2 |\log\tau|^\frac14 
\lesssim |s|^{-\frac92-k-\frac1{2\alpha}(\frac{13}2+k)}\log|s|.
\]
\item For $\alpha>k+7$,
\[
\lambda^{\frac 12 + k} |\partial_x^k q(\tau, \lambda y +\sigma)|
\lesssim \lambda^{\frac12+k}\tau^2
\lesssim |s|^{-\frac92-k-\frac1{2\alpha}(\frac{13}2+k)}.
\]
\end{itemize}
We observe that \eqref{eq:qQ} holds in all the cases above.

Next, for the first term in \eqref{decomp:dktheta}, we claim that
\begin{equation}\label{eq:dk1}
\lambda^{\frac12+k} |\Theta_0^{(k)}(\lambda y +\sigma)|
\lesssim |s|^{-1-k}\eta+ \lambda^{\frac12+k} \delta^{\alpha-\frac12-k}\eta_{R}.
\end{equation}
To prove this, we expand the derivative of order $k$
\begin{equation}\label{eq:Tk}
\Theta_0^{(k)}(x)=
c_{k,k}x^{\alpha-\frac12-k} \CR(\delta^{-1}x)+
\sum_{l=1}^{k} c_{k,l} \delta^{-l} x^{\alpha-\frac12-k+l} \CR^{(l)}(\delta^{-1}x), 
\end{equation}
for some constants $c_{k,l}$.
We treat the first term in \eqref{eq:Tk}.
For $-|s| \le y \le |s|$,  from \eqref{eq:B4} and $\alpha>1$, we have
$\left| \frac{\lambda}{\sigma} y \right| <1$, so that, by using again \eqref{eq:B4} and \eqref{eq:B4.1},
\begin{equation*} 
\lambda^{\frac12+k} (\lambda y+\sigma)^{\alpha-\frac12-k} 
= \lambda^{\frac12}\sigma^{\alpha-\frac12}\left(\frac{\lambda}{\sigma}\right)^k\left(1+\frac{\lambda}{\sigma} y\right)^{\alpha-\frac 12-k} \lesssim |s|^{-1-k}.
\end{equation*}
For $ |s|\leq y \leq \lambda^{-1}\delta$, we have obviously
$\sigma \leq  \lambda y + \sigma$. But for $x\geq \sigma$, the following holds
\[
x^{\alpha-\frac12-k} \CR(\delta^{-1}x)
\lesssim \sigma^{\alpha-\frac12-k}
+\delta^{\alpha-\frac12-k} ,
\]
which implies, for $ |s|\leq y \leq \lambda^{-1}\delta$,
\begin{align*}
\lambda^{\frac12+k} (\lambda y+\sigma)^{\alpha-\frac12-k}\CR(\delta^{-1}(\lambda y + \sigma))
&\lesssim \lambda^{\frac12+k} \sigma^{\alpha-\frac12-k}
+\lambda^{\frac12+k} \delta^{\alpha-\frac12-k} \\
&\lesssim \lambda^\frac12 \sigma^{\alpha-\frac12} \left( \frac\lambda\sigma\right)^k
+\lambda^{\frac12+k} \delta^{\alpha-\frac12-k} \\
&\lesssim |s|^{-1-k} + \lambda^{\frac12+k} \delta^{\alpha-\frac12-k} .
\end{align*}
Note also that for $y\geq \lambda^{-1} \delta$, we have $x=\lambda y + \sigma 
\geq \delta + \sigma \geq \delta$
and thus $\CR(\delta^{-1}x)=0$. In conclusion for this term, for $|y|>-|s|$,
\[
\lambda^{\frac12+k} (\lambda y+\sigma)^{\alpha-\frac12-k}\CR(\delta^{-1}(\lambda y + \sigma))
\lesssim |s|^{-1-k}\eta+ \lambda^{\frac12+k} \delta^{\alpha-\frac12-k}\eta_{R}.
\]
For the other terms in \eqref{eq:Tk}, we use that 
by the definition of $\CR$,  for $l=1,\ldots,k$, for all $x\in \RR$,
\[
|\delta^{-l} x^{\alpha-\frac12-k+l} \CR^{(l)}(\delta^{-1}x)|
\lesssim \delta^{\alpha-\frac12-k}.
\]
Moreover, as before, 
if $y\geq \lambda^{-1} \delta$ then $x =\lambda y + \sigma >\lambda y \geq  \delta$
and if $y<|s|$ then 
$x=\lambda y + \sigma \lesssim \sigma <\frac 12\delta$.
Thus, in both cases, for $l\geq 1$,
$\CR^{(l)}(\delta^{-1}(\lambda y + \sigma))=0$.
It follows that
\[
\lambda^{\frac 12 + k} \sum_{l=1}^{k}   \delta^{-l} (\lambda y+\sigma)^{\alpha-\frac12-k+l} 
|\CR^{(l)}(\delta^{-1} (\lambda y+\sigma))| \lesssim 
\lambda^{\frac12+k} \delta^{\alpha-\frac12-k}\eta_{R}.
\]
We have prove \eqref{eq:dk1}.

Finally, we estimate the second term in \eqref{decomp:dktheta}.
We recall that $\Theta_1=-\Theta_0'''-m_0^4(\Theta_0^5)'$.
Estimate \eqref{eq:dk1} implies that
\[
\lambda^{\frac12+k+3}|\Theta_0^{(k+3)}(\lambda y +\sigma)|
\lesssim |s|^{-4-k}\eta + \lambda^{\frac12+k+3}\delta^{\alpha-\frac72-k}\eta_{R},
\]
and so
\[
\lambda^{\frac12+k}\tau |\Theta_0^{(k+3)}(\lambda y +\sigma)|
\lesssim |s|^{-1-k} (s^{-3}\tau \lambda^{-3}) \eta + \lambda^{\frac12+k}\tau\delta^{\alpha-\frac72-k}\eta_{R}.
\]
Since $s^{-3}\tau \lambda^{-3} \lesssim s^{-12}$, we obtain
\[
\lambda^{\frac12+k}\tau |\Theta_0^{(k+3)}(\lambda y +\sigma)|
\lesssim |s|^{-3-k}  \eta + \lambda^{\frac12+k}\tau\delta^{\alpha-\frac72-k}\eta_{R}.
\]
The term containing $(\Theta_0^5)'$ is similar and easier.
\end{proof}

Next, we derive precise asymptotics for $\theta$ in the solitonic region.
\begin{lemma}\label{le:35}
For all $-|s|\leq y\leq |s|$, it holds
\begin{equation}\label{eq:0T}
\begin{aligned}
\theta(s,y)& = 
(2\alpha)^{-1} |s|^{-1}+O(C_1^\star(1+|y|)|s|^{-2}\log|s|),
\\
\partial_y\theta(s,y) & =  \left(\alpha-\frac12\right)(2\alpha)^{-2} |s|^{-2}+O(C_1^\star(1+|y|)|s|^{-3}\log|s|),\\
\partial_y^2\theta(s,y) & = \left(\alpha-\frac12\right)\left(\alpha-\frac32\right)(2\alpha)^{-3} |s|^{-3}+O(C_1^\star(1+|y|)|s|^{-4}\log|s|).
\end{aligned}
\end{equation}
Moreover, the following expansions hold
\begin{equation}\label{eq:8T}
\begin{aligned}
\lambda^\frac12\sigma^{\alpha-\frac12}&=\theta-y\partial_y\theta+\left(\kappa_\alpha
+\tfrac12 (\alpha-\tfrac12)(\alpha-\tfrac32)y^2\right)\theta^3
+ \Psi_1,\\
\theta  \partial_y \theta &= (\alpha-\tfrac12) \theta^3 + \Psi_2,\\
\partial_y^2 \theta & = (\alpha-\tfrac12) (\alpha-\tfrac3 2)\theta^3+ \Psi_3,\\
\lambda \sigma^{2\alpha-1} &= \theta^2 - 2(\alpha-\tfrac12) y \theta^3 + \Psi_4,\\
\lambda^\frac12\sigma^{\alpha-\frac12}\theta  &= \theta^2 - (\alpha-\tfrac12) y \theta^3 + \Psi_5,\\
\lambda \sigma^{2\alpha-1} \theta   &=   \theta^3 + \Psi_6,\\
\lambda^\frac12\sigma^{\alpha-\frac12} \theta^2   &=  \theta^3 + \Psi_7,\\
\lambda^\frac12\sigma^{\alpha-\frac12} \partial_y \theta &= (\alpha-\tfrac12) \theta^3 + \Psi_8,
\end{aligned}
\end{equation}
where
\[
\kappa_\alpha = \frac{(\alpha-\tfrac12)(\alpha-\tfrac32)(\alpha-\tfrac52)}{4\alpha+3}
\]
and,  for $-|s|\leq y \leq |s|$,
\begin{align*}
|\Psi_1|+|\partial_y\Psi_1| &\lesssim  C_1^\star (1+|y|^3) |s|^{-4}\log|s|; \\
|\Psi_2|+|\partial_y\Psi_2|+|\Psi_3|+|\partial_y\Psi_3| &\lesssim  C_1^\star (1+|y|) |s|^{-4}\log|s| ;  \\
|\Psi_4|+|\partial_y\Psi_4|+|\Psi_5|+|\partial_y\Psi_5| &\lesssim  C_1^\star (1+y^2)|s|^{-4}\log|s|; \\
|\Psi_6|+|\partial_y\Psi_6|+|\Psi_7|+|\partial_y\Psi_7|   &\lesssim  (1+|y|)|s|^{-4};\\
|\Psi_8|+|\partial_y\Psi_8| &\lesssim  C_1^\star(1+|y|) |s|^{-4}\log|s|.
\end{align*}
\end{lemma}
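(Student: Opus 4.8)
The plan is to reduce everything to the known behavior of $\Theta$ at the blowup point, encoded in $\Theta=\Theta_\sharp+q$, and to Taylor expansions in the small quantity $\lambda/\sigma$. First I would recall from \eqref{def:theta} that $\theta(s,y)=\lambda^{\frac12}\Theta(\tau,\lambda y+\sigma)$ and that, by \eqref{est:cutoff}, in the region $|y|\le|s|$ the cut-off $\chi(t^{-\frac76\beta}x)$ in $\Theta_\sharp$ equals $1$; hence \eqref{decomp:dktheta} gives $\partial_y^k\theta=\lambda^{\frac12+k}\Theta_0^{(k)}(\lambda y+\sigma)+\lambda^{\frac12+k}\tau\,\Theta_1^{(k)}(\lambda y+\sigma)+\lambda^{\frac12+k}\partial_x^k q(\tau,\lambda y+\sigma)$. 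By \eqref{eq:qQ} the $q$-term is $O(|s|^{-k-9/2})$, which is far smaller than the claimed error, so it can be absorbed. The $\Theta_1$-term is controlled using $|\partial_x^j\Theta_1|\lesssim x^{\alpha-7/2-j}$ from \eqref{eq:eT} together with $\tau\lambda^{-3}\sim|s|^{-9}$ (from \eqref{eq:B1}, \eqref{eq:B3}); it is therefore also negligible. So the whole content is the leading term $\lambda^{\frac12+k}\Theta_0^{(k)}(\lambda y+\sigma)$, and since $\Theta_0(x)=x^{\alpha-1/2}$ for $0<x<\delta$ (the cut-off $\CR$ being $1$ there, which holds since $x\lesssim|s|^{-1/(2\alpha)}\ll\delta$ by \eqref{est:lambda y}), we get $\lambda^{\frac12+k}\Theta_0^{(k)}(\lambda y+\sigma)=\big(\prod_{j=0}^{k-1}(\alpha-\tfrac12-j)\big)\lambda^{\frac12}\sigma^{\alpha-\frac12}(\lambda/\sigma)^k(1+\tfrac\lambda\sigma y)^{\alpha-\frac12-k}$.

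Next I would expand $(1+\tfrac\lambda\sigma y)^{\alpha-\frac12-k}=1+(\alpha-\tfrac12-k)\tfrac\lambda\sigma y+O((\tfrac\lambda\sigma)^2 y^2)$, valid since $|\tfrac\lambda\sigma y|\le\tfrac12$ for $|y|\le|s|$ by \eqref{eq:B4} and $\alpha>1$. Combined with the key asymptotics \eqref{eq:B4}, namely $\lambda^{\frac12}\sigma^{\alpha-\frac12}=(2\alpha)^{-1}|s|^{-1}+O(C_1^\star|s|^{-2}\log|s|)$ and $\lambda/\sigma=(2\alpha)^{-1}|s|^{-1}+O(C_1^\star|s|^{-2}\log|s|)$, the expansion \eqref{eq:0T} for $\theta$, $\partial_y\theta$, $\partial_y^2\theta$ drops out directly by bookkeeping the powers of $|s|^{-1}$ and of $(1+|y|)$. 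The error terms come from three sources — the $q$ and $\Theta_1$ contributions (super-negligible), the $O((\tfrac\lambda\sigma)^2y^2)$ remainder in the binomial expansion (size $|s|^{-k-3}(1+|y|^2)$, absorbed), and replacing $\lambda^{\frac12}\sigma^{\alpha-\frac12}$ and $\lambda/\sigma$ by $(2\alpha)^{-1}|s|^{-1}$ — and the dominant one is the last, of size $C_1^\star|s|^{-k-2}\log|s|\cdot(1+|y|)$, matching the stated bounds. The estimates on $\partial_y^k\theta$ for $k=0,1,2$ are all instances of the same computation with one more derivative, so I would carry out the $k=0$ case in detail and indicate that $k=1,2$ are identical.

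For the eight expansions \eqref{eq:8T}, the strategy is to use the three established asymptotics $\lambda^{\frac12}\sigma^{\alpha-\frac12}=(2\alpha)^{-1}|s|^{-1}+\cdots$, $\lambda\sigma^{2\alpha-1}=\lambda^{\frac12}\sigma^{\alpha-\frac12}\cdot(\lambda/\sigma)^{1/2}\cdot\sigma^{\alpha-1/2}$-type manipulations — more cleanly, $\lambda\sigma^{2\alpha-1}=(\lambda^{\frac12}\sigma^{\alpha-\frac12})(\lambda^{\frac12}\sigma^{\alpha-\frac12})(\sigma/\lambda)^{0}$ is not quite right; instead use $\lambda\sigma^{2\alpha-1}=(\lambda/\sigma)^{\frac12}\cdot$... the clean route is to note all three of $\lambda^{\frac12}\sigma^{\alpha-\frac12}$, $\lambda/\sigma$, $\sigma^{2\alpha}$ equal $(2\alpha)^{-1}|s|^{-1}+O(C_1^\star|s|^{-2}\log|s|)$ by \eqref{eq:B4}, and that $\lambda\sigma^{2\alpha-1}=(\lambda/\sigma)\cdot\sigma^{2\alpha}$, so it too is $((2\alpha)^{-1}|s|^{-1})^2+O(C_1^\star|s|^{-3}\log|s|)$. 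Then each product on the left of \eqref{eq:8T} is a monomial in $|s|^{-1}$ with explicit constant, and each right-hand side, using $\theta=(2\alpha)^{-1}|s|^{-1}+\cdots$ and $\partial_y\theta=(\alpha-\tfrac12)(2\alpha)^{-2}|s|^{-2}+\cdots$ from \eqref{eq:0T}, is the same monomial up to the claimed $\Psi_j$ error; one checks the constants match (e.g. for $\theta\partial_y\theta$: LHS $=(2\alpha)^{-1}(\alpha-\tfrac12)(2\alpha)^{-2}|s|^{-3}=(\alpha-\tfrac12)(2\alpha)^{-3}|s|^{-3}$, and $(\alpha-\tfrac12)\theta^3=(\alpha-\tfrac12)(2\alpha)^{-3}|s|^{-3}$, agreeing). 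The first identity for $\lambda^{\frac12}\sigma^{\alpha-\frac12}$ is the only subtle one: it requires expanding $\theta$, $y\partial_y\theta$, $\theta^3$ to the needed order and verifying that the order-$|s|^{-3}$ coefficient, including the $y^2$ term and the constant $\kappa_\alpha$, reproduces $\lambda^{\frac12}\sigma^{\alpha-\frac12}$'s next-order expansion; this forces going one Taylor order deeper, which is where $\kappa_\tau$, $\kappa_\sigma$, $\kappa_\lambda$ from \eqref{eq:ka} and the refined asymptotic \eqref{eq:B4.1} enter. The bounds on $\partial_y\Psi_j$ follow by differentiating the same expansions, using the $k=1$ case of \eqref{eq:0T}. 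The main obstacle is precisely this first expansion: it is not a mere power count but a genuine second-order matching requiring the explicit leading constants of $\tau,\sigma,\lambda$, so I would isolate it as a separate computation and treat the remaining seven identities as routine power bookkeeping.
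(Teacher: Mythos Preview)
Your overall plan is correct for \eqref{eq:0T} and for most of the identities in \eqref{eq:8T}, and it is essentially the paper's approach. But there is a genuine gap in how you handle the $\Theta_1$-contribution, and it invalidates your derivation of the first identity in \eqref{eq:8T}.

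You claim the $\Theta_1$-term is negligible, citing ``$\tau\lambda^{-3}\sim|s|^{-9}$''. This power count is wrong: from \eqref{eq:B1}--\eqref{eq:B3} one has $\tau\sim|s|^{-(4\alpha+3)/(2\alpha)}$ and $\lambda^3\sim|s|^{-3(2\alpha+1)/(2\alpha)}$, so $\tau\lambda^{-3}\sim|s|$, not $|s|^{-9}$. More to the point, the relevant combination for the $\Theta_1$-contribution to $\theta$ is $\lambda^{1/2}\tau\,\Theta_0'''(\lambda y+\sigma)\sim \lambda^{1/2}\sigma^{\alpha-1/2}\cdot(\tau\sigma^{-3})$, and by \eqref{eq:B4.1} one has $\tau\sigma^{-3}\sim(4\alpha+3)^{-1}(2\alpha)^{-2}|s|^{-2}$; hence this term is of size $|s|^{-3}$, which is indeed negligible for \eqref{eq:0T} but \emph{not} for the first identity in \eqref{eq:8T}, whose error must be $O(|s|^{-4})$.

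In fact $\kappa_\alpha$ comes \emph{precisely} from this $\Theta_0'''$ part of $\Theta_1$. If you expand using $\Theta_0$ alone you will find
\[
\lambda^{1/2}\sigma^{\alpha-1/2}=\theta - y\partial_y\theta + \tfrac12(\alpha-\tfrac12)(\alpha-\tfrac32)y^2\theta^3 + O((1+|y|^3)|s|^{-4}),
\]
with no $\kappa_\alpha\theta^3$ term. The missing piece is $(\alpha-\tfrac12)(\alpha-\tfrac32)(\alpha-\tfrac52)\,\lambda^{1/2}\sigma^{\alpha-1/2}(\tau\sigma^{-3})$, which equals $\kappa_\alpha\theta^3$ up to admissible error; this is exactly the $(4\alpha+3)^{-1}$ factor in $\kappa_\alpha$, and it cannot arise from any further Taylor expansion of $\Theta_0$ or of the parameters. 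So your plan to ``go one Taylor order deeper'' in $\Theta_0$ and use \eqref{eq:B4.1} will not close: you must retain the $\Theta_1$-term in the decomposition \eqref{decomp:dktheta} at this step.

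A secondary point: for $\Psi_6$ and $\Psi_7$ the statement claims bounds \emph{without} the factor $C_1^\star\log|s|$. Your route via \eqref{eq:0T} would incur that factor, since \eqref{eq:0T} itself carries it. The paper avoids this by expanding $\theta$ and $\theta^2$ in terms of $\lambda^{1/2}\sigma^{\alpha-1/2}$ and $\lambda\sigma^{2\alpha-1}$ directly (first-order Taylor in $\lambda y/\sigma$), before invoking any bootstrap estimate; the resulting error terms are pure powers of $|s|^{-1}$.
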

\begin{remark}\label{rk:ag}
For $k=0,1,\ldots,4$, Lemma \ref{le:th} gives bounds on $\theta$ of the form
\[
\mbox{for $-|s|\leq y\leq |s|$,}\quad 
|\partial_y^k \theta(s,y)|\lesssim |s|^{-1-k}.
\]
Estimates \eqref{eq:0T} give the first order expansions of
$\theta$, $\partial_y\theta$ and $\partial_y^2\theta$ in $|s|^{-1}$.
Since we use the bootstrap estimate \eqref{eq:B2}--\eqref{eq:B3}
to estimate the parameters $\lambda$ and $\sigma$, the error term depends on the bootstrap
constant $C_1^\star$.

The first estimate in \eqref{eq:8T} says that $\lambda^\frac12\sigma^{\alpha-\frac12}$
can be expanded in terms of $\theta$, $y\partial_y \theta$, $\theta^3$
and $y^2\theta^3$ up to order $|s|^{-4}$. While $\partial_y \theta$ and $\theta^2$ are of the same
order $|s|^{-2}$, to replace $\partial_y \theta$ by $\theta^2$ in this
expansion would require an expansion of the parameters 
$\lambda$ and $\sigma$ at the next order.

An approximate solution at an order in $|s|^{-1}$ higher that $|s|^{-3}$ 
(for example, in an attempt to treat the case where $0<\alpha\leq 1$), would also
require more precise estimates for the parameters than just the bootstrap
estimates \eqref{eq:B1}--\eqref{eq:B3}.
\end{remark}

\begin{proof}
In this proof, we assume $|y|\leq |s|$, so that by \eqref{eq:B4}, it holds $\frac{\lambda}{\sigma}|y|\leq \frac 12 $.
In particular, for $|s|$ large enough,
$\lambda y + \sigma \in (0,\frac \delta2)$.
Since $\Theta_0(x)=x^{\alpha-\frac12}$ in the region $0<x<\frac\delta 2$, (see \eqref{eq:T0}), we have
by explicit computations
\begin{align*}
\lambda^{\frac12}\Theta_0(\lambda y+\sigma) & =\lambda^{\frac12}\sigma^{\alpha-\frac 12} \left(1+\frac{\lambda}{\sigma} y\right)^{\alpha-\frac 12};\\
\lambda^{\frac32}\Theta_0'(\lambda y+\sigma) & =(\alpha-\tfrac12)\lambda^{\frac32}\sigma^{\alpha-\frac 32} \left(1+\frac{\lambda}{\sigma} y\right)^{\alpha-\frac 32};\\
\lambda^{\frac52}\Theta_0''(\lambda y+\sigma) & =(\alpha-\tfrac12)(\alpha-\tfrac32)\lambda^{\frac52}\sigma^{\alpha-\frac 52} \left(1+\frac{\lambda}{\sigma} y\right)^{\alpha-\frac 52}; \\ 
\tau \lambda^{\frac12}\Theta_0'''(\lambda y+\sigma)&=(\alpha-\tfrac12)(\alpha-\tfrac32)(\alpha-\tfrac52)\lambda^{\frac12}\sigma^{\alpha-\frac 12} (\tau \sigma^{-3})\left(1+\frac{\lambda}{\sigma} y\right)^{\alpha-\frac 72} ;
\\ 
\tau \lambda^{\frac12}(\Theta_0^5)'(\lambda y+\sigma)&=5(\alpha-\tfrac12)\lambda^{\frac12}\sigma^{\alpha-\frac 12} (\tau \sigma^{4\alpha-3})\left(1+\frac{\lambda}{\sigma} y\right)^{5\alpha-\frac 72}.
\end{align*}
Then, we expand the first three expressions above at the first order
of $\frac\lambda\sigma y$,
\begin{equation}\label{eq:7T}
\begin{aligned}
\lambda^\frac12\Theta_0(\lambda y+\sigma) & = 
\lambda^{\frac12}\sigma^{\alpha-\tfrac12} + O((1+|y|)|s|^{-2});\\
\lambda^\frac32\Theta_0'(\lambda y+\sigma) & =  (\alpha-\tfrac12)\lambda^{\frac12}\sigma^{\alpha-\frac12}\frac{\lambda}{\sigma}  +O((1+|y|)|s|^{-3});\\
\lambda^\frac52\Theta_0''(\lambda y+\sigma) & = (\alpha-\tfrac12)(\alpha-\tfrac32)\lambda^{\frac12}\sigma^{\alpha-\tfrac12}\left(\frac{\lambda}{\sigma}\right)^2
+O( (1+|y|)|s|^{-4}).
\end{aligned}
\end{equation}
Using the decomposition of $\partial_y^k\theta$ in \eqref{decomp:dktheta}, the estimate of
$q$ in \eqref{eq:qQ} and the bootstrap assumptions \eqref{eq:B2}--\eqref{eq:B4}, 
we easily obtain \eqref{eq:0T}. We observe in particular that the contributions of
both $\Theta_1$ and $q$ are negligible at this level. Observe that the error terms in \eqref{eq:0T}
contain the bootstrap constant $C_1^\star$ since we have used the bootstrap assumption to obtain
 precise asymptotics in terms of $|s|^{-1}$.

Now we expand all the expressions above at the same order $|s|^{-4}$, which gives
\begin{align*}
\lambda^\frac12\Theta_0(\lambda y+\sigma) & = 
 \lambda^{\frac12}\sigma^{\alpha-\tfrac12}+(\alpha-\tfrac12)\lambda^{\tfrac12}\sigma^{\alpha-\frac12}\frac{\lambda}{\sigma}y
+\frac 12(\alpha-\tfrac12)(\alpha-\tfrac32)\lambda^{\frac12}\sigma^{\alpha-\frac12}\left(\frac{\lambda}{\sigma}\right)^2y^2 \\ & \quad+O((1+|y|^3)|s|^{-4});
\\
\lambda\Theta_0^2(\lambda y+\sigma) & = 
\lambda\sigma^{2\alpha-1}+2(\alpha-\tfrac12)\lambda\sigma^{2\alpha-1}\frac{\lambda}{\sigma} y+O((1+y^2)|s|^{-4});
\\
\lambda^\frac32\Theta_0'(\lambda y+\sigma) & =  (\alpha-\tfrac12)\lambda^{\frac12}\sigma^{\alpha-\frac12}\frac{\lambda}{\sigma}{+(\alpha-\tfrac12)(\alpha-\tfrac32)\lambda^{\frac12}\sigma^{\alpha-\frac12}\left(\frac{\lambda}{\sigma}\right)^2y}  +O((1+y^2)|s|^{-4}); \\
\lambda^\frac52\Theta_0''(\lambda y+\sigma) & = (\alpha-\tfrac12)(\alpha-\tfrac32)\lambda^{\frac12}\sigma^{\alpha-\tfrac12}\left(\frac{\lambda}{\sigma}\right)^2+O( (1+|y|)|s|^{-4}); \\
\tau \lambda^{\frac12}\Theta_0'''(\lambda y+\sigma)&=(\alpha-\tfrac12)(\alpha-\tfrac32)(\alpha-\tfrac52)\lambda^{\frac12}\sigma^{\alpha-\frac 12} (\tau \sigma^{-3})+O( (1+|y|)|s|^{-4});\\
\tau \lambda^{\frac12}(\Theta_0^5)'(\lambda y+\sigma)&= O( |s|^{-5}).
\end{align*}
We observe that in the decomposition of $\theta$ given by \eqref{decomp:dktheta},
the term $(\Theta_0^5)'$ in the definition of $\Theta_1$ and the term $q$ are both negligible at the order $|s|^{-4}$. In contrast, the term $\Theta_0'''$ in the definition of
$\Theta_1$ has a non zero contribution at the order $|s|^{-3}$ 
(the first term in the expansion of $\tau\lambda^\frac12\Theta_0'''$ above).

An important observation is that all the expansions above  are still valid after differentiation
in $y$, at the same order, \emph{i.e.} 
with identical error terms of size $|s|^{-4}$
(actually, error terms may be slightly more favorable in terms of $y$ 
after differentiation but we shall not exploit this).

We also observe that the error terms above do not depend on the bootstrap constant
$C_1^\star$ since we have not used yet the bootstrap estimates \eqref{eq:B1}--\eqref{eq:B3}.
Now, we claim
\begin{equation}\label{eq:1T}
\begin{aligned}
\theta(s,y)& =  \lambda^{\frac12}\sigma^{\alpha-\frac12}+y\partial_y\theta(s,y)
- (\alpha-\tfrac12)(\alpha-\tfrac32)(\alpha-\tfrac52)\lambda^{\frac12}\sigma^{\alpha-\frac12}(\tau \sigma^{-3})
\\ &\quad -\frac12 (\alpha-\tfrac12)(\alpha-\tfrac32)\lambda^{\frac12}\sigma^{\alpha-\frac12}\left(\frac{\lambda}{\sigma}\right)^2y^2
 +O( (1+|y|^3)|s|^{-4} );
\\
\theta^2(s,y) & =  \lambda \sigma^{2\alpha-1}+2y\theta(s,y)\partial_y\theta(s,y)+O( (1+y^2)|s|^{-4} ) .
\end{aligned}
\end{equation}
To prove \eqref{eq:1T}, we first observe by combining the above identities that
\begin{align*}
\lambda^\frac12\Theta_0(\lambda y+\sigma) &= 
\lambda^{\frac12}\sigma^{\alpha-\frac12}+\lambda^\frac32\Theta_0'(\lambda y+\sigma)y 
-\frac12 (\alpha-\tfrac12)(\alpha-\tfrac32)\lambda^{\frac12}\sigma^{\alpha-\frac12}\left(\frac{\lambda}{\sigma}\right)^2y^2
\\&\quad +O((1+|y|^3)|s|^{-4}),\\
\lambda\Theta_0^2(\lambda y+\sigma)&=\lambda\sigma^{2\alpha-1}+2\lambda^2\Theta_0(\lambda y+\sigma)\Theta_0'(\lambda y+\sigma)y+O((1+y^2)|s|^{-4}) ,
\end{align*}
and second we insert the contribution of $\Theta_1$.

Then, we use $|\theta^3-\lambda^\frac32\sigma^{3\alpha-\frac32}|\lesssim C(1+|y|)|s|^{-4}$
and the bootstrap estimates \eqref{eq:B4}--\eqref{eq:B4.1} to obtain
\begin{align*}
\big|\theta^3-(2\alpha)^{-3} |s|^{-3}\big|&\lesssim C_1^\star (1+|y|) |s|^{-4}\log|s|;\\
\big| (\lambda^\frac12\sigma^{\alpha-\frac12}){\tau\sigma^{-3}}-(4\alpha+3)^{-1}(2\alpha)^{-3} |s|^{-3} \big | &\lesssim C_1^\star (1+|y|) |s|^{-4}\log|s|;\\
 \bigg| \lambda^\frac12\sigma^{\alpha-\frac12}\left(\frac\lambda\sigma\right)^2 -(2\alpha)^{-3} |s|^{-3}\bigg|
&\lesssim C_1^\star (1+|y|) |s|^{-4}\log|s|.
\end{align*}
The estimate of $\Psi_1$ follows. To derive the estimate of $\partial_y \Psi_1$, we use the identity 
\begin{equation*}
\partial_y \Psi_1=y\left(\partial_y^2\theta-(\alpha-\tfrac12)(\alpha-\tfrac32)\theta^3\right)-\frac32(\alpha-\tfrac12)(\alpha-\tfrac32)y^2\theta^2\partial_y\theta 
\end{equation*}
and the additional estimate
\begin{equation} \label{est:add:theta}
\big|\partial_y^2\theta - (\alpha-\tfrac12)(\alpha-\tfrac32) (2\alpha)^{-3} |s|^{-3}\big|
\lesssim C_1^\star(1+|y|) |s|^{-4} \log|s| .
\end{equation}
Thus, the estimate holds at the same
order after differentiation in $y$ and the estimate of $\partial_y \Psi_1$ follows.
The estimates of $\Psi_2$ and $\Psi_3$ follow from \eqref{est:add:theta} and
\begin{align*}
\big|\theta\partial_y\theta - (\alpha-\tfrac12) (2\alpha)^{-3} |s|^{-3}\big|
\lesssim C_1^\star(1+|y|) |s|^{-4} \log|s|,
\end{align*}
and similarly for their $y$-partial derivatives.
The estimates of $\Psi_4$ and its $y$-derivative follows from the  estimate of $\theta^2$ in \eqref{eq:1T} and the above estimate for $\theta\partial_y \theta$.
The estimate of $\Psi_5$ follows from multiplying the estimate
$
|\theta-  \lambda^{\frac12}\sigma^{\alpha-\frac12}-y\partial_y\theta|
\lesssim (1+|y|^2) |s|^{-3}
$
by $\theta$ and using $y\Psi_2$.
Now, we estimate $\Psi_6$ and $\Psi_7$.
As before, we have
\begin{equation*}
\theta = \lambda^{\frac12}\sigma^{\alpha-\tfrac12} + O((1+|y|)|s|^{-2});\quad
\theta^2 = \lambda\sigma^{2\alpha-\tfrac12} +  O((1+|y|)|s|^{-3}).
\end{equation*}
Multiplying the first line by $\theta^2$ and multiplying the second line by $\theta$, we
obtain the estimates on $\Psi_6$ and $\Psi_7$.
Their spatial partial derivatives are treated similarly.
We note that the error term does not contain the bootstrap constant $C_1^\star$.
This fact is not important in the sequel.

Finally, the estimate of $\Psi_8$ follows from
\begin{align*}
\big|\partial_y\theta - (\alpha-\tfrac12) (2\alpha)^{-2} |s|^{-2}\big|
&\lesssim C_1^\star(1+|y|) |s|^{-3} \log|s|;\\
\big|\theta^3-(2\alpha)^{-3} |s|^{-3}\big|&\lesssim C_1^\star (1+|y|) |s|^{-4}\log|s|,
\end{align*}
and the bootstrap estimates \eqref{eq:B2}--\eqref{eq:B3}.
The estimate for $\partial_y \Psi_8$ is proved similarly.
\end{proof}

\begin{lemma}\label{le:ww} For $k=0,\ldots,3$, and $m \in \mathbb N$,
\begin{align*}
|V_1\partial_y^k\theta| & \lesssim |s|^{-1-k}\eta
+\lambda^{\frac12+k}\eta_{R},\\
|(\Lambda_1V_1)\partial_y^k\theta|
&\lesssim 
|s|^{-1-k}\omega\eta + |s|^{-1-k} \eta_L,\\
|(\partial_y^mV_1)\partial_y^k \theta|
+|(\partial_y^m \Lambda_1V_1)\partial_y^k \theta|
&\lesssim 
|s|^{-1-k}\omega\eta + |s|^{-1-m-k} \eta_L, \quad\mbox{for $m\geq 1$,}\\
|(\partial_y^mV_2)\partial_y^k(\theta^2)|+|(\partial_y^m\Lambda_2 V_2)\partial_y^k(\theta^2)|
&\lesssim 
\begin{cases}
|s|^{-2-k}\omega \eta+|s|^{-2-k} \eta_I, & \mbox{for $m=0$,}\\
|s|^{-2-k}\omega\eta + |s|^{-2-m-k} \eta_L, & \mbox{for $m\geq1$,}
\end{cases}\\
|(\partial_y^mV_2^\star)\partial_y^{k+1} \theta|
+|(\partial_y^m\Lambda_3 V_2^\star)\partial_y^{k+1} \theta|
&\lesssim |s|^{-2-k}\omega\eta,\\
|(\partial_y^mV_3)\partial_y^k (\theta^3)|
&\lesssim 
\begin{cases}
|s|^{-3-k}\omega\eta+|s|^{-3-k}(1+|y|)\eta_I,& \mbox{for $m=0$,}\\
|s|^{-3-k}\omega\eta +|s|^{-3-k}\eta_I, & \mbox{for $m=1$,}
\\
|s|^{-3-k}\omega\eta + |s|^{-3-k-m+1}\eta_L, & \mbox{for $m\geq 2$,}
\end{cases} \\ 
|(\partial_y^m\Lambda_3 V_3)\partial_y^k(\theta^3)|
&\lesssim 
\begin{cases}
|s|^{-3-k}\omega\eta +|s|^{-3-k}\eta_I, & \mbox{for $m=0$,}
\\
|s|^{-3-k}\omega\eta + |s|^{-3-k-m}\eta_L, & \mbox{for $m\geq 1$.}
\end{cases}
\end{align*}
Moreover,
\begin{equation}\label{eq:vv}
|\partial_y^pV| 
\lesssim 
\begin{cases}
|s|^{-1} \eta + \lambda^\frac 12 \eta_{R},&\mbox{for $p=0$,}\\
|s|^{-1}\omega\eta+|s|^{-1-p}\eta + \lambda^{\frac 12 +p}\eta_R, & \mbox{for $p=1,2,3,4$} .
\end{cases}
\end{equation}
In addition, we have the following $L^2$-bounds
\begin{equation}\label{eq:mg}
\int V_1^2 \theta^2 \lesssim \delta^{2\alpha} , \
\int V_2^2 \theta^4+ 
\int V_3^2 \theta^6 \lesssim  |s|^{-3}, \ \int (V_2^\star)^2 (\partial_y\theta)^2 \lesssim |s|^{-4}
\end{equation}
and 
\begin{equation}\label{eq:mg.1}
\begin{aligned}
&\int \left(\partial_y(V_1\theta)\right)^2 \lesssim |s|^{-2} , \quad
\int \left(\partial_y(V_2\theta^2)\right)^2\lesssim |s|^{-3}, \\
&\int  \left(\partial_y(V_2^*\partial_y\theta)\right)^2 \lesssim |s|^{-4}, \quad
\int  \left(\partial_y(V_3\theta^3)\right)^2 \lesssim  |s|^{-5},
\end{aligned}
\end{equation}
so that 
\begin{equation} \label{est:V:H1}
\int V^2\lesssim \delta^{2\alpha} \quad \mbox{and} \quad \int (\partial_yV)^2\lesssim |s|^{-2} .
\end{equation}
\end{lemma}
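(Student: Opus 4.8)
To prove Lemma \ref{le:ww}, the plan is to deduce every bound from the pointwise estimates on the profiles $V_j$ and on $\Lambda_jV_j$ given by Lemma \ref{le:vv}, from the pointwise and $L^2$ estimates on $\theta$ given by Lemma \ref{le:th} (and, where the precise constants matter, Lemma \ref{le:35}), together with careful bookkeeping of the supports of the cut-offs $\eta$, $\eta_L$, $\eta_I$, $\eta_R$ and of the weight $\omega=e^{-|y|/10}$. I would first isolate three elementary facts to be used throughout: (a) a factor lying in $\cY$, or equal to one of $\Lambda_1V_1$, $\Lambda_2V_2$, $\Lambda_3V_3$, $\Lambda_3V_2^*$ — which enjoy $\cY$-type decay by \eqref{eq:zz} — decays like $e^{-|y|}$ on $\{y\ge 1\}$, and hence on the support of $\eta_R$ carries a gain $e^{-9|s|/10}$ that absorbs every polynomial loss in $|s|$ arising in these products, in particular the factors $\lambda^{-(\frac12+k)}$ (recall $\lambda\sim|s|^{-(2\alpha+1)/(2\alpha)}$, so $\lambda^{-1}$ is polynomial in $|s|$); (b) $\eta_L$ and $\eta_I$ are supported in $\{y\le0\}$, so their products with the $\eta_R$-localized part of $\partial_y^k\theta$ vanish, while $\eta\eta=\eta$ and $\eta\eta_R=\eta_R$; (c) applying $\partial_y$ or $\Lambda_j$ to $\ZL=\CL(|s|^{-1}y)$ produces a term localized in $\eta_L$ (because $\CL'$ is supported in $[-1,-\tfrac12]$), at the cost of a factor $|s|^{-1}$ per derivative.

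Granting this, the array of pointwise product estimates follows by multiplying, term by term, the relevant bound of Lemma \ref{le:vv} by $|\partial_y^k\theta|\lesssim|s|^{-1-k}\eta+\lambda^{\frac12+k}\eta_R$ and simplifying with (b); the one delicate point is that the estimates for $(\Lambda_1V_1)\partial_y^k\theta$ and for $(\partial_y^m\Lambda_jV_j)\partial_y^k\theta$ (or $\partial_y^k(\theta^j)$) carry no $\eta_R$ term, which is exactly fact (a) — the $e^{-9|s|/10}$ gain killing the $\eta_R$-part of $\partial_y^k\theta$. The estimate \eqref{eq:vv} for $\partial_y^pV$ then follows from $V=V_1\theta+V_2\theta^2+V_2^*\partial_y\theta+V_3\theta^3$ and the Leibniz rule: for $p=0$ the leading term is $V_1\theta$, contributing $|s|^{-1}\eta+\lambda^{\frac12}\eta_R$ (the three other terms being of lower order in $|s|^{-1}$), while for $1\le p\le 4$ each differentiation either hits a $V_j$ — producing the $\omega$-localized contribution $|s|^{-1}\omega\eta$ and an $\eta_L$ remainder — or a $\theta$-factor, producing $|s|^{-1-p}\eta+\lambda^{\frac12+p}\eta_R$.

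For the $L^2$-bounds \eqref{eq:mg}, I would use that $V_1$ is bounded while $V_2$, $V_2^*$, $V_3$ decay exponentially on the right (since $P\in\cZ_0^-$, $A_2,A_2^*\in\cY$, $A_3\in\cZ_1^-$), with $V_3$ growing at most linearly on the left. Then $\int V_1^2\theta^2\lesssim\|V_1\|_{L^\infty}^2\|\theta\|_{L^2}^2\lesssim\delta^{2\alpha}$ by Lemma \ref{le:th}; for the remaining integrals the $\eta_R$-region gives an exponentially small contribution (an $O(|s|^{-4})$ one for $V_2^*$, which decays like $e^{-|y|}$ everywhere), and on $\{-|s|\le y\le|s|\}$ one has $|\partial_y^k\theta|\lesssim|s|^{-1-k}$, so integrating over an interval of length $\sim|s|$ — with at most an extra factor $|s|^2$ from the polynomial weight attached to $V_3$ — yields $\int V_2^2\theta^4\lesssim|s|^{-3}$, $\int V_3^2\theta^6\lesssim|s|^{-3}$ and $\int(V_2^*)^2(\partial_y\theta)^2\lesssim|s|^{-4}$. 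The bounds \eqref{eq:mg.1} follow the same way after the Leibniz rule, using $A_1'\in\cY$ and $P'\in\cY$ (hence $V_1',V_2'\lesssim\omega\eta+|s|^{-1}\eta_L$), $\Lambda_3A_2^*\in\cY$ and $\Lambda_3A_3\in\cZ_0^-$ for the terms where $\partial_y$ falls on a profile; here the borderline term is $V_1'\theta$, which gives $\int(V_1'\theta)^2\lesssim|s|^{-2}$. Finally \eqref{est:V:H1} follows from $(a+b+c+d)^2\lesssim a^2+b^2+c^2+d^2$ applied to $V$ and to $\partial_yV$, combined with \eqref{eq:mg}--\eqref{eq:mg.1}, the dominant contributions being $\int V_1^2\theta^2\lesssim\delta^{2\alpha}$ and $\int(\partial_y(V_1\theta))^2\lesssim|s|^{-2}$.

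The only place where genuine care is needed is the systematic reduction of each of the many products to the three regimes $\{y\ge|s|\}$, $\{-|s|\le y\le|s|\}$, $\{y\le-\tfrac12|s|\}$, and the verification, in the far-right regime, that the exponential decay of the $\cY$-type profiles beats both the width $\sim\delta\lambda^{-1}$ of the $\eta_R$-region and the growth $\theta\sim\lambda^{\frac12}(\lambda y)^{\alpha-\frac12}$ there; this is exactly what forces the $\eta_R$-terms to disappear from the $\Lambda_jV_j$-type estimates and to be of strictly lower order in the $L^2$ bounds.
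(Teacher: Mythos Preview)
Your proposal is correct and follows essentially the same approach as the paper's own proof, which is extremely terse (three sentences: combine Lemmas \ref{le:vv} and \ref{le:th}, use the Leibniz rule for \eqref{eq:vv}, then integrate and use \eqref{theta:H1} for the $L^2$ bounds). You are supplying exactly the bookkeeping the paper omits---the key observations that the $V_j$ vanish for $y<-|s|$ (so \eqref{eq:lt} applies on their support), that $\eta_L,\eta_I$ have disjoint support from $\eta_R$, and that exponential decay on the right of the $\cZ_0^-$ and $\cY$ profiles kills the $\eta_R$-contribution in the $\Lambda_jV_j$ products. One small imprecision: your appeal to \eqref{eq:zz} for ``$\cY$-type decay'' of $\Lambda_2V_2$ and $\Lambda_3V_3$ is not literally correct (e.g.\ $\Lambda_2P\in\cZ_0^-$, not $\cY$, and \eqref{eq:zz} only gives $\Lambda_3A_3\in\cZ_0^-$), but what you actually need and use---exponential decay on $\{y\ge 1\}$---is valid for all of them.
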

\begin{proof}
The above estimates easily follow from the ones of
Lemmas \ref{le:vv} and \ref{le:th}.
Note that all the functions $V_j$ vanish identically
for $y<-|s|$ and so one can apply the estimates
on $\theta$ from Lemma \ref{le:th}. Estimate \eqref{eq:vv} follows by combining the Leibniz rule with the above estimates. Finally, estimates \eqref{eq:mg}, \eqref{eq:mg.1} and \eqref{est:V:H1} follow from integrating the previous pointwise estimates and also using \eqref{theta:H1}.
\end{proof}

\subsection{Components of the blowup profile}\label{s:3.6}
Here, we choose the functions
$A_1$, $A_2$, $A_2^*$, $A_3$ and the constant $c_2$ so that 
the error term $\mathcal E(W)$ is sufficiently small
(formally, we cancel all terms up to order $|s|^{-3}$, and thus
we obtain an error term of size $|s|^{-4}$).

\emph{$\bullet$ Equation of $W$.}
First, we insert the explicit expression of $W$ into the rescaled gKdV equation.
In practice, we compute $\cE(W)$ defined in \eqref{eq:EW}.

We will make use of the following identities 
\begin{align*}
\Lambda \left(\theta^j V_j \right)&=jV_j \theta^{j-1} \Lambda \theta +\left(\Lambda_jV_j\right) \theta^j , \\
\partial_y \Lambda \theta&=\Lambda \partial_y \theta+\partial_y\theta,
\end{align*}
where $\Lambda$ and $\Lambda_j$ are defined in \eqref{eq:La}.

Using the equation \eqref{eq:TT} of $\Theta$ and the definition of $\tau(s)$ in \eqref{eq:ta}
\begin{align}
\partial_s \theta
&=\frac{\lambda_s}{\lambda}\Lambda\theta +  \frac{\sigma_s}{\lambda}  \partial_y \theta
+\lambda^\frac72\partial_t \Theta(\tau(s),\lambda(s) y+\sigma(s))\nonumber\\
&=\frac{\lambda_s}{\lambda}\Lambda\theta +   \frac{\sigma_s}{\lambda} 
\partial_y \theta -\partial_y^3 \theta - m_0^4 \partial_y (\theta^5).\label{eq:TA}
\end{align}
Thus,
\begin{align*}
\partial_s W & = \frac{\lambda_s}{\lambda} \left( V_1 \Lambda\theta+2V_2\theta\Lambda\theta
+V_2^*\partial_y \Lambda\theta+3V_3\theta^2\Lambda\theta\right)\\
&\quad +  \frac{\sigma_s}{\lambda} \left(V_1 \partial_y \theta + 2 V_2\theta \partial_y \theta + 
V_2^* \partial_y^2 \theta + 3V_3 \theta^2\partial_y \theta\right)\\
&\quad-\left(V_1 \partial_y^3 \theta + m_0^4 V_1  \partial_y(\theta^5) 
+ 2V_2\theta\partial_y^3 \theta + V_2^*\partial_y^4\theta+3V_3\theta^2\partial_y^3 \theta\right)\\
&\quad 
+\cS_1+\cS_2
\end{align*}
where
\begin{align}
\cS_1 &= \partial_s V_0+ (\partial_s V_1)\theta+(\partial_s V_2)\theta^2+(\partial_s V_2^*)\partial_y\theta+(\partial_s V_3)\theta^3, \label{def:S1}\\
\cS_2  & = -2m_0^4V_2\theta \partial_y(\theta^5)
-m_0^4V_2^*\partial_y^2(\theta^5)
-3 m_0^4V_3\theta^2\partial_y(\theta^5). \label{def:S2}
\end{align}
We continue by
\begin{align*}
\partial_y(\partial_y^2W-W+W^5)
& =
-(V_1\partial_y\theta+V_2 \partial_y(\theta^2)+V_2^*\partial_y^2\theta 
+V_3\partial_y(\theta^3))\\
&\quad
+V_1 \partial_y^3 \theta + m_0^4 V_1  \partial_y(\theta^5) 
+ 2V_2\theta\partial_y^3 \theta + V_2^*\partial_y^4\theta+3V_3\theta^2\partial_y^3 \theta\\
&\quad 
- (\partial_y \cLt V_1)\theta
-(\partial_y \cLt V_2) \theta^2 - (\partial_y\cLt V_2^*) \partial_y\theta
-(\partial_y \cLt V_3) \theta^3\\
&\quad +3 (\partial_y^2V_1)\partial_y\theta
+ 3 (\partial_yV_1)\partial_y^2\theta\ 
+3(\partial_y^2V_2)\partial_y(\theta^2) 
+3(\partial_y^2V_2^*)\partial_y^2\theta
\\
&\quad
+\cRR+\cS_3,
\end{align*}
where
\begin{align*}
\cRR & = \partial_y (W^5-V_0^5)- V_1^5 \partial_y(\theta^5)\\
&\quad -5 \partial_y (V_0^4V_1)\theta-5\partial_y (V_0^4V_2)\theta^2
-5\partial_y(V_0^4V_2^*)\partial_y\theta-5\partial_y(V_0^4V_3)\theta^3 
\end{align*}
and
\begin{equation}\label{def:S3}
\begin{aligned}
\cS_3 & = (V_1^4-m_0^4)V_1\partial_y(\theta^5)
+6V_2(\partial_y\theta)(\partial_y^2 \theta)+3(\partial_yV_2)\partial_y^2(\theta^2)
+3(\partial_yV_2^*)\partial_y^3\theta   \\
&\quad 
+3(\partial_y^2V_3)\partial_y(\theta^3)
+3(\partial_yV_3)\partial_y^2(\theta^3)+ 6 V_3(3\theta(\partial_y \theta)(\partial_y^2 \theta)
+ (\partial_y\theta)^3)   \\
&\quad +Q \partial_y^3 \ZL+3Q'\partial_y^2\ZL+2Q''\partial_y\ZL
+\partial_y (Q^5  (\ZL^5-\ZL)). 
\end{aligned}
\end{equation}
Thus, combining the two identities above, we find
\begin{align*}
\partial_s W +\partial_y(\partial_y^2W-W+W^5) 
& = \frac{\lambda_s}{\lambda} \left( V_1 \Lambda\theta+2V_2\theta\Lambda\theta
+V_2^*\partial_y \Lambda\theta+3V_3\theta^2\Lambda\theta\right)\\
&\quad +  \left(\frac{\sigma_s}{\lambda}-1\right) \left(V_1 \partial_y \theta + 2 V_2\theta \partial_y \theta + 
V_2^* \partial_y^2 \theta + 3V_3 \theta^2\partial_y \theta\right)\\
&\quad - (\partial_y \cLt V_1)\theta
-(\partial_y \cLt V_2) \theta^2 - (\partial_y\cLt V_2^*) \partial_y\theta
-(\partial_y \cLt V_3) \theta^3\\
&\quad +3 (\partial_y^2V_1)\partial_y\theta
+ 3 (\partial_yV_1)\partial_y^2\theta\ 
+3(\partial_y^2V_2)\partial_y(\theta^2) 
+3(\partial_y^2V_2^*)\partial_y^2\theta\\
&\quad 
+\cRR+\cS_1+\cS_2+\cS_3.
\end{align*}
Then,
\begin{align*}
-\frac{\lambda_s}{\lambda}\Lambda W & = -\frac{\lambda_s}{\lambda}
\big( V_1\Lambda \theta + 2V_2\theta\Lambda \theta +V_2^*\partial_y\Lambda \theta+3V_3\theta^2\Lambda \theta\big)\\
&\quad  -\frac{\lambda_s}{\lambda}
\big(\Lambda V_0+\left(\Lambda_1V_1\right)\theta +\left(\Lambda_2V_2\right)\theta^2 +\left(\Lambda_3V_2^*\right)\partial_y\theta  +\left(\Lambda_3V_3\right)\theta^3\big)
\end{align*}
and
\begin{align*}
- \left(\frac{\sigma_s}{\lambda} - 1\right)\partial_y W
& = - \left(\frac{\sigma_s}{\lambda} - 1\right)
\big( V_1 \partial_y \theta
+ 2 V_2\theta  \partial_y \theta  + V_2^* \partial_y^2 \theta 
+  3V_3 \theta^2  \partial_y \theta  \big)\\
&\quad 
 - \left(\frac{\sigma_s}{\lambda} - 1\right)
\big( \partial_y V_0 + (\partial_y V_1)\theta 
+ (\partial_y V_2)\theta^2 
+ (\partial_y V_2^*)\partial_y\theta 
+ (\partial_y V_3)\theta^3 \big)  .
\end{align*}
Therefore,
\begin{align*}
\cE(W)
& = -\frac{\lambda_s}{\lambda} \left( \Lambda V_0 + (\Lambda_1 V_1) \theta
+(\Lambda_2 V_2)\theta^2 + (\Lambda_3V_2^*) \partial_y \theta
+(\Lambda_3V_3)\theta^3\right)   \\
&\quad - \left(\frac{\sigma_s}{\lambda}-1\right) \left(\partial_yV_0+(\partial_y V_1) \theta+(\partial_yV_2)\theta^2
+(\partial_yV_2^*)\partial_y\theta+(\partial_yV_3)\theta^3\right)\\
&\quad - (\partial_y \cLt V_1)\theta
-(\partial_y \cLt V_2) \theta^2 - (\partial_y\cLt V_2^*) \partial_y\theta
-(\partial_y \cLt V_3) \theta^3\\
&\quad +3 (\partial_y^2V_1)\partial_y\theta
+ 3 (\partial_yV_1)\partial_y^2\theta\ 
+3(\partial_y^2V_2)\partial_y(\theta^2) 
+3(\partial_y^2V_2^*)\partial_y^2\theta\\
&\quad 
+\cRR+\cS_1+\cS_2+\cS_3.
\end{align*}
Now, we introduce 
\begin{align}
\Psi_\lambda & =   
(\Lambda_1 V_1) \theta
+(\Lambda_2 V_2)\theta^2 + (\Lambda_3V_2^*) \partial_y \theta
+(\Lambda_3V_3)\theta^3,\label{eq:dL}\\
\Psi_\sigma & =   (\partial_y V_1) \theta+(\partial_yV_2)\theta^2
+(\partial_yV_2^*)\partial_y\theta+(\partial_yV_3)\theta^3.\label{eq:dS}
\end{align}
and $\beta=c_1 \lambda^\frac12\sigma^{\alpha-\frac12}+c_2\lambda\sigma^{2\alpha-1}$
so that 
\begin{align*}
\cE(W) & = 
-\left(\frac{\lambda_s}{\lambda} + \beta\right)
(\Lambda V_0 + \Psi_\lambda) 
-\left(\frac{\sigma_s}{\lambda}-1\right)
(\partial_yV_0+\Psi_\sigma)\\
&\quad - (\partial_y \cLt V_1)\theta
-(\partial_y \cLt V_2) \theta^2 - (\partial_y\cLt V_2^*) \partial_y\theta
-(\partial_y \cLt V_3) \theta^3\\
&\quad+c_1\lambda^\frac12 \sigma^{\alpha-\frac12}\Lambda V_0 
+c_2\lambda \sigma^{2\alpha-1} \Lambda V_0 + c_1\lambda^\frac12\sigma^{\alpha-\frac12} \theta \Lambda_1V_1
+(\partial_y\theta)  3\partial_y^2 V_1\\
&\quad
+\theta^3 \left(-\partial_y({\cLt} V_3)\right)
+3(\partial_y^2V_2)\partial_y(\theta^2) 
+( 3 \partial_yV_1+ 3\partial_y^2V_2^* )\partial_y^2\theta\\
&\quad + c_2\lambda\sigma^{2\alpha-1}\theta\Lambda_1V_1
+c_1 \lambda^\frac12\sigma^{\alpha-\frac12}\theta^2\Lambda_2V_2
+c_1\lambda^\frac12\sigma^{\alpha-\frac12}\partial_y\theta\Lambda_3V_2^*
\\
&\quad +\cRR+\cS_1+\cS_2+\cS_3+\cS_4
\end{align*}
where
\begin{equation} \label{def:S4}
\cS_4 = c_2\lambda\sigma^{2\alpha-1}\theta^2\Lambda_2V_2 +c_2\lambda\sigma^{2\alpha-1}(\partial_y\theta) \Lambda_3V_2^\star+c_1\lambda^{\frac12}\sigma^{\alpha-\frac12}\theta^3\Lambda_3V_3+c_2\lambda\sigma^{2\alpha-1}\theta^3\Lambda_3V_3.
\end{equation}
Now, we expand $\cR$
\begin{align*}
\cRR & = \partial_y (5V_0^4V+10V_0^3V^2+10V_0^2V^3+5V_0V^4+V^5)\\
&\quad
-5 \partial_y (V_0^4V_1)\theta-5\partial_y (V_0^4V_2)\theta^2
-5\partial_y(V_0^4V_2^*)\partial_y\theta-5\partial_y(V_0^4V_3)\theta^3 - V_1^5 \partial_y(\theta^5)\\
& = 
10\partial_y(V_0^3V_1^2)\theta^2
+5V_0^4V_1 \partial_y \theta
+(10\partial_y (V_0^2V_1^3)+20\partial_y(V_0^3V_1V_2)) \theta^3\\
&\quad +(10V_0^3V_1^2+5V_0^4V_2+10\partial_y(V_0^3V_1V_2^*))\partial_y (\theta^2)
+5V_0^4V_2^*\partial_y^2 \theta
+\cS_5
\end{align*}
where
\begin{align*}
\cS_5 &= 5V_0^4V_3\partial_y(\theta^3) +\partial_y (10V_0^3V^2+10V_0^2V^3+5V_0V^4+V^5)- V_1^5 \partial_y(\theta^5)\\
&\quad -10(\partial_y(V_0^3V_1^2))\theta^2  
-(10\partial_y (V_0^2V_1^3)  
+20\partial_y(V_0^3V_1V_2)) \theta^3\\
&\quad -(10V_0^3V_1^2+10\partial_y(V_0^3V_1V_2^*)) \partial_y (\theta^2).
\end{align*}

Thus,
\begin{align*}
\cE(W) & = 
-\left(\frac{\lambda_s}{\lambda} + \beta\right)
(\Lambda V_0 + \Psi_\lambda) 
-\left(\frac{\sigma_s}{\lambda}-1\right)
(\partial_yV_0+\Psi_\sigma)\\
&\quad - (\partial_y \cLt V_1)\theta
-(\partial_y \cLt V_2) \theta^2 - (\partial_y\cLt V_2^*) \partial_y\theta
-(\partial_y \cLt V_3) \theta^3\\
&\quad +F+\cS_1+\cS_2+\cS_3+\cS_4+\cS_5,
\end{align*}
where
\begin{align*}
F &= c_1\lambda^\frac12 \sigma^{\alpha-\frac12}\Lambda V_0 \\
&\quad+\theta^2  10\partial_y(V_0^3V_1^2)  
+c_2\lambda \sigma^{2\alpha-1} \Lambda V_0 + c_1\lambda^\frac12\sigma^{\alpha-\frac12} \theta \Lambda_1V_1
+(\partial_y\theta) \left( 3\partial_y^2 V_1+5V_0^4V_1\right)\\
&\quad
+\theta^3 \left(  10\partial_y (V_0^2V_1^3)+20\partial_y(V_0^3V_1V_2)\right)\\
&\quad + \partial_y(\theta^2)  \left(3\partial_y^2V_2+10V_0^3V_1^2+5V_0^4V_2+10\partial_y(V_0^3V_1V_2^*)\right)\\
&\quad +\partial_y^2\theta\left( 3 \partial_yV_1 + 3\partial_y^2V_2^*
+5V_0^4V_2^*\right)\\
&\quad + c_2\lambda\sigma^{2\alpha-1}\theta\Lambda_1V_1
+c_1 \lambda^\frac12\sigma^{\alpha-\frac12}\theta^2\Lambda_2V_2
+c_1\lambda^\frac12\sigma^{\alpha-\frac12}\partial_y\theta\Lambda_3V_2^*
\end{align*}
Now, using Lemma \ref{le:35}, we  write $F$ in terms
of $\theta$, $\theta^2$, $\partial_y\theta$ and $\theta^3$,
up to the error terms $\Psi_1$, $\Psi_2$ and $\Psi_3$.
We find
 \[
 F = \theta (c_1 \Lambda V_0 )   + \theta^2(c_2 \Lambda V_0+ \widetilde F_2 )
 +\partial_y \theta \widetilde F_2^* + \theta^3 \widetilde F_3
 +\cS_6
 \]
 where
 \begin{align*}
\widetilde F_2 & =  10\partial_y(V_0^3V_1^2)+c_1 \Lambda_1V_1\\
\widetilde F_2^* & = -c_1 y \Lambda V_0 + 3\partial_y^2 V_1+5V_0^4V_1\\
\widetilde F_3 & =
c_1\left(\kappa_\alpha +\tfrac12 (\alpha-\tfrac12)(\alpha-\tfrac32)y^2\right)\Lambda V_0
-2 c_2(\alpha-\tfrac12)y\Lambda V_0
-c_1 (\alpha-\tfrac12) y \Lambda_1 V_1
+ 10\partial_y (V_0^2V_1^3)\\
&\quad +20\partial_y(V_0^3V_1V_2)+ 2(\alpha-\tfrac12) \left(3\partial_y^2V_2+10V_0^3V_1^2+5V_0^4V_2+10\partial_y(V_0^3V_1V_2^*)\right)\\
&\quad +(\alpha-\tfrac12) (\alpha-\tfrac32) \left( 3 \partial_yV_1 + 3\partial_y^2V_2^*
+5V_0^4V_2^*\right)+c_2 \Lambda_1 V_1
+c_1 \Lambda_2 V_2+c_1 (\alpha-\tfrac12)\Lambda_3 V_2^*\\
\end{align*}
and
\begin{align*}
\cS_6 
&= c_1 \Lambda V_0 \Psi_1 +c_2\Lambda V_0 \Psi_4
+c_1 \Lambda_1 V_1\Psi_5\\
&\quad +2 \left(3\partial_y^2V_2+10V_0^3V_1^2+5V_0^4V_2+10\partial_y(V_0^3V_1V_2^*)\right)\Psi_2\\
&\quad + \left( 3 \partial_yV_1 + 3\partial_y^2V_2^*
+5V_0^4V_2^*\right)\Psi_3+c_2 \Lambda_1 V_1 \Psi_6
+c_1 \Lambda_2 V_2\Psi_7 +c_1 \Lambda_3V_2^*\Psi_8.
\end{align*}
In relation to functions defined above, it is natural to introduce the corresponding time-independent functions (without cut-off)
\begin{align*}
F_2 &=  c_1^2 \left( 10 (Q^3A_1^2)' + \Lambda_1 A_1 \right)\\
F_2^*&= c_1 (-y\Lambda Q + 3 A_1'' + 5Q^4A_1 )\\
F_3 &=c_1\left(\kappa_\alpha +\tfrac12 (\alpha-\tfrac12)(\alpha-\tfrac32)y^2\right)\Lambda Q
-c_1^2 (\alpha-\tfrac12) y \Lambda_1 A_1
+ 10c_1^3 \partial_y (Q^2A_1^3)\\
&\quad +20c_1(Q^3A_1A_2)'+ 2(\alpha-\tfrac12) \left(3A_2''+10c_1^2Q^3A_1^2+5Q^4A_2+10c_1\partial_y(Q_0^3A_1A_2^*)\right)\\
&\quad +(\alpha-\tfrac12) (\alpha-\tfrac32) \left( 3c_1 A_1' + 3(A_2^*)''
+5Q^4A_2^*\right)
+c_1 \Lambda_2 A_2+c_1 (\alpha-\tfrac12)\Lambda_3 A_2^* \\
\Omega &=-2  (\alpha-\tfrac12)y\Lambda Q
+20c_1(Q^3A_1P)'+2(\alpha-\tfrac12)(3P''+5Q^4P)
+c_1\Lambda_1A_1 +c_1\Lambda_2P.
\end{align*}
(We have separated $\widetilde F_3 \approx c_2 \Omega + F_3$, where $F_3$ does not depend on $c_2$, since we need later to fix the parameter $c_2$ depending on $F_3$, so that 
$c_2 \Omega + F_3$ satisfies a suitable orthogonality condition).
Observe that (note that $\Lambda_1 A_1 = y A_1'\in \cY$)
\begin{equation}\label{eq:FF}
F_2 \in \cY,\quad F_2^*\in\cY,\quad F_3\in \cY, \quad \Omega\in \cZ_0^-.
\end{equation}

\emph{$\bullet$ Construction of $A_2$ and $A_2^*$.} We claim that there exist unique even functions $A_2, A_2^* \in \cY$ such that 
\[
(\cL A_2)'=F_2 \quad \mbox{and} \quad (\cL A_2^*)'=F_2^*.
\]
Indeed,  $F_2$ is odd, and thus 
$y\mapsto\int_{-\infty}^{y}F_2$ is an even function that also belongs to $\cY$.
In particular $(\int_{-\infty}^{y}F_2,\partial_y Q)=0$. Hence, using (iv) of Lemma~\ref{le:li}, there exists a unique function $A_2 \in \cY$, even, such that 
$\cL A_2=\int_{-\infty}^y F_2(y) dy$,
which implies that $(\cL A_2)'=F_2$. 
The argument for $A_2^*$ is identical.
Note that the above equations for $A_2$ and $A_2^*$ do not involve the parameter $c_2$, which
actually matters only at the order $|s|^{-3}$.

\emph{$\bullet$ Definition of $c_2$ and construction of $A_3$.}
We claim the following non-degeneracy condition
\begin{equation}\label{eq:nd}
(\Omega, Q) =  -2m_0^2 .
\end{equation}
(We only need that $(\Omega,Q)\neq 0$ for the range of $\alpha$ considered.
However, we observe that the value does not depend on $\alpha$ and thus never vanishes, which means that, at least for this step, such construction can be extended to any $\alpha>0$.)

\emph{Proof of \eqref{eq:nd}.}
Indeed, it follows from \eqref{eq:Q5}, \eqref{eq:pq} and \eqref{eq:PQ} that 
\begin{equation*} 
\left(3P''+5Q^4 P,Q \right)=3\left( P, Q\right)+2\left( P,Q^5 \right)=3m_0^2 -2 m_0^2=m_0^2 .
\end{equation*}
Combining this identity with \eqref{eq:id} and recalling $c_1=-(2\alpha+1)$ yields \eqref{eq:nd}.

We deduce from \eqref{eq:nd} that there
exists a unique value of $c_2$ such that
\[
(c_2 \Omega + F_3,Q)=0.
\]
The parameter $c_2$ is fixed to such value
so that by (v) of Lemma \ref{le:li}, there exists $A_3\in \cZ_1^-$ such that
\[
(LA_3)'=c_2\Omega+F_3. 
\]

$\bullet$ In conclusion of these computations, we have obtained
\[
\cE(W)   = 
-\left(\frac{\lambda_s}{\lambda} + \beta\right)
\left( \Lambda V_0+\Psi_\lambda\right)
-\left(\frac{\sigma_s}{\lambda}-1\right)
\left(\partial_y V_0+\Psi_\sigma\right) +\Psi_W
\]
where
\begin{equation}\label{eq:dW}
\Psi_W=
\cS_1+\cS_2+\cS_3+\cS_4+\cS_5+\cS_6+\cS_7
\end{equation}
and, by the equations satisfied by the functions $A_1$, $A_2$, $A_2^*$ and $A_3$,
\begin{align*}
\cS_7 
& = c_1 \theta  \left(-\partial_y(\cLt (A_1\ZL))  +   \Lambda V_0 \right)
+ c_2 \theta^2 \left(
-\partial_y(\cLt (P\ZL)) + \Lambda V_0 \right) + \theta^2 \left(-\partial_y(\cLt (A_2\ZL))  
+ \widetilde F_2\right)\\
&\quad+ \partial_y\theta  \left(-\partial_y({\cLt} (A_2^*\ZL)) +  \widetilde F_2^*\right)
+\theta^3 \left(-\partial_y(\cLt(A_3\ZL)) + \widetilde F_3\right)\\
& = c_1 \theta  \left(-\partial_y(\cLt (A_1\ZL)- \cL A_1) +   \Lambda V_0-\Lambda Q\right)+ c_2 \theta^2 \left(
-\partial_y(\cLt (P\ZL)- \cL P) + \Lambda V_0-\Lambda Q\right)\\
&\quad + \theta^2 \left(-\partial_y(\cLt (A_2\ZL) -\cL A_2)
+ \widetilde F_2-F_2\right)+ \partial_y\theta  \left(-\partial_y({\cLt} (A_2^*\ZL)-\cL A_2^*) +  \widetilde F_2^*-F_2\right)\\
&\quad
+\theta^3 \left(-\partial_y(\cLt(A_3\ZL)-\cL A_3) + \widetilde F_3-(c_2\Omega + F_3)\right).
\end{align*}

\subsection{Estimates of the error terms}
In this subsection, we prove the pointwise estimates for the error terms $\Psi_\lambda$, $\Psi_\sigma$ and $\Psi_W$ defined in \eqref{eq:dL}, \eqref{eq:dS} and \eqref{eq:dW}.

\smallskip
\noindent\emph{Estimates for $\Psi_\lambda$ 
and $\Psi_\sigma$.}
Estimates \eqref{eq:ls} and \eqref{eq:ss} follows by combining the Leibniz rule with the estimates in Lemma \ref{le:ww}.
Moreover, the same estimates show that
\[
(\Psi_\lambda - \Lambda_1 V_1 \theta, Q) \lesssim |s|^{-2}.
\]
Now, we compute $(\Lambda_1 V_1\theta ,Q)$. Since $\Lambda_1V_1$ is odd and $Q$ is even,
we have $(\Lambda_1 V_1 ,Q)=0$. Thus, by \eqref{eq:0T} and \eqref{eq:lt},
\[
\left|(\Lambda_1 V_1\theta ,Q) \right| =
\left|(\Lambda_1 V_1(\theta-(2\alpha)^{-1} |s|^{-1}) ,Q) \right|
\lesssim C_1^\star |s|^{-2}\log|s|,
\]
which proves \eqref{est:Psi_lambda:Q}.

\smallskip

\noindent\emph{Estimate of $\Psi_W$.}
We will estimate each term $S_j$ separately. 

\noindent\emph{Estimate of $\cS_1$.} We claim that 
 for any $p=0,1,2$,
\begin{equation} \label{es:s1}
|\partial_y^p\cS_1| \lesssim |s|^{-2-p}\eta_L .
\end{equation}

Indeed, $\cS_1$ is defined in \eqref{def:S1}. Observe first that 
\begin{equation*} 
\left| \partial_y^p\partial_s\chi_L(|s|^{-1}y)\right| \lesssim |s|^{-2-p}|y|\eta_L \lesssim |s|^{-1-p}\eta_L .
\end{equation*}
Thus, using the estimates in Lemmas \ref{le:vv} and \ref{le:th} as in Lemma \ref{le:ww}, we find, for $p=0,1$,
\begin{align*} 
 |\partial_y^p\partial_sV_0| &=\left|\partial_y^p\left(\partial_s\chi_L(|s|^{-1}y) Q\right)\right| \lesssim |s|^{-1}\eta_L w \lesssim  |s|^{-100} \eta_L, \\
|\partial_y^p((\partial_sV_1) \theta)| &=\left|\partial_y^p\left(\partial_s\chi_L(|s|^{-1}y) A_1\theta \right)\right|\lesssim |s|^{-2-p}\eta_L, \\
|\partial_y^p((\partial_sV_2) \theta^2)| &=\left|\partial_y^p\left(\partial_s\chi_L(|s|^{-1}y) (c_2P+A_2)\theta^2 \right)\right|\lesssim |s|^{-3-p}\eta_L, \\
|\partial_y^p((\partial_sV_2^*) \partial_y\theta)| &=\left|\partial_y^p\left(\partial_s\chi_L(|s|^{-1}y) A_2^*\partial_y\theta \right)\right|\lesssim |s|^{-100} \eta_L, \\
|\partial_y^p((\partial_sV_3) \theta^3)| &=\left|\partial_y^p\left(\partial_s\chi_L(|s|^{-1}y) A_3\theta^3 \right)\right|\lesssim |s|^{-3-p}\eta_L,
\end{align*}
which yields \eqref{es:s1}.

\noindent\emph{Estimate of $\cS_2$.} We claim that 
 for any $p=0,1$,
\begin{equation} \label{es:s2}
|\partial_y^p\cS_2| \lesssim |s|^{-7}\omega \eta+|s|^{-7-p}\eta_I .
\end{equation} 

We recall that $\cS_2$ is defined in \eqref{def:S2}. By using the estimates in Lemmas \ref{le:vv} and \ref{le:th} as in Lemma \ref{le:ww}, we find, for $p=0,1$,
\begin{align*} 
 \left|\partial_y^p\left(V_2 \theta \partial_y(\theta^5)\right) \right| & \lesssim |s|^{-7}\omega \eta+|s|^{-7-p}\eta_I, \\
 \left|\partial_y^p\left(V_2^* \partial_y^2(\theta^5)\right) \right| &\lesssim |s|^{-7}\omega , \\ 
 \left|\partial_y^p\left(V_3 \theta^2 \partial_y(\theta^5)\right) \right| &\lesssim |s|^{-8}\omega \eta+|s|^{-7-p}\eta_I ,
\end{align*}
which yields \eqref{es:s2}.

\noindent\emph{Estimate of $\cS_3$.} We claim that 
 for any $p=0,1$,
\begin{equation} \label{es:s3}
|\partial_y^p\cS_3 | \lesssim |s|^{-4}\omega \eta+|s|^{-5-p}\eta_I .
\end{equation} 

Recall that $\cS_3$ is defined in \eqref{def:S3}. 
By using Lemmas \ref{le:vv}, \ref{le:th}, \ref{le:ww}, and the properties of $A_1$ in Lemma \ref{le:PA}, we estimate, for $p=0,1$,
\begin{align*} 
 \left|\partial_y^p\left((V_1^4-m_0^4)V_1\partial_y(\theta^5)\right) \right| & \lesssim |s|^{-6}\omega \eta+|s|^{-6-p}\eta_I, \\
 \left|\partial_y^p\left(V_2(\partial_y\theta)(\partial_y^2 \theta)\right) \right|+\left|\partial_y^p\left((\partial_yV_3)\partial_y^2(\theta^3)\right) \right| &\lesssim  |s|^{-5}\omega \eta+|s|^{-5-p}\eta_I, \\ 
 \left|\partial_y^p\left((\partial_yV_2)\partial_y^2(\theta^2)\right) \right|+\left|\partial_y^p\left((\partial_y^2V_3)\partial_y(\theta^3)\right) \right| &\lesssim  |s|^{-4}\omega \eta+|s|^{-5-p}\eta_L, \\
 \left|\partial_y^p\left(V_3(\theta(\partial_y \theta)(\partial_y^2 \theta)+(\partial_y\theta)^3\right) \right| &\lesssim  |s|^{-6}\omega \eta+|s|^{-5-p}\eta_I, \\
 \left|\partial_y^p\left(Q \partial_y^3 \ZL+3Q'\partial_y^2\ZL+2Q''\partial_y\ZL
+\partial_y (Q^5  (\ZL^5-\ZL))\right) \right| & \lesssim |s|^{-100} \omega \eta_L ,
\end{align*}
which yields \eqref{es:s3}. 

\noindent\emph{Estimate of $\cS_4$.} We claim that 
 for any $p=0,1$,
\begin{equation} \label{es:s4}
|\partial_y^p\cS_4| \lesssim |s|^{-4}\omega \eta+|s|^{-4-p}\eta_I . 
\end{equation} 

 Recall that $\cS_4$ is defined in \eqref{def:S4}. We deduce from the bootstrap estimates \eqref{eq:B4} and Lemma \ref{le:ww} that for $p=0,1$, 
\begin{align*} 
 \left|\partial_y^p\left(\lambda\sigma^{2\alpha-1}\Lambda_2V_2\theta^2\right) \right| & \lesssim |s|^{-4}\omega \eta+|s|^{-4-p}\eta_I, \\
\left|\partial_y^p\left(\lambda\sigma^{2\alpha-1} \Lambda_3V_2^\star (\partial_y\theta)\right) \right| &\lesssim |s|^{-4}\omega \eta , \\ 
\left|\partial_y^p\left(\lambda^{\frac12}\sigma^{\alpha-\frac12}\theta^3\Lambda_3V_3\right) \right| &\lesssim |s|^{-4}\omega \eta+|s|^{-4-p}\eta_I , 
\end{align*}
which yields \eqref{es:s4}.

\noindent\emph{Estimate of $\cS_5$.} We claim that 
 for any $p=0,1$,
\begin{equation} \label{es:s5}
|\partial_y^p\cS_5| \lesssim |s|^{-4}\omega \eta +|s|^{-6-p}\eta_I
\end{equation}
We rewrite the error term $\cS_5$ as follows
\begin{align*}
\cS_5 &= 5V_0^4V_3\partial_y(\theta^3) +5\partial_y (V_0V^4)\\
&\quad +10 \partial_y (V_0^3V^2- V_0^3V_1^2 \theta^2)
-20 \partial_y (V_0^3V_1V_2) \theta^3
-20 \partial_y(V_0^3V_1V_2^*)\theta\partial_y\theta
 \\
&\quad 
+10\partial_y(V_0^2V^3)-10 \partial_y(V_0^2V_1^3) \theta^3
\\
&\quad +\partial_y(V^5)- V_1^5 \partial_y(\theta^5)\\
&\quad = \cS_{5,1}+\cS_{5,2}+\cS_{5,3}+\cS_{5,4}.
\end{align*}
The two terms composing $\cS_{5,1}$ are controlled directly by
\[
|\cS_{5,1}| \lesssim |V_0^4V_3\partial_y(\theta^3)|+|\partial_y (V_0V^4)| \lesssim |s|^{-4}\omega \eta .
\]
For $\cS_{5,2}$, we expand $V^2=(V_1\theta+V_2\theta^2+V_2^*\partial_y\theta+V_3\theta^3)^2$, so
that
\begin{equation*}
V^2 -V_1^2\theta^2
 = 2 V_1V_2\theta^3 + 2V_1V_2^*\theta \partial_y \theta
+2V_1V_3\theta^4 + (V_2\theta^2+V_2^*\partial_y\theta+V_3\theta^3)^2.
\end{equation*}
Thus,
\begin{align*}
 \partial_y (V_0^3V^2- V_0^3V_1^2 \theta^2)
 & = 
2 \partial_y (V_0^3V_1V_2\theta^3) + 2\partial_y(V_0^3V_1V_2^*\theta \partial_y \theta)
+2\partial_y(V_0^3V_1V_3\theta^4)\\ 
&\quad+ \partial_y (V_0^3(V_2\theta^2+V_2^*\partial_y\theta+V_3\theta^3)^2),
\end{align*}
and
\begin{align*}
\cS_{5,2} & =
20 (V_0^3 V_1 V_2) \partial_y (\theta^3)
+ 20 (V_0^3V_1V_2^*) \partial_y(\theta\partial_y\theta)
+2V_0^3V_1V_3\partial_y(\theta^4)\\ 
&\quad+ \partial_y (V_0^3(V_2\theta^2+V_2^*\partial_y\theta+V_3\theta^3)^2).
\end{align*}
Observe that all the terms appearing in $\cS_{5,2}$ contain $V_0^3$. By using that $|V_0^3| \lesssim \omega \eta$, we deduce from  Lemmas \ref{le:vv} and \ref{le:th} that 
\[
|\cS_{5,2}|  \lesssim |s|^{-4}\omega \eta .
\]
Next, we decompose $\cS_{5,3}$ as 
\[\cS_{5,3}=10\partial_y(V_0^2V_1^3)\partial_y(\theta^3)+10\partial_y\left(V_0^2\left((V_1\theta+V_2\theta^2+V_2^*\partial_y\theta+V_3\theta^3)^3-V_1^3\theta^3\right) \right), \]
so that we deduce arguing similarly as for $\cS_{5,2}$ that 
\[
|\cS_{5,3}|  \lesssim |s|^{-4}\omega \eta .
\]
It remains to deal with $\cS_{5,4}$. We first write
\begin{align*}
    \cS_{5,4}=\partial_y(V^5)- V_1^5 \partial_y(\theta^5)
    =\partial_y(V^5- V_1^5 \theta^5) + \partial_y(V_1^5) \theta^5.
\end{align*}
By Lemma \ref{le:vv}, we have, for $p=0,1$,
\[
|\partial_y^p[\partial_y(V_1^5) \theta^5]|\lesssim |s|^{-5} (\omega\eta + |s|^{-1-p} \eta_L).
\]
Moreover,
\[
V^5-V_1^5\theta^5 = \sum_{l=1}^5 (V-V_1\theta)^l(V_1\theta)^{5-l}=
\sum_{l=1}^5 (V_2\theta^2+V_2^*\partial_y\theta+V_3\theta^3)^l(V_1\theta)^{5-l},
\]
and by using Lemma \ref{le:ww}, we deduce, for $l=1,\ldots,5$,
\[
|  [(V-V_1\theta)^l(V_1\theta)^{5-l}]| \lesssim |s|^{-2l}(\omega \eta+\eta_I)(|s|^{-(5-l)}\eta+\lambda^\frac{l}2\eta_R)
\lesssim |s|^{-5-l}(\omega \eta +\eta_I)
\]
and similarly, 
\[
|  \partial_y[(V-V_1\theta)^l(V_1\theta)^{5-l}]| 
\lesssim |s|^{-5-l}\omega \eta +|s|^{-6-l}\eta_I .
\]
Thus, for $p=0,1$,
\[
|\partial_y^p \cS_{5,4}|\lesssim |s|^{-6}\omega \eta + |s|^{-6-p}\eta_I.
\]
Therefore, we conclude the proof of \eqref{es:s5} gathering these estimates. 

\noindent\emph{Estimate of $\cS_6$.} We claim that 
 for any $p=0,1$,
\begin{align} 
|\partial_y^p\cS_6| &\lesssim C_1^\star|s|^{-4}(\log|s|)(1+y^4) \omega \eta+ |s|^{-4}(1+|y|)\eta_I
+C_1^\star|s|^{-2}(\log|s|)\eta_L; \label{es:s6.1} 
\end{align} 
We estimate each term of $\cS_6$ separately. 
Note that by its definition, $\cS_6=0$ for $y<-|s|$.
Based on the estimates for the functions based on $V_j$ appearing in the expression of $\cS_6$ in Lemma  \ref{le:vv}, and the estimates for the error terms $\Psi_j$ in Lemma \ref{le:35}, we have,
for $-|s|\leq y \leq |s|$, $p=0,1,$
\begin{align*} 
| \partial_y^p(\Lambda V_0 \Psi_1)| +|\partial_y^p(\Lambda V_0 \Psi_4)|+|\partial_y^p(\Lambda_3V_2^*\Psi_8)| & \lesssim C_1^\star |s|^{-4}(\log|s|) (1+|y|^3)\omega \eta ; \\
 \left|\partial_y^p\left(\partial_y^2V_2 \Psi_2\right)\right|+\left|\partial_y^p\left(V_0^3V_1^2 \Psi_2\right)\right|
& \lesssim C_1^\star |s|^{-4}(\log|s|) \left((1+|y|)\omega \eta+|s|^{-1}\eta_L\right) ;\\
\left|\partial_y^p\left(V_0^4V_2 \Psi_2\right)\right|+\left|\partial_y^p\left(\partial_y(V_0^3V_1V_2^*\Psi_2\right)\right|
& \lesssim C_1^\star |s|^{-4}(\log|s|) \left((1+|y|)\omega \eta+|s|^{-1}\eta_L\right)
\\
 |\partial_y^p(\partial_yV_1\Psi_3)| + |\partial_y^p(\partial_y^2V_2^*\Psi_3)|+|\partial_y^p(V_0^4V_2^*\Psi_3)| &\lesssim  C_1^\star |s|^{-4}(\log|s| ) \left((1+|y|)\omega \eta+ \eta_L\right) ; \\
|\partial_y^p(\Lambda_1 V_1\Psi_5)| & \lesssim C_1^\star |s|^{-4}(\log|s|) \left( (1+y^2)\omega \eta+|s|^2\eta_L\right) ; \\ 
|\partial_y^p(\Lambda_1 V_1 \Psi_6 )| & \lesssim   |s|^{-4} \left( (1+|y|)\omega \eta+ |s| \eta_L\right) ; \\ 
|\partial_y^p(\Lambda_2 V_2\Psi_7)| & \lesssim  |s|^{-4} \left((1+|y|) \omega \eta+ (1+|y|) \eta_I\right). 
\end{align*}

For $y>|s|$, we have $\sum_{j=1}^8 |\Psi_j| \lesssim |s|^{-\frac 12}\lesssim 1 $ by \eqref{eq:lt},
but by Lemma  \ref{le:vv}, all the functions based on $V_j$
which appear in $\cS_6$ happen to be exponentially decaying for $y>0$.
Thus, for $y>|s|$,
\[
|\cS_6|+|\partial_y \cS_6| \lesssim \omega \lesssim |s|^{-4} (1+y^4) \omega.
\]
We conclude the proof of \eqref{es:s6.1} gathering these estimates. 

\noindent\emph{Estimate of $\cS_7$.} We claim that 
\begin{equation} \label{es:s7}
|\cS_7|+|\partial_y \cS_7| \lesssim |s|^{-2}\eta_L.
\end{equation}
First, we remark that $\cS_7$ is identically zero for $y<-|s|$.
Second, for $y>-\frac{1}{2}|s|$, we have $\zeta(s,y)=1$ and thus again
$\cS_7$ is identically zero.
Therefore, we only have to consider the region $-|s|\leq y \leq -\frac{1}2|s|$.
Moreover, by the definition of $\zeta$ in \eqref{def:chiL}, one has
\begin{equation}\label{eq:ZT}
|\zeta|\lesssim 1,\quad |\partial_y^p \zeta|\lesssim |s|^{-p}.
\end{equation}

Now, we provide a general computation, for a function $A=A(y)$,
\begin{align*}
-\partial_y(\cLt (A\ZL)- \cL A)
&= 
-(1-\zeta)(A''-A)' - 5(1-\zeta^5) (Q^4 A)'\\
&\quad + \partial_y \zeta (3A''-A) + 25 \partial_y \zeta \zeta^4 Q^4 A 
+ 3\partial_y^2 \zeta A'+ \partial_y^3\zeta A   .
\end{align*}
Thus,
\begin{align*}
|\partial_y(\cLt (A\ZL)- \cL A)|
+|\partial_y^2(\cLt (A\ZL)- \cL A)|
\lesssim \sum_{l=1}^4 |A^{(l)}| +|s|^{-1} |A|.
\end{align*}
It follows from the properties of  $A_j$ (see \eqref{eq:aa}) 
and Lemma \ref{le:th} that
for $-|s|\leq y \leq -\frac 12 |s|$,
\begin{align*}
&\left|\theta  \left(-\partial_y(\cLt (A_1\ZL)- \cL A_1) \right)\right|
+ \left|\theta^2 \left(-\partial_y(\cLt (P\ZL)- \cL P) \right)\right| 
+ \left| \theta^2 \left(-\partial_y(\cLt (A_2\ZL) -\cL A_2) \right)\right| \\
&\quad+ \left| \partial_y\theta  \left(-\partial_y({\cLt} (A_2^*\ZL)-\cL A_2^*)  \right)\right| 
+\left|\theta^3 \left(-\partial_y(\cLt(A_3\ZL)-\cL A_3) \right)\right|
\lesssim |s|^{-2}.
\end{align*}
To estimate the remaining terms, we observe that 
$Q,F_2,F_2^*,F_3\in \cY$ while $\Omega\in\cZ_0$ (see \eqref{eq:FF}).
It follows that
\begin{align*}
\big|\theta  ( \Lambda V_0-\Lambda Q)\big|
+\big|\theta^2 (  \Lambda V_0-\Lambda Q)\big|
+\big|\theta^2 (\widetilde F_2-F_2)\big|
+\big| \partial_y\theta  ( \widetilde F_2^*-F_2)\big|
\lesssim |s|^{-100} \eta_L
\end{align*}
and
\[
\theta^3 |\widetilde F_3 - (c_2\Omega+ F_3)|\lesssim 
|s|^{-3} \eta_L.
\]
The estimates for the derivatives are similar, so that \eqref{es:s7}
is proved.

The proof of \eqref{eq:PW} follows from \eqref{es:s1}-\eqref{es:s7}.

\subsection{Norms of the blowup profile}
To complete the proof of Proposition \ref{pr:bp}, it only remains to prove (iii) and (iv),
concerning the mass, the $H^1$ norm, and the variation of the energy of $W$.
This is the objective of this subsection and the next one.

We have, using the definition of $W$, $V_0$ and $V$,
\begin{align*}
\int W^2-\int Q^2
&= \int (V_0+V)^2 - \int Q^2\\
&= \int V_0^2 -\int Q^2+ 2 \int V_0 V + \int V^2 \\
&= -\int (1-\ZL^2)Q^2 + 2 \int Q \ZL V + \int V^2,
\end{align*}
Thus, using \eqref{eq:vv} and \eqref{est:V:H1}, 
\[
\left| \int W^2-\int Q^2 \right|
\lesssim |s|^{-1} + \int V^2 \lesssim \delta^{2\alpha}.
\]
This is sufficient to prove \eqref{eq:ma}. 

Proceeding similarly as in the proof of \eqref{eq:ma}, and using $(A_1,Q'')=0$ by symmetry, we find
\begin{align*}
\int (\partial_y W)^2-\int (Q')^2
&= \int (\partial_yV_0+\partial_yV)^2 - \int (Q')^2\\
&= \int (\partial_y V_0)^2 -\int (Q')^2 - 2 \int (\partial_y^2 V_0) V + \int (\partial_yV)^2,
\end{align*}
Thus, using \eqref{eq:vv} and \eqref{est:V:H1}, 
\[
\left| \int (\partial_yW)^2-\int (Q')^2 \right|
\lesssim |s|^{-2},
\]
which proves \eqref{eq:h1}.

\subsection{Variation of the energy of the blowup profile}\label{s:3.9}

By time differentiation and integration by parts, we have
\begin{equation*}
\lambda^2 \frac{d}{ds}\left[ \frac {E(W)}{\lambda^2}\right]
=-\int \partial_sW \left( \partial_y^2W+W^5 \right) -2\frac{\lambda_s}{\lambda} E(W).
\end{equation*}
Using~\eqref{eq:EW} and \eqref{eq:ew} and integrating by parts, we get
\begin{align*}
\lambda^2 \frac{d}{ds}\left[ \frac {E(W)}{\lambda^2}\right]
&=-\frac{\lambda_s}{\lambda} \int \Lambda W \left( \partial_y^2W+W^5 \right)-2\frac{\lambda_s}{\lambda} E(W) \\ 
& \quad +\int \left( \frac{\lambda_s}{\lambda}+\beta\right) \left( \Lambda V_0+\Psi_{\lambda} \right) \left( \partial_y^2W+W^5\right) \\
& \quad +\left( \frac{\sigma_s}{\lambda}-1\right)\int \left(\partial_y V_0+\Psi_{\sigma} \right) \left( \partial_y^2W+W^5\right) \\
& \quad -\int \Psi_W \left( \partial_y^2 W+W^5 \right) \\ & 
:= e_1+e_2+e_3+e_4.
\end{align*}
First, integrating by parts,
$-\int \Lambda W \left( \partial_y^2W+W^5 \right) = 2E(W)$ and thus $e_1=0$.
(This cancellation is obviously related to the scaling of the energy.)

\smallskip 

\noindent\textit{Estimate for $e_2$}.
It follows from~\eqref{eq:Q5} that 
\begin{align*}
\partial_y^2 W + W^5 
& = \partial_y^2 V_0 + \partial_y^2 V + (V_0+V)^5\\
& = V_0 + 2 Q'\partial_y\ZL + Q \partial_y^2\ZL
+\partial_y^2 V + (V_0+V)^5 - Q^5\ZL.
\end{align*}
Thus, 
\[
|\partial_y^2 W + W^5 -V_0 |
\lesssim |s|^{-10}\eta_L + |\partial_y^2 V|
+ \omega \eta(|V|+|V|^4)+|V|^5
\]
and it follows from \eqref{eq:vv} that
\begin{equation}\label{eq:23}
\left|\partial_y^2 W + W^5 - V_0\right|\lesssim |s|^{-1} \omega\eta+|s|^{-3}\eta+\lambda^\frac52\eta_R.
\end{equation}
Therefore, by \eqref{eq:ls} and $\int V_0 \Lambda_0 V_0=0$, we obtain
\begin{align*}
\left| e_2\right|
&\lesssim  \left| \frac{\lambda_s}{\lambda}+\beta\right|
 \int (\omega +|s|^{-1}\eta_L+|s|^{-2}\eta_I) \left( |s|^{-1} \omega\eta+|s|^{-3}\eta+\lambda^\frac52\eta_R\right)\\
&\quad +\left| \frac{\lambda_s}{\lambda}+\beta\right| \int (|s|^{-1}\omega\eta+|s|^{-1}\eta_L+|s|^{-2}\eta_I)\omega \\
& \lesssimD |s|^{-1}\left| \frac{\lambda_s}{\lambda}+\beta\right| . 
\end{align*}

 \smallskip

\noindent \textit{Estimate for $e_3$}. 
Using \eqref{eq:ss} and $\int V_0 \partial_y V_0=0$, we prove similarly that
\begin{equation*}
\left| e_3\right| \lesssimD |s|^{-1}\left| \frac{\sigma_s}{\lambda}-1\right| . 
\end{equation*}

\smallskip

\noindent \textit{Estimate for $e_4$}. 
We note from~\eqref{eq:23} that
\[
|\partial_y^2W + W^5|\lesssim \omega\eta+|s|^{-3}\eta+\lambda^\frac52\eta_R.
\]
Therefore, using also~\eqref{eq:PW}
\begin{align*} 
\left| e_4 \right| &\lesssimD 
\int \left(C_1^\star |s|^{-4}(\log |s|)(1+|y|^4)\omega\eta+|s|^{-4}(1+|y|)\eta_I+C_1^\star|s|^{-2}(\log|s|)\eta_L\right) \\
&\qquad \times 
\left(\omega\eta+|s|^{-3}\eta+\lambda^\frac52\eta_R \right)\\
&\lesssimD C_1^\star |s|^{-4}(\log |s|)  .
\end{align*}
The proof of~\eqref{eq:en} follows from gathering these estimates.

\smallskip

The proof of Proposition~\ref{pr:bp} is now complete.

\subsection{From the blowup profile to the blowup residue}
\begin{lemma}\label{le:new}
For all $s\in \cI$,
\begin{equation}\label{eq:30}
\left| \int W^2- M_0\right|
\lesssim |s|^{-1}\quad 
\mbox{where $M_0 = \|Q\|_{L^2}^2 + (2\alpha+1)^2 m_0^2 \|\Theta_0\|_{L^2}^2$.}
\end{equation}
Moreover,
\begin{equation}\label{eq:33}
\int |W - Q - (2\alpha+1)m_0\theta_0|^2 \lesssim |s|^{-1},
\end{equation}
where $\theta_0(s,y) = \lambda^\frac12 \Theta_0(\lambda y + \sigma)$.
\end{lemma}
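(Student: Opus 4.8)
My plan is to prove \eqref{eq:33} first and then deduce \eqref{eq:30} from it together with an expansion of $\int W^2$; the only subtle point will be that the cross term in that expansion has to be estimated by hand (not by Cauchy--Schwarz) in order to reach the sharper power $|s|^{-1}$. Throughout I take $|s|$ large, and I use freely that, by \eqref{eq:B2}--\eqref{eq:B4}, $\sigma/\lambda\sim 2\alpha|s|$, $\lambda\le\sigma/2\le\delta/4$, and $\lambda^{1/2}\sigma^{\alpha-\frac12}\sim(2\alpha)^{-1}|s|^{-1}$.

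For \eqref{eq:33}, I would start from $W=V_0+V$ with $V_0=Q\ZL$, $V=V_1\theta+V_2\theta^2+V_2^*\partial_y\theta+V_3\theta^3$, $V_1=c_1A_1\ZL$ and $c_1=-(2\alpha+1)$, so that $(2\alpha+1)m_0\theta_0=-c_1m_0\theta_0$ and
\[
W-Q-(2\alpha+1)m_0\theta_0=(\ZL-1)Q+c_1\big(A_1\ZL\theta+m_0\theta_0\big)+V_2\theta^2+V_2^*\partial_y\theta+V_3\theta^3 .
\]
The term $(\ZL-1)Q$ is supported in $\{y\le-|s|/2\}$ where $Q$ is exponentially small, hence $O(|s|^{-100})$ in $L^2$; and $\|V_2\theta^2\|_{L^2}+\|V_2^*\partial_y\theta\|_{L^2}+\|V_3\theta^3\|_{L^2}\lesssim|s|^{-3/2}$ by \eqref{eq:mg}. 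For the main term I would write $A_1=B_1-m_0z_0$, where $z_0$ is the odd generator of $\cZ_0$ and $B_1\in\cY$ (the coefficient $-m_0$ being forced by $\lim_{y\to+\infty}A_1=-m_0$ from Lemma \ref{le:PA}(iii), $z_0\to1$, $B_1\to0$), so that
\[
A_1\ZL\theta+m_0\theta_0=B_1\ZL\theta+m_0(1-z_0\ZL)\theta_0+m_0\,z_0\ZL(\theta_0-\theta).
\]
Since $B_1\in\cY$ decays exponentially and $|\theta|\lesssim|s|^{-1}\eta+\lambda^{1/2}\eta_R$ by \eqref{eq:lt}, one gets $\|B_1\ZL\theta\|_{L^2}\lesssim|s|^{-1}$. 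On the support of $1-z_0\ZL$, i.e. $\{y\le1\}$, the function $\theta_0=\lambda^{1/2}\Theta_0(\lambda y+\sigma)$ vanishes for $y\le-\sigma/\lambda$ and, for $-\sigma/\lambda\le y\le1$, one has $0\le\lambda y+\sigma\le2\sigma<\delta/2$, hence $\theta_0=\lambda^{1/2}(\lambda y+\sigma)^{\alpha-\frac12}$ with $|\theta_0|\lesssim\lambda^{1/2}\sigma^{\alpha-\frac12}\lesssim|s|^{-1}$ (using $\alpha>1$); since this window has length $\sigma/\lambda\sim 2\alpha|s|$, this yields $\|(1-z_0\ZL)\theta_0\|_{L^2}^2\lesssim|s|^{-2}\cdot|s|\lesssim|s|^{-1}$ — and this is exactly where the bound in \eqref{eq:33} comes from. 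Finally, on $\{y>-|s|\}$, the estimates \eqref{est:lambda y}--\eqref{est:cutoff} give $\lambda y+\sigma\ge\tfrac12\rho(\tau)$ and $\chi(\tau^{-\frac76\beta}(\lambda y+\sigma))\equiv1$, hence $\theta-\theta_0=\lambda^{1/2}\tau\,\Theta_1(\lambda y+\sigma)+\lambda^{1/2}q(\tau,\lambda y+\sigma)$ there; changing variables $x=\lambda y+\sigma$,
\[
\|z_0\ZL\,\lambda^{1/2}q(\tau,\cdot)\|_{L^2}^2\le\int_{x>\frac12\rho(\tau)}q(\tau,x)^2\,dx\lesssim\int q(\tau)^2\,\omega_0\lesssim\tau^2 ,
\]
using Lemma \ref{LE:qq} (see \eqref{eq:H1}) and the fact that the weight $\omega_0$ of that lemma satisfies $\omega_0(\tau,\cdot)\gtrsim1$ on $\{x>\tfrac12\rho(\tau)\}$; likewise $\|z_0\ZL\,\lambda^{1/2}\tau\Theta_1(\lambda\cdot+\sigma)\|_{L^2}^2\lesssim\tau^2\int_{x>\frac12\rho(\tau)}\Theta_1^2$ is $O(|s|^{-5})$. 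Since $\tau\lesssim|s|^{-(4\alpha+3)/(2\alpha)}$, all these are $\ll|s|^{-1}$, and collecting the bounds proves \eqref{eq:33}.

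For \eqref{eq:30} I would set $\mathcal P=Q+(2\alpha+1)m_0\theta_0$ and $g=W-\mathcal P$. The change of variables $x=\lambda y+\sigma$ gives $\int\theta_0^2=\|\Theta_0\|_{L^2}^2$, so
\[
\int W^2-M_0=2(2\alpha+1)m_0\,(Q,\theta_0)+2(\mathcal P,g)+\int g^2 .
\]
By \eqref{eq:33}, $\int g^2\lesssim|s|^{-1}$, and $(Q,\theta_0)\lesssim|s|^{-1}$ since $Q$ is exponentially localized and $\theta_0\sim\lambda^{1/2}\sigma^{\alpha-\frac12}\lesssim|s|^{-1}$ on $\{|y|\lesssim|s|\}$. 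The cross term $(\mathcal P,g)$ is only $O(|s|^{-1/2})$ by Cauchy--Schwarz, so I split it. First, near $y=0$ one has $\ZL\equiv1$, $|V|\lesssim|s|^{-1}$ by \eqref{eq:vv} and $|\theta_0|\lesssim|s|^{-1}$, hence $|g|\lesssim|s|^{-1}$ on the support of $Q$ and $(Q,g)\lesssim|s|^{-1}$. Second, using $g=(\ZL-1)Q+c_1(A_1\ZL\theta+m_0\theta_0)+V_2\theta^2+V_2^*\partial_y\theta+V_3\theta^3$ together with the decomposition of $A_1\ZL\theta+m_0\theta_0$ from the previous paragraph, the contributions to $(\theta_0,g)$ of $(\ZL-1)Q$, of $B_1\ZL\theta$, of $m_0z_0\ZL(\theta_0-\theta)$, and of the $V_2,V_2^*,V_3$ terms are all $\lesssim|s|^{-1}$ by Cauchy--Schwarz (using $\|\theta_0\|_{L^2}\lesssim1$), while the remaining piece is $c_1m_0\big(\theta_0,(1-z_0\ZL)\theta_0\big)=c_1m_0\int(1-z_0\ZL)\theta_0^2\lesssim|s|^{-2}\cdot(\sigma/\lambda)\lesssim|s|^{-1}$. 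Hence $(\mathcal P,g)\lesssim|s|^{-1}$, which gives \eqref{eq:30}.

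The main obstacle is the bookkeeping in the transition zone $-\sigma/\lambda\lesssim y\lesssim1$, of width $\sim|s|$, where $A_1\approx-m_0$ while $\theta_0$ has size $|s|^{-1}$: this zone contributes $\sim|s|^{-1/2}$ to the $L^2$-error in \eqref{eq:33} but, because it enters the mass squared, only $\sim|s|^{-1}$ to $\int W^2$ — so \eqref{eq:30} genuinely cannot be deduced from \eqref{eq:33} by Cauchy--Schwarz and one must expand $\int W^2$ keeping the critical term quadratic. A secondary technical point will be converting the \emph{weighted} smallness of $q$ from Lemmas \ref{LE:qq}--\ref{LE:q2} into honest smallness of $\int_{x>\frac12\rho(\tau)}q(\tau,x)^2\,dx$, which is possible precisely because the weight $\omega_0(\tau,\cdot)$ is bounded below by a positive constant on the region $\{x>\tfrac12\rho(\tau)\}$ hit by the change of variables $x=\lambda y+\sigma$, $y>-|s|$.
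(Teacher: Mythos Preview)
Your proof is correct, and for \eqref{eq:33} your decomposition $A_1=B_1-m_0z_0$ with $B_1\in\cY$ is essentially the paper's observation that $A_1+m_0\in\cZ_0^-$, reorganized; the three resulting pieces match one-to-one.

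The difference is in \eqref{eq:30}. You deduce it from \eqref{eq:33} by writing $\int W^2-M_0=2(2\alpha+1)m_0(Q,\theta_0)+2(\mathcal P,g)+\int g^2$, and---as you correctly note---Cauchy--Schwarz on $(\mathcal P,g)$ only gives $|s|^{-1/2}$, so you have to go back and re-expand $(\theta_0,g)$ term by term. The paper proceeds more directly: it expands $\int W^2=\int V_0^2+\int V_1^2\theta^2+\text{(rest)}$, bounds the rest by $|s|^{-2}$, and then compares $\int V_1^2\theta^2$ to $(2\alpha+1)^2m_0^2\|\Theta_0\|_{L^2}^2$ using the single observation that $A_1^2-m_0^2\in\cY$ (since $A_1$ is odd with limits $\pm m_0$, its square minus $m_0^2$ decays exponentially on \emph{both} sides). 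That observation replaces your whole cross-term computation by one line: $\int|A_1^2-m_0^2|\zeta^2\theta^2\lesssim|s|^{-2}$. Your route works but duplicates effort; the paper's avoids it by exploiting the extra cancellation in $A_1^2$ that is not visible at the level of $A_1$ itself.
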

\begin{remark}\label{rk:39}
Estimate \eqref{eq:30} implies that the mass of $W$ converges to the fixed number $M_0$
for $|s|$ large.
Estimate \eqref{eq:33} means that the blowup profile $W$, once rescaled, is asymptotically in $L^2$
composed of the rescaled bubble plus the fixed chosen residue $\Theta_0$.
\end{remark}
\begin{proof}
We start by proving \eqref{eq:30}.
One has
\begin{align*}
\int W^2 &= \int V_0^2+ \int V_1^2\theta^2 
+2 \int V_0V_1\theta + \int (V_2\theta^2 + V_2^*\partial_y\theta+V_3\theta^3)^2\\
&\quad + 2 \int (V_2\theta^2 + V_2^*\partial_y\theta+V_3\theta^3)(V_0+V_1\theta),
\end{align*}
and thus
using the pointwise bound on $V_1\theta$ in \eqref{eq:mg} of Lemma \ref{le:ww}, 
\[
\left|\int W^2 - \int Q^2 - \int V_1^2 \theta^2 \right| \lesssim |s|^{-2}.
\]
Now, by using the definition of $V_1$ in \eqref{eq:v9} and recalling that $c_1=-(2\alpha+1)$, we have 
\begin{align*}
\int V_1\theta^2-(2\alpha+1)^2m_0^2\int \theta_0^2 =(2\alpha+1)^2\int \left(A_1^2 \zeta^2\theta^2-m_0^2\theta_0^2 \right),
\end{align*}
so that 
\begin{align*}
\left|\int V_1\theta^2-(2\alpha+1)^2m_0^2\int \theta_0^2\right| & \lesssim \int \left|A_1^2 -m_0^2\right| \zeta^2\theta^2+\int \left|\theta^2 -\theta_0^2 \right|\zeta^2+\int \theta_0^2\left| \zeta^2-1\right| .
\end{align*}
We observe from the properties of $A_1$ in Lemma \ref{le:PA} that $A_1^2-m_0^2 \in \mathcal{Y}$, so that we deduce from the bound \eqref{eq:lt} on $\theta$ that 
\begin{align*}
\int \left|A_1^2 -m_0^2\right| \zeta^2\theta^2\lesssim |s|^{-2} .
\end{align*}
In the support of $\zeta$, one has $y\ge -|s|$, so that it follows from \eqref{decomp:dktheta} that 
\begin{equation} \label{decomp:theta}
\theta=\theta_0+\theta_1+\lambda^{\frac12}q(\tau,\lambda y+\sigma), \quad \mbox{where} \quad \theta_1(s,y)=\lambda^{\frac12}\tau \Theta_1(\lambda y+\sigma). 
\end{equation}
 Thus, $\theta^2-\theta_0^2=(\theta_1+\lambda^{\frac12}q(\tau,\lambda y+\sigma))(\theta_0+\theta)$, and we deduce from the Cauchy Schwarz inequality and the $L^2$-bound on $\Theta$ in \eqref{theta:H1} that 
\begin{align*} 
\int \left|\theta^2 -\theta_0^2 \right|\zeta^2 
&\le \left(\|\theta\|_{L^2}+\|\theta_0\|_{L^2}\right)\left\|\left(\theta_1 +\lambda^{\frac12}q(\tau,\lambda \cdot+\sigma)\right)\zeta\right\|_{L^2}\\&
\lesssim \delta^{\alpha} \left(\|\theta_1 \zeta\|_{L^2}+\|\lambda^{\frac12}q(\tau,\lambda \cdot+\sigma) \zeta\|_{L^2}\right) .
\end{align*}
In the support of $\zeta$,we have $\lambda y+\sigma \ge \frac12 \rho \circ \tau$ (see \eqref{est:lambda y}), so that it follows by changing variable $x=\lambda y+\sigma$ and using \eqref{eq:L1} and \eqref{eq:B1}
\begin{equation} \label{est:q:L2}
\|\lambda^{\frac12}q(\tau,\lambda \cdot+\sigma) \zeta\|_{L^2} \le \left( \int_{x \ge \frac12 \rho(\tau)} q^2(\tau,x) dx\right)^{\frac12} \lesssim \tau^{2+\frac{\alpha-6}{4\alpha+3}} \lesssim |s|^{-\frac92} .
\end{equation}
Similarly, recalling that $\tau^{-\frac76\beta}(\lambda y+\sigma)$ on the support of $\zeta$ (see \eqref{est:cutoff}), we have 
using \eqref{eq:Tk}, 
\begin{align}
\|\theta_1 \zeta\|_{L^2}  \le \tau \left( \int_{x \ge \frac12 \rho(\tau)} \Theta_1^2(x) dx\right)^{\frac12} & 
\lesssim \tau \left( \int_{\frac12 \rho(\tau)}^{\delta}x^{2\alpha-7} dx+\delta^{2\alpha-6}\right)^{\frac12} \nonumber\\ & \lesssim \tau\left(\tau^{\frac{\alpha-3}{4\alpha+3}}+\delta^{\alpha-3} \right)\lesssim |s|^{-\frac52} .\label{est:theta1_L2}
\end{align}
To estimate the third integral, we change variable $x=\lambda y+\sigma$. Observe from the bootstrap hypotheses \eqref{eq:B2}-\eqref{eq:B3} that on the support of $\zeta^2-1$, \[x \le \sigma-\frac12\lambda|s| \le (\kappa_\sigma-\frac{\kappa_\lambda}2)|s|^{-\frac1{2\alpha}}+cC_1^\star|s|^{-\frac1{2\alpha}-1} \le c|s|^{-\frac1{2\alpha}} .\]
Thus,
\begin{equation} \label{est:theta0_zeta}
\int \theta_0^2\left| \zeta^2-1\right|\le \int_0^{c|s|^{-\frac1{2\alpha}}}  x^{2\alpha-1} dx \lesssim |s|^{-1} .
\end{equation}
Thus, we conclude gathering these estimates the proof of \eqref{eq:30}. 

To prove \eqref{eq:33}, we start observing by \eqref{eq:mg} and the definition of $V_0$, that
\[
\left\|W - Q - V_1 \theta\right\|_{L^2} \lesssim |s|^{-\frac32}.
\]
Now, using $c_1=-(2\alpha+1)$, we decompose 
\[V_1\theta-(2\alpha+1)m_0\theta_0=-(2\alpha+1)\left( (A_1+m_0)\zeta\theta+m_0\zeta(\theta_0-\theta)+m_0\theta_0(1-\zeta)\right),\]
so that 
\[\|V_1\theta-(2\alpha+1)m_0\theta_0\|_{L^2}\lesssim \|(A_1+m_0)\zeta \theta\|_{L^2}+\|\zeta (\theta-\theta_0)\|_{L^2}+\|\theta_0(1-\zeta)\|_{L^2}.\]
We estimate each term on the right-hand side separately. First, using $(A_1+m_0) \zeta \in \mathcal{Z}_0^-$ and \eqref{eq:lt}, we get
\[\|(A_1+m_0)\zeta \theta\|_{L^2}^2\lesssim |s|^{-2}\int \left(\omega \eta+\ONE_{[-|s| ,0]}\right)\lesssim |s|^{-1} .\]
Second, the decomposition \eqref{decomp:theta}, and then \eqref{est:q:L2}, \eqref{est:theta1_L2} yield
\[ \|\zeta (\theta-\theta_0)\|_{L^2} \le \|\zeta \theta_1\|_{L^2}+\|\lambda^{\frac12}q(\tau,\lambda \cdot+\sigma) \zeta\|_{L^2}\lesssim |s|^{-\frac52} .\]
Third, arguing as in the proof of \eqref{est:theta0_zeta}, we have 
\[\|\theta_0(1-\zeta)\|_{L^2}^2 \le \int_0^{c|s|^{-\frac1{2\alpha}}}  x^{2\alpha-1} dx \lesssim |s|^{-1} .\]
Therefore, we conclude the proof of \eqref{eq:33} gathering these estimates.

\subsection{Formal derivation of the blowup rate}\label{S:310}
Proposition \ref{pr:bp} (ii) and the definition of $\beta$ in \eqref{eq:bt} suggest to consider a blowup profile $W$
with parameters $(\lambda,\sigma)$ satisfying formally the coupled nonlinear system
\[
\frac{\lambda_s}{\lambda} - (2\alpha+1) \lambda^\frac12\sigma^{\alpha-\frac12}  + c_2 \lambda \sigma^{2\alpha-1} = 0 , \quad
\frac{\sigma_s}{\lambda} - 1 = 0.
\]
It turns out that we can approximately solve this system. Neglecting the term in $c_2$ in the equation of $\lambda_s$, which we anticipate to be of lower order,
we reduce ourselves to
\[
\lambda_s = (2\alpha+1) \lambda^\frac32\sigma^{\alpha-\frac12},\quad \sigma_s = \lambda.
\]
Set
$
g = \lambda^\frac12 - \sigma^{\alpha+\frac12} 
$
and compute
\begin{equation*}
g_s  = \frac 12 \lambda_s \lambda^{-\frac12} - \left( \alpha+\frac12\right) \sigma_s \sigma^{\alpha-\frac12} = 0 .
\end{equation*}
By integration, taking the integration constant to be $0$ as $|s|\to +\infty$, we find at the main order
$g(s)=0$ and so
$
\lambda^\frac12= \sigma^{\alpha+\frac12}.
$
This implies
$
\sigma_s = \lambda  = \sigma^{2\alpha+1}
$
and thus $\sigma^{-2\alpha}(s)=2\alpha |s|$ by integrating again.
This justifies the main order of $\sigma$ in \eqref{eq:B2} and then the main order of $\lambda$ in \eqref{eq:B3}
using $\lambda = \sigma^{2\alpha+1}$.

Now, we formally go back to the original time variable, writing (see \eqref{eq:ta})
\[
t= \int_{-\infty}^s \lambda^3(s') ds'
= \kappa_\lambda^3 \int_{-\infty}^s |s'|^{-\frac{6\alpha+3}{2\alpha}} ds'
= \frac {2\alpha}{4\alpha+3}\kappa_\lambda^3 |s|^{-\frac{4\alpha+3}{2\alpha}}.
\]
Thus, 
\begin{equation}\label{eq:ts}
(4\alpha+3) t = (2\alpha |s|)^{-\frac{4\alpha+3}{2\alpha}}
\end{equation}
and
$
\lambda(t) = \kappa_\lambda |s|^{-\frac{2\alpha+1}{2\alpha}}
= (2\alpha |s|)^{-\frac{2\alpha+1}{2\alpha}}
=(4\alpha+3)^\nu t^\nu.
$
\end{proof}

\section{Modulation close to the blowup profile}\label{S:4}

\subsection{Refined blow-up profile} 
We introduce a refined approximate blow-up profile $W_b$ close to $W$ by involving a additional small parameter $b$.
For a $\cC^1$ real-valued function $b:\cI\to \RR$ such that
\begin{equation}\label{wBTb}
|b| < \delta^{2\alpha},
\end{equation}
we define the function $W_b(s,y)=W_b(y;\lambda(s),\sigma(s),b(s))$ by
\begin{equation} \label{def:W:b}
W_b(s,y) = W(s,y) + b(s) P_b(s,y)
\end{equation}
where
\begin{equation} \label{def:P:b}
P_b(s,y) = \chi_b(y) P(y),\quad \chi_b(s,y) = \chi_L(|b(s)|^\gamma y),\quad \gamma = \frac 34, 
\end{equation}
and where the function $\chi_L$ is defined in \eqref{eq:CL}.
The next lemma provides estimates for the refined profile $W_b$.
Let
\begin{equation} \label{def:Psi:b}
\Psi_b= \partial_y \left( b\partial_y^2 P_b - b P_b + W_b^5-W^5 \right) + b( \Lambda V_0 +   \beta \Lambda P_b).
\end{equation}
The motivation for introducing $\Psi_b$ is to complement the error term $\Psi_W$
in the rescaled equation when passing from $W$ to $W_b$.

\begin{lemma}  For all~$s\in\cI$.
\begin{enumerate}
\item[(i)]  \emph{Pointwise estimates for $W_b$.} For any $p=1,2,3,4$, for any $y \in \RR$,
\begin{align} 
|W_b| &\lesssim \omega\eta+|s|^{-1}\eta + \lambda^\frac12\eta_R
+|b| \ONE_{[-1,0]}(|b|^{\gamma}y), \label{est:W0:b} \\ 
|\partial_y^pW_b|
& \lesssim\omega\eta+|s|^{-1-p}\eta + \lambda^{\frac12+p}\eta_R+|b|^{1+\gamma p}\ONE_{[-1,-\frac12]}(|b|^{\gamma}y).\label{est:W12:b}
\end{align}
\item[(ii)]  \emph{Error term in the equation of $W_b$.}
For $p=0,1$, for all $y \in \RR$,
\begin{equation} \label{est:Psi0:b}
|\partial_y^p \Psi_b| \lesssimD (b^2+|b||s|^{-1})\omega \eta
+|b||s|^{-1} \ONE_{[-1,0]}(|b|^{\gamma}y)+ |b|^{1+\gamma(1+p)}\ONE_{[-1,-\frac12]}(|b|^{\gamma}y).
\end{equation}
Moreover, 
\begin{equation} \label{est:Psib:Q}
\left|(\Psi_b,Q)-2c_1(2\alpha)^{-1}m_0^2b|s|^{-1} \right|
\lesssimD b^2+|b| |s|^{-2} .
\end{equation}
\item[(iii)]  \emph{Mass and Energy of $W_b$.}
\begin{align}
 \left| \int W_b^2-\int W^2 \right| &\lesssimD |b|+|b|^{1-\gamma}|s|^{-1}, \label{mass:W:b}\\ 
 \left| \int (\partial_yW_b)^2-\int (\partial_yW)^2 \right|&\lesssimD |b|, \label{h1:W:b}\\
 \left| {E(W_b)}- {E(W)}\right| &\lesssimD |b|+|b|^{1-\gamma}|s|^{-5} . \label{energy:W:b}
\end{align}
\end{enumerate}
\end{lemma}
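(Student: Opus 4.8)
The plan is to prove the three parts of the lemma by direct computation, relying on the pointwise estimates for $W$ from Proposition \ref{pr:bp}(i), the properties of $P\in\cZ_0^-$ from Lemma \ref{le:PA}, and the bootstrap assumptions \eqref{eq:B1}--\eqref{eq:B3} together with \eqref{wBTb} and \eqref{eq:B4}. The key preliminary observation is that $P_b=\chi_L(|b|^\gamma y)P(y)$, so on the support of $\chi_b$ one has $y\geq -|b|^{-\gamma}$, and the derivatives of the cut-off satisfy $|\partial_y^k\chi_b|\lesssim |b|^{\gamma k}\ONE_{[-1,-\frac12]}(|b|^\gamma y)$. Since $P$ behaves like the constant $2m_0$ as $y\to-\infty$ and decays exponentially as $y\to+\infty$ (together with all derivatives of $P$ lying in $\cY$ after the first), we get $|\partial_y^p P_b|\lesssim \omega\eta + |b|^{\gamma p}\ONE_{[-1,-\frac12]}(|b|^\gamma y)$ for $p\geq 1$ and $|P_b|\lesssim \omega\eta + \ONE_{[-1,0]}(|b|^\gamma y)$ for $p=0$. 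Adding $b\,\partial_y^p P_b$ to the estimate \eqref{eq:w0} on $\partial_y^p W$ gives \eqref{est:W0:b}--\eqref{est:W12:b} directly.

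For part (ii), I would first expand $W_b^5 - W^5 = 5W^4(bP_b) + 10W^3(bP_b)^2 + \cdots + (bP_b)^5$ and note that the main contribution $5W^4 b P_b$ has $W^4$ concentrated near $Q^4$ (exponentially localized), so after differentiation all these nonlinear terms are bounded by $(b^2 + |b|(\omega\eta + |s|^{-1}\eta + \lambda^2\eta_R))\omega\eta$-type quantities, the $b^2$ coming from $(bP_b)^2$ against the slowly-decaying tail. The linear-in-$b$ part of $\Psi_b$ is $b\,\partial_y(\partial_y^2 P_b - P_b) + b\Lambda V_0 + b\beta\Lambda P_b$; here the crucial cancellation is that $(\cL P)'=\Lambda Q$ (equation \eqref{eq:PP}), so $\partial_y(\partial_y^2 P - P + 5Q^4 P) = -\Lambda Q$, hence $b\,\partial_y(\partial_y^2 P_b - P_b) + b\Lambda V_0$ equals $-5b\,\partial_y(Q^4 P)$ up to cut-off-derivative errors supported on $[-1,-\tfrac12]$ in $|b|^\gamma y$ and up to the difference $\Lambda V_0 - \Lambda Q$ which is $O(|s|^{-100})$ on the relevant region. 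The term $-5b\partial_y(Q^4 P)$ is exponentially localized and thus $\lesssim |b|\omega\eta$; combining with $b\beta\Lambda P_b$, which is $O(|b||s|^{-1})$ times a localized-or-tail function using $|\beta|\lesssim |s|^{-1}$ from \eqref{eq:bt} and \eqref{eq:B4}, yields \eqref{est:Psi0:b}. For \eqref{est:Psib:Q} I would pair with $Q$: the nonlinear and cut-off error terms contribute $O(b^2 + |b||s|^{-2})$, the $-5b\partial_y(Q^4P)$ term contributes $5b((Q^4P)',Q)$ which combined with the $b\Lambda V_0$ correction and $b\beta(\Lambda P_b,Q)$ must be identified; here one uses $(\Lambda P,Q)$, the identities \eqref{eq:PQ}, \eqref{eq:pq}, and $\beta = c_1\lambda^{1/2}\sigma^{\alpha-1/2} + c_2\lambda\sigma^{2\alpha-1} = c_1(2\alpha)^{-1}|s|^{-1} + O(|s|^{-2})$, extracting the stated leading coefficient $2c_1(2\alpha)^{-1}m_0^2 b|s|^{-1}$.

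For part (iii), I would write $\int W_b^2 - \int W^2 = 2b\int W P_b + b^2\int P_b^2$; since $W = Q\zeta + V$ with $V$ small in $L^2$ and $P_b$ has an $L^2$-norm that grows like $|b|^{-\gamma/2}$ (because $P\to 2m_0$ at $-\infty$ and $\chi_b$ truncates at $-|b|^{-\gamma}$), we get $|b|\|P_b\|_{L^2}\lesssim |b|^{1-\gamma/2}$ and $b^2\|P_b\|_{L^2}^2\lesssim |b|^{2-\gamma}$; being slightly generous this gives $\lesssim |b| + |b|^{1-\gamma}|s|^{-1}$, where the $|s|^{-1}$ can be inserted because the estimate is only needed for $|s|$ large (alternatively one splits $W P_b$ into the $Q$-part, which pairs against the localized part of $P_b$, and the $V$-part). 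For \eqref{h1:W:b} the derivative $\partial_y P_b$ is in $\cY+|b|^\gamma(\text{localized})$, hence in $L^2$ with bounded norm, so $2b\int \partial_y W\,\partial_y P_b + b^2\int(\partial_y P_b)^2 \lesssim |b|$. For \eqref{energy:W:b} I expand $E(W_b)-E(W)$ into the quadratic form $b\int(\partial_y W\,\partial_y P_b - W^5 P_b) + O(b^2(\|\partial_y P_b\|_{L^2}^2 + \cdots))$ plus higher powers of $b$ from the sextic term; the linear term is $\lesssim |b|$ and the $b^2 P_b^2$ contribution from the $\partial_y$ part is $\lesssim |b|^{2-\gamma}\lesssim |b|^{1-\gamma}|s|^{-5}$ for $|s|$ large, with the sextic contributions being even smaller. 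The main obstacle throughout is bookkeeping: tracking which error terms are genuinely localized (hence absorbed into $\omega\eta$), which live on the slowly-decaying tail (hence producing the $|b|^{1-\gamma}$ losses), and which are supported on the cut-off transition region $[-1,-\tfrac12]$ in the variable $|b|^\gamma y$; the only genuinely delicate point is the algebraic identification of the leading constant in \eqref{est:Psib:Q}, which hinges on correctly using $(\cL P)'=\Lambda Q$ together with \eqref{eq:PQ} and \eqref{eq:pq}.
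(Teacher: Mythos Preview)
Your approach to parts (i) and (iii) is essentially correct and matches the paper, though in (iii) your bookkeeping slips in two places: Cauchy--Schwarz on $\int WP_b$ gives only $|b|^{1-\gamma/2}=|b|^{5/8}$, which is \emph{larger} than $|b|$, so you must use the pointwise bound \eqref{eq:w0} on $W$ (your ``alternative'' splitting into $Q$-part and $V$-part is what actually works); and $|b|^{2-\gamma}\lesssim |b|$ directly, not $|b|^{1-\gamma}|s|^{-5}$.

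The genuine gap is in part (ii). You correctly identify $(\cL P)'=\Lambda Q$ as the key, but your grouping leaves two uncancelled pieces each of size $|b|\omega\eta$. In your expansion of $\partial_y(W_b^5-W^5)$, the leading term $5b\,\partial_y(W^4P_b)\approx 5b\,\partial_y(Q^4P)$ is \emph{not} bounded by $(b^2+|b||s|^{-1})\omega\eta$ on its own; it is $O(|b|\omega)$. Separately, your computation of the ``linear part'' $b\,\partial_y(\partial_y^2P_b-P_b)+b\Lambda V_0$ correctly yields $-5b\,\partial_y(Q^4P)$, which you acknowledge is $|b|\omega\eta$ --- also too big. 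These two pieces must be combined to cancel. The paper instead extracts $5Q^4P_b$ from $W_b^5-W^5$ and groups it with the linear piece, so that $b\,\partial_y(\partial_y^2P_b-P_b+5Q^4P_b)+b\Lambda V_0$ collapses to $b(1-\chi_b)\Lambda Q+b(\Lambda V_0-\Lambda Q)$ plus cut-off errors, all exponentially small in $|b|^{-\gamma}$ or of order $|b||s|^{-100}$. What remains from the quintic is $5b\,\partial_y((W^4-Q^4)P_b)$, which is genuinely $O(|b||s|^{-1}\omega)$ since $|W^4-Q^4|\lesssim|V|\lesssim|s|^{-1}$ near the soliton.

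This mis-grouping also causes you to miss an essential $b|s|^{-1}$ contribution to $(\Psi_b,Q)$: the term $5b\,\partial_y((W^4-V_0^4)P_b)\approx 20b\,\partial_y(V_0^3V_1\theta P_b)$ contributes $-20c_1(2\alpha)^{-1}b|s|^{-1}(Q^3A_1P,Q')$ to the scalar product, via $V_1=c_1A_1\zeta$ and $\theta\approx(2\alpha)^{-1}|s|^{-1}$. The constant $2c_1(2\alpha)^{-1}m_0^2$ then arises from combining this with $b\beta(\Lambda P,Q)$ through identity \eqref{eq:id} (together with \eqref{eq:PQ}), not from \eqref{eq:pq}.
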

\begin{proof}
For the reader's convenience, we repeat the proof of \cite[Lemma 3.1]{MP24}.

(i) The estimates \eqref{est:W0:b}-\eqref{est:W12:b} follow directly from the estimates \eqref{eq:w0} for $W$ and the definitions of $\chi_L$, $W_b$ and $P_b$ in \eqref{eq:cL}, \eqref{def:W:b} and \eqref{def:P:b}. 

(ii) Expanding $W_b=W+bP_b$ and using $(\cL P)' = \Lambda Q$ in the definition of $\Psi_b$, we find 
\begin{equation}\label{eq:Psib}
\begin{aligned}
\Psi_b & = b (1-\chi_b) \Lambda Q +b(\Lambda V_0-\Lambda Q)
+ b \beta \Lambda P_b\\
&\quad + 3 b (\partial_y \chi_b) P'' + 3 b (\partial_y^2 \chi_b) P'
+b (\partial_y^3 \chi_b) P - b (\partial_y \chi_b) P + 5 b(\partial_y \chi_b) Q^4 P
\\&\quad + b\partial_y \left(5(W^4-V_0^4)P_b+5(V_0^4-Q^4)P_b  \right) \\ & \quad + b\partial_y \left(10 b W^3 P_b^2 +10 b^2 W^2 P_b^3 + 5 b^3 W P_b^4 + b^4 P_b^5 \right).
\end{aligned}
\end{equation}
We estimate separately each term on the right-hand side of \eqref{eq:Psib}. First, from the properties of $\chi_b$ and $\Lambda Q \in \cY$, for $p=0,1$,
\begin{equation*} 
\left|b\partial_y^p \left((1-\chi_b) \Lambda Q\right) \right| \lesssim |b|e^{-\frac 34|y|} \ONE_{(-\infty,-1]}(|b|^{\gamma}y) \lesssim |b|e^{-\frac 14 |b|^{-\gamma}} \omega 
\end{equation*}
and 
\begin{equation*}
\left|b\partial_y^p\left(\Lambda V_0-\Lambda Q \right) \right|=
\left|b\partial_y^p\left(\Lambda Q(\zeta-1)+yQ\partial_y\zeta \right) \right| \lesssim |b||s|^{-100} \omega .
\end{equation*}
Next, we compute 
\begin{equation} \label{Lambda:P:b}
 \Lambda P_b = (\Lambda P)\chi_b+y(\partial_y \chi_b) P,
\end{equation}
and we estimate
\begin{equation} \label{Lambda:P:b.1}
\left|\Lambda P_b \right| \lesssim \omega+\ONE_{[-1,0]}(|b|^{\gamma}y) \quad \text{and} \quad \left|\partial_y\Lambda P_b \right| \lesssim \omega+|b|^{\gamma}\ONE_{[-1,-\frac12]}(|b|^{\gamma}y) .
\end{equation}
Thus, it follows from the definition of $\beta$ in \eqref{eq:bt} and \eqref{eq:lt} that
\begin{align*}
 \left|b \beta \Lambda P_b \right|
 &\lesssim |b| |s|^{-1}\omega +|b| |s|^{-1} \ONE_{[-1,0]}(|b|^{\gamma}y), \\
 \left|b \beta \partial_y(\Lambda P_b) \right|
 &\lesssim |b| |s|^{-1}\omega+|b|^{1+\gamma} |s|^{-1} \ONE_{[-1,-\frac12]}(|b|^{\gamma}y).
 \end{align*}
Next, we deduce from \eqref{def:P:b} and $P \in \cZ_0$ that, for $p=0,1$, 
\begin{align*}
&\left| b \partial_y^p\left((\partial_y \chi_b) P''\right)\right| +\left| b \partial_y^p\left((\partial_y^2 \chi_b) P'\right) \right|
+\left| b\partial_y^p\left((\partial_y \chi_b) Q^4 P\right) \right|\lesssim e^{-\frac 14|b|^{-\gamma}} \omega, \\
&\left|b \partial_y^p\left((\partial_y^3 \chi_b) P\right) \right|+\left|b \partial_y^p\left((\partial_y \chi_b) P\right) \right|
\lesssim |b|^{1+(1+p)\gamma}\ONE_{[-1,-\frac12]}(|b|^{\gamma}y) .
\end{align*}
From the definition of $V_0=Q\zeta$, we have, for $p=0,1,2$, 
\begin{equation*}
\left|\partial_y^p(V_0^4-Q^4)\right|=\left|\partial_y^p(Q^4(\zeta^4-1))\right| \lesssim \omega^2 \ONE_{(-\infty,-\frac12]}(|s|^{-1}y) ,
\end{equation*}
so that 
\begin{equation*}
\left|b\partial_y^{1+p}\left((V_0^4-Q^4)P_b\right)\right| \lesssim |b||s|^{-100}\omega \ONE_{(-1,0]}(|b|^{\gamma}y) .
\end{equation*}
Expanding $W=V_0+V$, we obtain
$W^4-V_0^4=4V_0^3V+6V_0^2V^2+4V_0V^3+V^4$.
By~\eqref{eq:vv}, for $p=0,1,2$,
\begin{align*}
& |\partial_y^p (V_0^3 V)|+ |\partial_y^p (V_0^2 V^2)| + |\partial_y^p (V_0 V^3)
|\lesssim |s|^{-1} \omega,\\
& |\partial_y^p (V^4)|\lesssim |s|^{-4} \eta +\lambda^2 \eta_R. 
\end{align*}
Thus, for $p=0,1$,
\begin{equation*}
\big|b \partial_y^{1+p} \big((W^4-V_0^4)P_b\big)\big|
 \lesssimD |b||s|^{-1} \omega +|b||s|^{-4}\ONE_{[-1,0]}(|b|^{\gamma}y) 
  + |b|^{1+\gamma}|s|^{-4}\ONE_{[-1,-\frac12]}(|b|^{\gamma}y).
\end{equation*}
Last, by using \eqref{eq:w0} and \eqref{def:P:b}, we have, for $p=0,1$,
\begin{align*} 
\left| b^2\partial_y^{1+p}\left(W^3 P_b^2\right) \right|
&\lesssimD |b|^2\omega\eta+|b|^2|s|^{-5-p} \ONE_{[-1,0]}(|b|^{\gamma}y)\\
&\quad +|b|^{2+\gamma}|s|^{-3}(|s|^{-p}+|b|^{p\gamma}) \ONE_{[-1,-\frac12]}(|b|^{\gamma}y), \\
\left| b^3\partial_y^{1+p}\left(W^2 P_b^3\right) \right|
&\lesssimD |b|^3\omega\eta+|b|^3|s|^{-4-p} \ONE_{[-1,0]}(|b|^{\gamma}y)\\
&\quad 
+|b|^{3+\gamma}|s|^{-2}(|s|^{-p}+|b|^{p\gamma}) \ONE_{[-1,-\frac12]}(|b|^{\gamma}y), \\
\left| b^4\partial_y^{1+p}\left(W P_b^4\right) \right|
&\lesssimD |b|^4\omega\eta+|b|^4|s|^{-3-p} \ONE_{[-1,0]}(|b|^{\gamma}y)\\
&\quad 
+|b|^{4+\gamma}|s|^{-1}(|s|^{-p}+|b|^{p\gamma})\ONE_{[-1,-\frac12]}(|b|^{\gamma}y),\\
\left| b^5 \partial_y^{1+p}(P_b^5) \right|
&\lesssimD |b|^5\omega\eta+|b|^{5+(1+p)\gamma}\ONE_{[-1,-\frac12]}(|b|^{\gamma}y).
\end{align*}
We deduce~\eqref{est:Psi0:b} by gathering these estimates. 

Proof of~\eqref{est:Psib:Q}. 
Taking the scalar product of the expression of $\Psi_b$ in \eqref{eq:Psib} with $Q$
and using the same estimates as before,
we observe that the two terms $b \beta \Lambda P_b$ and $20b\partial_y (V_0^3V_1\theta P_b)$ give contributions of size $b|s|^{-1}$,
while all the other terms give contributions of size at most $b^2+|b||s|^{-2}$.
First, we observe
\begin{align*}
\left|\left(\beta \Lambda P_b-c_1(2\alpha)^{-1}|s|^{-1}\Lambda P, Q\right) \right|
&\lesssim |\beta| \left|\left(P_b- P,\Lambda Q\right) \right|
+| \beta - c_1(2\alpha)^{-1}|s|^{-1}| |( P, \Lambda Q)|\\
&\lesssim |s|^{-1} e^{-\frac 14|b|^{-\gamma}} + |s|^{-1}e^{-\frac{\delta}{4\lambda}}+ |s|^{-2}
\lesssimD |s|^{-1} b^2 + |s|^{-2} .
\end{align*}
Second, we compute from \eqref{eq:v9}, \eqref{eq:lt} and \eqref{eq:0T} that 
\begin{align*}
 & \left|\left(\partial_y (V_0^3V_1\theta P_b),Q\right)
+c_1(2\alpha)^{-1} |s|^{-1}\big(Q^3A_1P,Q'\big)\right| 
\\ & \quad \lesssim |s|^{-1} e^{-\frac 14|b|^{-\gamma}}+|s|^{-1}e^{-\frac{|s|}2}+ |s|^{-1} e^{-\frac\delta{4\lambda}}\lesssimD |s|^{-1} b^2 + |s|^{-2} .
\end{align*}
Thus, \eqref{est:Psib:Q} follows from the identities \eqref{eq:PQ} and \eqref{eq:id}.

(iii) Expanding $W_b=W+bP_b$, we obtain
\begin{equation*}
 \int W_b^2=\int W^2+2b \int WP_b+b^2\int P_b^2 .
\end{equation*}
Observe from \eqref{eq:w0} and \eqref{def:P:b} that 
\begin{equation*}
\left| b \int WP_b \right| \lesssim |b|+|b|^{1-\gamma}|s|^{-1} ,\quad
b^2 \int P_b^2 \lesssim |b|^{2-\gamma}.
\end{equation*}
Hence, \eqref{mass:W:b} is proved.
Expanding $W_b=W+bP_b$ in the definition of the energy, we obtain 
\begin{equation*} 
E(W_b)=E(W)+b\int (\partial_yW) (\partial_yP_b)+\frac12b^2\int (\partial_yP_b)^2 -\frac16 \int \left( (W+bP_b)^6-W^6 \right).
\end{equation*}
Observe from \eqref{eq:w0} and \eqref{def:P:b} that 
\begin{gather*}
\left| b\int (\partial_yW) (\partial_yP_b) \right| \lesssim |b| ,\quad
b^2 \int (\partial_yP_b)^2 \lesssim b^2, \\
\left|\int \left( (W+bP_b)^6-W^6 \right) \right| \lesssim 
|b|+|b|^{1-\gamma}|s|^{-5} .
\end{gather*}
Hence, \eqref{energy:W:b} is proved. 
Estimate \eqref{h1:W:b} is proved similarly.
\end{proof}

\subsection{Choice of the modulation parameters}
Let 
\[
\mbox{$\cI=[S,s_0]$ where $S<s_0<0$ and $|s_0| \gg 1$.}
\]
We look for $(w(s,y),\lambda(s),\sigma(s))$ solution of
the rescaled equation~\eqref{rescaled} on $\cI$ 
such that $w$ has the form
\[
w(s,y) = W_{b}(s,y) + \varepsilon(s,y), 
\]
where the parameters $\tau$, $\lambda$, $\sigma$, $b$ satisfy \eqref{eq:B1}--\eqref{eq:B3}, \eqref{wBTb}
and the function $\varepsilon$ satisfies
\begin{equation}\label{eq:H1b}
\|\varepsilon(s)\|_{H^1} \leq \delta^\frac 14
\end{equation}
and the orthogonality relations
\begin{equation}\label{eq:or}
(\varepsilon(s),\Lambda Q)=(\varepsilon(s),y\Lambda Q)=(\varepsilon(s),Q)=0.
\end{equation}
Moreover, we assume that 
\begin{equation} \label{w:0}
w(S) \equiv W(S) \iff b(S)=0, \, \varepsilon(S)\equiv 0.
\end{equation}
In this context, the existence and uniqueness of $\cC^1$ functions $\lambda$, $\sigma$, $b$ ensuring the orthogonality relations~\eqref{eq:or} follow from standard arguments based on the implicit function theorem.
In the case where the blow-up profile is $Q$, this is justified in~\cite[Lemma~2.5]{MMR1} and the references therein.
The same proof applies to the refined ansatz $W$.
The orthogonality relations \eqref{eq:or} are taken from~\cite{MM1}  in order to ensure the positivity of a quadratic form
related to a virial argument (see Lemma~\ref{le:virial}).

We derive the equation of $\varepsilon$. First, we check from \eqref{rescaled} that
\begin{equation}\label{eq:eps} 
\partial_s \varepsilon + \partial_y \left[\partial_y^2\varepsilon-\varepsilon+\left((W_b+\varepsilon)^5-W_b^5\right)\right]
-\frac{\lambda_s}{\lambda} \Lambda\varepsilon - \left( \frac{\sigma_s}{\lambda}-1\right) \partial_y \varepsilon
+ \cE_b(W) = 0
\end{equation}
where (see \eqref{eq:EW} and \eqref{def:Psi:b} for the definitions of $\cE(W)$ and $\Psi_b$)
\begin{equation*}
\cE_b(W) = \cE(W) - b \Lambda V_0 
-\left(\frac{\lambda_s}{\lambda}+\beta\right) \Lambda (bP_b)
- \left( \frac{\sigma_s}{\lambda} - 1 \right) (b\partial_y P_b)
+ \partial_s (bP_b)+ \Psi_b.
\end{equation*}
Next, using \eqref{eq:ew}, we rewrite
\begin{equation}\label{def:EbW}
\cE_b(W) = - \vec{m} \cdot \vec{M}V_0+ \Psi_M + \Psi_{W} + \Psi_b, 
\end{equation}
where
\begin{equation} \label{def:mM}
 \vec m =\begin{pmatrix} \frac{\lambda_s}{\lambda}+\beta+b \\ \frac{\sigma_s}{\lambda}-1 \end{pmatrix},
 \quad \vec M =\begin{pmatrix} \Lambda \\ \partial_y \end{pmatrix},
\end{equation}
and 
\begin{equation}\label{def:PsiM}
\begin{split}
\Psi_M &= -\left(\frac{\lambda_s}{\lambda}+\beta+b\right) \left( \Psi_{\lambda}+b\Lambda P_b\right)
- \left( \frac{\sigma_s}{\lambda} - 1 \right) (\Psi_{\sigma}+b \partial_y P_b) \\ & \quad
+b\left(\Psi_{\lambda}+b\Lambda P_b\right)+ b_s \left(\chi_b + \gamma y \partial_y \chi_b \right) P.
\end{split}
\end{equation}

\begin{lemma}[Modulation estimates] 
For any $s \in \cI$,
\begin{align}
|\vec m|:=\left| \frac{\lambda_s}{\lambda} +\beta +b\right|+
\left| \frac{\sigma_s}{\lambda} - 1 \right|& \lesssimD \|\varepsilon\|_{L^2_\loc}+C_1^\star |s|^{-4} \log|s|+|b||s|^{-1}+b^2,
 \label{est:lambda:sigma}\\
\left| b_s+2c_1b(2\alpha)^{-1} |s|^{-1}\right|& \lesssimD \|\varepsilon\|_{L^2_\loc}^2+\|\varepsilon\|_{L^2_\loc} (|s|^{-1}+|b|)\nonumber\\
&\quad +C_1^\star |s|^{-4} \log|s|+C_1^\star|b||s|^{-2}\log |s| +b^2 .\label{est:b_s}
\end{align}
\end{lemma}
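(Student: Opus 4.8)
The plan is to derive the modulation equations by differentiating each orthogonality relation in \eqref{eq:or} in $s$, using the equation \eqref{eq:eps} for $\varepsilon$, and then inverting an almost-diagonal linear system for the scalar quantities $\frac{\lambda_s}{\lambda}+\beta+b$, $\frac{\sigma_s}{\lambda}-1$ and $b_s$. First I would record that, since $(\varepsilon,Q)=(\varepsilon,\Lambda Q)=(\varepsilon,y\Lambda Q)=0$ for all $s\in\cI$, taking $\frac{d}{ds}$ gives $(\partial_s\varepsilon,Q)=(\partial_s\varepsilon,\Lambda Q)=(\partial_s\varepsilon,y\Lambda Q)=0$ (the profiles $Q$, $\Lambda Q$, $y\Lambda Q$ are time-independent). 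Then I substitute $\partial_s\varepsilon$ from \eqref{eq:eps}, namely
\[
\partial_s\varepsilon = -\partial_y\bigl[\partial_y^2\varepsilon-\varepsilon+((W_b+\varepsilon)^5-W_b^5)\bigr]+\frac{\lambda_s}{\lambda}\Lambda\varepsilon+\Bigl(\frac{\sigma_s}{\lambda}-1\Bigr)\partial_y\varepsilon-\cE_b(W),
\]
and use the decomposition \eqref{def:EbW} of $\cE_b(W)$ as $-\vec m\cdot\vec M V_0+\Psi_M+\Psi_W+\Psi_b$.

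The core algebraic step is that the "main order" of $\vec m\cdot\vec M V_0$ against the test functions produces an invertible matrix. Concretely, pairing $\Lambda V_0\approx\Lambda Q$ and $\partial_y V_0\approx Q'$ with $Q$, $\Lambda Q$, $y\Lambda Q$, the relevant scalar products are governed by $(\Lambda Q,Q)=0$, $\|\Lambda Q\|_{L^2}^2\ne 0$, $(Q',y\Lambda Q)\ne 0$ and $(Q',Q)=0$ (by parity), so that the equation for $(\varepsilon,\Lambda Q)$ controls $\frac{\sigma_s}{\lambda}-1$, the equation for $(\varepsilon,y\Lambda Q)$ controls $\frac{\lambda_s}{\lambda}+\beta+b$, and the equation for $(\varepsilon,Q)$ controls $b_s$ — this last one because, among the $b$-dependent pieces of $\Psi_M$ and $\Psi_b$, the only term surviving the pairing with $Q$ at order $|s|^{-1}$ is $b_s(\chi_b+\gamma y\partial_y\chi_b)P$ together with the already-computed $(\Psi_b,Q)\sim 2c_1(2\alpha)^{-1}m_0^2 b|s|^{-1}$ from \eqref{est:Psib:Q}; using $(P,Q)=m_0^2$ from \eqref{eq:PQ} extracts $b_s+2c_1(2\alpha)^{-1}b|s|^{-1}$. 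For the error contributions I would bound $|(\Psi_W,Q)|$, $|(\Psi_W,\Lambda Q)|$, $|(\Psi_W,y\Lambda Q)|$ by $C_1^\star|s|^{-4}\log|s|$ using \eqref{eq:PW} (the weight $\omega\eta$ and polynomial factors $1+y^4$ integrate against the Schwartz functions $Q,\Lambda Q,y\Lambda Q$), use \eqref{est:Psi_lambda:Q} and \eqref{est:Psib:Q} for the $\Psi_\lambda$- and $\Psi_b$-pieces of $\Psi_M$, and control the nonlinear term $((W_b+\varepsilon)^5-W_b^5,\cdot)$ and the transport terms $\frac{\lambda_s}{\lambda}(\Lambda\varepsilon,\cdot)$, $(\frac{\sigma_s}{\lambda}-1)(\partial_y\varepsilon,\cdot)$ by $\|\varepsilon\|_{L^2_\loc}$ (and $\|\varepsilon\|_{L^2_\loc}^2$) after integrating by parts to move derivatives onto the smooth rapidly decaying test functions, absorbing the small prefactors $|\vec m|$ — which is where the bootstrap smallness of $\vec m$ is used self-consistently. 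The quadratic term $\|\varepsilon\|_{L^2_\loc}^2$ only appears in the $b_s$ estimate because of the sharper order needed there; the $\|\varepsilon\|_{L^2_\loc}(|s|^{-1}+|b|)$ cross term comes from pairing the nonlinear difference against $Q$ picking up one factor of $W_b$ of size $|s|^{-1}+|b|$.

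After collecting, I would write the system as $(\mathrm{Id}+O(|s|^{-1}))\vec m = (\text{errors})$ and invert for $|s_0|$ large, which yields \eqref{est:lambda:sigma}; feeding this back into the $(\varepsilon,Q)$-equation and isolating $b_s$ gives \eqref{est:b_s}, where the $C_1^\star|b||s|^{-2}\log|s|$ term arises from the $|b||s|^{-1}$-type errors in $\Psi_M$ multiplied by the modulation-equation errors, and the $b^2$ term from the quadratic-in-$b$ remainders in \eqref{est:Psib:Q} and \eqref{est:Psi0:b}.

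The main obstacle I expect is bookkeeping the many $b$-dependent pieces in $\Psi_M$ (especially $-(\frac{\lambda_s}{\lambda}+\beta+b)(\Psi_\lambda+b\Lambda P_b)$, $b(\Psi_\lambda+b\Lambda P_b)$ and $b_s(\chi_b+\gamma y\partial_y\chi_b)P$) and the cross terms between $\vec m$ and the various error terms, so that one reaches exactly the stated orders $C_1^\star|s|^{-4}\log|s|$, $C_1^\star|b||s|^{-2}\log|s|$, $b^2$ and not a weaker bound; in particular one must check that the cut-off $\chi_b$ contributes only exponentially small ($e^{-\frac14|b|^{-\gamma}}$) errors when paired against $Q$, rather than a polynomial loss, and that the coefficient of $b|s|^{-1}$ in the $b_s$ equation is precisely $-2c_1(2\alpha)^{-1}=2(2\alpha+1)(2\alpha)^{-1}$ as claimed, which relies on correctly combining $(\Psi_b,Q)$ from \eqref{est:Psib:Q} with the $b_s P$ term via $(P,Q)=m_0^2$.
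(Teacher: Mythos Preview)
Your approach is essentially the paper's, but there are two slips worth flagging.

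First, a harmless swap: pairing $\vec m\cdot\vec M V_0$ with $\Lambda Q$ isolates $\frac{\lambda_s}{\lambda}+\beta+b$ (since $(\Lambda Q,\Lambda Q)=\|\Lambda Q\|_{L^2}^2$ and $(Q',\Lambda Q)=0$), while pairing with $y\Lambda Q$ isolates $\frac{\sigma_s}{\lambda}-1$ (since $(\Lambda Q,y\Lambda Q)=0$ by parity and $(Q',y\Lambda Q)=\|\Lambda Q\|_{L^2}^2$). You have these reversed; the argument still goes through, but the bookkeeping should be corrected.

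Second, and more substantively: for the $b_s$ equation you need to explain why the \emph{linear} contribution of $\varepsilon$ vanishes when paired with $Q$, not merely that it is controlled by $\|\varepsilon\|_{L^2_\loc}$. If you pair $-\partial_y[\partial_y^2\varepsilon-\varepsilon]$ with $Q$ and separately pair the linear part $-\partial_y[5W_b^4\varepsilon]$ of the nonlinearity with $Q$, each produces a term of size $\|\varepsilon\|_{L^2_\loc}$ with no small prefactor, which would be fatal for \eqref{est:b_s}. The point, made explicit in the paper, is to combine them via $\cR_2=5(W_b^4-Q^4)\varepsilon$ so that the remaining linear piece is exactly $-\partial_y\cL\varepsilon$, and then use the \emph{special cancellations}
\[
(\partial_y\cL\varepsilon,Q)=-(\varepsilon,\cL Q')=0,\qquad ((\beta+b)\Lambda\varepsilon,Q)=-(\beta+b)(\varepsilon,\Lambda Q)=0,
\]
the first from $\cL Q'=0$, the second from the orthogonality \eqref{eq:or}. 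Only after these cancellations do the surviving linear-in-$\varepsilon$ terms carry the extra smallness factor $|s|^{-1}+|b|$ (coming from $\cR_2$) or $|\vec m|$ (from $\vec m\cdot\vec M\varepsilon$), yielding exactly the structure $\|\varepsilon\|_{L^2_\loc}^2+\|\varepsilon\|_{L^2_\loc}(|s|^{-1}+|b|)$ on the right of \eqref{est:b_s}. Your proposal treats the nonlinear term and the dispersive term as separate error contributions and does not articulate this mechanism, so as written it would produce a bare $\|\varepsilon\|_{L^2_\loc}$ on the right of \eqref{est:b_s}. (A minor related point: the $C_1^\star|b||s|^{-2}\log|s|$ term comes directly from the piece $b\,\Psi_\lambda$ in $\Psi_M$ via \eqref{est:Psi_lambda:Q}, not from a product of modulation errors.)
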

\begin{proof}
By using the definition of $\cL$ in \eqref{eq:cL}, we rewrite the equation of $\varepsilon$ as
\begin{equation}\label{eq:eps.2} 
\partial_s \varepsilon - \partial_y\cL \varepsilon
-\vec{m} \cdot \vec{M}(V_0+\varepsilon)+(\beta+b)\Lambda \varepsilon+\partial_y\cR_1+\partial_y\cR_2+\Psi_M+\Psi_W+\Psi_b=0
\end{equation}
where 
\begin{equation*}
\cR_1=(W_b+\varepsilon)^5-W_b^5-5W_b^4\varepsilon ,\quad 
\cR_2=5(W_b^4-Q^4)\varepsilon .
\end{equation*}
We start by general estimates on the error terms in \eqref{eq:eps.2}.
We claim that, for any $f \in \cY$, 
\begin{align} 
\left|\left(\partial_y \cR_1,f\right) \right|
&\lesssim \|\varepsilon\|_{L^2_\loc}^2 , \label{est:mod.1}\\
\left|\left(\partial_y \cR_2,f\right) \right|
&\lesssim \left(|s|^{-1}+|b|\right)\|\varepsilon\|_{L^2_\loc}, \label{est:mod.2}\\
\left|\left(\Psi_{M},f\right) \right|
& \lesssimD |b_s|+(|\vec{m}|+b) (|s|^{-1}+|b|)+|s|^{-100}, \label{est:mod.3}\\
\left|\left(\Psi_{W},f\right) \right| & \lesssimD C_1^\star |s|^{-4} \log |s|, \label{est:mod.4}\\ 
\left|\left(\Psi_{b},f\right) \right| &\lesssimD |b|(|s|^{-1}+|b|). \label{est:mod.5}
\end{align}
where the implicit constants depend on $f$.
Indeed, the estimate~\eqref{est:mod.1} is a consequence of 
$|\cR_1|\lesssim |W_b|^3 \varepsilon^2 + |\varepsilon|^5\lesssim |\varepsilon|^2$ by~\eqref{est:W0:b} and
$\|\varepsilon\|_{L^\infty}^2\lesssim \|\partial_y \varepsilon\|_{L^2}\|\varepsilon\|_{L^2}\lesssim 1$.
The estimate~\eqref{est:mod.2} follows from integration by parts $\left(\partial_y \cR_2,f\right)= -\left((W_b^4-Q^4)\varepsilon, f'\right)$
and $|W_b^4-Q^4| \lesssim |s|^{-1}+|b|$
using \eqref{eq:vv} and \eqref{def:W:b}.
The estimates~\eqref{est:mod.3}-\eqref{est:mod.5} follow from \eqref{eq:lt}, \eqref{eq:ls}, \eqref{eq:ss}, \eqref{eq:PW}, \eqref{est:Psi0:b}, \eqref{Lambda:P:b} and \eqref{def:PsiM}. 

Now, using the first orthogonality relation in \eqref{eq:or}, \eqref{eq:eps.2} and $(Q',\Lambda Q)=0$, we obtain 
\begin{align*}
\left|\left( \frac{\lambda_s}{\lambda}+\beta+b\right)-
\frac{(\varepsilon,\cL(\Lambda Q)')}{\|\Lambda Q\|_{L^2}^2} \right|
&\lesssimD (|\vec{m}|+ \|\varepsilon\|_{L^2_\loc}) \left(|s|^{-1}+|b|+ \|\varepsilon\|_{L^2_\loc}\right)+\left|\left(\partial_y \cR_1,\Lambda Q\right) \right|\\ 
&\quad +\left|\left(\partial_y \cR_2,\Lambda Q\right) \right|
 +\left|\left(\Psi_M,\Lambda Q\right) \right|
+\left|\left(\Psi_{W},\Lambda Q\right) \right|+\left|\left(\Psi_b,\Lambda Q\right) \right| .
\end{align*}
Hence, using \eqref{est:mod.1}-\eqref{est:mod.5} with $f=\Lambda Q$,
\begin{equation} \label{est:lambda.2}
\left| \frac{\lambda_s}{\lambda}+\beta+b \right| \lesssimD
\|\varepsilon\|_{L^2_\loc}+ |\vec{m}| \left(|s|^{-1}+|b|+ \|\varepsilon\|_{L^2_\loc}\right)+|b_s| +C_1^\star |s|^{-4} \log|s|+|b||s|^{-1}+b^2 .
\end{equation}
Similarly, by the second orthogonality relation in \eqref{eq:or} and $(Q',y\Lambda Q)=\|\Lambda Q\|_{L^2}^2$, we obtain
\begin{equation} \label{est:sigma.2}
\left| \frac{\sigma_s}{\lambda}-1\right|
\lesssimD
\|\varepsilon\|_{L^2_\loc}+ |\vec{m}| \left(|s|^{-1}+|b|+ \|\varepsilon\|_{L^2_\loc}\right)+|b_s| +C_1^\star |s|^{-4} \log|s|+|b||s|^{-1}+b^2 .
\end{equation}

Last, using the third orthogonality relation in \eqref{eq:or} and \eqref{eq:eps.2}, we have
\begin{align*}
0=(\partial_s\varepsilon,Q )&=-\left( \partial_y\cL\varepsilon,Q\right)-\left(\vec{m} \cdot\vec M (Q+\varepsilon) , Q \right)+(\beta+b) (\Lambda \varepsilon,Q)\\ 
&\quad + \left(\partial_y \cR_1, Q\right) +\left(\partial_y \cR_2, Q\right)
+(\Psi_M,Q)+(\Psi_W,Q)+(\Psi_b,Q) .
\end{align*}
We observe the special cancellations
$\left(\partial_y\cL\varepsilon,Q\right)=-\left( \varepsilon,\cL Q'\right)=0$ and $(\Lambda \varepsilon,Q)=-(\varepsilon,\Lambda Q)=0$.
Moreover, it follows from the identities $(\Lambda Q, Q)=(Q',Q)=0$ that 
\begin{equation*} 
\left| \left(\vec{m} \cdot\vec M (Q+\varepsilon) , Q \right) \right| \lesssimD |\vec{m}| \|\varepsilon\|_{L^2_\loc}.
\end{equation*}
Recalling $m_0^2=(P,Q)>0$ (see \eqref{eq:PQ}) and arguing as for the proof of \eqref{est:mod.3} but using \eqref{est:Psi_lambda:Q} instead of \eqref{eq:ls}, we obtain
\begin{equation*} 
\left|(\Psi_M,Q)-m_0^2 b_s \right| \lesssimD |\vec{m}|(|s|^{-1}+|b|)+C_1^*|b||s|^{-2}\log|s|+b^2+|b_s||b|^{100}. 
\end{equation*}
Then, we deduce combining these estimates with \eqref{est:Psib:Q} and \eqref{est:mod.1}-\eqref{est:mod.4} with $f=Q$ that 
\begin{equation} \label{est:b_s.2}
\begin{aligned}
\left| b_s+2c_1b(2\alpha)^{-1} |s|^{-1}\right| 
&\lesssimD (|\vec{m}|+\|\varepsilon\|_{L^2_\loc}) \left(|s|^{-1}+|b|+ \|\varepsilon\|_{L^2_\loc}\right)
\\&\quad +C_1^\star |s|^{-4} \log|s|+C_1^\star|b| |s|^{-2} \log|s|+b^2 ++b^2+|b_s||b|^{100}.
\end{aligned}
\end{equation}
Finally, we conclude the proof of \eqref{est:lambda:sigma} combining \eqref{est:lambda.2}, \eqref{est:sigma.2} and \eqref{est:b_s.2},
and taking~$\delta$ small enough. The proof of \eqref{est:b_s} then follows from \eqref{est:lambda:sigma} and \eqref{est:b_s.2}. 
\end{proof}

\section{The bootstrap setting}\label{S:5}

\subsection{Definition of a sequence of solutions}\label{S:5.1}

The time variables $t$ and $s$ being related approximately by \eqref{eq:ts}, it is natural
to introduce, for $n$ large
\begin{equation}\label{eq:TS}
T_n = \frac 1{4\alpha+3} (2\alpha n)^{-\frac{4\alpha+3}{2\alpha}},\quad
S_n = -n.
\end{equation}
We denote by $W_{b,n}(t,y)$ the function $W_b(y;\tau_n(t),\lambda_n(t),\sigma_n(t),b_n(t))$ defined in Section~\ref{S:4}
for $\tau_n(t)$, $\lambda_n(t)$, $\sigma_n(t)$ and~$b_n(t)$ to be chosen.
We define the solution $u_n$ of \eqref{eq:KV} with initial data at $T_n$
\begin{equation}\label{eq:IN}
u_n(T_n) = \frac 1{\lambda_n^\frac 12(T_n)} W_{b,n}\left(T_n, \frac{x-\sigma_n(T_n)}{\lambda_n(T_n)}\right)
\end{equation}
where $b_n(T_n)= 0$, $\tau_n(T_n)=T_n=\kappa_\tau |S_n|^{-\frac{4\alpha+3}{2\alpha}}$ and where $\sigma_n(T_N)$ and $\lambda_n(T_n)$ are uniquely chosen so that
\[
\frac 1{2\alpha} \sigma_n^{-2\alpha}(T_n) - c_2 \log \sigma_n (T_n) = n
\]
and
\[
\lambda_n^\frac12(T_n) - \sigma_n^{\alpha+\frac 12}(T_n)  + \frac{c_2}{2} \frac1{3\alpha+\frac12} \sigma_n^{3\alpha+\frac12}(T_n)=0.
\]
(See below \eqref{eq:dg} and \eqref{eq:cs}  where these choices are used.)
In particular, it is easy to check that
\begin{equation} \label{at_Tn}
\lambda_n(T_n)\approx \kappa_\lambda n^{-\frac{2\alpha+1}{2\alpha}} ,\quad
\sigma_n(T_n) \approx \kappa_\sigma n^{-\frac1{2\alpha}}.
\end{equation}
We consider the solution $u_n$ for times $t\geq T_n$. As long as it exists and remains close to $Q$ up to rescaling and translation,
we can decompose it as in Section \ref{S:4}
\begin{equation}\label{eq:SD}
u_n(t,x) =\frac 1{\lambda_n^{\frac 12}(t)}w_n\left(t, \frac{x-\sigma_n(t)}{\lambda_n(t)}\right)= \frac 1{\lambda_n^{\frac 12}(t)} \left( W_{b,n}\left(t, \frac{x-\sigma_n(t)}{\lambda_n(t)}\right)
+ \varepsilon_n\left(t,\frac{x-\sigma_n(t)}{\lambda_n(t)}\right)\right),
\end{equation}
where $\varepsilon_n$ satisfies \eqref{eq:or}.
At $t=T_n$, this decomposition satisfies  $\varepsilon_n(T_n)\equiv 0$.

We define the rescaled time variable $s$
\begin{equation}\label{defts}
s=s(t) = S_n + \int_{T_n}^t \frac{dt'}{\lambda_n^3(t')},
\end{equation}
which is equivalent to \eqref{eq:ta} with $T=T_n$ and $S=S_n$.
From now on, any time-dependent function will be seen either as a function of $t$ or as a function of $s$.

\subsection{Notation for the energy-virial functional}

Let $\psi,\varphi\in\cC^\infty$ be non decreasing functions such that
\begin{equation}\label{eq:5P}
\psi(y) = \left\{ \begin{aligned} e^y \quad & \mbox{for } y < -1, \\ 1 \quad & \mbox{for } y > -\frac 12, \end{aligned} \right.
\qquad \mbox{and} \qquad
\varphi(y) = \left\{ \begin{aligned} e^y \quad & \mbox{for } y < -1, \\ y + 1 \quad 
& \mbox{for } y>-\frac 12.\end{aligned} \right.
\end{equation}
We note that such functions satisfy $\frac 12 e^y \leq \psi(y) \leq 3e^y$
and $\frac 13 e^y \leq \varphi(y) \leq 3e^y$ for $y<0$,
and $\psi(y) \leq 3\varphi(y)$ for all $y\in\RR$.

In previous articles using a similar virial-energy functional such as the one
introduced in Section \ref{S:6} (see \cite{CoM1,CoM2,MMR1,MMR2,MMR3,MP20,MP24}), various functions 
$\psi$ and $\varphi$ have been used, depending on the context.
In particular, the choice here slightly differs
from the one in \cite{MP24}. We justify the particular choice \eqref{eq:5P} in Remark \ref{rk:pb} below.

For $B > 100$ large to be chosen later we define
\[
\psi_B(y) = \psi\left( \frac yB \right) \quad\mbox{and}\quad
\varphi_B(y) = \varphi\left( \frac yB\right).
\]
From the properties of $\psi$ and $\varphi$, we have for all $y<0$,
\begin{equation} \label{cut:left}
\frac 12 e^{\frac yB} \leq \psi_B(y) \leq 3 e^{\frac yB}, \quad
\frac 13 e^{\frac yB} \leq \varphi_B(y) \leq 3 e^{\frac yB},
\end{equation}
and for all $y\in\RR$,
\begin{equation} \label{cut:onR}
\psi_B'(y) + B^2|\psi_B'''(y)| + B^2|\varphi_B'''(y)| \lesssim \varphi_B'(y).
\end{equation}
We set
\[
\cN_B(\varepsilon_n) = \left(\int \left( (\partial_y \varepsilon_n)^2 + \varepsilon_n^2 \right) \varphi_B \right)^{\frac 12}
\]
and we observe 
\begin{equation} \label{eL2B}
\|\varepsilon_n\|_{L^2_\loc} \lesssim \left( B \int \varepsilon_n^2\varphi_B' \right)^{\frac 12} \lesssim \cN_B(\varepsilon_n).
\end{equation}

\subsection{Bootstrap estimates}
For $C_1^\star$, $C_2^\star$ and $C_3^\star$ positive to be chosen, we introduce
the following bootstrap estimates
\begin{align}
\left|\tau_n(s)-\kappa_\tau |s|^{-\frac{4\alpha+3}{2\alpha}}\right|
&\leq 
C_1^\star |s|^{-\frac{4\alpha+3}{2\alpha}-1}\log|s|,\label{eq:1n}\\
\left|\sigma_n(s)-\kappa_\sigma  |s|^{-\frac1{2\alpha}}\right|
&\leq C_1^\star |s|^{-\frac1{2\alpha}-1}\log|s|,\label{eq:2n}\\
\left|\lambda_n(s)-\kappa_\lambda  |s|^{-\frac{2\alpha+1}{2\alpha}}\right|
&\leq C_1^\star |s|^{-\frac{2\alpha+1}{2\alpha}-1}\log|s|\label{eq:3n}\\
|b_n(s)|
&\leq C_2^\star |s|^{-3} \log|s|,\label{eq:4n}\\
\mathcal N_B(\varepsilon_n(s))
+\left(\int_{S_n}^s \left( \frac{\tilde{s}}{s} \right)^{\frac{2\alpha+1}{\alpha}} \|\varepsilon_n(\tilde{s})\|_{L^2_\loc}^2 
d\tilde{s} \right)^{\frac 12} 
&\leq C_3^\star  |s|^{-3} \log|s|,\label{eq:5n}
\end{align}
where $\kappa_\tau$, $\kappa_\sigma$ and $\kappa_\lambda$ are defined in \eqref{eq:ka}. 

For $s_0<-1$, $|s_0|$ large enough to be chosen later, we define $S_n^\star \in (S_n,s_0]$ by 
\begin{equation*}
 S_n^{\star}=\sup \left\{\mbox{$S_n<\tilde{s}<s_0$ : \eqref{eq:H1b}, \eqref{eq:1n}--\eqref{eq:5n} are satisfied for all $s\in [S_n,\tilde{s}]$}\right\}.
\end{equation*}
\begin{proposition} \label{prop:bootstrap}
There exist $C_1^\star>1$, $C_2^\star>1$, $C_3^\star>1$, $B>1$ and $s_0<-1$, independent of $n$, such that for $n$ large enough, $S_n^{\star}=s_0$. 
\end{proposition}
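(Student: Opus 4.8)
The plan is to run a standard bootstrap/continuity argument: assuming the estimates \eqref{eq:H1b}, \eqref{eq:1n}--\eqref{eq:5n} hold on $[S_n,S_n^\star]$ with the constants $C_1^\star,C_2^\star,C_3^\star,B$ to be fixed, I would show that on the same interval each of these estimates \emph{in fact} holds with the constant improved by a factor $\tfrac12$ (or with a strictly smaller absolute constant), so that by continuity of $s\mapsto\varepsilon_n(s),\lambda_n(s),\ldots$ and the definition of $S_n^\star$, one cannot have $S_n^\star<s_0$. The hierarchy of constants matters: $B$ is chosen first (large, from the coercivity/virial structure), then $C_3^\star$ (controlling $\varepsilon_n$), then $C_1^\star$ (parameters $\tau_n,\sigma_n,\lambda_n$), then $C_2^\star$ (the parameter $b_n$), and finally $|s_0|$ is taken large depending on all of them; $\delta$ is taken small throughout.

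The order of the steps would be as follows. \textbf{Step 1: closing the $\varepsilon_n$ estimate \eqref{eq:5n}.} This is the analytic heart. I would introduce the energy-virial functional built from $\psi_B,\varphi_B$ (this is done in Section~\ref{S:6}, which we may invoke), differentiate it along the flow of \eqref{eq:eps}, and use: the coercivity of $\cL$ under the orthogonality conditions \eqref{eq:or} (Lemma~\ref{le:li}(iii)) together with the virial positivity à la \cite{MM1,MMR1}; the favorable sign of the scaling term $-\tfrac{\lambda_s}{\lambda}\Lambda\varepsilon$ coming from working \emph{backwards} in time (here $\tfrac{\lambda_s}{\lambda}<0$, defocusing-type gain); the source term bound $\|\Psi_W\|\lesssim C_1^\star|s|^{-4}\log|s|$ from \eqref{eq:PW}, the $\Psi_b$ bound \eqref{est:Psi0:b}, and the modulation bounds \eqref{est:lambda:sigma}--\eqref{est:b_s}. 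A Gronwall-type integration with the weight $(\tilde s/s)^{(2\alpha+1)/\alpha}$ then yields $\cN_B(\varepsilon_n(s))+(\cdots)^{1/2}\lesssim |s|^{-3}\log|s|$ with an implied constant independent of $C_3^\star$, which beats $C_3^\star|s|^{-3}\log|s|$ once $C_3^\star$ is large. Crucially $\varepsilon_n(S_n)\equiv0$ so the functional starts at $0$. \textbf{Step 2: closing the modulation estimates for $(\tau_n,\sigma_n,\lambda_n)$, i.e. \eqref{eq:1n}--\eqref{eq:3n}.} Plug the improved $\varepsilon_n$ bound and \eqref{eq:4n} into \eqref{est:lambda:sigma}, obtaining $|\tfrac{\lambda_s}{\lambda}+\beta+b|+|\tfrac{\sigma_s}{\lambda}-1|\lesssim |s|^{-4}\log|s|$ (the $C_1^\star$ in $\Psi_W$ is absorbed by taking $|s_0|$ large). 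Then integrate the ODE system for $(\lambda_n,\sigma_n)$ backward from $S_n$ (where the data is prescribed exactly, \eqref{at_Tn}), compare with the formal solution of Subsection~\ref{S:310}, and recover \eqref{eq:2n}--\eqref{eq:3n} with a factor-$\tfrac12$ improvement; \eqref{eq:1n} for $\tau_n$ follows from \eqref{eq:ta} by integrating $\lambda_n^3$. \textbf{Step 3: closing the $b_n$ estimate \eqref{eq:4n}.} Use \eqref{est:b_s}: the main term is $b_s+2c_1b(2\alpha)^{-1}|s|^{-1}=b_s+2c_1b(2\alpha)^{-1}|s|^{-1}$ with $c_1=-(2\alpha+1)<0$, so the coefficient of $b/|s|$ has a sign; solving this linear ODE backward from $b_n(S_n)=0$ with forcing $\lesssim |s|^{-4}\log|s|$ (plus the $\varepsilon_n$ and $C_1^\star|s|^{-4}\log|s|$ contributions) gives $|b_n(s)|\lesssim |s|^{-3}\log|s|$, improving \eqref{eq:4n} once $C_2^\star$ is large relative to the implied constant. \textbf{Step 4: closing \eqref{eq:H1b}.} This is soft: $\|\varepsilon_n\|_{H^1}\lesssim \cN_B(\varepsilon_n)\lesssim |s|^{-3}\log|s|\ll\delta^{1/4}$ for $|s_0|$ large, and $\|W_{b,n}\|_{H^1}\lesssim 1$, so the decomposition \eqref{eq:SD} persists.

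The main obstacle is unquestionably Step 1 — the differential inequality for the energy-virial functional on $\varepsilon_n$. One has to verify that every term produced by differentiating the functional and substituting \eqref{eq:eps} is either (i) absorbed by the coercive/virial quadratic form (the leading $-\int(\partial_y\varepsilon_n)^2\varphi_B'$ and $-\int\varepsilon_n^2\varphi_B'$-type terms, after handling the soliton-localized directions via the three orthogonality conditions), (ii) controlled by the backward-in-time scaling gain, or (iii) a genuine error of size $\lesssim|s|^{-4}\log|s|$ (times a weight) coming from $\Psi_W,\Psi_b,\Psi_M$ and the modulation terms $\vec m$. The delicate points are the cross terms between the virial weight $\varphi_B'$ and the nonlinearity $(W_b+\varepsilon)^5-W_b^5-5W_b^4\varepsilon$ (cubic and higher in $\varepsilon$, controlled by $\|\varepsilon_n\|_{L^\infty}\lesssim\|\varepsilon_n\|_{H^1}^{1/2}\cdot\|\varepsilon_n\|_{L^2}^{1/2}\ll1$), the terms involving $\partial_y\varphi_B$ on the far left where $\psi_B,\varphi_B$ decay exponentially, and getting the \emph{exact} power weight $(\tilde s/s)^{(2\alpha+1)/\alpha}$ in the time-integrated norm to match the rate $\lambda_n\approx|s|^{-(2\alpha+1)/(2\alpha)}$ so that the Gronwall integration converges. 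Once Step 1 is in place with a constant independent of $C_3^\star$, Steps 2--4 are essentially ODE comparison arguments and are routine; the whole scheme then closes by the standard continuity/contradiction argument on $S_n^\star$.
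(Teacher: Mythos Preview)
Your overall bootstrap/continuation strategy is the paper's, but two points are genuinely wrong as stated.

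\textbf{Step 4 is not soft.} The weight $\varphi_B(y)\sim e^{y/B}$ as $y\to-\infty$, so $\cN_B(\varepsilon_n)$ does \emph{not} control $\|\varepsilon_n\|_{H^1}$. The paper obtains the global bounds $\|\varepsilon\|_{L^2}\lesssim\delta^\alpha$ and $\|\partial_y\varepsilon\|_{L^2}\lesssim\delta^{\alpha-1}\lambda$ from \emph{conservation of mass and energy} (Lemma~\ref{le:mass-ener}), using \eqref{eq:ma}, \eqref{mass:W:b} for mass and the integrated version of \eqref{eq:en} for energy. This is not a final cosmetic step: the resulting $\|\varepsilon\|_{L^\infty}\lesssim\lambda^{1/2}$ is used throughout the energy-virial differential inequality (your Step~1) to kill the higher-order nonlinear terms such as $\int|W_b|^3|\varepsilon|^3\varphi_B'$ and $\int\varepsilon^6\varphi_B'$. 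So the global $H^1$ control must be established \emph{before} (and fed into) Step~1, not after.

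\textbf{The constant hierarchy is inverted.} The source $\Psi_W$ in the $\varepsilon$-equation carries an explicit factor $C_1^\star$ (see \eqref{eq:PW}), so the energy-virial estimate cannot produce a bound independent of $C_1^\star$; hence $C_3^\star$ cannot be fixed before $C_1^\star$. The paper's order is: first close $(\tau,\sigma,\lambda)$ and fix $C_1^\star$ independently of $C_2^\star,C_3^\star$ (Lemma~\ref{le:pB}; only the crude bound $|\vec m|\lesssim|s|^{-5/2}$ is needed, in which the bootstrap constants are swallowed by taking $|s_0|$ large), then close $b$ and fix $C_2^\star$ (the $C_3^\star$-dependent contribution to $b$ carries an extra factor $|s|^{-\frac13(1-1/\alpha)}$ and is absorbed), and \emph{only then} run the energy-virial argument with $C_1^\star,C_2^\star$ already fixed, obtaining $[\cN_B(\varepsilon)]^2\lesssim C_3^\star|s|^{-6}(\log|s|)^2$, which closes for $C_3^\star$ large via the square root. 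Incidentally, $\lambda_s/\lambda\approx -\beta\approx(2\alpha+1)(2\alpha)^{-1}|s|^{-1}>0$, not $<0$; it is precisely this positive sign that makes the scaling contribution $-\tfrac{\lambda_s}{\lambda}\cdot 2\int\varphi_B\varepsilon^2\le 0$ in the functional favorable.
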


Proposition~\ref{prop:bootstrap} is proved in the rest of Section~\ref{S:5} and in Section~\ref{S:6}.
We work on the time interval $\cI=[S_n,S_n^{\star}]$ where the bootstrap estimates~\eqref{eq:H1b}, \eqref{eq:1n} and~\eqref{eq:5n} hold.
In every step of the proof, the value of $|s_0|$ will be taken large enough, depending on the constants $\delta$, $C_1^\star$, $C_2^\star$ $C_3^\star$
and~$B$, but independent of $n$.
For simplicity of notation, we drop the $n$ in index of the functions $(W_{b,n},\varepsilon_n,\lambda_n,\sigma_n,b_n)$.

\subsection{Closing the parameter estimates}
The next lemma follows from inserting \eqref{eq:1n}--\eqref{eq:5n}
into the estimates \eqref{est:lambda:sigma} and \eqref{est:b_s}, using also \eqref{eL2B}.
\begin{lemma}\label{le:cB}
For all $s\in\cI$,
\begin{align}
 |\vec m |= \left| \frac{\lambda_s}{\lambda} + \beta + b \right|+\left| \frac{\sigma_s}{\lambda} - 1 \right|
 &\lesssimD \|\varepsilon\|_{L^2_\loc} +(C_1^\star+C_2^\star)|s|^{-4}\log |s| , \label{eq:1c}\\
 \left| b_s + \frac{2\alpha+1}{\alpha} s^{-1} b \right| & \lesssimD |s|^{-1}\|\varepsilon\|_{L^2_\loc} + C_1^\star|s|^{-4}\log |s|. \label{eq:2c} 
\end{align}
\end{lemma}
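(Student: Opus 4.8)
The plan is to obtain \eqref{eq:1c} and \eqref{eq:2c} by direct substitution: feed the bootstrap bounds \eqref{eq:1n}--\eqref{eq:5n} into the modulation estimates \eqref{est:lambda:sigma} and \eqref{est:b_s}, and then choose $|s_0|$ large enough (depending on $\delta$, $C_1^\star$, $C_2^\star$, $C_3^\star$ and $B$, but \emph{not} on $n$) so that every term which is quadratic in the small quantities is absorbed into the leading error term $|s|^{-4}\log|s|$. All the estimates below hold on $\cI=[S_n,S_n^\star]$ and are uniform in $n$, so such a threshold $|s_0|$ can indeed be fixed once and for all.

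For \eqref{eq:1c}, the right-hand side of \eqref{est:lambda:sigma} is $\|\varepsilon\|_{L^2_\loc}+C_1^\star|s|^{-4}\log|s|+|b||s|^{-1}+b^2$. From \eqref{eq:4n} one gets $|b||s|^{-1}\le C_2^\star|s|^{-4}\log|s|$ and $b^2\le (C_2^\star)^2|s|^{-6}(\log|s|)^2$, the latter being $\le C_2^\star|s|^{-4}\log|s|$ once $|s_0|$ is large. Summing these contributions yields exactly \eqref{eq:1c}; one may, if convenient, also invoke \eqref{eL2B} to rewrite $\|\varepsilon\|_{L^2_\loc}$ in terms of $\cN_B(\varepsilon)$, but this is not needed.

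For \eqref{eq:2c}, the first point is a matter of sign bookkeeping: since $c_1=-(2\alpha+1)$ one has $2c_1(2\alpha)^{-1}=-(2\alpha+1)/\alpha$, and since $s<0$ one has $|s|^{-1}=-s^{-1}$, so the main term $2c_1b(2\alpha)^{-1}|s|^{-1}$ appearing in \eqref{est:b_s} is precisely $\frac{2\alpha+1}{\alpha}s^{-1}b$. It then remains to bound the error terms of \eqref{est:b_s}, namely $\|\varepsilon\|_{L^2_\loc}^2+\|\varepsilon\|_{L^2_\loc}(|s|^{-1}+|b|)+C_1^\star|s|^{-4}\log|s|+C_1^\star|b||s|^{-2}\log|s|+b^2$. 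Using \eqref{eq:5n} together with \eqref{eL2B}, $\|\varepsilon\|_{L^2_\loc}\lesssim C_3^\star|s|^{-3}\log|s|\le |s|^{-1}$ for $|s_0|$ large, hence $\|\varepsilon\|_{L^2_\loc}^2\le |s|^{-1}\|\varepsilon\|_{L^2_\loc}$; likewise \eqref{eq:4n} gives $|b|\le |s|^{-1}$, so $\|\varepsilon\|_{L^2_\loc}|b|\le |s|^{-1}\|\varepsilon\|_{L^2_\loc}$. Finally $C_1^\star|b||s|^{-2}\log|s|+b^2\le C_1^\star C_2^\star|s|^{-5}(\log|s|)^2+(C_2^\star)^2|s|^{-6}(\log|s|)^2\le C_1^\star|s|^{-4}\log|s|$ for $|s_0|$ large. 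Collecting everything gives \eqref{eq:2c}.

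There is no genuine obstacle here: the statement is purely mechanical once the modulation estimates of the previous section are in hand. The only mild subtleties are (i) verifying that the coefficient in front of $s^{-1}b$ comes out as $\frac{2\alpha+1}{\alpha}$, which is the sign computation above, and (ii) ensuring — as already noted — that the value of $|s_0|$ needed to absorb the quadratic terms is independent of $n$.
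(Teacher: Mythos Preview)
Your proof is correct and follows exactly the approach indicated in the paper, which simply states that the lemma follows from inserting \eqref{eq:1n}--\eqref{eq:5n} into \eqref{est:lambda:sigma} and \eqref{est:b_s}, using also \eqref{eL2B}. You have spelled out the absorption of each quadratic term and the sign computation for the coefficient of $s^{-1}b$ in more detail than the paper does, but the argument is the same.
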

We strictly improve the bootstrap estimates \eqref{eq:1n}-\eqref{eq:4n} on the parameters. 
\begin{lemma} \label{le:pB}
There exists $C_1^\star>1$, independent of $C_3^\star$
such that for all $s \in \cI$,
\begin{align}
\left|\tau(s)-\kappa_\tau |s|^{-\frac{4\alpha+3}{2\alpha}}\right|
&\leq 
\frac{C_1^\star}2  |s|^{-\frac{4\alpha+3}{2\alpha}-1}\log|s|,\label{eq:1o}
\\
\left|\sigma(s)-\kappa_\sigma  |s|^{-\frac1{2\alpha}}\right|
&\leq \frac{C_1^\star}2 |s|^{-\frac1{2\alpha}-1}\log|s|,\label{eq:2o}\\
\left|\lambda(s)-\kappa_\lambda  |s|^{-\frac{2\alpha+1}{2\alpha}}\right|
&\leq \frac{C_1^\star}2 |s|^{-\frac{2\alpha+1}{2\alpha}-1}\log|s|\label{eq:3o}\\
|b(s)|
&\leq \frac{C_2^\star}2 |s|^{-3} \log|s|.\label{eq:4o}
\end{align}
\end{lemma}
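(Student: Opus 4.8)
The plan is to upgrade each bootstrap bound by integrating, from the initial time $s=S_n$, the (approximate) differential equation governing the corresponding parameter, using that the relevant quantities are pinned by the construction of Subsection~\ref{S:5.1}. The inputs are: the modulation estimates of Lemma~\ref{le:cB}, valid on $\cI=[S_n,S_n^\star]$ since the bootstrap hypotheses hold there; the control $\|\varepsilon(s)\|_{L^2_\loc}\lesssim\cN_B(\varepsilon(s))\le C_3^\star|s|^{-3}\log|s|$ together with the weighted space-time bound in~\eqref{eq:5n}; and the algebraic analysis of Subsection~\ref{S:310} (the relations $\lambda^{1/2}\approx\sigma^{\alpha+1/2}$, $\sigma^{-2\alpha}\approx 2\alpha|s|$, $\lambda\approx\sigma^{2\alpha+1}$, $\tau_s=\lambda^3$). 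The constants are then chosen in the order $C_1^\star$ (and $B$), then $C_3^\star$, then $C_2^\star$.

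First I would close the estimate on $b$. By~\eqref{eq:2c}, $b$ solves the linear equation $b_s+\tfrac{2\alpha+1}\alpha s^{-1}b=O_\delta\bigl(|s|^{-1}\|\varepsilon\|_{L^2_\loc}+C_1^\star|s|^{-4}\log|s|\bigr)$ with $b(S_n)=0$. Using the integrating factor $|s|^{\frac{2\alpha+1}\alpha}$ and integrating from $S_n$ to $s$,
\[
|b(s)|\ \le\ |s|^{-\frac{2\alpha+1}\alpha}\int_{S_n}^{s}|\tilde s|^{\frac{2\alpha+1}\alpha}\Bigl(C_\delta|\tilde s|^{-1}\|\varepsilon(\tilde s)\|_{L^2_\loc}+C_\delta C_1^\star|\tilde s|^{-4}\log|\tilde s|\Bigr)\,d\tilde s.
\]
Since $\alpha>1$ we have $\tfrac{2\alpha+1}\alpha-4=\tfrac1\alpha-2<-1$, so the second term integrates to $\lesssim_\delta C_1^\star|s|^{-3}\log|s|$; for the first term one inserts $\|\varepsilon(\tilde s)\|_{L^2_\loc}\lesssim C_3^\star|\tilde s|^{-3}\log|\tilde s|$ (or, equivalently, applies Cauchy--Schwarz against the weighted bound in~\eqref{eq:5n}), again using $\tfrac1\alpha-2<-1$, obtaining $\lesssim_\delta C_3^\star|s|^{-3}\log|s|$. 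Hence $|b(s)|\le C_\delta(C_1^\star+C_3^\star)|s|^{-3}\log|s|\le\tfrac{C_2^\star}2|s|^{-3}\log|s|$ once $C_2^\star$ is large relative to $C_1^\star$ and $C_3^\star$.

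Next I would close the estimates on $\sigma$, $\lambda$, $\tau$. Following Subsection~\ref{S:310}, the combination $\lambda^{1/2}-\sigma^{\alpha+1/2}$, corrected by a higher-order $\sigma$-power term (precisely the combination pinned to $0$ at $s=S_n$ in Subsection~\ref{S:5.1}), satisfies a differential equation whose right-hand side is integrable in $s$ and of size $O_\delta\bigl((1+C_3^\star)|s|^{-4-\frac1{2\alpha}}\log|s|\bigr)$ once $|b|$ and $|\vec m|$ are controlled by the bootstrap; integration from $S_n$ gives $\lambda^{1/2}=\sigma^{\alpha+1/2}\bigl(1+O_\delta(|s|^{-1}\log|s|)\bigr)$. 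Substituting this, the combination $\tfrac1{2\alpha}\sigma^{-2\alpha}-c_2\log\sigma$ (again pinned at $S_n$) satisfies $\tfrac{d}{ds}\bigl(\tfrac1{2\alpha}\sigma^{-2\alpha}-c_2\log\sigma\bigr)=-1+O_\delta\bigl((1+C_3^\star)|s|^{-2}\log|s|\bigr)$, so integrating yields $\sigma^{-2\alpha}(s)=2\alpha|s|+O_\delta(\log|s|)$, which is~\eqref{eq:2o}; then~\eqref{eq:3o} follows from $\lambda=\sigma^{2\alpha+1}\bigl(1+O_\delta(|s|^{-1}\log|s|)\bigr)$, and~\eqref{eq:1o} from integrating $\tau_s=\lambda^3$ from $\tau(S_n)=\kappa_\tau|S_n|^{-\frac{4\alpha+3}{2\alpha}}$, using $\tfrac{6\alpha+3}{2\alpha}>1$ and the expansion of $\lambda$ just obtained. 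In all three steps the terms carrying $C_1^\star$, $C_2^\star$ or $C_3^\star$ have strictly more decay than the target errors, so they are absorbed by taking $|s_0|$ large; in particular the ``large enough'' part of $C_1^\star$ depends only on $\delta$ and absolute constants, independently of $C_3^\star$, as claimed.

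The main obstacle is the bookkeeping of constants together with the orientation of the dynamics. The equation for $b$ has an \emph{unstable} homogeneous mode in forward rescaled time, so that the forcing is amplified, not damped, as $s$ increases; with zero data at the far endpoint $S_n=-n$ one must check that the resulting bound is $O(|s|^{-3}\log|s|)$ uniformly in $n$, which is exactly where the hypothesis $\alpha>1$ (via $\tfrac1\alpha-2<-1$) and the specific weight $(\tilde s/s)^{\frac{2\alpha+1}\alpha}$ in~\eqref{eq:5n} are used. For $\sigma$, $\lambda$, $\tau$ the delicate point is to pick the corrected quantities so that, after using the initialization at $T_n$, no term of the form (constant in $s$)$\times\log n$ survives in the integrated expression — this is the purpose of the logarithmic and power corrections built into those quantities — and to verify that the remaining forcing is genuinely integrable, so that the parameters stay within the bootstrap region with the improved constants.
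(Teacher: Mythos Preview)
Your treatment of $\sigma$, $\lambda$, $\tau$ is essentially the paper's: the same corrected quantity $g=\lambda^{1/2}-\sigma^{\alpha+1/2}+\tfrac{c_2}{2}\tfrac{1}{3\alpha+1/2}\sigma^{3\alpha+1/2}$, the same integration of $\sigma^{-2\alpha}$ with the $c_2\log\sigma$ correction, and then $\tau$ from $\tau_s=\lambda^3$. Fine.

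The gap is in the $b$ step, specifically in the dependence of constants. Inserting the pointwise bound $\|\varepsilon(\tilde s)\|_{L^2_\loc}\lesssim C_3^\star|\tilde s|^{-3}\log|\tilde s|$ into the integrated equation for $b$ gives exactly $|b(s)|\lesssim_\delta C_3^\star|s|^{-3}\log|s|$, with \emph{no} extra decay attached to $C_3^\star$. You then set $C_2^\star\gtrsim_\delta C_3^\star$, and you announce the order $C_1^\star$, then $C_3^\star$, then $C_2^\star$. But this order is incompatible with the energy step (Section~\ref{S:6}): there one needs $\cN_B(\Psi_b)\lesssim_\delta |s|^{-4}\log|s|$ with the implicit constant \emph{independent of $C_3^\star$} (the paper explicitly uses that $C_2^\star$ ``has already been fixed''), otherwise the bound in Lemma~\ref{lemma:eps_loc:BS} becomes $\cN_B(\varepsilon)\lesssim_\delta\sqrt{C_2^\star C_3^\star}\,|s|^{-3}\log|s|$ and the two conditions $C_2^\star\gtrsim_\delta C_3^\star$ and $C_3^\star\gtrsim_\delta C_2^\star$ are circular. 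Your parenthetical alternative ``Cauchy--Schwarz against the weighted bound in~\eqref{eq:5n}'' does not help: the leftover factor $\bigl(\int_{S_n}^s|\tilde s|^{\frac{1}{\alpha}-1}\,d\tilde s\bigr)^{1/2}\sim |S_n|^{1/(2\alpha)}$ is not uniform in $n$, since $\tfrac1\alpha-1>-1$.

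The paper breaks this circularity by interpolating: write $\|\varepsilon\|_{L^2_\loc}\le \cN_B(\varepsilon)^{1-2d}\|\varepsilon\|_{L^2_\loc}^{2d}$ with $d=\tfrac{\alpha-1}{3\alpha}\in(0,\tfrac13)$, then apply H\"older with exponents $\tfrac1{1-d},\tfrac1d$. The point of this choice is that the first factor becomes $\int|\tilde s|^{-1-\frac{\alpha-1}{\alpha(2\alpha+1)}}(\log|\tilde s|)^{c}\,d\tilde s$, genuinely integrable since $\alpha>1$, while the second factor is exactly the weighted quantity $\int|\tilde s|^{\frac{2\alpha+1}{\alpha}}\|\varepsilon\|_{L^2_\loc}^2$ controlled by~\eqref{eq:5n}. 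The outcome is
\[
|b(s)|\lesssim_\delta C_3^\star\,|s|^{-3-\frac13(1-\frac1\alpha)}(\log|s|)^{\frac{3\alpha+1}{3\alpha}}+|s|^{-3}\log|s|,
\]
so the $C_3^\star$-dependent piece carries strictly faster decay and is absorbed by taking $|s_0|$ large; $C_2^\star$ can then be fixed from the second term alone, independently of $C_3^\star$. This interpolation is the missing idea.
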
 
\begin{proof}
\emph{Closing the estimates for $\sigma$ and $\lambda$.}
Note that \eqref{eq:1c} and \eqref{eq:4n}--\eqref{eq:5n} implies the rougher estimates
\begin{equation}\label{eq:1d}
\left| \frac{\lambda_s}{\lambda} + c_1 \lambda^\frac12 \sigma^{\alpha-\frac12}
+ c_2 \lambda \sigma^{2\alpha-1}\right|
+ \left| \frac{\sigma_s}{\lambda} -1 \right|\lesssim |s|^{-\frac 52},
\end{equation}
where the bootstrap constants $C_2^\star$ and $C_3^\star$ do not appear anymore.
(This is obtained by taking $S_0$ large depending on the bootstrap constants.)
We set 
\[
g(s) = \lambda^\frac12 (s) - \sigma^{\alpha+\frac 12}(s) + \frac{c_2}{2} \frac1{3\alpha+\frac12} \sigma^{3\alpha+\frac12}.
\]
Then, using \eqref{eq:1d}, the expression of $c_1=-(2\alpha+1)$
and $\lambda^\frac12 = \sigma^{\alpha+\frac 12} + O(|s|^{-\frac 12-\frac{2\alpha+1}{4\alpha}})$
(from \eqref{eq:2n}-\eqref{eq:3n}),
\begin{align*}
g_s & = \frac 12 \lambda_s \lambda^{-\frac12} - \left( \alpha+\frac12\right) \sigma_s \sigma^{\alpha-\frac12}
+\frac{c_2}{2} \sigma_s \sigma^{3\alpha-\frac 12}\\
& = -\frac {c_1}2\lambda \sigma^{\alpha-\frac 12}- \left(\alpha+\frac 12\right) \lambda \sigma^{\alpha-\frac12}  -\frac {c_2}2(\lambda^\frac32 \sigma^{2\alpha-1} -\lambda \sigma^{3\alpha-\frac12})
+O(|s|^{-\frac52 -\frac{2\alpha+1}{4\alpha}})
\\
& = O(|s|^{-\frac 52 -\frac{2\alpha+1}{4\alpha}}).
\end{align*}
Thus, integrating over $(S,s)$, using (see beginning of \S \ref{S:5.1}
for the choice of $\lambda(S)$ and $\sigma(S)$)
\begin{equation}\label{eq:dg}
g(S)=0,
\end{equation}
we find $|g(s)|\lesssim |s|^{-\frac32 -\frac{2\alpha+1}{4\alpha}}$.
In particular,
\begin{equation}\label{eq:sl}
\lambda = \sigma^{2\alpha+1} -\frac{c_2}{3\alpha+\frac12} \sigma^{4\alpha+1}+ O (|s|^{-\frac 32-\frac{2\alpha+1}{2\alpha}}) ,\quad
\frac1\lambda= \sigma^{-2\alpha-1} +\frac{c_2}{3\alpha+\frac12} \sigma^{-1}+ O(|s|^{-\frac 32+\frac{2\alpha+1}{2\alpha}}).
\end{equation}
Inserting this into \eqref{eq:1d}, we find
\[
\left|  \sigma_s(\sigma^{-2\alpha-1}+\frac{c_2}{3\alpha+\frac12} \sigma^{-1}) -1 \right | 
=\left| \frac1{2\alpha} (\sigma^{-2\alpha})_s - \frac{c_2}{3\alpha+\frac12} (\log \sigma)_s +1 \right|
\lesssim |s|^{-\frac 32}.
\]
Integrating over $(S,s)$ and recalling the choice $\sigma(S)$ such that 
\begin{equation}\label{eq:cs}
\frac 1{2\alpha} \sigma^{-2\alpha}(S) - \frac{c_2}{3\alpha+\frac12} \log \sigma (S) +S=0 ,
\end{equation}
we get
\[
\left|\frac 1{2\alpha} \sigma^{-2\alpha} - \frac{c_2}{3\alpha+\frac12} \log \sigma  + s  \right| \lesssim  |s|^{-\frac12}.
\]
Thus,
$\left|\frac 1{2\alpha} \sigma^{-2\alpha}  + s  \right| \lesssim  \log|s|$, and so
\[
\left|\sigma  - \left(2 \alpha |s|\right)^{-\frac 1{2\alpha}}\right|
\lesssim |s|^{-\frac1{2\alpha}-1} \log|s|,
\]
which implies \eqref{eq:2o} for $C_1^\star$ sufficiently large.
Moreover, \eqref{eq:sl} now implies 
\begin{equation}\label{eq:99}
\left|\lambda -\kappa_\lambda  |s|^{-\frac{2\alpha+1}{2\alpha}}\right|
\leq |s|^{-\frac{2\alpha+1}{2\alpha}-1}\log|s|
\end{equation}
and thus \eqref{eq:3o} holds for $C_1^\star$ sufficiently large.
Note that from the above estimates, one could derive a higher order expansion of the parameters.

\noindent \emph{Closing the estimate for $\tau$.}
We use 
\[
\frac{d\tau}{ds} = \lambda^3
\]
and \eqref{eq:99}, to obtain
\[
\frac{d\tau}{ds} = \kappa_\lambda^3 |s|^{-\frac{6\alpha+3}{2\alpha}}
+O\big(|s|^{-\frac{6\alpha+3}{2\alpha}-1} \log|s|\big).
\]
By integration on $(S,s)$, using $\tau(S)=T=\kappa_\tau |S|^{-\frac{4\alpha+3}{2\alpha}}$,
we obtain
\[
\tau(s) = \kappa_\tau |s|^{-\frac{4\alpha+3}{2\alpha}}
+O\big(|s|^{-\frac{4\alpha+3}{2\alpha}-1} \log|s|\big).
\]

Now, the constant $C_1^\star$ is fixed sufficiently large.

\noindent \emph{Closing the estimate for $b$.}
From \eqref{eq:2c} (recall that the bootstrap constant $C_1^\star$ is fixed, and we do not mention it
anymore), it holds
\[
|(s^{\frac{2\alpha+1}{\alpha}}b)_s| \lesssim |s|^{-1+\frac{2\alpha+1}{\alpha}} \|\varepsilon\|_{L^2_\loc}
+|s|^{-4+\frac{2\alpha+1}{\alpha}} \log |s|.
\]
Thus, integrating on $(S,s)$, using $b(S)=0$ and multiplying by $|s|^{-\frac{2\alpha+1}{\alpha}}$,
we obtain
\[
|b(s)|\lesssimD
|s|^{-\frac{2\alpha+1}{\alpha}}\int_{S}^s |\tilde{s}|^{-1+\frac{2\alpha+1}{\alpha}} \|\varepsilon(\tilde{s})\|_{L^2_\loc} d\tilde{s}
+|s|^{-3} \log |s|.
\]
We set $d=\frac{\alpha-1}{3\alpha}$. We have, using \eqref{eq:5n},
\begin{align*}
|\tilde{s}|^{-1+\frac{2\alpha+1}{\alpha}} \|\varepsilon(\tilde{s})\|_{L^2_\loc}
&\lesssim |\tilde{s}|^{\frac{\alpha+1}{\alpha}} 
\cN_B(\varepsilon(\tilde{s}))^{1-2d}\|\varepsilon(\tilde{s})\|_{L^2_\loc}^{2d}\\
&\lesssim (C_3^\star)^{1-2d} |\tilde{s}|^{-3(1-2d)} (\log|\tilde{s}|)^{1-2d} 
|\tilde{s}|^{\frac{\alpha+1}{\alpha}}  \|\varepsilon(\tilde{s})\|_{L^2_\loc}^{2d}\\
&\lesssim (C_3^\star)^{1-2d}  (\log|\tilde{s}|)^{1-2d} 
|\tilde{s}|^{\frac{1-2\alpha-d +4\alpha d}{\alpha}}
\left( |\tilde{s}|^{\frac{2\alpha+1}{\alpha}}\|\varepsilon(\tilde{s})\|_{L^2_\loc}^2\right)^{d}
\end{align*}
Thus, by the Hölder inequality,
\begin{align*}
&\int_{S}^s |\tilde{s}|^{-1+\frac{2\alpha+1}{\alpha}} \|\varepsilon(\tilde{s})\|_{L^2_\loc} d\tau\\
&\quad \lesssim 
(C_3^\star)^{1-2d}   \left(\int_{S}^s (\log|\tilde{s}|)^{\frac{1-2d}{1-d}}|\tilde{s}|^{\frac{1-2\alpha-d +4\alpha d}{\alpha(1-d)}} d\tilde{s} \right)^{1-d}
\left( \int_{S}^s  |\tilde{s}|^{\frac{2\alpha+1}{\alpha}}\|\varepsilon(\tilde{s})\|_{L^2_\loc}^2 d\tilde{s} \right)^d.
\end{align*}
We compute
\[
\frac{1-2\alpha-d +4\alpha d}{\alpha(1-d)}
=-1-\frac{\alpha-1}{\alpha(2\alpha+1)}.
\]
Therefore, using \eqref{eq:5n},
\begin{align*}
 \int_{S}^s |\tilde{s}|^{-1+\frac{2\alpha+1}{\alpha}} \|\varepsilon(\tilde{s})\|_{L^2_\loc} d\tau 
&\  \lesssim 
(C_3^\star)^{1-2d} (\log|s|)^{\frac{\alpha+2}{3\alpha}}|s|^{-\frac{\alpha-1}{3\alpha^2}}  
\left( \int_{S}^s |\tilde{s}|^{\frac{2\alpha+1}{\alpha}}\|\varepsilon(\tilde{s})\|_{L^2_\loc}^2 d\tau \right)^{\frac{\alpha-1}{3\alpha}} \\
& \lesssim 
C_3^\star(\log|s|)^{\frac{3\alpha+1}{3\alpha}}
|s|^{\frac{-4(\alpha-1)}{3\alpha}}  .
\end{align*}
Finally, since $\alpha>1$,
\[
|b(s)|\lesssimD C_3^\star (\log|s|)^{\frac{3\alpha+1}{3\alpha}}
|s|^{-3-\frac13(1-\frac1{\alpha}))} 
+|s|^{-3} \log |s| \lesssimD |s|^{-3} \log |s|.
\]
Thus, we have proved \eqref{eq:4o} by taking the bootstrap constant $C_2^\star$ sufficiently large. 
\end{proof}

\subsection{Estimate of global norms from conservation of mass and energy}

\begin{lemma}\label{le:mass-ener}
For any $s \in \cI$,
\begin{equation}\label{GN:BS}
\|\varepsilon(s)\|_{L^2} \lesssim \delta^\alpha ,\quad
\|\partial_y \varepsilon(s)\|_{L^2}\lesssim \delta^{\alpha-1} \lambda(s), \quad
\|\varepsilon(s)\|_{L^{\infty}}\lesssim \delta^{\alpha-\frac12}\lambda^\frac12(s) .
\end{equation}
\end{lemma}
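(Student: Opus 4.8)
The plan is to derive all three bounds from the two conservation laws satisfied by $u_n$, working on $\cI=[S_n,S_n^\star]$ where the bootstrap assumptions \eqref{eq:H1b} and \eqref{eq:1n}--\eqref{eq:5n}, hence the modulation estimates of Lemma \ref{le:cB}, are available. First, for $\|\varepsilon\|_{L^2}$, I would use that $M(u_n)=\int w^2$ is conserved and equal, at $s=S_n$, to $\int W(S_n)^2$ since there $b=0$ and $\varepsilon\equiv 0$; hence
\[
\int\varepsilon^2=\int W(S_n)^2-\int W_b^2-2\int W_b\varepsilon .
\]
By \eqref{eq:ma} and \eqref{mass:W:b}, together with $|b|^{1-\gamma}\lesssim(C_2^\star|s_0|^{-3}\log|s_0|)^{1/4}$ taken small, the first two terms add up to $O(\delta^{2\alpha})$. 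For the cross term I would replace $W_b$ by $W_b-Q$ using the orthogonality $(\varepsilon,Q)=0$ from \eqref{eq:or}, then bound $\|W_b-Q\|_{L^2}\lesssim\delta^\alpha$ using \eqref{est:V:H1}, \eqref{def:W:b}, and the fact that $V_0-Q=Q(\zeta-1)$ is exponentially small; this gives $|\int W_b\varepsilon|\lesssim\delta^\alpha\|\varepsilon\|_{L^2}$. Thus $\|\varepsilon\|_{L^2}^2\leq C\delta^{2\alpha}+C\delta^\alpha\|\varepsilon\|_{L^2}$, and since $\|\varepsilon\|_{L^2}$ is finite by \eqref{eq:H1b}, an absorption argument yields $\|\varepsilon\|_{L^2}\lesssim\delta^\alpha$.

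For $\|\partial_y\varepsilon\|_{L^2}$ I would use conservation of energy in the form $E(u_n)=\lambda^{-2}E(w)$, so that $E(w(s))=\tfrac{\lambda^2(s)}{\lambda^2(S_n)}E(W(S_n))$. Rather than estimating $E(W(S_n))$ directly (it is of the same size as $E(W(s))$ and not small enough on its own), the key is to compare $E(w)$ with $E(W)$ through the variation estimate \eqref{eq:en}: writing
\[
\frac{E(w(s))-E(W(s))}{\lambda^2(s)}=-\int_{S_n}^s\frac{d}{ds'}\Big[\frac{E(W)}{\lambda^2}\Big]\,ds' ,
\]
and inserting \eqref{eq:en} together with Lemma \ref{le:cB} and \eqref{eq:4n}--\eqref{eq:5n}, the integrand is $\lesssimD|s'|^{-2+1/\alpha}\log|s'|$ up to the bootstrap constants, which is integrable since $\alpha>1$; this gives $|E(w(s))-E(W(s))|\lesssimD|s|^{-3}\log|s|$, the bulk value of $E(W)$ cancelling out. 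Combined with $|E(W_b)-E(W)|\lesssimD|b|$ from \eqref{energy:W:b}, this yields $|E(w)-E(W_b)|\lesssimD|s|^{-3}\log|s|$, which is $\lesssimD\delta^{2(\alpha-1)}\lambda^2$ once $|s_0|$ is large enough since $\lambda^2\sim|s|^{-(2\alpha+1)/\alpha}$.

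Then I would Taylor-expand the energy around $W_b$,
\[
\tfrac12\int(\partial_y\varepsilon)^2=(E(w)-E(W_b))-\big(-\partial_y^2W_b-W_b^5,\varepsilon\big)+\tfrac52\int W_b^4\varepsilon^2+\tfrac16\int\big[(W_b+\varepsilon)^6-W_b^6-6W_b^5\varepsilon-15W_b^4\varepsilon^2\big],
\]
and estimate the right-hand side. In the linear term I replace $-\partial_y^2W_b-W_b^5$ by $-\partial_y^2W_b-W_b^5+Q$ thanks to $(\varepsilon,Q)=0$ and $-Q''-Q^5=-Q$ (from \eqref{eq:Q5}); an estimate of the type \eqref{eq:23}, updated to include the $bP_b$ correction via \eqref{def:Psi:b}, shows that this remainder is, up to lower-order tails, exponentially localized of size $O(|s|^{-1})$, so pairing with $\varepsilon$ and using $\|\varepsilon\|_{L^2_\loc}\lesssim\cN_B(\varepsilon)\leq C_3^\star|s|^{-3}\log|s|$ on the localized part and $\|\varepsilon\|_{L^2}\lesssim\delta^\alpha$ on the tails gives a bound $\lesssimD\delta^{2(\alpha-1)}\lambda^2$ for $|s_0|$ large. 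The quadratic and higher corrections are handled by $|W_b|^4\lesssim\omega\eta+\cdots$, so $\int W_b^4\varepsilon^2\lesssim\|\varepsilon\|_{L^2_\loc}^2$, and by $\int|\varepsilon|^6\lesssim\|\varepsilon\|_{L^\infty}^4\|\varepsilon\|_{L^2}^2$ together with the $H^1$ bootstrap; all are again $\lesssimD\delta^{2(\alpha-1)}\lambda^2$. This gives $\|\partial_y\varepsilon\|_{L^2}\lesssim\delta^{\alpha-1}\lambda$, and then the last bound follows from the Agmon inequality $\|\varepsilon\|_{L^\infty}^2\lesssim\|\varepsilon\|_{L^2}\|\partial_y\varepsilon\|_{L^2}\lesssim\delta^{2\alpha-1}\lambda$.

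The main obstacle is the energy step. One must notice that although $E(W)$ is not comparable to $\lambda^2$, it drops out when passing from $E(w)$ to $E(W_b)$ through the variation estimate \eqref{eq:en}; and then the linear-in-$\varepsilon$ term must be estimated sharply, which is possible only because the orthogonality $(\varepsilon,Q)=0$ removes the $O(1)$ part of $-\partial_y^2W_b-W_b^5$ and because the remainder $-\partial_y^2W_b-W_b^5+Q$ is strongly localized, so that the favorable weighted bootstrap control $\|\varepsilon\|_{L^2_\loc}$ replaces the much weaker global $L^2$ control on the bulk of this term, with the remaining small tails absorbed using the $L^2$ bound of the first step and the freedom to take $|s_0|$ large.
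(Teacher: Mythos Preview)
Your overall strategy matches the paper's: mass conservation for $\|\varepsilon\|_{L^2}$, the variation estimate \eqref{eq:en} to reduce $E(w)-E(W_b)$ to something of order $\lambda^2\cdot o(1)$, and then a Taylor expansion of the energy with absorption. The mass step and the use of \eqref{eq:en} are correct.

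The gap is in your treatment of the linear term. You propose to bound $(-\partial_y^2 W_b - W_b^5 + Q,\varepsilon)$ via a pointwise estimate of the type \eqref{eq:23}, splitting into an exponentially localized $O(|s|^{-1})$ piece (paired with $\|\varepsilon\|_{L^2_\loc}$) and ``lower-order tails'' (paired with $\|\varepsilon\|_{L^2}\lesssim\delta^\alpha$). But the dominant non-localized piece of $-\partial_y^2 W_b - W_b^5 + Q$ is $V_1\partial_y^2\theta$, whose pointwise size on $|y|<|s|$ is only $|s|^{-3}$ (see \eqref{eq:lt}, Lemma~\ref{le:ww}). Pairing that with $\|\varepsilon\|_{L^2}$ via Cauchy--Schwarz over $|y|<|s|$ gives at best $|s|^{-5/2}\delta^\alpha$. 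Comparing with the target $\delta^{2\alpha-2}\lambda^2\sim\delta^{2\alpha-2}|s|^{-(2\alpha+1)/\alpha}$, the ratio is $\delta^{2-\alpha}|s|^{(2-\alpha)/(2\alpha)}$, which diverges as $|s|\to\infty$ for every $1<\alpha<2$. Since the bootstrap interval $[S_n,s_0]$ contains $|s|=n\to\infty$, no choice of $|s_0|$ can rescue this; the failure is at large $|s|$, not small. A similar obstruction arises on the far right region $y>|s|$ for $\alpha>5/2$.

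The fix --- and this is exactly what the paper does --- is \emph{not} to integrate by parts all the way to $\varepsilon$. One keeps the kinetic cross term in the form $\int\partial_y(W_b-Q)\,\partial_y\varepsilon$ and isolates the single non-localized contribution $\int V_1(\partial_y\theta)\,\partial_y\varepsilon$. Cauchy--Schwarz together with the global $H^1$ bound $\|\partial_y\theta\|_{L^2}\lesssim\delta^{\alpha-1}\lambda$ from \eqref{theta:H1} then gives $C\delta^{\alpha-1}\lambda\,\|\partial_y\varepsilon\|_{L^2}$, which is absorbed by Young's inequality into $\tfrac18\|\partial_y\varepsilon\|_{L^2}^2+C\delta^{2\alpha-2}\lambda^2$. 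This is precisely where the factor $\delta^{\alpha-1}\lambda$ in the statement originates, and it cannot be recovered from the pointwise tail bounds you invoke together with the crude $\|\varepsilon\|_{L^2}$ control.
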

\begin{proof} 
From $\varepsilon(S)\equiv 0$, $b(S)=0$ and $\int w^2(S)=\int w^2(s)$, it follows that
\begin{equation*}
 \int W^2(S)= \int w^2(s)=\int (W_b+\varepsilon)^2(s)=\int W_b^2(s)+2\int (W+bP_b)(s) \varepsilon(s)+\int \varepsilon^2(s) .
\end{equation*}
Using \eqref{eq:ma}, we have $\left| \int W^2(s)-\int W^2(S) \right| \lesssim \delta^{2\alpha}$.
Moreover, we deduce from the orthogonality relation $\int \varepsilon Q =0$,
 the Cauchy-Schwarz inequality and the estimates \eqref{eq:mg} and \eqref{def:P:b} that
\begin{equation*}
 \left| \int (W+bP_b) \varepsilon \right| \le \left( \|V\|_{L^2}+|b|\|P_b\|_{L^2}\right) \|\varepsilon \|_{L^2} \le \frac12 \int \varepsilon^2+C(\delta^{2\alpha}+|b|^{2-\gamma}) .
\end{equation*}
Combining these estimates with \eqref{mass:W:b} comparing $W$ and $W_b$, yields 
the control of the $L^2$ norm of $\varepsilon$ (recall that \eqref{wBTb}
control the size of $b$). 

\smallskip

The conservation of energy implies that 
$\lambda^{-2}(S)E(w(S))=\lambda^{-2}(s)E(w(s)$. Thus, by~\eqref{w:0}
\begin{equation*}
 \frac{E(W(S))}{\lambda^2(S)} = \frac{E(w(s))}{\lambda^2(s)}=
 \frac{E(W_b(s)+\varepsilon(s))}{\lambda^2(s)} .
\end{equation*}
Using the identity $\int(Q''+Q^5)\varepsilon=\int Q\varepsilon=0$,
\begin{equation*}
\begin{split}
E(W_b+\varepsilon)&=E(W_b)+\int (\partial_yV+b \partial_y P_b)(\partial_y\varepsilon) +\frac12 \int (\partial_y\varepsilon)^2 \\ & \quad -\int (W_b^5-Q^5)\varepsilon -\frac16 \int \left((W_b+\varepsilon)^6-W_b^6-6W_b^5\varepsilon \right) .
\end{split}
\end{equation*}
Using the Cauchy-Schwarz inequality and then \eqref{eq:mg.1},
\begin{equation*}
 \left|\int (\partial_y(V-V_1\theta))(\partial_y\varepsilon) \right|
 \leq \|\partial_y (V-V_1\theta)\|_{L^2}\|\partial_y\varepsilon\|_{L^2}
 \leq \frac18 \|\partial_y\varepsilon\|_{L^2}^2 + C|s|^{-3}.
\end{equation*}
Now, we estimate $\int \partial_y(V_1\theta)\partial_y\varepsilon$ as follows,
using $A_1\in \cZ_0$ and then \eqref{eq:5n} and \eqref{theta:H1} 
\begin{align*}
\left|\int \partial_y(V_1\theta)\partial_y\varepsilon\right|
& \lesssim \int \left| A_1'\theta\partial_y\varepsilon\right|
+\int \left| \partial_y(\theta)\partial_y\varepsilon\right|\\
& \lesssim |s|^{-1} \cN_B(\varepsilon) + \|\partial_y\theta\|_{L^2}  \|\partial_y\varepsilon\|_{L^2}\\
& \lesssim C_3^\star|s|^{-4} \log|s| + \delta^{\alpha-1} \lambda  \|\partial_y\varepsilon\|_{L^2}
\leq \frac 18 \int (\partial_y\varepsilon)^2 + \delta^{2\alpha-2}\lambda^2.
\end{align*}
Next, the Cauchy-Schwarz inequality and \eqref{def:P:b} yield 
\begin{equation*}
 \left|\int b(\partial_y P_b)(\partial_y\varepsilon) \right| \lesssim |b|\|\partial_y\varepsilon\|_{L^2}
 \le \frac18 \|\partial_y\varepsilon\|_{L^2}^2+C|b|^2 .
\end{equation*}
We observe from \eqref{eq:vv}, \eqref{def:P:b},
\begin{align*}
\left| \int (W_b^5-Q^5) \varepsilon \right|
&\lesssim \int Q^5 |\zeta^5-1||\varepsilon|+\int V_0^4 (|V|+|b||P_b|)|\varepsilon|+\int (|V|^5+|b|^5|P_b|^5)|\varepsilon|\\
& \lesssim (|s|^{-1}+|b|)\|\varepsilon\|_{L^2_\loc}
+(\|V\|_{L^\infty}^4\|V\|_{L^2} +|b|^{5-\frac{\gamma}2}) \|\varepsilon\|_{L^2}\\
& \lesssim |s|^{-1}\|\varepsilon\|_{L^2_\loc}
+((|s|^{-4}+\lambda^2)\delta^{\alpha} +|b|^{5-\frac{\gamma}2})\|\varepsilon\|_{L^2}\\
&
\lesssim |s|^{-1}\|\varepsilon\|_{L^2_\loc} + \delta^{2\alpha}\lambda^2 + b^2.
\end{align*}
Moreover, from \eqref{est:W0:b} and the Gagliardo-Nirenberg inequality \eqref{sharpGN},
\begin{align*}
\left| \int \left((W_b+\varepsilon)^6-W_b^6-6W_b^5\varepsilon \right)\right|
&\lesssim \int \left( W_b^4\varepsilon^2+\varepsilon^6 \right)\\
&\lesssim \|\varepsilon\|_{L^2_\loc}^2+(|s|^{-4}+\lambda^2+b^4)\|\varepsilon\|_{L^2}^2+\|\varepsilon\|_{L^2}^4\|\partial_y \varepsilon\|_{L^2}^2\\
&\lesssim \|\varepsilon\|_{L^2_\loc}^2+ \delta^{2\alpha} \lambda^2 +b^4+ \delta^{4\alpha} \|\partial_y \varepsilon\|_{L^2}^2.
\end{align*}
Gathering these estimates, for $\delta$ small enough, we deduce
\begin{align*}
 \|\partial_y \varepsilon\|_{L^2}^2 &\lesssim 
 \|\varepsilon\|_{L^2_\loc}^2+|s|^{-1}\|\varepsilon\|_{L^2_\loc}
 +\delta^{2\alpha-2}\lambda^2+b^2 +\lambda^2 \left| \frac{E(W_b)}{\lambda^2}-\frac{E(W(S))}{\lambda^2(S)}\right| .
\end{align*}
By \eqref{energy:W:b} and then \eqref{eq:B1}--\eqref{eq:B3} we have (recall $\gamma = 3/4$)
\[
|E(W_b)-E(W)|\lesssim |b| + |b|^{\frac 14}|s|^{-5}
\lesssim |b| + |s|^{-\frac {11}2}
\lesssim |b| +\delta^{2\alpha-2} \lambda^2.
\]
From \eqref{eq:en}, \eqref{eq:1c}, \eqref{eL2B} and \eqref{eq:2n}--\eqref{eq:5n}, we have
\[
\left| \frac{d}{ds} \left[ \frac{E(W)}{\lambda^2}\right]\right| \lesssim \frac 1{\lambda^2}
C_3^\star |s|^{-4}\log|s|
\lesssim C_3^\star |s|^{-2+\frac1\alpha}\log|s|.
\]
Thus, by integration over $(S,s)$,
\begin{equation}\label{est:asymp:energy:W}
\left| \frac{E(W(s))}{\lambda^2(s)}-\frac{E(W(S))}{\lambda^2(S)}\right| \lesssim  C_3^\star |s|^{-1+\frac 1\alpha}\log|s| \lesssim \delta^{2\alpha-2}.
\end{equation}
This yields, using also \eqref{eL2B} and \eqref{eq:2n}--\eqref{eq:5n},
\[
\|\partial_y\varepsilon\|_{L^2}^2 
\lesssim 
\|\varepsilon\|_{L^2_\loc}^2+|s|^{-1}\|\varepsilon\|_{L^2_\loc} 
+ \delta^{2\alpha-2 }\lambda^2 +|b|
\lesssim \delta^{2\alpha-2 }\lambda^2.
\]
The estimate $\|\varepsilon\|_{L^{\infty}}^2\lesssim \|\varepsilon\|_{L^2}\|\partial_y\varepsilon\|_{L^2}\lesssim \delta^{2\alpha-1} \lambda$
completes the proof of~\eqref{GN:BS}.
\end{proof}

Now, we use \eqref{eq:30} to improve the estimate of the $L^2$ norm of $\varepsilon$.
\begin{lemma} For any $s\in\cI$,
\begin{equation}\label{eq:EU}
\|\varepsilon\|_{L^2} \lesssim |s|^{-\frac12}.
\end{equation}
\end{lemma}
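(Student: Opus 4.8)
The plan is to redo the mass-conservation computation of Lemma~\ref{le:mass-ener}, but now feeding in the precise mass $M_0$ of the blowup profile provided by \eqref{eq:30} in place of the crude bound \eqref{eq:ma}. First I would use $\varepsilon(S)\equiv 0$, $b(S)=0$ and conservation of mass $\int w^2(s)=\int w^2(S)=\int W^2(S)$, and expand $w=W_b+\varepsilon$ to obtain
\[
\int\varepsilon^2 + 2\int(W+bP_b)\varepsilon = \int W^2(S)-\int W_b^2(s).
\]
Comparing $W_b$ with $W$ via \eqref{mass:W:b} together with $|b|\lesssim|s|^{-3}\log|s|$, and then invoking \eqref{eq:30} at both times $s$ and $S$ (using $|S|=|S_n|=n\ge|s|$ on the bootstrap interval), the right-hand side is $O(|s|^{-1})$, so it remains to control the cross term.

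Next I would simplify $\int(W+bP_b)\varepsilon$. The contribution of $bP_b$ is negligible, since $|b|\,\|P_b\|_{L^2}\,\|\varepsilon\|_{L^2}\lesssim|b|^{1-\gamma/2}\lesssim|s|^{-1}$ by \eqref{def:P:b} and \eqref{GN:BS}. Writing $W=Q\ZL+V$, the orthogonality $(\varepsilon,Q)=0$ and the exponential smallness of $Q(\ZL-1)$ on $\{y<-\frac12|s|\}$ reduce $\int W\varepsilon$ to $\int V\varepsilon$ up to $O(|s|^{-100})\|\varepsilon\|_{L^2}$; and by \eqref{eq:mg} one has $\|V-V_1\theta\|_{L^2}\lesssim|s|^{-3/2}$, so only $\int V_1\theta\,\varepsilon$ with $V_1\theta=c_1A_1\ZL\theta$ remains.

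The main point — and the only mildly delicate step — is that $V_1\theta$ does not decay as $y\to+\infty$ (it carries the rescaled residue $\Theta_0$), so a plain Cauchy--Schwarz estimate only yields the size $\delta^\alpha\|\varepsilon\|_{L^2}$, which is not enough. I would instead split $\int V_1\theta\,\varepsilon$ at $y=0$. On the range $-|s|\le y\le 0$ (recall $\ZL$ vanishes for $y<-|s|$), the bound $|\theta|\lesssim|s|^{-1}$ from \eqref{eq:lt} and the Cauchy--Schwarz inequality over an interval of length $|s|$ give
\[
\Bigl|\int_{-|s|\le y\le 0}V_1\theta\,\varepsilon\Bigr|\lesssim|s|^{-1}\int_{-|s|}^{0}|\varepsilon|\lesssim|s|^{-1/2}\|\varepsilon\|_{L^2}.
\]
On $y\ge 0$ one uses that $\varphi_B\ge 1$ there, so $\int_{y\ge0}\varepsilon^2\le\mathcal N_B(\varepsilon)^2$, together with $|A_1|\lesssim1$ and $\|\theta\|_{L^2}\lesssim\delta^\alpha$ from \eqref{theta:H1}, to get a contribution $\lesssim\delta^\alpha\,\mathcal N_B(\varepsilon)\lesssim C_3^\star|s|^{-3}\log|s|\lesssim|s|^{-1}$ by the bootstrap \eqref{eq:5n}.

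Collecting all of the above yields $\int\varepsilon^2\lesssim|s|^{-1}+|s|^{-1/2}\|\varepsilon\|_{L^2}$, and solving this quadratic inequality in $\|\varepsilon\|_{L^2}$ gives \eqref{eq:EU}. The hardest part is really just the bookkeeping ensuring that every error term — from comparing $W$, $W_b$ and $M_0$, and especially from the far region $y>0$ — is genuinely of order $|s|^{-1}$ rather than merely $\delta^\alpha$; the structural input that makes this possible is that on $[0,\infty)$ the weight $\varphi_B$ is bounded below, so the already-established smallness of $\mathcal N_B(\varepsilon)$ controls $\varepsilon$ there.
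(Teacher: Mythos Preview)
Your proposal is correct and follows essentially the same approach as the paper: use mass conservation together with the refined mass expansion \eqref{eq:30} (rather than the crude \eqref{eq:ma}) to get an $O(|s|^{-1})$ right-hand side, then control the cross term $\int(W+bP_b)\varepsilon$ by exploiting $(\varepsilon,Q)=0$, Cauchy--Schwarz over the interval $[-|s|,0]$, and the fact that $\varphi_B\ge 1$ on $\{y\ge 0\}$ so that $\|\varepsilon\|_{L^2(y\ge 0)}\le\cN_B(\varepsilon)$.

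The only organizational difference is that the paper bounds $|W|$ pointwise via \eqref{eq:w0} and treats the whole cross term at once, whereas you first peel off $bP_b$, $Q\ZL$ and $V-V_1\theta$ before focusing on $\int V_1\theta\,\varepsilon$. Your decomposition is slightly more explicit about why the $y\ge 0$ contribution is $O(\cN_B(\varepsilon))$ --- you pair $\|\theta\|_{L^2}\lesssim\delta^\alpha$ with $\|\varepsilon\|_{L^2(y\ge 0)}\le\cN_B(\varepsilon)$ directly --- whereas the paper's intermediate line bundles this into $\cN_B(\varepsilon)$ in one step. Both routes lead to the same inequality $\int\varepsilon^2\lesssim|s|^{-1}+|s|^{-1/2}\|\varepsilon\|_{L^2}$ and hence \eqref{eq:EU}.
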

\begin{proof}
By \eqref{eq:IN} and \eqref{eq:30}, we have
\[
\int w^2(S) = \int W^2(S) = M_0 + O(|S|^{-1}).
\]
By the conservation of mass $\int w^2(S)=\int w^2(s)$, we have
\[
\int w^2(s)=
M_0+O(|S|^{-1})
= \int (W_b+\varepsilon)^2(s)=\int W_b^2(s)+2\int (W+bP_b)(s) \varepsilon(s)+\int \varepsilon^2(s)
\]
Using \eqref{eq:w0} and then \eqref{eq:5n},
\begin{align*}
\int (|W|+|bP_b|)|\varepsilon| & \lesssim 
  \int (\omega + |s|^{-1}\eta + \lambda^\frac 12 \eta_R + |b||P_b|) |\varepsilon|\\
&\lesssim \cN_B(\varepsilon) + |s|^{-1}  \int_{y<0} \eta |\varepsilon|
\lesssim |s|^{-2}+|s|^{-\frac12}\|\varepsilon\|_{L^2}.
\end{align*}
Using \eqref{eq:30}, \eqref{mass:W:b} and \eqref{eq:4n} we have
\[
\int W_b^2(s) = M_0 + O(|s|^{-1}).
\]
The above estimates imply that
$\|\varepsilon\|_{L^2} \lesssim |s|^{-\frac12}$, which is \eqref{eq:EU}.
\end{proof}

\section{Local energy estimates}\label{S:6}
To complete the proof of Proposition \ref{prop:bootstrap},
we improve the bootstrap estimate~\eqref{eq:5n} using the following virial-energy functional 
\[
\cF = \frac{1}{\lambda^2}
\left[ \int (\partial_y\varepsilon)^2 \psi_B + \int \varepsilon^2 \varphi_B
- \frac13 \int \left( (W_b+\varepsilon)^6 - W_b^6 - 6 W_b ^5 \varepsilon\right) \psi_B\right].
\]

\subsection{Virial-energy estimates}

\begin{proposition}\label{pr:local-energy}
There exist $\mu_1,\mu_2>0$ and $B>100$ such that the following hold on $\cI$.
\begin{enumerate}
\item[{(i)}] \emph{Time variation of $\cF$.} 
\begin{equation}\label{time:local}
\frac{d\cF}{ds} + \frac{\mu_1} {\lambda^2} \int \left( (\partial_y\varepsilon)^2 + \varepsilon^2\right)\varphi_B'
\lesssimD C_3^\star|s|^{-7+\frac{2\alpha+1}{\alpha}}(\log|s|)^2 .
\end{equation}
\item[{(ii)}]  \emph{Coercivity of $\cF$.} 
\begin{equation}\label{coer:local}
\cF \geq \frac{\mu_2}{\lambda^2} \left[{\cN}_B(\varepsilon)\right]^2 .
\end{equation}
\end{enumerate}
\end{proposition}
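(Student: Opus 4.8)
\textbf{Plan of proof of Proposition \ref{pr:local-energy}.}
The two statements follow the now-standard virial--energy scheme of \cite{MMR1} adapted to the present defocusing backward-in-time setting, as in \cite{MP24}, so the plan is to reproduce that scheme with the error terms controlled by the bootstrap estimates \eqref{eq:H1b}, \eqref{eq:1n}--\eqref{eq:5n}, the parameter estimates of Lemma \ref{le:cB} (already improved to \eqref{eq:1d}), the modulation bound \eqref{est:lambda:sigma}--\eqref{est:b_s}, and the profile estimates of Proposition \ref{pr:bp} and the lemma in Section \ref{S:4}. I would first establish (ii), the coercivity, since it is the shorter part: expand $(W_b+\varepsilon)^6-W_b^6-6W_b^5\varepsilon = 15 W_b^4 \varepsilon^2 + O(|W_b|^3|\varepsilon|^3 + \varepsilon^6)$ and note that, by \eqref{est:W0:b} and the bounds $\|\varepsilon\|_{L^\infty}\lesssim \delta^{\alpha-1/2}\lambda^{1/2}$, $\|\varepsilon\|_{L^2}\lesssim |s|^{-1/2}$ from \eqref{GN:BS}--\eqref{eq:EU}, the cubic and higher terms are negligible, while $15 W_b^4 \psi_B = 15 Q^4 \psi_B + (\text{small})$. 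Then $\lambda^2 \cF = \int (\partial_y\varepsilon)^2\psi_B + \int \varepsilon^2\varphi_B - 5\int Q^4\varepsilon^2\psi_B + o(\mathcal N_B(\varepsilon)^2)$, and since $\psi_B\le 3\varphi_B$ one bounds this below by $\int(\partial_y\varepsilon)^2\psi_B + \int \varepsilon^2\varphi_B - 5\int Q^4\varepsilon^2\varphi_B$; a localized (in $B$) version of the coercivity \eqref{eq:co} of $\cL$, using the three orthogonality conditions \eqref{eq:or}, gives a lower bound $\gtrsim \int((\partial_y\varepsilon)^2+\varepsilon^2)\varphi_B = \mathcal N_B(\varepsilon)^2$, at the cost of harmless error terms $B^{-1}\int(\cdot)\varphi_B'$ absorbed for $B$ large. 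This gives \eqref{coer:local} with some $\mu_2>0$.

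For (i), I would differentiate $\cF$ in $s$. Write $\cF = \lambda^{-2}\mathcal G$ with $\mathcal G$ the bracket; then $\frac{d\cF}{ds} = \lambda^{-2}\frac{d\mathcal G}{ds} - 2\frac{\lambda_s}{\lambda}\cF$. Using the equation \eqref{eq:eps} for $\varepsilon$ and integrating by parts, one obtains the classical decomposition of $\frac{d\mathcal G}{ds}$ into: (a) the leading \emph{virial quadratic form} $-\int (\cdot)\varphi_B'$-type terms coming from the transport term $\partial_y(\partial_y^2\varepsilon-\varepsilon+5Q^4\varepsilon)$ paired against $\psi_B,\varphi_B$ — this is where the favorable sign and the coercivity of the linearized virial quadratic form (as in \cite[Lemma 3.4, Prop. 3.5]{MMR1}) produce the good term $-\mu_1\lambda^{-2}\int((\partial_y\varepsilon)^2+\varepsilon^2)\varphi_B'$, after using \eqref{eq:or} and choosing $B$ large; (b) the \emph{scaling term} $\frac{\lambda_s}{\lambda}(\cdots)$, which in the defocusing, backward-in-time regime carries a favorable sign (here $\lambda_s/\lambda \approx -\beta - b$ with $\beta = c_1\lambda^{1/2}\sigma^{\alpha-1/2}+c_2\lambda\sigma^{2\alpha-1}$ and $c_1=-(2\alpha+1)<0$, so $\lambda_s/\lambda>0$ at main order, $\sim (2\alpha)^{-1}|s|^{-1}$) — this term is either absorbed into the good term or bounded by $|s|^{-1}\mathcal N_B(\varepsilon)^2$ and then, via \eqref{eq:5n}, integrated; (c) the \emph{modulation terms} $\vec m\cdot\vec M V_0$ etc., estimated using \eqref{est:lambda:sigma} together with $\|\varepsilon\|_{L^2_\loc}\lesssim\mathcal N_B(\varepsilon)$ and Young's inequality; (d) the \emph{source terms} coming from $\cE_b(W) = -\vec m\cdot\vec M V_0 + \Psi_M + \Psi_W+\Psi_b$, controlled by the pointwise bounds \eqref{eq:PW}, \eqref{est:Psi0:b}, \eqref{def:PsiM}, giving a contribution $\lesssim \lambda^{-2}\big(C_1^\star|s|^{-4}\log|s| + |b||s|^{-1}\big)\mathcal N_B(\varepsilon)$; and (e) nonlinear-in-$\varepsilon$ terms $\cR_1$, handled by $|\cR_1|\lesssim|\varepsilon|^2$ and the small $L^\infty,L^2$ norms. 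Collecting, every term that is not the negative good term is bounded, after using \eqref{eq:1n}--\eqref{eq:5n} and $\lambda^{-2}\sim|s|^{(2\alpha+1)/\alpha}$, by $C_3^\star|s|^{-7+\frac{2\alpha+1}{\alpha}}(\log|s|)^2$ plus a term absorbable into $\frac{\mu_1}{2}\lambda^{-2}\int((\partial_y\varepsilon)^2+\varepsilon^2)\varphi_B'$ by Young; this yields \eqref{time:local}.

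The main obstacle — the point requiring genuine care rather than bookkeeping — is step (b) together with the interplay of the $-2\frac{\lambda_s}{\lambda}\cF$ term with the cubic-and-higher part of $\mathcal G$. One must verify that the sign of the scaling contribution is indeed favorable with the specific choice \eqref{eq:5P} of $\psi,\varphi$ (this is precisely why that choice differs from \cite{MP24}, cf. the announced Remark \ref{rk:pb}), i.e. that the terms of the form $\frac{\lambda_s}{\lambda}\int (\partial_y\varepsilon)^2(\Lambda$-type weight$)$ and $\frac{\lambda_s}{\lambda}\int Q^4\varepsilon^2(\cdots)$ combine into something of the right sign or at worst of size $|s|^{-1}\mathcal N_B(\varepsilon)^2$, harmless once integrated against the weight $(\tilde s/s)^{(2\alpha+1)/\alpha}$ in \eqref{eq:5n}. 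A secondary delicate point is the coercivity of the virial quadratic form under the weights $\psi_B,\varphi_B$ rather than $\omega$: one uses the localization estimates $\psi_B'+B^2|\psi_B'''|+B^2|\varphi_B'''|\lesssim\varphi_B'$ from \eqref{cut:onR} to reduce to the global coercivity \cite[Prop. 4.1]{MMR1}, paying attention that the orthogonality conditions \eqref{eq:or} are with respect to $Q,\Lambda Q,y\Lambda Q$ and not their $\varphi_B$-weighted versions, which costs localization errors of order $e^{-B}$ or $B^{-1}$ that are absorbed for $B$ fixed large enough. Everything else is routine given the estimates already proved in Sections \ref{S:2}--\ref{S:5}.
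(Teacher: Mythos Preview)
Your plan is correct and follows the same virial--energy scheme as the paper's proof, with the same decomposition into the virial core, the modulation/source contributions, and the scaling term. Two points deserve sharpening.

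First, in your step (c), the term $\int (\vec m\cdot\vec M V_0)\, G_B(\varepsilon)$ cannot be handled by the crude bound $|\vec m|\,\cN_B(\varepsilon)$ plus Young's inequality: since $|\vec m|\lesssim \|\varepsilon\|_{L^2_\loc}+|s|^{-4}\log|s|$, this would produce a $(C_3^\star)^2|s|^{-6}$ contribution after using \eqref{eq:5n}, and the target \eqref{time:local} must be \emph{linear} in $C_3^\star$ to close the bootstrap in Lemma~\ref{lemma:eps_loc:BS}. The paper (following \cite{MMR1}) instead integrates by parts using $\cL\Lambda Q=-2Q$ and $\cL Q'=0$, and then invokes the orthogonality relations $(\varepsilon,Q)=0$ and $\int yQ'\varepsilon=0$ (a consequence of \eqref{eq:or}) to obtain the much stronger bound $|\int \Lambda V_0\, G_B(\varepsilon)|+|\int \partial_yV_0\, G_B(\varepsilon)|\lesssim (e^{-B/8}+|s|^{-1})\|\varepsilon\|_{L^2_\loc}+\|\varepsilon\|_{L^2_\loc}^2$, which after multiplication by $|\vec m|$ is absorbable.

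Second, your reading of why the choice \eqref{eq:5P} matters is slightly off. The favorable sign in the scaling term comes simply from $\lambda_s/\lambda>0$ (since $c_1<0$ forces $\beta<0$), used to discard the $y>0$ part of what the paper calls $f_{4,1}$. The linear growth of $\varphi$ for $y>-\tfrac12$ is needed for a different reason: in the term $f_{4,3}=2\frac{\lambda_s}{\lambda}\int\psi_B\,\Lambda W_b\,[(W_b+\varepsilon)^5-W_b^5-5W_b^4\varepsilon]$, the factor $|y|$ from $\Lambda W_b$ forces one to control $\int(1+|y|\eta)\varepsilon^2\psi_B$, and it is precisely the linear growth of $\varphi_B$ that makes this $\lesssim \cN_B(\varepsilon)^2$ (this is Remark~\ref{rk:pb}). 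With these two clarifications your outline matches the paper's proof.
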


\begin{proof}
We start by the proof of \eqref{time:local}. Integrating by parts and then using \eqref{eq:eps}
\begin{align*}
\lambda^2 \frac{d\cF}{ds}& = 
2 \int \partial_s\varepsilon \left(-\psi_B' \partial_y\varepsilon - \psi_B \partial_y^2\varepsilon + \varepsilon \varphi_B
- \psi_B \left[ (W_b+\varepsilon)^5 - W_b^5\right] \right) \\
&\quad - 2 \frac{\lambda_s}{\lambda}\lambda^2 \cF
- 2 \int (\partial_s W_b) \left[ (W_b+\varepsilon)^5 - W_b^5 - 5 W_b^4 \varepsilon \right] \psi_B\\
& = f_1+f_2+f_3+f_4+f_5
\end{align*}
where
\begin{align*}
f_1 & = 2 \int \partial_y\left(-\partial_y^2\varepsilon + \varepsilon - \left[ (W_b+\varepsilon)^5 - W_b^5\right]\right) G_B(\varepsilon)\\
f_2 & = - 2 \int \cE_b(W) G_B(\varepsilon) \\
f_3 & = 2 \left(\frac{\sigma_s}{\lambda} - 1\right) \int (\partial_y\varepsilon) G_B(\varepsilon) \\
f_4 & = 2 \frac{\lambda_s}{\lambda} \int \left((\Lambda \varepsilon) G_B(\varepsilon)
-(\partial_y\varepsilon)^2 \psi_B - \varepsilon^2 \varphi_B
+ \frac 13 \left[ (W_b+\varepsilon)^6 - W_b^6 - 6 W_b^5 \varepsilon\right]\psi_B \right)\\
f_5 & = - 2 \int (\partial_s W_b) \left[ (W_b+\varepsilon)^5 - W_b^5 - 5 W_b^4 \varepsilon \right] \psi_B
\end{align*}
and
\[
G_B(\varepsilon) = - \partial_y(\psi_B \partial_y\varepsilon) + \varepsilon \varphi_B
- \psi_B \left[ (W_b+\varepsilon)^5 - W_b^5\right].
\]

\smallskip

\noindent \emph{Estimate for $f_1$.} We claim that there exist $\mu_1>0$, $B_0 \ge 100$, such that for all $B \ge B_0$,
\begin{equation} \label{est:f1}
 f_{1} +2\mu_1 \int \left(\varepsilon^2+(\partial_y\varepsilon)^2\right)
 \varphi_B' \lesssimD |s|^{-8} .
\end{equation}
Following the computations page 89 in \cite{MMR1},
\begin{align*}
f_1 & = - \int \left[ 3 (\partial_y^2 \varepsilon)^2 \psi_B'
+(3 \varphi_B'+\psi_B'-\psi_B''') (\partial_y \varepsilon)^2
+ (\varphi_B'-\varphi_B''') \varepsilon^2 \right]\\
&\quad - \frac 13 \int \left[ (W_b+\varepsilon)^6 - W_b^6 - 6 (W_b+\varepsilon)^5 \varepsilon \right] (\varphi_B'-\psi_B')\\
&\quad + 2 \int\left[ (W_b+\varepsilon)^5 - W_b^5 - 5 W_b^4 \varepsilon\right] (\partial_yW_b) (\psi_B-\varphi_B)\\
&\quad + 10 \int \psi_B' (\partial_y \varepsilon) \left[ (\partial_y W_b)\left( (W_b+\varepsilon)^4 - W_b^4\right)
+ (W_b+\varepsilon)^4 (\partial_y \varepsilon)\right]\\
&\quad - \int \psi_B' \left[\left(-\partial_y^2\varepsilon+ \varepsilon- \left( (W_b+\varepsilon)^5 -W_b^5\right)\right)^2
- (-\partial_y^2 \varepsilon + \varepsilon)^2 \right].
\end{align*}
We decompose $f_1  =:f_1^{<}+f_1^{\sim}+f_1^{>}$
where $f_1^{<}$, $f_1^{\sim}$ and $f_1^{>}$ correspond to the integration over the regions $y<-\frac{B}2$, $|y|\le \frac{B}2$ and $y>\frac{B}2$ respectively. 

We first estimate $f_1^{<}$. By using the properties of $\varphi_B$ and $\psi_B$ in \eqref{cut:left}-\eqref{cut:onR} and choosing $B \ge 100$ large enough, we have 
\begin{align*}
 f_1^{<}+3\int_{y<-\frac{B}2} (\partial_y^2\varepsilon)^2& \psi_B'+\frac12 \int_{y<-\frac{B}2} \left( \varepsilon^2+(\partial_y\varepsilon)^2 \right) \varphi_B'
 \\ 
 & 
 \lesssim \int_{y<-\frac{B}2} |\partial_y\varepsilon| \left(|\partial_yW_b| \left(|W_b|^3|\varepsilon|+\varepsilon^4\right)+|\partial_y\varepsilon|\left( |W_b|^4+\varepsilon^4\right)\right)\varphi_B' \\ 
 &\quad +\int_{y<-\frac{B}2} \left| -2\left(-\partial_y^2\varepsilon+\varepsilon \right)+ \left( (W_b+\varepsilon)^5-W_b^5\right) \right| \left|(W_b+\varepsilon)^5-W_b^5 \right|\psi_B' \\ 
 &=:f_{1,1}^{<}+f_{1,2}^{<} .
\end{align*}
It follows from \eqref{est:W0:b}, \eqref{est:W12:b}, then \eqref{cut:left}-\eqref{cut:onR} and \eqref{GN:BS} that
\begin{align*}
 f_{1,1}^{<} &\lesssim \int_{y<-\frac{B}2} |\partial_yW_b| \left(|W_b|^3+|\varepsilon|^3\right) \left(\varepsilon^2+(\partial_y\varepsilon)^2\right)\varphi_B'+\int_{y<-\frac{B}2} (\partial_y\varepsilon)^2\left(|W_b|^4+\varepsilon^4 \right)\varphi_B' \\ 
 & \lesssim \left(e^{-\frac{B}{4}}+\lambda^2\right) \int_{y<-\frac{B}2}\left(\varepsilon^2+(\partial_y\varepsilon)^2\right)\varphi_B'
\end{align*}
and
\begin{align*}
 f_{1,2}^{<} 
 & \lesssim \int_{y<-\frac{B}2} \left(|W_b|^4+\varepsilon^4\right) \left(\varepsilon^2+(\partial_y^2\varepsilon)^2\right) \psi_B'+
 \int_{y<-\frac{B}2}\left(|W_b|^8+\varepsilon^8\right)\varepsilon^2\varphi_B'
 \\
 & \lesssim \left(e^{-\frac{B}{4}}+\lambda^{2}\right) \int_{y<-\frac{B}2} \left((\partial_y^2\varepsilon)^2\psi_B'+
\varepsilon^2\varphi_B' \right).
\end{align*}
Gathering these estimates and taking $B>100$ and $|s|$ large enough, we deduce that 
\begin{equation} \label{est:f1<}
f_1^{<}+\int_{y<-\frac{B}2} (\partial_y^2\varepsilon)^2 \psi_B'+\frac14 \int_{y<-\frac{B}2} \left( \varepsilon^2+(\partial_y\varepsilon)^2 \right) \varphi_B' \le 0 .\end{equation}
 
Since $\psi_B' \equiv 0$ on $y>\frac{B}2$, we observe using also \eqref{cut:onR} that, for $B>100$ large enough, 
\begin{align*}
f_1^> +\frac12 \int_{y>\frac{B}2} \left( \varepsilon^2+(\partial_y\varepsilon)^2\right) \varphi_B' 
\lesssim \int_{y>\frac{B}2}\left(|W_b|^4+\varepsilon^4 \right) \varepsilon^2 \varphi_B' + \int_{y>\frac{B}2}|\partial_yW_b|\left( |W_b|^3+|\varepsilon|^3 \right)\varepsilon^2 \varphi_B.
\end{align*}

Using \eqref{est:W0:b}, \eqref{est:W12:b} and \eqref{GN:BS}, it follows that
\begin{equation*}
 f_{1}^{>} +\frac12 \int_{y>\frac{B}2} \left( \varepsilon^2+(\partial_y\varepsilon)^2\right) \varphi_B' \lesssim \int_{y>\frac{B}2} \varepsilon^2 \omega 
 +\lambda^2\int_{y>\frac{B}2} \varepsilon^2 \varphi_B
 \lesssim e^{-\frac{B}8} \int_{y>\frac{B}2} \varepsilon^2 \varphi_B'+\lambda^2[\cN_B(\varepsilon)]^2.
\end{equation*}
Thus, using \eqref{eq:3n} and \eqref{eq:5n}, for $B$ and $|s|$ large enough,
we obtain
\begin{equation} \label{est:f1>}
f_1^{>}+\frac14 \int_{y>\frac{B}2} \left( \varepsilon^2+(\partial_y\varepsilon)^2\right) \varphi_B' \lesssim (C_3^\star)^2\lambda^2|s|^{-6}(\log|s|)^2 \lesssim |s|^{-8} .
\end{equation}

In the region $|y|<\frac{B}2$, using $\varphi_B(y)=1+\frac{y}B$ and $\psi_B(y)=1$,
we obtain the following expression of $f_1^{\sim}$
\begin{align*} 
f_1^{\sim}&=-\frac1B \int_{|y|<\frac{B}2} \left(3(\partial_y\varepsilon)^2+\varepsilon^2+\frac13\left[ (W_b+\varepsilon)^6 - W_b^6 - 6 (W_b+\varepsilon)^5 \varepsilon \right]\right) \\
&\quad -\frac2B \int_{|y|<\frac{B}2}\left[ (W_b+\varepsilon)^5 - W_b^5 - 5 W_b^4 \varepsilon\right] y(\partial_yW_b).
\end{align*}
We decompose $f_1^{\sim}$ as
\begin{align*} 
f_1^{\sim}
&=-\frac1B \int_{|y|<\frac{B}2}\left(3(\partial_y\varepsilon)^2+\varepsilon^2-5Q^4\varepsilon^2+20yQ^3Q'\varepsilon^2\right) \\ 
&\quad -\frac1{3B}\int_{|y|<\frac{B}2}\left[ (W_b+\varepsilon)^6 - W_b^6 - 6 (W_b+\varepsilon)^5 \varepsilon + 15Q^4\varepsilon^2\right] \\ & 
\quad -\frac2B\int_{|y|<\frac{B}2}\left[ (W_b+\varepsilon)^5 - W_b^5 - 5 W_b^4 \varepsilon-10W_b^3\varepsilon^2\right] y(\partial_yW_b) \\ 
& \quad -\frac{20}B \int_{|y|<\frac{B}2} y\left( W_b^3\partial_yW_b-Q^3Q'\right) \varepsilon^2 \\ 
&=:f_{1,1}^{\sim}+f_{1,2}^{\sim}+f_{1,3}^{\sim}+f_{1,4}^{\sim}.
\end{align*}

To control the main quadratic term $f_{1,1}^{\sim}$, we rely on the virial-type estimate proved in \cite{MMR1}.
\begin{lemma}[{\cite[Lemma 3.4]{MMR1}}]\label{le:virial}
There exists $B_0>100$ and $\mu_0>0$ such that, for all $B\ge B_0$,
\[
\int_{|y|<\frac B2} \left[ 3 (\partial_y\varepsilon)^2 + \varepsilon^2 - 5 Q^4 \varepsilon^2 + 20 y Q'Q^3 \varepsilon^2\right]
\geq \mu_0 \int_{|y|< \frac B2} \left(\varepsilon^2+(\partial_y\varepsilon)^2\right) - \frac 1B \int \varepsilon^2 e^{-\frac {|y|}2}.
\]
\end{lemma}
Hence, 
\begin{equation*}
 f_{1,1}^{\sim} +\mu_0 \int_{|y|<\frac{B}2}\left(\varepsilon^2+(\partial_y\varepsilon)^2\right)
 \varphi_B' \lesssim \frac1{B} \int \varepsilon^2\varphi_B' .
\end{equation*}
 
To estimate $f_{1,2}^{\sim}$ and $f_{1,4}^\sim$, we need the following estimate for $k=0,1$, $y \in \mathbb R$,
\begin{equation} \label{cons:BS:Wb3Wb'}
 \left|\partial_y^k (W_b^4-Q^4)\right|
 \lesssimD |s|^{-1}\omega +
 |s|^{-4}\eta + \lambda^2 \eta_R
 +b^4\ONE_{[-1,0]}(|b|^{\gamma}y)\lesssim |s|^{-1}.
\end{equation}
Indeed, for $k=0$, we observe that
\begin{equation*}
 \left| W_b^4-Q^4\right|\lesssim \left| W^4- Q^4 \right| +\left|W_b^4- W^4 \right| 
 \lesssim Q^3|V|+V^4+|b||P_b||W|^3+|b|^4|P_b|^4.
\end{equation*}
For $k=1$, we expand 
\begin{equation*}
 W_b^3\partial_yW_b-Q^3Q'=(W^3-Q^3+3bW^2P_b+3b^2WP_b^2+b^3P_b^3)\partial_y W_b+Q^3\partial_y(V+bP_b)
\end{equation*}
which implies that
\begin{equation*}
 \left|W_b^3\partial_yW_b-Q^3Q'\right| \lesssim \left(Q^2|V|+|V|^3+|b|W^2|P_b|+|b|^3|P_b|^3 \right)|\partial_yW_b|+Q^3\left(|\partial_yV|+|b||\partial_yP_b|\right).
\end{equation*}
Thus, \eqref{cons:BS:Wb3Wb'} follows from \eqref{eq:w0}, \eqref{eq:vv}, \eqref{def:P:b} and \eqref{eq:2n}--\eqref{eq:4n}.

\smallskip

We decompose 
\begin{align*}
(&W_b+\varepsilon)^6 - W_b^6 - 6 (W_b+\varepsilon)^5 \varepsilon +15Q^4\varepsilon^2 \\ &=(W_b+\varepsilon)^6 - W_b^6-6W_b^5\varepsilon-15W_b^4\varepsilon^2
-6\left[(W_b+\varepsilon)^5-W_b^5-5W_b^4\varepsilon \right] \varepsilon
-15\left[W_b^4-Q^4\right]\varepsilon^2 ,
\end{align*}
so that 
\begin{align*}
\left| (W_b+\varepsilon)^6 - W_b^6 - 6 (W_b+\varepsilon)^5 \varepsilon +15Q^4\varepsilon^2 \right| \lesssim \left(|W_b|^3|\varepsilon|+\varepsilon^4 \right)\varepsilon^2+\left|W_b^4-Q^4\right|\varepsilon^2 .
\end{align*}
Thus, it follows from \eqref{est:W0:b}, \eqref{GN:BS} and \eqref{cons:BS:Wb3Wb'} that 
\begin{equation*}
 |f_{1,2}^{\sim}| \lesssim \left( \|W_b\|_{L^{\infty}}^3\|\varepsilon\|_{L^{\infty}}+\|\varepsilon\|_{L^{\infty}}^4+\|W_b^4-Q^4\|_{L^{\infty}} \right) \int_{|y|<\frac{B}2}\varepsilon^2\lesssim \lambda^\frac12\int_{|y|<\frac{B}2} \varepsilon^2 \varphi_B' .
\end{equation*}
Similarly, \eqref{est:W0:b}, \eqref{est:W12:b} and \eqref{GN:BS} yield 
\begin{align*}
|f_{1,3}^{\sim}| &\lesssim \frac1B \int_{|y|<\frac{B}2} \left(W_b^2|\varepsilon|^3+|\varepsilon|^5 \right) |y||\partial_yW_b|\\ &
\lesssim \left( \|W_b\|_{L^{\infty}}^2\|\varepsilon\|_{L^{\infty}}+\|\varepsilon\|_{L^{\infty}}^3 \right)\|\partial_yW_b\|_{L^{\infty}} \int_{|y|<\frac{B}2}\varepsilon^2 
\lesssimD \lambda^\frac12 B \int_{|y|<\frac{B}2} \varepsilon^2\varphi_B'.
\end{align*}
Finally, \eqref{cons:BS:Wb3Wb'} implies that
\begin{equation*}
 |f_{1,4}^{\sim}| \lesssim \left\| W_b^3\partial_yW_b-Q^3Q'\right\|_{L^{\infty}} \int_{|y|<\frac{B}2} \varepsilon^2
 \lesssimD |s|^{-1}B\int_{|y|<\frac{B}2} \varepsilon^2 \varphi_B'.
\end{equation*}
Hence, by gathering these estimates and taking $|s|$ large enough,
\begin{equation} \label{est:f1sim}
 f_{1}^{\sim} +\frac{\mu_0}2 \int_{|y|<\frac{B}2}\left(\varepsilon^2+(\partial_y\varepsilon)^2\right)\varphi_B'
 \lesssimD \frac1{B} \int \varepsilon^2\varphi_B' .
\end{equation}

The proof of \eqref{est:f1} follows by combining \eqref{est:f1<}, \eqref{est:f1>}, \eqref{est:f1sim} and by fixing $2^{-4}\min\{1,\mu_0\}$, $B\ge B_0$ large enough and taking $|s|$ large enough possibly depending on $B$.

\smallskip

\noindent \emph{Estimate for $f_2$.} We claim
\begin{equation} \label{est:f2}
|f_2| \le \frac{\mu_1}2 \int \varepsilon^2\varphi_B'+ C_\delta C_3^\star|s|^{-7}(\log|s|)^2 .
\end{equation}

Using the expression of $\cE_b(W)$ in \eqref{def:EbW},
\begin{align*}
f_2 &= 
2 \int \left(\vec{m} \cdot \vec{M}V_0 \right)G_B(\varepsilon) -2 \int \left( \Psi_M + \Psi_{W} + \Psi_b\right) G_B(\varepsilon)=: f_{2,1}+f_{2,2}.
\end{align*}

We deal first with $f_{2,1}$.
Integrating by parts and using $\cL\Lambda Q = -2Q$, $\cL Q'=0$, we compute
\begin{align*}
\int \Lambda V_0 \, G_B(\varepsilon)
&= \int \left( \Lambda V_0-\Lambda Q\right) G_B(\varepsilon)- 2 \int \psi_B Q\varepsilon -\int \psi_B'(\Lambda Q)' \varepsilon +\int (\varphi_B-\psi_B)\Lambda Q \, \varepsilon
\\ & \quad -\int \Lambda Q \psi_B \left[ (W_b+\varepsilon)^5-W_b^5-5W_b^4\varepsilon\right] -5\int \Lambda Q \psi_B \left[W_b^4-Q^4\right]\varepsilon
\end{align*}
and
\begin{align*}
\int \partial_y V_0 \, G_B(\varepsilon)
&= \int \left(\partial_y V_0-Q'\right) \, G_B(\varepsilon)-\int \psi_B'Q'' \varepsilon +\int (\varphi_B-\psi_B)Q' \varepsilon
\\ & \quad -\int Q' \psi_B \left[ (W_b+\varepsilon)^5-W_b^5-5W_b^4\varepsilon\right]-5\int Q' \psi_B \left[ W_b^4-Q^4\right]\varepsilon.
\end{align*}
First, using the identities $\Lambda V_0-\Lambda Q=\Lambda Q (\zeta-1)+yQ\partial_y\zeta$ and $\partial_y V_0-Q'=Q'(\zeta-1)+Q\partial_y\zeta$, we see that 
\[\left| \int \left( \Lambda V_0-\Lambda Q\right) G_B(\varepsilon)\right|+ \left| \int \left(\partial_y V_0-Q'\right) \, G_B(\varepsilon)\right| \lesssim |s|^{-100}\|\varepsilon\|_{L^2_\loc} . \]
Next, by the orthogonality relations \eqref{eq:or}, we also have $\int yQ' \varepsilon=\int ( \Lambda Q-\frac12Q) \varepsilon=0$.
Using the definitions of $\psi_B$, $\psi_B'$ and $\varphi_B$, it follows that 
\begin{align*}
&\left|\int \psi_B Q\varepsilon \right|=\left| \int (\psi_B-1) Q \varepsilon\right| \lesssim \int_{y<-\frac{B}2} e^{-\frac{|y|}2} |\varepsilon| \lesssim e^{-\frac{B}8} \|\varepsilon\|_{L^2_\loc} , \\
&\left|\int \psi_B' Q'' \varepsilon \right|+\left|\int \psi_B'(\Lambda Q)' \varepsilon \right| \lesssim \int_{y<-\frac{B}2} e^{-\frac{|y|}2} |\varepsilon| \lesssim e^{-\frac{B}8} \|\varepsilon\|_{L^2_\loc} ,
\end{align*}
and
\begin{align*}
&\left|\int (\varphi_B-\psi_B)\Lambda Q \, \varepsilon \right|=\left| \int(\varphi_B-\psi_B-\frac{y}B)\Lambda Q \, \varepsilon\right| \lesssim \int_{|y|>\frac{B}2} e^{-\frac{|y|}2}|\varepsilon| \lesssim e^{-\frac{B}8} \|\varepsilon\|_{L^2_\loc} , \\ 
&\left|\int (\varphi_B-\psi_B) Q' \varepsilon \right|=\left|\int (\varphi_B-\psi_B-\frac{y}B)Q' \varepsilon \right|
\lesssim \int_{|y|>\frac{B}2} e^{-\frac{|y|}2}|\varepsilon| \lesssim e^{-\frac{B}8} \|\varepsilon\|_{L^2_\loc} .
\end{align*}

Next, we get from \eqref{cons:BS:Wb3Wb'} that 
\begin{equation*}
\int \psi_B\left(|\Lambda Q|+|Q'|\right) \left|W_b^4-Q^4\right||\varepsilon|
\lesssimD |s|^{-1}\int e^{-\frac{|y|}2}|\varepsilon| \lesssimD |s|^{-1}\|\varepsilon\|_{L^2_\loc} .
\end{equation*}
Finally, we deduce from \eqref{est:W0:b} and \eqref{GN:BS}
that 
\begin{align*}
&\int \psi_B\left(|\Lambda Q|+|Q'|\right)\left|(W_b+\varepsilon)^5-W_b^5-5W_b^4\varepsilon\right| 
\\ & \lesssim \int \psi_B e^{-\frac{|y|}2} \left(|W_b|^3\varepsilon^2+|\varepsilon|^5\right)
\lesssim \left(\|W_b\|_{L^{\infty}}^3+\|\varepsilon\|_{L^{\infty}}^3\right)\|\varepsilon\|_{L^2_\loc}^2
\lesssim \|\varepsilon\|_{L^2_\loc}^2.
\end{align*}

Combining these estimates with \eqref{eL2B}, \eqref{eq:5n}, \eqref{eq:1c} and taking $B$ large enough, we obtain
\begin{equation} \label{est:f21}
|f_{2,1}| \le \frac{\mu_1}4 \int \varepsilon^2\varphi_B'+C_\delta |s|^{-8}(\log|s|)^2 .
\end{equation}
We fix $B$ to such a value (independent of $C_3^\star$ and $\delta$) and we do not track anymore this constant.

To estimate $\mathrm{f}_{2,2}$, we need an estimate on the error term $\Psi = \Psi_M + \Psi_{W} + \Psi_b$.
\begin{lemma}\label{BS:lemma}
For all $s\in \cI$, 
$\cN_B(\Psi) \lesssimD |s|^{-1} \|\varepsilon\|_{L^2_\loc} + s^{-4} \log |s|$.
\end{lemma}
\begin{proof}[Proof of Lemma~\ref{BS:lemma}]
First, we observe that \eqref{eq:PW} and \eqref{eq:2n}--\eqref{eq:4n} imply 
\[
\cN_B(\Psi_W)\lesssimD |s|^{-4} \log |s|.
\]
(Recall that both the constant $C_1^\star$ and the constant $B$ have been fixed.)
Second, we also have 
$\cN_B(\Psi_b)\lesssimD |s|^{-4} \log |s|$ by \eqref{est:Psi0:b} and \eqref{eq:2n}--\eqref{eq:4n}.
(Again, we recall that the constant $C_2^\star$ have already been fixed.)

Last, we claim that $\cN_B(\Psi_M) \lesssimD |s|^{-1} \|\varepsilon\|_{L^2_\loc} + s^{-4} \log |s|$.
By the definition of $\Psi_M$ in \eqref{def:PsiM},
\begin{align*}
\cN_B(\Psi_M)
&\lesssim \left|\vec m\right| \left( \cN_B(\Psi_{\lambda})+\cN_B(\Psi_{\sigma})+|b|\cN_B(\Lambda P_b) + |b| \cN_B(\partial_y P_b)\right)\\
&\quad + |b_s| \cN_B\left((\chi_b+\gamma y\partial_y\chi_b)P\right)
+|b| \left( \cN_B( \Psi_{\lambda})+|b|\cN_B(\Lambda P_b)\right).
\end{align*}
From \eqref{eq:1c}, \eqref{eq:2c}, \eqref{eq:2n}--\eqref{eq:4n}, we have
$\left|\vec m \right|\lesssimD \|\varepsilon\|_{L^2_\loc} + |s|^{-4} \log |s|$,
$|b|\lesssimD |s|^{-3}\log|s|$ and
$|b_s|\lesssimD |s|^{-1}\|\varepsilon\|_{L^2_\loc} + |s|^{-4} \log |s|$.
Next, from \eqref{eq:ls}-\eqref{eq:ss}, 
$\cN_B(\Psi_\lambda)+\cN_B(\Psi_\sigma)\lesssim |s|^{-1}$
and from the definition of $P$ and \eqref{Lambda:P:b.1}, 
\[
\cN_B(\Lambda P_b)+\cN_B(\partial_y P_b)+\cN_B\left((\chi_b+\gamma y\partial_y\chi_b)P\right)\lesssim 1.
\]
These estimates are sufficient to prove the claim on $\Psi_M$.
\end{proof}

We compute from the definition of $G_B(\varepsilon)$
\begin{equation*} 
\mathrm{f}_{2,2}=2\int \psi_B (\partial_y\Psi) (\partial_y \varepsilon)+2\int \varphi_B \Psi \, \varepsilon
-2\int \psi_B \Psi \left[(W_b+\varepsilon)^5-W_b^5 \right] .
\end{equation*}
We deduce from the Cauchy-Schwarz inequality, \eqref{cut:onR}, Lemma~\ref{BS:lemma} and then~\eqref{eq:5n} and~\eqref{eL2B} that
\begin{align*}
\left|\int \psi_B (\partial_y\Psi) (\partial_y\varepsilon) \right|
&\leq \cN_B(\Psi) \cN_B(\varepsilon)
\lesssimD \left(|s|^{-1} \|\varepsilon\|_{L^2_\loc}+ |s|^{-4} \log |s| \right) \cN_B(\varepsilon)\\
&\lesssimD 
|s|^{-\frac 12} \|\varepsilon\|_{L^2_\loc}^2
+|s|^{-\frac 32}[\cN_B(\varepsilon)]^2
+C_3^\star |s|^{-7} (\log |s|)^2 \\
&\lesssimD |s|^{-\frac 12} \int \varepsilon^2 \varphi_B'+C_3^\star |s|^{-7} (\log |s|)^2 .
\end{align*}
Similarly,
\begin{equation*}
\left|\int \varphi_B \Psi \varepsilon \right|
\le \cN_B(\Psi) \cN_B(\varepsilon)
\lesssimD |s|^{-\frac 12} \int \varepsilon^2 \varphi_B'+C_3^\star |s|^{-7} (\log |s|)^2.
\end{equation*}
Moreover, since $\psi_B\leq 3 \varphi_B$ and
$\|W_b\|_{L^{\infty}}+\|\varepsilon\|_{L^{\infty}}\lesssim 1$ by
\eqref{est:W0:b} and \eqref{GN:BS}, we check
\begin{equation*}
\left|\int \psi_B \Psi \left[(W_b+\varepsilon)^5-W_b^5 \right] \right|
\lesssim \int \varphi_B |\Psi|\left(|W_b|^4 |\varepsilon|+|\varepsilon|^5 \right)
\lesssim \cN_B(\Psi)\cN_B(\varepsilon).
\end{equation*}
Thus,
\begin{equation} \label{est:f22}
|f_{2,2}| \leq \frac{\mu_1}4 \int \varepsilon^2\varphi_B'+ C_\delta C_3^\star|s|^{-7}(\log|s|)^2 ,
\end{equation}
which together with \eqref{est:f21} yields \eqref{est:f2}. 

\smallskip 

\noindent \emph{Estimate for $f_3$.}
Integrating by parts, we compute
\begin{align*}
f_3 =\left(\frac{\sigma_s}{\lambda} - 1\right) 
\int & \biggl[ - \psi_B' (\partial_y\varepsilon)^2 -\varphi_B' \varepsilon^2 
+\frac 13 \psi_B' \left( (W_b+\varepsilon)^6- W_b^6 - 6 W_b^5 \varepsilon\right) \\
&+ 2 \psi_B (\partial_y W_b) \left( (W_b+\varepsilon)^5 - W_b^5 - 5 W_b^4 \varepsilon\right)\biggr]
\end{align*}
By \eqref{eq:1c}, \eqref{eL2B} and \eqref{eq:5n}, we have $|\vec m|\lesssimD C_3^\star |s|^{-3}\log|s|$.
From \eqref{est:W0:b}-\eqref{est:W12:b} and~\eqref{GN:BS}, $\|W_b\|_{L^\infty}+\|\partial_y W_b\|_{L^\infty}+\|\varepsilon\|_{L^\infty}
\lesssim 1$.
Thus, by \eqref{eq:5n} and the properties of $\varphi_B$, $\psi_B$,
\begin{equation}\label{est:f3}
 |f_3| \lesssim |\vec{m}| [\cN_b(\varepsilon)]^2 \lesssimD (C_3^\star)^3|s|^{-9}(\log|s|)^3 \lesssim |s|^{-8}.
\end{equation}

\smallskip

\noindent \emph{Estimate for $f_4$.} We claim 
\begin{equation} \label{est:f4}
f_4 
\leq \frac{\mu_1}8\int \left( \varepsilon^2+(\partial_y\varepsilon)^2\right) \varphi_B'+C_\delta |s|^{-8} .
\end{equation}

By integration by parts (see also~\cite[page 97]{MMR1}), we have the identities
\begin{align*} 
\int (\Lambda \varepsilon) \partial_y(\psi_B\partial_y\varepsilon)
&=-\int \psi_B(\partial_y\varepsilon)^2+\frac12\int y\psi_B'(\partial_y\varepsilon)^2 , \\ 
\int (\Lambda \varepsilon) \varepsilon \varphi_B
&=-\frac12 \int y \varphi_B' \varepsilon^2\\
\int (\Lambda \varepsilon) \psi_B \left( (W_b+\varepsilon)^5-W_b^5\right)
&=\frac16 \int (2\psi_B-y\psi_B')\left( (W_b+\varepsilon)^6-W_b^6-6W_b^5\varepsilon \right)\\
&\quad 
-\int \psi_B \Lambda W_b\left( (W_b+\varepsilon)^5-W_b^5-5W_b^4\varepsilon \right).
\end{align*}
Hence,
\begin{align*}
f_4
&=-\frac{\lambda_s}{\lambda}\left(\int y\psi_B'(\partial_y\varepsilon)^2+2\int \varphi_B \varepsilon^2+\int y\varphi_B' \varepsilon^2\right)\\
& \quad +\frac13 \frac{\lambda_s}{\lambda}\int y\psi_B' \left( (W_b+\varepsilon)^6-W_b^6-6W_b^5\varepsilon \right)\\
& \quad +2 \frac{\lambda_s}{\lambda}\int \psi_B \Lambda W_b \left( (W_b+\varepsilon)^5-W_b^5-5W_b^4\varepsilon \right)\\
&=:f_{4,1}+f_{4,2}+f_{4,3} .
\end{align*}
First, from \eqref{eq:1c}, \eqref{eq:2n}--\eqref{eq:4n}, \eqref{eq:5n} and then the definition of $\beta$ in \eqref{eq:bt}
(recall that $c_1=-(2\alpha+1)$ and thus $\beta<0$ for large $|s|$), we have
\[
\frac {\lambda_s}{\lambda} \geq - \beta - C_3^\star |s|^{-3}\log|s|\geq (2\alpha+1) |s|^{-1} -C|s|^{-2} \geq |s|^{-1}>0,
\]
so that using $\psi_B'\geq 0$, $\varphi_B\geq 0$ and $\varphi_B'\geq 0$,
\[
-\frac{\lambda_s}{\lambda}\left(\int_{y>0} y\psi_B'(\partial_y\varepsilon)^2+2\int \varphi_B \varepsilon^2
+\int_{y>0} y\varphi_B' \varepsilon^2\right) \leq 0
\]
Thus, by $\big|\frac {\lambda_s}{\lambda}\big|\lesssim |s|^{-1}$,
the properties of $\psi_B$, $\varphi_B$ and the H\"older and Young inequalities
\begin{align*}
f_{4,1}
&\lesssim |s|^{-1} \int_{y<0} |y| e^{\frac{y}B} \left(\varepsilon^2+(\partial_y\varepsilon)^2\right)\\
&\lesssim |s|^{-1} \left(\int_{y<0} |y|^{100} e^{\frac{y}B}\left( \varepsilon^2+(\partial_y\varepsilon)^2\right)\right)^{\frac 1{100}}
\left( \int_{y<0} \left(\varepsilon^2+(\partial_y\varepsilon)^2\right) \varphi_B'\right)^{\frac{99}{100}}\\
&\lesssimD 
|s|^{-\frac{101}2}
+|s|^{-\frac 12} \int_{y<0} \left( \varepsilon^2+(\partial_y\varepsilon)^2\right) \varphi_B' 
\end{align*}
where we have also used $\int_{y<0} |y|^{100} e^{\frac{y}B}\left( \varepsilon^2+(\partial_y\varepsilon)^2\right)\lesssim \|\varepsilon\|_{H^1}^2 \lesssimD 1$ (see \eqref{eq:2n}--\eqref{eq:4n}).

To deal with $f_{4,2}$, we first recall the observation that $\psi_B'\equiv 0$ for $y>-\frac B2$. Moreover, note that using \eqref{est:W0:b} and \eqref{GN:BS} 
\[
\left| (W_b+\varepsilon)^6-W_b^6-6W_b^5\varepsilon \right| \lesssim |W_b|^4 \varepsilon^2 + \varepsilon^6
\lesssimD \omega \varepsilon^2 + \lambda^2\varepsilon^2.
\]
Then, it follows arguing as for $f_{4,1}$ that 
\begin{align*}
 |f_{4,2}| &\lesssimD |s|^{-1}\|\varepsilon\|_{L^2_{\loc}}^2
 +|s|^{-3}\int_{y<0} |y|e^{\frac{y}B} \varepsilon^2 \\ &
 \lesssimD |s|^{-1} \int \varepsilon^2 \varphi_B'
 +|s|^{-3}\left(\int_{y<0}|y|^{100}e^{\frac{y}B}\varepsilon^2 \right)^{\frac1{100}}\left(\int \varepsilon^2\varphi_B' \right)^{\frac{99}{100}}\\ 
 & \lesssimD |s|^{-201}+|s|^{-1} \int \varepsilon^2 \varphi_B'.
\end{align*}

To deal with $f_{4,3}$, we observe from \eqref{est:W0:b}--\eqref{est:W12:b},
and \eqref{GN:BS} that
\begin{align*}
\left|\Lambda W_b \left[ (W_b+\varepsilon)^5-W_b^5-5W_b^4\varepsilon \right] \right| 
& \lesssim (|W_b|+|y|\partial_yW_b|) \left(|W_b|^3\varepsilon^2 +|\varepsilon|^5\right)\\
& \lesssimD \omega \varepsilon^2 + \lambda^2 (1+ \lambda|y|\eta)\varepsilon^2\\
& \lesssimD \omega \varepsilon^2 + |s|^{-2} (1+ |y|\eta) \varepsilon^2.
\end{align*}
By the choice of the function $\varphi$ in \eqref{eq:5P}, we have
\[
\int (1+|y|\eta)\varepsilon^2 \psi_B \lesssim \int \varepsilon^2 \varphi_B \lesssim [\cN_B(\varepsilon)]^2.
\]
Hence, it follows from \eqref{cut:onR}, \eqref{eL2B}, and then \eqref{eq:5n} that
\begin{equation*}
 |f_{4,3}| \lesssimD |s|^{-1} \|\varepsilon\|_{L^2_\loc}^2+|s|^{-3}[\cN_B(\varepsilon)]^2 
 \lesssimD |s|^{-1} \int \varepsilon^2 \varphi_B'+(C_3^\star)^2|s|^{-9}\log|s|.
\end{equation*}
\begin{remark}\label{rk:pb}
Note that the term $f_{4,3}$ was the motivation in the present paper for the particular
choice of $\varphi$ in \eqref{eq:5P}.
Indeed, it allows to absorb in the term $\cN_B(\varepsilon)$ the function $|y|$ that appears in the definition of the operator $\Lambda$.
\end{remark}

We conclude the proof of \eqref{est:f4} by combining the estimates for $f_{4,1}$, $f_{4,2}$ and $f_{4,3}$. 

\smallskip

\noindent \emph{Estimate for $f_5$.} We claim that for $|s|$ large enough possibly depending  $C_3^\star$, 
\begin{equation} \label{est:f5}
|f_5| \leq \frac{\mu_1}{8}\int \varepsilon^2 \varphi_B'+C_\delta |s|^{-8}. 
\end{equation}

Using \eqref{est:W0:b} and \eqref{GN:BS}, we have 
\[
\left| (W_b+\varepsilon)^5 - W_b^5 - 5 W_b^4 \varepsilon \right|
\lesssim (|W_b|^3  +|\varepsilon|^3) \varepsilon^2
\lesssim (\omega + \lambda^\frac32) \varepsilon^2.
\]
and so
\begin{equation}\label{eq:mm}
|f_5|\lesssim \int |\partial_s W_b| (\omega + \lambda^\frac32) \varepsilon^2\psi_B.
\end{equation}
We claim
\begin{equation}\label{on:Wb}
|\partial_s W_b|\lesssim |s|^{-1}\eta_L + |s|^{-\frac32} (1+ |y| \eta).
\end{equation}
To prove \eqref{on:Wb}, we use the definition of $W$ in \eqref{eq:WW} and the definition of $W_b$ in \eqref{def:W:b}, and we compute
\begin{align*}
\partial_s W_b & = \partial_s W + \partial_s (bP_b) \\
& = |s|^{-1}y \chi_L'(|s|^{-1}y) \left( Q+c_1A_1\theta+(c_2P+A_2)\theta^2
+A_2^*\partial_y \theta + A_3 \theta^3\right)\\
& \quad + V_1 \partial_s\theta + V_2 \partial_s (\theta^2) + V_2^* \partial_s\partial_y\theta
+V_3 \partial_s (\theta^3)\\
& \quad +b_s\left( \chi_b + \gamma y \partial_y \chi_b\right) P.
\end{align*}
Inserting (recall \eqref{eq:TA})
\[
\partial_s \theta = \frac{\lambda_s}{\lambda}\Lambda\theta +   \frac{\sigma_s}{\lambda} 
\partial_y \theta -\partial_y^3 \theta - m_0^4 \partial_y (\theta^5)
\]
we obtain
\begin{align*}
\partial_s W_b 
& = |s|^{-1}y \chi_L'(|s|^{-1}y) \left( Q+c_1A_1\theta+(c_2P+A_2)\theta^2
+A_2^*\partial_y \theta + A_3 \theta^3\right)\\
&\quad + \frac{\lambda_s}\lambda \Lambda\theta \left(V_1  + 2V_2 \theta  +3 V_3 \theta^2\right)
+ \frac{\lambda_s}\lambda \left(\partial_y\Lambda\theta\right)  V_2^* 
\\
&\quad + \frac{\sigma_s}\lambda \partial_y\theta \left(V_1  + 2V_2 \theta  +3 V_3 \theta^2\right)
+ \frac{\sigma_s}\lambda \left(\partial_y^2\theta\right)  V_2^*\\
& \quad -\left(\partial_y^3 \theta + m_0^4 \partial_y (\theta^5)\right)
\left(V_1  + 2V_2 \theta  +3 V_3 \theta^2\right)
- \left(\partial_y^4 \theta + m_0^4 \partial_y^2 (\theta^5)\right) V_2^*
\\
& \quad +b_s\left( \chi_b + \gamma y \partial_y \chi_b\right) P.
\end{align*}
First, we have 
\[
|s|^{-1} |y \chi_L'(|s|^{-1}y)| \left| Q+c_1A_1\theta+(c_2P+A_2)\theta^2
+A_2^*\partial_y \theta + A_3 \theta^3\right|
\lesssim |s|^{-1} \eta_L.
\]
Second, using \eqref{eq:lt}, we have, for $y\ge -|s|$,
\begin{align*}|
\theta|+|\Lambda\theta| &\lesssim   (|s|^{-1}+|s|^{-2} |y|)\eta +\lambda^\frac 12 \eta_R ; \\ 
|\partial_y \theta| +|\partial_y \Lambda\theta|& \lesssim (|s|^{-2}+|s|^{-3} |y|)\eta+\lambda \eta_R, \\
|\partial_y^2\theta|  & \lesssim |s|^{-3}\eta+\lambda^\frac32\eta_R. 
\end{align*}
This yields, using $\big|\frac{\lambda_s}{\lambda}\big|\lesssim |s|^{-1}$ and Lemma \ref{le:vv},
\[
\left|  \frac{\lambda_s}\lambda \right| \left| \Lambda\theta \left(V_1  + 2V_2 \theta  +3 V_3 \theta^2\right)
+  \left(\partial_y\Lambda \theta \right)  V_2^* \right|
\lesssim |s|^{-1}(\lambda^\frac12 +|s|^{-2} |y|)\eta
\lesssim |s|^{-\frac32} (1+|y)\eta,
\]
and, using $\big|\frac{\sigma_s}{\lambda}\big|\lesssim 1$ and Lemma \ref{le:vv},
\[
\left| \frac{\sigma_s}\lambda\right| \left| \partial_y\theta \left(V_1  + 2V_2 \theta  +3 V_3 \theta^2\right)
+  \left(\partial_y^2\theta\right)  V_2^* \right|\lesssim |s|^{-1}\lambda^\frac12 \eta
\lesssim |s|^{-\frac32} \eta.
\]
Third, using $|b_s|\lesssimD C_3^\star |s|^{-4}\log|s|$, we have
\[
\left|b_s\left( \chi_b + \gamma y \partial_y \chi_b\right) P \right| 
\lesssimD C_3^\star |s|^{-4}\log|s|.
\]
Thus, \eqref{on:Wb} is proved.

It  follows from \eqref{eq:mm} and \eqref{on:Wb} that
\begin{align*}
|f_5| & \lesssim |s|^{-\frac32} \|\varepsilon\|_{L^2_\loc}^2 + 
|s|^{-3} \int (1+|y|\eta) \varepsilon^2 \psi_B\\
&\lesssim |s|^{-\frac32} \|\varepsilon\|_{L^2_\loc}^2 +|s|^{-3} [\cN_B(\varepsilon)]^2 \\
&\lesssim |s|^{-\frac32} \|\varepsilon\|_{L^2_\loc}^2 
+(C_3^\star)^2 |s|^{-9}
(\log|s|)^2  ,
\end{align*}
which yields \eqref{est:f5} for $|s|$ large enough.

Finally, we conclude the proof of \eqref{time:local} by gathering \eqref{est:f1}, \eqref{est:f2}, \eqref{est:f3}, \eqref{est:f4} and \eqref{est:f5}. 

\smallskip

We turn to the proof of \eqref{coer:local}. We decompose $\cF$ as follows: 
\begin{align*}
 \lambda^2\cF &=\int \left[(\partial_y\varepsilon)^2 \psi_B + \varepsilon^2 \varphi_B-5Q^4\varepsilon^2 \right]
- \frac13 \int \left[ (W_b+\varepsilon)^6 - W_b^6 - 6 W_b ^5 \varepsilon-15Q^4\varepsilon^2\right] \psi_B \\ 
&=: \cF_1+\cF_2 .
\end{align*}
To bound $\cF_1$ from below, we rely on the coercivity of the linearized operator $\cL$ around the ground state under the orthogonality conditions \eqref{eq:or} and standard localisation arguments. Proceeding for instance as \cite[Appendix A]{MM1}, we deduce that there exists $\tilde{\mu}_2>0$ such that, for $B$ large enough, 
$\cF_1 \ge \tilde{\mu}_2 [\cN_B(\varepsilon)]^2 $.

To estimate $\cF_2$, we write 
\begin{equation*}
 (W_b+\varepsilon)^6 - W_b^6 - 6 W_b ^5 \varepsilon-15Q^4\varepsilon^2
 =(W_b+\varepsilon)^6 - W_b^6 - 6 W_b ^5 \varepsilon-15W_b^4\varepsilon^2-15\left( W_b^4-Q^4\right)\varepsilon^2 ,
\end{equation*}
so that 
\begin{equation*}
 \left|(W_b+\varepsilon)^6 - W_b^6 - 6 W_b ^5 \varepsilon-15Q^4\varepsilon^2 \right|
 \lesssim \left(|W_b|^3|\varepsilon|+\varepsilon^4\right)\varepsilon^2+\left| W_b^4-Q^4\right|\varepsilon^2.
\end{equation*}
Thus, it follows from \eqref{cut:onR}, and then \eqref{est:W0:b}, \eqref{cons:BS:Wb3Wb'} and \eqref{GN:BS} that
\begin{align*}
 \left|\cF_2 \right| \lesssim \left( \|W_b\|_{L^{\infty}}^3\|\varepsilon\|_{L^{\infty}}+\|\varepsilon\|_{L^{\infty}}^4+\left\| W_b^4-Q^4\right\|_{L^{\infty}}\right) \int \varepsilon^2\varphi_B 
 \lesssimD \lambda^\frac12[\cN_B(\varepsilon)]^2 .
\end{align*}
The proof of \eqref{coer:local} follows from these estimates taking $|s|$ large enough
\end{proof}

\subsection{Closing the energy estimates}
For $C_3^\star$ sufficiently large, we improve the bootstrap estimate 
\eqref{eq:5n} using Proposition \ref{pr:local-energy}.

\begin{lemma} \label{lemma:eps_loc:BS}
There exists $C_3^\star>1$ such that on $\cI$,
\begin{equation}\label{eq:9n}
\cN_B(\varepsilon(s)) 
+\left(\int_{S_n}^s \left(\frac{\tilde{s}} s\right)^{\frac{2\alpha+1}{\alpha}} \|\varepsilon(\tau)\|_{L^2_\loc}^2\, d\tilde{s} \right)^{\frac 12} 
\leq \frac{C_3^\star}2 |s|^{-3} \log |s|. 
\end{equation}
\end{lemma}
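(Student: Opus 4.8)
The plan is to integrate the differential inequality \eqref{time:local} from $S_n$ to $s$ and then use the coercivity \eqref{coer:local} to convert the resulting bound on $\cF$ into the desired bound on $\cN_B(\varepsilon)$. First I would rewrite \eqref{time:local} in the $t$-variable, or rather keep it in $s$ but absorb the factor $\lambda^{-2}$ properly: since $\lambda \approx \kappa_\lambda |s|^{-\frac{2\alpha+1}{2\alpha}}$ by \eqref{eq:3n}, we have $\lambda^{-2} \approx \kappa_\lambda^{-2} |s|^{\frac{2\alpha+1}{\alpha}}$, so the dissipative term in \eqref{time:local} reads, up to constants, $|s|^{\frac{2\alpha+1}{\alpha}} \int((\partial_y\varepsilon)^2+\varepsilon^2)\varphi_B'$, and the right-hand side is $\lesssim_\delta C_3^\star |s|^{-7+\frac{2\alpha+1}{\alpha}}(\log|s|)^2$. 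Using \eqref{eL2B}, the dissipative term controls $|s|^{\frac{2\alpha+1}{\alpha}}\|\varepsilon\|_{L^2_\loc}^2$ up to the constant $B$. This is exactly the weight appearing in the integral term of \eqref{eq:5n}: indeed $\int_{S_n}^s (\tilde s/s)^{\frac{2\alpha+1}{\alpha}}\|\varepsilon(\tilde s)\|_{L^2_\loc}^2 d\tilde s = |s|^{-\frac{2\alpha+1}{\alpha}}\int_{S_n}^s |\tilde s|^{\frac{2\alpha+1}{\alpha}}\|\varepsilon(\tilde s)\|_{L^2_\loc}^2 d\tilde s$, which is what the integrated dissipation produces after multiplying through by $\lambda^2(s)\approx |s|^{-\frac{2\alpha+1}{\alpha}}$.

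Concretely, I would set $\widetilde\cF(s) = \lambda^2(s)\cF(s)$ is not quite right because the $\lambda^{-2}$ is inside; instead I work directly with $\cF$ and integrate $\frac{d\cF}{ds} + \frac{\mu_1}{\lambda^2}\int((\partial_y\varepsilon)^2+\varepsilon^2)\varphi_B' \lesssim_\delta C_3^\star |s|^{-7+\frac{2\alpha+1}{\alpha}}(\log|s|)^2$ over $[S_n,s]$. Since $\cF(S_n)=0$ (because $\varepsilon(S_n)\equiv 0$ and $b(S_n)=0$), this gives
\[
\cF(s) + \mu_1 \int_{S_n}^s \frac{1}{\lambda^2(\tilde s)}\int\bigl((\partial_y\varepsilon)^2+\varepsilon^2\bigr)\varphi_B'\, d\tilde s \lesssimD C_3^\star \int_{S_n}^s |\tilde s|^{-7+\frac{2\alpha+1}{\alpha}}(\log|\tilde s|)^2\, d\tilde s.
\]
Because $\alpha>1$, the exponent $-7+\frac{2\alpha+1}{\alpha} = -7 + 2 + \frac1\alpha = -5+\frac1\alpha < -4$, so the integral on the right converges at $-\infty$ and is $\lesssim |s|^{-4+\frac1\alpha}(\log|s|)^2$, which in particular is $\lesssim_\delta |s|^{-6}(\log|s|)^2$ after using $\lambda^2(s) \approx |s|^{-\frac{2\alpha+1}\alpha}$ — more precisely, multiplying the whole inequality is not needed; I just note $\cF(s) \lesssim_\delta C_3^\star |s|^{-4+\frac1\alpha}(\log|s|)^2$. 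Then by coercivity \eqref{coer:local}, $\frac{\mu_2}{\lambda^2(s)}[\cN_B(\varepsilon(s))]^2 \leq \cF(s)$, hence $[\cN_B(\varepsilon(s))]^2 \lesssim_\delta \lambda^2(s) C_3^\star |s|^{-4+\frac1\alpha}(\log|s|)^2 \approx C_3^\star |s|^{-\frac{2\alpha+1}\alpha - 4 + \frac1\alpha}(\log|s|)^2 = C_3^\star|s|^{-6}(\log|s|)^2$, since $\frac{2\alpha+1}{\alpha}+4-\frac1\alpha = 2+4 = 6$. Taking square roots, $\cN_B(\varepsilon(s)) \lesssim_\delta (C_3^\star)^{1/2}|s|^{-3}\log|s|$. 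For the integral term of \eqref{eq:9n}, I multiply the integrated dissipation bound by $\lambda^2(s)\approx \kappa_\lambda^2|s|^{-\frac{2\alpha+1}\alpha}$ and use $\lambda^2(s)/\lambda^2(\tilde s) \approx (\tilde s/s)^{\frac{2\alpha+1}\alpha}$ together with \eqref{eL2B} to get $\int_{S_n}^s (\tilde s/s)^{\frac{2\alpha+1}\alpha}\|\varepsilon(\tilde s)\|_{L^2_\loc}^2 d\tilde s \lesssim_\delta C_3^\star |s|^{-6}(\log|s|)^2$, so its square root is $\lesssim_\delta (C_3^\star)^{1/2}|s|^{-3}\log|s|$.

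The final step is the bootstrap closure: both contributions are bounded by $K_\delta (C_3^\star)^{1/2}|s|^{-3}\log|s|$ for a constant $K_\delta$ depending only on $\delta$ (and on $B$, $\mu_1$, $\mu_2$, which are already fixed by Proposition~\ref{pr:local-energy}), independent of $C_3^\star$. Choosing $C_3^\star$ large enough that $K_\delta(C_3^\star)^{1/2} \leq \frac12 C_3^\star$, i.e. $C_3^\star \geq 4K_\delta^2$, yields \eqref{eq:9n}. I would also have to double-check that in \eqref{time:local} the implied constant is genuinely independent of $C_3^\star$ — inspecting the proof of Proposition~\ref{pr:local-energy}, the terms $f_1,\dots,f_5$ were each estimated with right-hand sides of the form $C_\delta C_3^\star |s|^{-7+\frac{2\alpha+1}\alpha}(\log|s|)^2$ plus smaller terms, and the coefficient $C_\delta$ depends only on $\delta$ (and $B$), so this is fine.

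The main obstacle, and the part requiring care, is bookkeeping the powers of $|s|$ correctly through the $\lambda^{-2}$ weight: one must verify that the exponent produced by integrating the forcing term, namely $-5+\frac1\alpha$ (which requires $\alpha>1$ to be integrable, matching exactly the standing hypothesis and the restriction $\nu\in(\frac37,\frac12)$), combines with $\lambda^2(s)$ to give precisely $|s|^{-6}(\log|s|)^2$ after squaring, so that $\cN_B(\varepsilon)\lesssim |s|^{-3}\log|s|$ with the same exponent $-3$ as in the bootstrap assumption \eqref{eq:5n}. There is no loss of a logarithm or a power here: the gain in $|s|$ from the convergent time integral of the forcing is exactly compensated by nothing being lost, and the $(\log|s|)^2$ from the forcing becomes $\log|s|$ after the square root. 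A secondary subtlety is the weighted integral term: one must recognize that $\frac{1}{\lambda^2(\tilde s)}\varphi_B'$-integral over $[S_n,s]$, after multiplication by $\lambda^2(s)$, reproduces exactly the self-similar weight $(\tilde s/s)^{\frac{2\alpha+1}\alpha}$ thanks to $\lambda(s)\approx\kappa_\lambda|s|^{-\frac{2\alpha+1}{2\alpha}}$ — this is why the particular weight was chosen in \eqref{eq:5n} in the first place, and why \eqref{time:local} is stated with the factor $\lambda^{-2}$ in front of the dissipation.
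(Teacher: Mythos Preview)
Your proposal is correct and follows essentially the same approach as the paper's proof: integrate \eqref{time:local} over $[S_n,s]$ using $\cF(S_n)=0$, invoke the coercivity \eqref{coer:local} and \eqref{eL2B}, and close the bootstrap by choosing $C_3^\star$ large enough to beat the $(C_3^\star)^{1/2}$ dependence on the right-hand side. The paper presents this in two displayed lines, while you spell out the exponent arithmetic in more detail; the content is the same.
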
 

\begin{proof}

Let $s\in\cI$.
Integrating~\eqref{time:local} on $[S_n,s]$ and using $\cF(S_n)=0$, we find
\[
\cF(s) + \int_{S_n}^s \tilde{s}^{\frac{2\alpha+1}{\alpha}}  \int \left( (\partial_y \varepsilon)^2 + \varepsilon^2 \right) \varphi_B' d\tilde{s}
\lesssimD C_3^\star |s|^{-6+\frac{2\alpha+1}{\alpha}} (\log|s|)^2 .
\]
Thus, by \eqref{eL2B} and~\eqref{coer:local}, we obtain
\[
[\cN_B(\varepsilon(s))]^2 
+s^{-{\frac{2\alpha+1}{\alpha}} } \int_{S_n}^s \tilde{s}^{\frac{2\alpha+1}{\alpha}}  \|\varepsilon(\tau)\|_{L^2_\loc}^2 d\tilde{s} 
\lesssimD C_3^\star s^{-6} (\log |s|)^2.
\]
By choosing $C_3^\star>1$ large enough, this implies \eqref{eq:9n}.
\end{proof}

We complete the proof of Proposition~\ref{prop:bootstrap}. The constants $C_1^\star>1$
and $C_2^\star$ have been fixed as in Lemma~\ref{le:pB}, and the constant $C_3^\star>1$ is now chosen as in Lemma~\ref{lemma:eps_loc:BS}.
Assuming by contradiction that $S_n^{\star}<s_0$, by continuity, it follows that at least one of the estimates in \eqref{eq:H1b}, \eqref{eq:2n}--\eqref{eq:4n} and \eqref{eq:5n} is reached, which is absurd by Lemmas~\ref{le:pB}, \ref{le:mass-ener} and \ref{lemma:eps_loc:BS}.

\section{Proof of the main result}\label{S:7}

\subsection{Uniform estimates in the original variables}

By Proposition~\ref{prop:bootstrap}, the sequence $\{u_n\}$ of solutions of \eqref{eq:KV}
defined in Section~\ref{S:5.1} satisfy the uniform estimates~\eqref{eq:1n}--\eqref{eq:5n} on $[S_n,s_0]$ where $s_0<-1$ is independent of $n$. We rewrite these estimates in the time variable $t$.
The variable $t$ is exactly $\tau_n(s)$ and thus
\[
t = \kappa_\tau |s|^{-\frac{4\alpha+3}{2\alpha}} + O(|s|^{-\frac{4\alpha+3}{2\alpha}-1}\log|s|).
\]
Thus, estimates~\eqref{eq:2n}--\eqref{eq:4n}, \eqref{GN:BS} and \eqref{eq:EU}
on $[S_n,s_0]$ imply the existence of $t_0>0$ independent of $n$ such that on $[T_n,t_0]$, the following estimates in the $t$ variable
hold
\begin{equation}\label{BS:t}\begin{aligned}
|b_n(t)|&\lesssimD t^{\frac{6\alpha}{4\alpha+3}} |\log t|,\\
\left| \lambda_n(t) - (4\alpha+3)^\nu t^\nu \right| 
&\lesssimD t^{\nu+\frac{2\alpha}{4\alpha+3}}|\log t|,\\
\left| \sigma_n(t) - (4\alpha+3)^\frac{1}{4\alpha+3} t^\frac1{4\alpha+3} \right|
&\lesssimD t^{\frac1{4\alpha+3}+\frac{2\alpha}{4\alpha+3}}|\log t|,\\
\|\varepsilon_n(t)\|_{L^2} \lesssim t^{\frac\alpha{4\alpha+3}},\quad  
\|\partial_y \varepsilon_n(t)\|_{L^2} &\lesssim \delta^{\alpha-1} t^{\nu},
\end{aligned}
\end{equation}
where we recall that $\nu=\frac{2\alpha+1}{4\alpha+3}$ (see \eqref{eq:ab}). Now, for any $t\in [T_n,t_0]$, $x\in \RR$, we set
\begin{equation}\label{eq:rn}
r_n(t,x)= u_n(t,x) - \frac 1{((4\alpha+3)t)^\frac\nu2} Q\left( \frac {x-\sigma_n(t)}{((4\alpha+3)t)^{\nu}}\right),
\end{equation}
and we decompose it as
\begin{align*}
r_n(t,x)
&= \frac 1{\lambda_n^\frac 12(t)} Q\left( \frac {x-\sigma_n(t)}{\lambda_n(t)}\right)
- \frac 1{((4\alpha+3)t)^\frac\nu2} Q\left( \frac {x-\sigma_n(t)}{((4\alpha+3)t)^{\nu}}\right)\\
&\quad 
+u_n(t,x)-\frac 1{\lambda_n^\frac 12(t)} Q\left( \frac {x-\sigma_n(t)}{\lambda_n(t)}\right)
= r_{1,n}(t,x)+r_{2,n}(t,x).
\end{align*}
Introducing 
$\bar \lambda_n(t) = {((4\alpha+3)t)^\nu}/\lambda_n(t)$ and using \eqref{BS:t}, we have
\begin{align*}
\|r_{1,n}(t)\|_{L^2} & =\left\| \bar \lambda_n^\frac 12(t) Q (\bar\lambda_n(t)\, \cdot \, ) - Q\right\|_{L^2}
\lesssim |\bar \lambda_n(t) -1| \lesssim t^{\frac{2\alpha}{4\alpha+3}} |\log t|,\\
t^{\nu} \|\partial_x r_{1,n}(t)\|_{L^2} & = \left\| \bar \lambda_n^\frac 32(t) Q' (\bar\lambda_n(t) \,\cdot \, ) - Q'\right\|_{L^2}
\lesssim |\bar \lambda_n(t) -1| \lesssim t^{\frac{2\alpha}{4\alpha+3}} |\log t|.
\end{align*}
Moreover, by the triangle inequality
\begin{align*}
\|r_{2,n}-(2\alpha+1) m_0 \Theta_0\|_{L^2}
&  = \|W_{b,n}+\varepsilon_n-Q-(2\alpha+1) m_0 \Theta_0\|_{L^2}\\
& \lesssim \|W_{b,n}-W_n\|_{L^2}
+ \|W_n-Q-(2\alpha+1)m_0 \theta_0\|_{L^2} + \|\varepsilon_n\|_{L^2}
\end{align*}
and so using \eqref{BS:t} for $b_n$ and $\|\varepsilon_n\|_{L^2}$, and \eqref{eq:33}, we obtain
\[
\|r_{2,n}-(2\alpha+1) m_0 \Theta_0\|_{L^2}\lesssim t^{\frac\alpha{4\alpha+3}}.
\]
Finally,  by \eqref{est:V:H1}, \eqref{def:P:b} and \eqref{BS:t},
\begin{align*}
t^{\nu}\|\partial_x r_{2,n}\|_{L^2} 
&\leq \|\partial_y V\|_{L^2}+\|b \partial_y P_{b}\|_{L^2}+\|\partial_y \varepsilon\|_{L^2}\lesssim t^{\frac{2\alpha}{4\alpha+3}} +|b|+ \delta^{\alpha-1} t^{\nu}
\lesssimD t^{\frac{2\alpha}{4\alpha+3}}.
\end{align*}
In conclusion on all the above estimates, we have constructed a sequence $\{u_n\}_n$
of $H^1$ solutions of the equation \eqref{eq:KV} each existing on an interval
$[T_n,t_0]$, where $T_n\to 0$ as $n\to +\infty$, and
a sequence of functions $\sigma_n$ on $[T_n,t_0]$ satisfying the following uniform
estimates on $[T_n,t_0]$,
\begin{equation}\label{eq:fin}
\begin{aligned}
|\sigma_n(t)|&\lesssim t^{\frac1{4\alpha+3}},\\ 
\|r_n(t)-(2\alpha+1)m_0 \Theta_0\|_{L^2} & \lesssim t^{\frac\alpha{4\alpha+3}},\\
t^\nu\|\partial_x r_n(t)\|_{L^2} & \lesssim t^{\frac{2\alpha}{4\alpha+3}},
\end{aligned}
\end{equation}
where
\[
u_n(t,x)=  \frac 1{((4\alpha+3)t)^\frac\nu2} Q\left( \frac {x-\sigma_n(t)}{((4\alpha+3)t)^{\nu}}\right) +r_n(t,x).
\]

\subsection{Passing to the limit}
First, observe that \eqref{eq:fin} taken at the time $t=t_0$
implies that the sequence  $\{\sigma_n(t_0)\}_n$ is bounded in $\RR$
and that the sequence $\{u_n(t_0)\}_n$ is bounded in $H^1$.

Therefore, there exists a subsequence  $\{ u_{n_k}\}_k$ and $\tilde u_0\in H^1$ 
such that $ u_{n_k} \rightharpoonup \tilde u_0$ weakly in $H^1$
  as $k\to \infty$.
 
Applying \cite[Lemma 2.10]{CoM1},
it follows that the solution~$\tilde u$ of~\eqref{eq:KV} with initial data $\tilde u(t_0)=\tilde u_0$ exists on the time interval $(0,t_0]$ and that for all $t\in (0,t_0]$,
$u_{n_k}(t)\rightharpoonup \tilde u(t)$ in $H^1$ weak as $k\to +\infty$
(weak continuity of the gKdV flow).

Moreover,
\[
\tilde u(t,x)=  \frac 1{((4\alpha+3)t)^\frac\nu2} Q\left( \frac {x-\tilde\sigma(t)}{((4\alpha+3)t)^{\nu}}\right) +\tilde r(t,x),
\]
where  on $(0,t_0]$, the function $\tilde \sigma(t)$ and $\tilde r(t,x)$ satisfy
\begin{equation}\label{eq:fi}
\begin{aligned}
|\tilde\sigma(t)|&\lesssim t^{\frac1{4\alpha+3}},\\ 
\|\tilde r(t)-(2\alpha+1)m_0 \Theta_0\|_{L^2} & \lesssim t^{\frac\alpha{4\alpha+3}},\\
t^\nu\|\partial_x \tilde r(t)\|_{L^2} & \lesssim t^{\frac{2\alpha}{4\alpha+3}}.
\end{aligned}
\end{equation}
In particular,
$\tilde u$ is an $H^1$ solution of \eqref{eq:KV} blowing up at time $t=0$ at the point $0$
and after the simple rescaling
\[
 u(t,x) = \lambda_0^\frac12 \tilde u(\lambda_0^3 t,\lambda_0 x),
\quad \lambda_0 = (4\alpha+3)^{\frac\nu{1-3\nu}},
\]
the solution  $ u(t)$ satisfies the conclusions of Theorem~\ref{th:01}.

\section{Statements}

Part of this work was done while the first author (Yvan Martel) was visiting the Department of Mathematics, University of Bergen.
The second author (Didier Pilod)  was partially supported by a grant from the Trond Mohn Foundation. He also would like to thank the Laboratoire de Mathématiques de Versailles for the kind hospitality during part of the writing of this work.

\end{document}